\documentclass[reqno]{amsart}
\usepackage{graphicx} % Required for inserting images
\usepackage[margin=1.5in]{geometry}
\geometry{
 a4paper,
 left=30mm,
 right=30mm,
 top=30mm,
 bottom=40mm
 }
\usepackage{setspace}
\setstretch{1.2}
\usepackage{amsthm}
\usepackage[dvipsnames]{xcolor}
\usepackage{amssymb}
\usepackage{amsmath}
\usepackage[mathscr]{euscript}
\usepackage{amsfonts}
\usepackage{mathtools}
\usepackage{enumitem}
\usepackage{hyperref}

\newtheorem{theorem}{Theorem}[section]
\newtheorem{lemma}[theorem]{Lemma}
\newtheorem{proposition}[theorem]{Proposition}

\newtheorem{definition}[theorem]{Definition}
\newtheorem{remark}[theorem]{Remark}

\newtheorem{thm}[theorem]{Theorem}

\newtheorem{prop}[theorem]{Proposition}
\newtheorem{cor}[theorem]{Corollary}
\newtheorem{defn}[theorem]{Definition}

\numberwithin{equation}{section}

\usepackage[normalem]{ulem}
\newcommand{\mydash}{\hbox{\sout{ }}}

\newcommand{\mc}[1]{{\mathcal #1}}
\newcommand{\mf}[1]{{\mathfrak #1}}

\newcommand{\bb}[1]{{\mathbb #1}}

\newcommand{\ms}[1]{{\mathscr #1}}

\newcommand{\rom}[1]{%
  \textup{\uppercase\expandafter{\romannumeral#1}}%
}

\renewcommand{\Cap}{{\rm cap}}

\DeclareMathOperator\supp{supp}

% Title and Author Information
\title[A \emph{$\Gamma$}-convergence of level-two large deviation for metastable systems]{A \emph{$\Gamma$}-convergence of level-two large deviation for metastable systems: The case of zero-range processes}
\author{Kyuhyeon Choi}
\thanks{Department of Mathematical Sciences, Seoul National University, 1, Gwanak-ro, Gwanak-gu, 08826, Seoul, Republic of Korea.}
\thanks{Email: efgd301@snu.ac.kr}

\begin{document}

\begin{abstract}

This study explores the relationship between the precise asymptotics of the level-two large deviation rate function and the behavior of metastable stochastic systems. Initially identified for overdamped Langevin dynamics (Ges{\`u} et al., SIAM J Math Anal 49(4), 3048-3072, 2017), this connection has been validated across various models, including random walks in a potential field. We extend this connection to condensing zero-range processes, a complex interacting particle system.

We investigate a certain class of zero-range processes on a fixed graph $G$ with $N > 0$ particles and interaction parameter $\alpha > 1$. On the time scale $N^2$, this process behaves like an absorbing-type diffusion and converges to a condensed state where all particles occupy a single vertex of $G$ as $N$ approaches infinity. Once condensed, on the time scale $N^{1+\alpha}$, the condensed site moves according to a Markov chain on $G$, showing metastable behavior among condensed states.
The time scales $N^2$ and $N^{1+\alpha}$ are called the pre-metastable and metastable time scales. This behavior is conjectured to be encapsulated in the level-two large deviation $\mc{I}_N$ of the zero-range process. Precisely, the $\Gamma$-expansion of $\mc{I}_N$ is expected to be:
$$
\mc{I}_N = \frac{1}{N^2} \mc{K} + \frac{1}{N^{1+\alpha}} \mc{J},
$$
where $\mc{K}$ and $\mc{J}$ are the level-two large deviations of the absorbing diffusion processes and the Markov chain on $G$, respectively. We rigorously prove this $\Gamma$-expansion by developing a methodology for $\Gamma$-convergence in the pre-metastable time scale and linking the resolvent approach to metastability (Landim et al., J Eur Math Soc, 2023. arXiv:2102.00998) with the $\Gamma$-expansion in the metastable time scale.
\end{abstract}

\maketitle

\section{Introduction} \label{section1}

Metastability, a prevalent phenomenon in various stochastic dynamical systems, refers to the tendency of systems to persist in one locally stable state before transitioning to other stable configurations.
Examples of metastability in various contexts, such as random perturbations of dynamical systems, low-temperature spin systems with ferromagnetic properties, and stochastic models like interacting particle systems, illustrate its widespread occurrence.
This phenomenon is necessary in a stochastic dynamical system with two or more locally stable sets and this stability is parameterized by certain parameters.
Explicitly, we usually consider a sequence of Markov processes of the form $(X_t^N)_{N=1}^{\infty}$ or $(X_t^\beta)_{\beta\geq 0}$ and consider the limiting behavior of this sequence in the regime $N\rightarrow \infty$ or $\beta \rightarrow \infty$.
Here $N$ usually stands for the mesh size of the discretized space (e.g., random walks in a potential field, Ising or Potts model \cite{Ising1, Potential1, L23G}) or the number of particles in the condensing interacting particle systems
(e.g., zero-range processes or inclusion processes \cite{BL09, SIP2, SIP1, SZRP}), while $\beta$ refers to the inverse temperature of the system (e.g., Ising model or Langevin dynamics \cite{Bov2, Ising_beta, Lange}).
In this paper, we discuss an example of zero-range processes for which the Markov processes are parametrized by the number of particles.
Hence, for the simplicity of the discussion, we focus on a sequence of Markov processes of the form $(X_t^N)_{N=1}^{\infty}$.

Metastability encompasses two distinct temporal behaviors: metastable and pre-metastable dynamics.
We characterize a sequence of processes $(X_{t\theta_N}^N)_{N=1}^\infty$ as exhibiting metastable behavior at the time scale $\theta_N$
if the accelerated process remains within specific regions of the state space for an extended period before transitioning almost instantaneously to another region.
These regions are referred to as metastable valleys.
Conversely, the pre-metastable time scale $\sigma_N$ precedes the metastable time scale, during which the accelerated process $(X_{t\sigma_N}^N)_{N=1}^\infty$ tends to be absorbed into these valleys.

The foundation for analyzing metastable behavior was laid by the potential theoretic approach \cite{Bov1, Bov2}, which calculates quantities from probability theory through potential theoretic notions.
This method, in particular, enabled the analysis of metastability in numerous models by effectively dissecting reversible Markov processes.
Building on this, to analyze metastable transitions in models with several metastable valleys, Beltr{\'a}n and Landim \cite{Turnel, Turnel2} introduced the martingale approach.
This methodology essentially demonstrates that under certain conditions, transitions between valleys occur as a Markov process when the model is accelerated on a specific time scale.
It has been effectively applied to interacting particle systems, notably zero-range processes \cite{BL09, Landim2014, SZRP}, establishing that $\theta_N = N^{1+\alpha}$ is the metastable time scale for such processes.

Despite these advances, a connection between metastable and pre-metastable behaviors remained unexplored.
In the context of condensing zero-range processes, it has been demonstrated in \cite{MZRP} that condensation occurs at the pre-metastable time scale $\sigma_N = N^2$,
largely using techniques from Feller processes and martingale problems.
This time scale corresponds to the process achieving limiting diffusion in the Skorokhod topology, which has absorption behavior into the condensation.

A novel perspective through level-two large deviation rate functions has recently integrated metastable and pre-metastable behaviors.
Gesù and Mariani \cite{Fisher} initially introduced the $\Gamma$-expansion of level-two large deviation rate functions in order to merge several time scales in overdamped Langevin dynamics.
Later, Bertini, Gabrielli, and Landim \cite{Bertini} showed that metastable time scales in finite state reversible Markov processes are delineated through the $\Gamma$-expansion of level-two large deviation rate functions.
This approach has been generalized to finite non-reversible cases \cite{L22G}.
Additionally, Landim, Misturini, and Sau \cite{L23G} extended this to models, in which the state space is a discretization of Euclidean space,
presenting that the $\Gamma$-expansion of level-two large deviation rate functions provide both metastable and pre-metastable time scales.
The following sections provide a brief overview of the large deviation theory and a notion of $\Gamma$-convergence.

\subsection*{Level-two large deviation}

In this paper, a large deviation rate function is a prior object of studying the metastability of a Markov process as in \cite{L23G}. We first recall its definition.

Let $S$ be a finite state space and $\ms L$ be a generator of a Markov process $X_t$ on $S$. Let $L_t$ be the empirical measure of the process $X_t$ defined as
\begin{equation*}
    L_t = \frac{1}{t}\int_{0}^{t}\delta_{X_s}ds,
\end{equation*}
where $\delta_x$ is the Dirac measure at $x\in S$.
Therefore, we may consider $L_t$ as a random variable on the space of probability measures on $S$, denoted by $\ms P(S)$.
Under an assumption that the process $X_t$ is irreducible, for any starting point $x\in S$, as $t\rightarrow \infty$,
the empirical measure $L_t$ converges to a unique stationary probability measure $\pi$ on $S$.

Donsker and Varadhan \cite{DV75a} proved the associated large deviation principle. To be specific, for any subset $A$ of $\ms P(S)$,
\begin{equation*}
    -\inf_{\mu\in \mathring{A}} I(\mu) \leq \liminf_{t\rightarrow \infty} \min_{x\in S} \frac{1}{t}\log \bb P_x(L_t\in A)
    \leq \limsup_{t\rightarrow \infty} \max_{x\in S} \frac{1}{t}\log \bb P_x(L_t\in A) \leq -\inf_{\mu\in \bar{A}} I(\mu).
\end{equation*}
Here, $\bb P_x$ stands for the probability measure of the process $X_t$ starting at $x\in S$,
and $\mathring{A}$ and $\bar{A}$ are the interior and the closure of $A$ in $\ms P(S)$, respectively. Also, $I$ is the large deviation rate function defined by
\begin{equation*}
    I(\mu) = \sup_{u>0} -\sum_{x\in S} \frac{\ms L u(x)}{u(x)} \mu(x).
\end{equation*}
This rate function is often called the level-two large deviation.
Let $\pi$ be a stationary measure of the process $X_t$. For the case that $\ms L$ is reversible, by \cite[Theorem 5]{DV75a},
\begin{equation}\label{sqrtcalc}
    I(\mu) = \langle \sqrt f, (-\ms L) \sqrt f \rangle_{\pi}
\end{equation}
for all $\mu\in \ms P(S)$, where $f = \frac{d\mu}{d\pi}$.

Even when $S$ is infinite, a similar theory can be developed if $S$ is compact and $\ms L$ serves as the generator of a Feller process on $S$, given that it satisfies certain nice properties. Precise descriptions can be found in \cite{DV75a}.
In this case, the large deviation rate function $\mc K$ is defined by
\begin{equation*}
    \mc K(\mu) = \sup_{u>0} -\int_{S} \frac{\ms L u}{u} d\mu,
\end{equation*}
where the supremum is taken over all positive functions $u:S\to \bb R$ contained in the domain of $\ms L$.

\subsection*{Metastability and level-two large deviations}

Prior to discussing a full $\Gamma$-expansion, we introduce a notion of $\Gamma$-convergence.

Fix a Polish space $\mc X$ and a sequence $(U_N : N\in\bb N)$ of
functionals on $\mc X$, $U_N\colon \mc X \to [0,+\infty]$.  The
sequence $U_N$ $\Gamma$-converges to the functional
$U\colon \mc X\to [0,+\infty]$ if and only if the following two
conditions are met:

\begin{itemize}
\item [(i)]\emph{$\Gamma$-liminf.} The functional $U$ is a
$\Gamma$-liminf for the sequence $U_N$: for each $x\in\mc X$ and each
sequence $x_N\to x$, we have that
\begin{equation}
\label{ginf}
\liminf_{N\to \infty} U_N(x_N) \;\ge\; U(x) \;.
\end{equation}

\item [(ii)]\emph{$\Gamma$-limsup.} The functional $U$ is a
$\Gamma$-limsup for the sequence $U_N$: for each $x\in\mc X$ there
exists a sequence $x_N\to x$ such that
\begin{equation}
\label{gsup}
\limsup_{N\to \infty} U_N(x_N) \;\le\; U(x)\;.
\end{equation}
\end{itemize}

The application of $\Gamma$-convergence to large deviation theory is concisely summarized in \cite{Mariani}.
Supppose a sequence of functionals $\ms I_N : \ms P(\Xi) \to [0,+\infty)$ are given.
Using $\Gamma$-convergence, we define a full $\Gamma$-expansion of level-two large deviation rate functions, which gives the asymptotics of functional $\ms I_N$ in a precise way in the regime $N\to \infty$.
For two positive sequences $(\alpha_N, N\ge 1)$ and $(\beta_N, N\ge 1)$, we denote by $\alpha_N \prec \beta_N$ if $\alpha_N/\beta_N \to 0$ as $N\to \infty$.
Then, we have the following definition.

\begin{defn}
\label{def1}
A \emph{full $\Gamma$-expansion of $\ms I_N$} is given by the speeds
$(\theta^{(p)}_N, N\ge 1)$, $1\le p\le \mf q$, and the functionals
$\ms I^{(p)}\colon \ms P(\Xi) \to [0, +\infty]$, $0\le p\le \mf q$, if:
\begin{itemize}
\item [(a)] The speeds $\theta^{(1)}_N, \dots, \theta^{(\mf q)}_N$ are
sequences such that $\theta^{(1)}_N \to\infty$,
$\theta^{(p)}_N \prec \theta^{(p+1)}_N$, $0\le p<\mf q$;
\item [(b)] Denote the zero set of $\ms I^{(p)}$ by $\ms N^{(p)}$. Then $\ms I_N$ $\Gamma$-converges to $\ms I^{(0)}$, and for each
$1\le p\le \mf q$, $\theta^{(p)}_N \, \ms I_N$ $\Gamma$-converges
to $\ms I^{(p)}$ on $\ms N^{(p-1)}$;
\item [(c)] For $1\leq p \leq \mf q$, $\ms N^{(p)} \subsetneq \ms N^{p-1}$ and $\ms N^{(\mf q)}$ is a singleton;
\item [(d)] For $0\le p<\mf q$, $\ms I^{(p+1)} (\mu)$ is finite if, and only if $\mu$ belongs to the $0$-level set of $\ms I^{(p)}$;
\end{itemize}

Then, the full $\Gamma$-expansion of $\ms I_N$ reads as
\begin{equation}
    \label{expansion}
    \ms I_N = \ms I^{(0)} + \sum_{p=1}^{\mf q} \frac{1}{\theta^{(p)}_N} \ms I^{(p)}.
\end{equation}
\end{defn}

For the case when $\ms J_N$ represents the level-two large deviation of a Markov process, the functional measures the improbability of a particular measure emerging from an empirical measure.
According to definition (c), we have a nested sequence of sets: $\ms N^{(0)} \supsetneq \ms N^{(1)} \supsetneq \cdots \supsetneq \ms N^{(\mf q)}$.
Additionally, definition (d) suggests that $ \theta^{(p+1)}_{N} $ is the first time scale at which a non-trivial improbability of the set of measures in $ \ms N^{(p)} $ becomes apparent.
Heuristically, we can interpret $\ms N^{(p)} $ as a set of common empirical measures at the time scale $ \theta^{(p)}_N $, where an increasing sequence of time scales establishes a hierarchy of these sets.

If there exists a $\tau$ satistying the following conditions:
\begin{itemize}
\item [(e)] There exists a finite set $M = \{x_1,\dots,x_\kappa\}$ such that $\ms N^{(\tau)}= \{ \sum_{j\in S} w_j \delta_{x_j} : w \in \ms P(S) \}$, where $S = \{1,\dots,\kappa\}$,
\end{itemize}
choose the smallest such $\tau$. Then we refer to the time scales $\theta^{(1)}_N, \dots, \theta^{(\tau)}_N$ as the pre-metastable time scales and $\theta^{(\tau+1)}_N, \dots, \theta^{(\mathfrak{q})}_N$ as the metastable time scales.
Heuristically, the point set $M$ represents the set of metastable valleys, and the full $\Gamma$-expansion implies that meaningful behaviors about $M$ cannot be observed before the scale $\theta^{(\tau+1)}_N$;
this is the reason for the term ``pre-metastable time scales".
Thus, pre-metastable time scales can be understood as the duration needed for the process to concentrate on $M$,
while the metastable time scales are those during which macroscopic transitions over $M$ occur.
Remark from the definition that there may exist multiple metastable, pre-metastable time scales.
This hierarchy of time scales is well demonstrated in \cite{L23G}: two pre-metastable time scales and numerous metastable time scales are identified for the random walk in a potential field.
Also, a similar structure has been clarified in \cite{LLS1, LLS2} for some types of diffusion processes, generalizing Langevin dynamics.
For our case, we proved that the zero-range process has one pre-metastable time scale $N^2$ and one metastable time scale $N^{1+\alpha}$.

In this paper, we aim to calculate the full $\Gamma$-expansion of the large deviation rate function for a zero-range process under a uniform measure condition.
To derive this full $\Gamma$-expansion, we introduce a general framework for proving the $\Gamma$-convergence of the level-two large deviation rate function.
We divide our strategy into two parts: a metastable time scale and a pre-metastable time scale.
In the metastable part, we link the $\Gamma$-convergence to a resolvent approach to metastability, first introduced in \cite{RES}.
Using the resolvent equation, we developed a general tool to handle the $\Gamma$-convergence for the metastable time scales.
This result is discussed in Section \ref{gen_metastable_proof}.

For the pre-metastable part, it is much harder to calculate the $\Gamma$-convergence because the support of a limiting rate function is significantly larger.
Furthermore, due to condensation occurring at the time scale, the limiting dynamics at the pre-metastable time scale also exhibit a form of absorption behavior, complicating the analysis of $\Gamma$-convergence.
Despite these challenges, assuming the existence of a certain limiting diffusion and the reversibility of the process,
we extended the results in \cite{DV75a} to calculate the level-two large deviation rate function associated with the limiting diffusion.
This approach enabled us to obtain the $\Gamma$-convergence of the rate functions for the pre-metastable time scales.
This result is discussed through Sections \ref{slowgammainf}--\ref{slowlimsup}.
Finally, throughout Sections \ref{prelimmeta}--\ref{finalsection}, we apply this general framework to zero-range processes and calculate the full $\Gamma$-expansion of the large deviation rate function.

\section{Main Results for Zero-Range Processes} \label{section2}
In this section, we illustrate a notion of zero-range processes and their metastable and pre-metastable behavior. Precisely, we introduce the level-two large deviation rate function of a zero-range process and demonstrate the full $\Gamma$-expansion of it by analyzing the $\Gamma$-convergence at each time scale.
Most of the prerequisites refer to \cite{MZRP, BL09, SZRP}.

\subsection{Condensing zero-range process} \label{thezrp}

A zero-range process is a system of interacting particles, which is trying to model a stickiness of particles.
In this model, the transition of each particle to another state is governed entirely by the number of particles in its current site and the probability of
transitioning to a new site, which is characterized by a random walk on the site space.
This section introduces the latter mechanism, a Markov process that captures the site transition of the particles.

Let $S$ be a finite set with $|S|=\kappa\geq 2$, and let $X(t):t\geq 0$ be a continuous-time irreducible Markov process on $S$,
so that the jump rate from a site $x\in S$ to a different site $y\in S$ is given by $r(x,y)$ for some $r:S\times S \rightarrow [0,\infty)$.
For convention, we take $r(x,x)=-\sum_{y\neq x}r(x,y)$ for all $x\in S$. The invariant measure of the Markov process $X(\cdot)$ is denote by $m(\cdot)$, namely,
\begin{equation*}
    \sum_{y\in S} m(y)r(y,x) = 0 \;\;\text{for all } x\in S.
\end{equation*}

Here, $m(x), x\in S$ represents the likelihood of particle accumulation at state $x$. 
To simplify and bypass intricate computations, we adopt a uniform measure condition which implies that
\begin{equation}\label{uniformmeasurecond}
    m(x)=\kappa^{-1} \text{ for all } x\in S.
\end{equation}
The reason for this assumption is explained in Section \ref{limitingdiff}.

For $f:S\rightarrow \bb R$, the generator $\mc L_S$ associated with the Markov process $X(\cdot)$ can be written as
\begin{equation*}
    (\mc L_S f)(x) = \sum_{y\in S} r(x,y)(f(y)-f(x)), \:x\in S.
\end{equation*}

Now, we choose the stickiness function which is determined by the number of particles in a site. A parameter $\alpha$ represents the stickiness of a cluster of particles.
In this paper, we assume that $\alpha > 1$. Let $a:\bb N \rightarrow \bb R$ be a function defined by
$$
a(n)=\begin{cases}
    1 &\text{if } n=0, \\
    n^\alpha & \text{if } n\geq 1.
\end{cases}
$$
Additionally, we introduce a function $g:\bb N \rightarrow \bb R$ defined as
$$
g(n)=\begin{cases}
    1 &\text{if } n=0, \\
    a(n)/a(n-1) & \text{if } n\geq 1.
\end{cases}
$$
For $N\in \bb N$, the set $\mc H_N\subset \bb N^S$, representing the set of configuration of $N$ particles on $S$, is defined by
\begin{equation}\label{mchn}
    \mc H_N = \{ \eta = (\eta_x)_{x\in S}\in\bb N^S:\sum_{x\in S} \eta_x = N\}.
\end{equation}
The zero-range process $\{\eta_N(t) : t\geq 0\}$ is defined as a continuous-time Markov process on $\mc H_N$ associated with the generator
\begin{equation} \label{zrpgen}
(\ms L_N f)(\eta) = \sum_{x,y\in S} g(\eta_x)r(x,y)(f(\eta^{x,y})-f(\eta))\; ; \; \eta \in \mc H_N,
\end{equation}
for $f\in \mc H_N \rightarrow \bb R$, where $\eta^{x,y}\in \mc H_N$ is the configuration obtained from $\eta$ by sending a particle from site $x$ to $y$. More precisely, $\eta^{x,y} = \eta$ if $\eta_x = 0$, and if $\eta_x\geq 1$,
$$
(\eta^{x,y})_z = \begin{cases}
    \eta_z-1 & \text{if } z=x \\
    \eta_z+1 & \text{if } z=y \\
    \eta_z & \text{otherwise}.
\end{cases}
$$

\subsection{Stationary measure}

For $\eta\in\mathbb{N}^{S}$, let
\begin{equation}
a(\eta)=\prod_{x\in S}a(\eta_{x})\;.\label{rec1}
\end{equation}
Then the unique stationary probability $\rho_N(\cdot)$ of the zero-range process defined above on $\mc H_N$ is given by
\begin{equation} \label{stationaryprob}
    \rho_N(\eta)=\frac{N^{\alpha}}{Z_{S,N}}\frac{1}{a(\eta)}\;;\; \eta \in \mc H_N\;,
\end{equation}
where $Z_{S,N}$ represents the partition function that make $\rho_N$
a probability measure, that is,
\[
Z_{S,N}=N^{\alpha}\sum_{\eta\in\mc H_N}\frac{1}{a(\eta)}\;.
\]
Define constants $\Gamma(\alpha)$ and $Z_S$ by
\begin{equation}
    \label{gammaalpha}
    \Gamma(\alpha)=\sum_{j=0}^{\infty}\frac{1}{a(j)}\;,\;Z_S \coloneq \kappa\Gamma(\alpha)^{\kappa-1}.
\end{equation}
The series converges because $\alpha>1$.
\begin{prop}\cite[Proposition 2.1]{BL09}.
\label{e21}
The positive constant $Z_S$ depending only on $\kappa$ and $\alpha$ satisfies
\begin{equation} \label{normalizationconst}
    \lim_{N\rightarrow\infty}Z_{S,N}=Z_S\;.
\end{equation}
\end{prop}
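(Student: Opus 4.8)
The plan is to recast the partition function as an iterated convolution of a summable, regularly varying sequence and to extract its precise asymptotics via a one-big-jump estimate. Set $u_n := 1/a(n)$, so that $u_0 = 1$ and $u_n = n^{-\alpha}$ for $n\ge 1$, and note $\sum_{n\ge 0} u_n = \Gamma(\alpha) < \infty$ since $\alpha > 1$. Because $a(\eta) = \prod_{x\in S} a(\eta_x)$ and $\mc H_N$ consists of the $\eta$ with $\sum_x \eta_x = N$, expanding the product yields
\[
\sum_{\eta\in\mc H_N}\frac{1}{a(\eta)} \;=\; \sum_{\substack{n_1,\dots,n_\kappa\ge 0\\ n_1+\cdots+n_\kappa = N}} u_{n_1}\cdots u_{n_\kappa} \;=\; u^{\ast\kappa}(N),
\]
the $\kappa$-fold convolution of $u$ evaluated at $N$. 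Thus $Z_{S,N} = N^{\alpha}\,u^{\ast\kappa}(N)$, and the statement is equivalent to $N^{\alpha} u^{\ast\kappa}(N) \to \kappa\,\Gamma(\alpha)^{\kappa-1}$.

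Next I would prove, by induction on $m\ge 1$, the convolution lemma: $n^{\alpha} u^{\ast m}(n) \to m\,\Gamma(\alpha)^{m-1}$ as $n\to\infty$, together with the auxiliary facts $\sum_n u^{\ast m}(n) = \Gamma(\alpha)^{m}$ (immediate from $\sum_n u^{\ast m}(n) = (\sum_n u_n)^m$) and a uniform bound $u^{\ast m}(n) \le C_m\, n^{-\alpha}$ for $n\ge 1$ (the asymptotics handle large $n$, finiteness handles small $n$). The base case $m=1$ is the definition of $u$. For the inductive step, write $v := u^{\ast(m-1)}$ and $c := (m-1)\Gamma(\alpha)^{m-2}$, so $v_n \sim c\, n^{-\alpha}$ and $v_n \le C_{m-1} n^{-\alpha}$, and split, for $n$ large and a fixed $K\in\bb N$,
\[
u^{\ast m}(n) \;=\; \sum_{k=0}^{K} u_k v_{n-k} \;+\; \sum_{k=K+1}^{n-K-1} u_k v_{n-k} \;+\; \sum_{k=n-K}^{n} u_k v_{n-k}.
\]
In the first block, $v_{n-k}/n^{-\alpha}\to c$ for each fixed $k$, so it is asymptotic to $c\, n^{-\alpha}\sum_{k=0}^{K} u_k$; in the third, reindexing $j = n-k$ and using $u_{n-j}/n^{-\alpha}\to 1$, it is asymptotic to $n^{-\alpha}\sum_{j=0}^{K} v_j$. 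In the middle block one has $\max(k,\,n-k)\ge n/2$, so bounding the larger factor by $C\,(n/2)^{-\alpha}$ and summing the smaller one over its tail beyond $K$ gives $\sum_{k=K+1}^{n-K-1} u_k v_{n-k} \le C'\, n^{-\alpha}\,\varepsilon_K$ with $\varepsilon_K := \sum_{k>K} u_k + \sum_{j>K} v_j \to 0$ as $K\to\infty$. Letting $n\to\infty$ and then $K\to\infty$, and using $\sum_k u_k = \Gamma(\alpha)$ and $\sum_j v_j = \Gamma(\alpha)^{m-1}$, gives $n^{\alpha} u^{\ast m}(n)\to c\,\Gamma(\alpha) + \Gamma(\alpha)^{m-1} = m\,\Gamma(\alpha)^{m-1}$, completing the induction.

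Applying the lemma with $m = \kappa$ gives $Z_{S,N} = N^{\alpha} u^{\ast\kappa}(N)\to \kappa\,\Gamma(\alpha)^{\kappa-1} = Z_S$, which is the assertion. The main obstacle is the uniform control of the ``bulk'' of the convolution, i.e.\ the middle block $\sum_{K<k<n-K} u_k v_{n-k}$: one must show that configurations in which two or more coordinates carry a number of particles of order $N$ contribute negligibly after multiplication by $N^{\alpha}$. This is precisely where summability ($\alpha>1$) and the one-big-jump mechanism — a single site accumulates $\approx N$ particles while the rest stay $O(1)$, with $\kappa$ choices for that site — enter; the two boundary blocks are then routine. (Alternatively, one could invoke the Chover--Ney--Wainger theory of subexponential sequences, but the direct argument above is self-contained.)
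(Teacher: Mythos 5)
Your proof is correct. Note that the paper does not prove this statement at all — it simply cites \cite[Proposition 2.1]{BL09} — so there is no in-paper argument to compare against; your reduction of $Z_{S,N}$ to $N^{\alpha}u^{\ast\kappa}(N)$ and the inductive one-big-jump estimate for convolutions of the summable, regularly varying sequence $u_n=1/a(n)$ is a valid self-contained derivation, and it rests on the same mechanism as the cited proof (the dominant configurations are those with a single macroscopic site, giving the factor $\kappa$ and $\Gamma(\alpha)^{\kappa-1}$ from the $O(1)$ occupancies of the remaining sites). The three-block splitting, the uniform bound $u^{\ast m}(n)\le C_m n^{-\alpha}$ carried along the induction, and the identity $\sum_n u^{\ast m}(n)=\Gamma(\alpha)^m$ are exactly the ingredients needed to make the limit $N^{\alpha}u^{\ast\kappa}(N)\to\kappa\,\Gamma(\alpha)^{\kappa-1}=Z_S$ rigorous.
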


\subsection{Euclidean embedding} \label{euclideanembedding}

Let $\Xi \subset \bb R^S$ be a set of non-negative coordinates whose sum is 1, that is,
$$ \Xi = \{(\xi_x)_{x\in S} \in \bb R^S : \xi_x\geq 0 \text{ for all }x\in S \text{ and  } \sum_{x\in S}\xi_x = 1 \}. $$
Noting that $\mc H_N$ consists of coordinates summing to $N$, we embed $\mc H_N$ into $\bb R^S$ as the following:
\begin{equation} \label{iotaembed}
    \iota_N:\mc H_N \rightarrow \Xi, \;(\iota_N(\eta))_x \coloneq \frac{\eta_x}{N}.
\end{equation}
Let $\Xi_N$ be the image of $\mc H_N$ under $\iota_N$, that is,
$$ \Xi_N \coloneq \iota_N(\mc H_N). $$
Consequently, $\Xi_N$ becomes a subset of $\Xi$, characterized by coordinates that are rational numbers with $N$ in their denominators.

Let $\ms P(\Xi)$ be the space of probability measures on $\Xi$. We introduce the rate functional $\mc I_N$ on $\ms P(\Xi)$ using the embedding $\iota_N$.
We first define the level-two large deviation rate function $I_N:\ms P(\mc H_N)\rightarrow [0,+\infty]$ as

\begin{equation}
    \label{plainratefunc}
    I_N(\nu) \coloneq \sup\limits_{u:\mc H_N\rightarrow \bb R_{>0}}\int_{\ms H_N} -\frac{\ms L_N u}{u} d\nu,\;\; \nu\in \ms P(\mc H_N).
\end{equation}
Now, let $\mc I_N$ be the rate functional on $\ms P(\Xi)$ defined by the following:
\begin{equation}
    \label{rate}
    \mc I_N(\mu) \coloneq \begin{cases}
        I_N(\nu), \;\;\mu= \nu\iota_N^{-1} & \text{if } \mu(\Xi_N)=1, \\
        +\infty & \text{otherwise}.
    \end{cases}
\end{equation}
In the first case, $\nu$ is uniquely determined since $\iota_N$ is injective. We understand $\mc I_N$ as a pushforward of $I_N$ under $\iota_N$.

\subsection{Metastablility} \label{metastability24}

In this section, we aim to demonstrate the metastable behavior of the zero-range process. To achieve this, we give a concise description of metastability.

Let $(\ell_N)_{N\in \bb N}$ be a sequence of positive integer satisfying $\ell_N\leq \frac{N}{2}$. Let us define $\mc{E}_{N}^{\ell, x}$, for each $x\in S$,
as a family of disjoint subsets of $\mc H_{N}$ given as
\begin{equation*}
    \mc{E}_{N}^{\ell, x} \coloneq \{\eta\in\mc H_{N}:\eta_{x}\geq N-\ell_N\}\;.
\end{equation*}
Define sets
\begin{equation} \label{plain}
\mc{E}^{\ell}_{N}\,=\,\bigcup\limits _{x\in S}\mc{E}_{N}^{\ell,x}
\;\;\;\;\text{and\;\;\;\;}
\Delta^{\ell}_{N}\,=\,\mc H_{N}\,\setminus\,
\Big(\,\bigcup_{x\in S}\mc{E}_{N}^{\ell,x}\,\Big)\;.
\end{equation}
For briefness, we omit the superscript $\ell$ when its meaning is evident from the context.
The sets $\mathcal{E}_N^x$, $x\in S$, represent the metastable states within the dynamics $\eta_N(\cdot)$, often referred to as metastable valleys.

To define metastability, we need the following condition from \cite{RES}.
For a finite set $S$ with a generator $L$, suppose a sequence of finite state spaces $(X_N)_{N\in \bb N}$ and a sequence of disjoint valleys $(\{\mc E_N^x : x\in S\})_{N\in \bb N}$ of $X_N$ are given.
For a sequence of generators $(L_N)_{N\in \bb N}$ of Markov processes on $X_N$ for each $N$, we introduce the following condition.
\smallskip

\noindent \textbf{Condition $\mf{R}_{L}$}. For all $\lambda>0$ and
$g\colon S\to\bb{R}$, the unique solution $F_{N}:X_N\rightarrow \bb R$ of the resolvent
equation \eqref{1f01} is asymptotically constant in each set
$\mc{E}_{N}^{x}$:
\begin{equation}
\label{1f01}
(\, \lambda \,-\, L_N\,) \, F_N \;=\; G_N\; \coloneqq \;\sum_{x\in S} g(x)\,\chi_{\mc{E}_{N}^{x}}\;,
\end{equation}
\begin{equation*}
\lim_{N\to\infty}
\sup_{\eta\in\mc{E}_{N}^{x}}\,\big|\,F_{N}(\eta)\,-\,f(x)\,\big|
\,\;=\;0\;, \quad x\in S\;,
\end{equation*}
where $f:S\rightarrow\bb{R}$ is the unique solution of the reduced
resolvent equation
\begin{equation*}
(\lambda - L) \, f \,=\, g \;.
\end{equation*}

Now, we begin to describe the properties and metastable behaviors of the zero-range process.
For the zero-range process, the measure concentration property is presented in \cite[Theorem 3.1]{SZRP}.
Recall that $\rho_N$ is the stationary measure of the zero-range process.

\begin{thm}
    \label{meas}
    For any $\ell_N \rightarrow \infty$ with $\ell_N\leq \frac{N}{2}$,
    $$\lim_{N\rightarrow \infty}\rho_N(\Delta_N) = 0,\;\; \lim_{N\rightarrow \infty}\rho_N(\mc E^x_N) = \frac{1}{\kappa}$$ for all $x\in S$.
\end{thm}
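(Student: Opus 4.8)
\medskip
\noindent\textbf{Proof strategy.}
The statement is a direct computation with the explicit stationary measure \eqref{stationaryprob}, combined with Proposition \ref{e21}; it is in fact \cite[Theorem 3.1]{SZRP}, and the sketch below only recalls the mechanism. The plan is first to evaluate $\rho_N(\mc E_N^x)$ for a fixed $x\in S$. Decomposing $\eta\in\mc E_N^x$ according to the value $\eta_x=N-k$, $0\le k\le \ell_N$, and the distribution $\zeta\in\bb N^{S\setminus\{x\}}$ of the remaining $k$ particles over the other sites, and using $a(N-k)=(N-k)^\alpha$ for $N-k\ge 1$, one obtains
\begin{equation*}
N^\alpha \sum_{\eta\in\mc E_N^x}\frac{1}{a(\eta)} \;=\; \sum_{k=0}^{\ell_N}\Big(\frac{N}{N-k}\Big)^{\alpha} c_k, \qquad c_k \;\coloneqq\; \sum_{\substack{\zeta\in\bb N^{S\setminus\{x\}}\\ \sum_{y\ne x}\zeta_y=k}}\;\prod_{y\ne x}\frac{1}{a(\zeta_y)} .
\end{equation*}
Since $\sum_{k\ge 0}c_k=\prod_{y\ne x}\sum_{j\ge 0}a(j)^{-1}=\Gamma(\alpha)^{\kappa-1}<\infty$, it suffices to show that the right-hand side converges to $\Gamma(\alpha)^{\kappa-1}$ as $N\to\infty$; dividing by $Z_{S,N}$ and using $Z_{S,N}\to Z_S=\kappa\,\Gamma(\alpha)^{\kappa-1}$ then yields $\rho_N(\mc E_N^x)\to\kappa^{-1}$.

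The one point that genuinely needs an argument is the convergence $\sum_{k=0}^{\ell_N}(N/(N-k))^\alpha c_k\to\sum_{k\ge 0}c_k$. I would prove it by dominated convergence, or equivalently an $\varepsilon$-splitting: for $k\le\ell_N\le N/2$ one has $1\le (N/(N-k))^\alpha\le 2^\alpha$, so the summands are dominated by the $N$-independent summable sequence $2^\alpha c_k$; for each fixed $k$ one has $(N/(N-k))^\alpha\to 1$; and the hypothesis $\ell_N\to\infty$ ensures that the truncation at $\ell_N$ eventually covers any prescribed initial segment of the series, so no non-negligible portion is discarded. Both $\ell_N\to\infty$ and $\ell_N\le N/2$ enter precisely here.

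For the first assertion, note that the valleys $\mc E_N^x$, $x\in S$, overlap only in configurations of vanishing mass: if $\eta_x\ge N-\ell_N$ and $\eta_y\ge N-\ell_N$ with $x\ne y$, then $N\ge\eta_x+\eta_y\ge 2(N-\ell_N)\ge N$, which forces $\ell_N=N/2$ and $\eta_x=\eta_y=N/2$ with all other coordinates zero, a single configuration of $\rho_N$-mass $O(N^{-\alpha})$. Hence $\rho_N(\Delta_N)=1-\sum_{x\in S}\rho_N(\mc E_N^x)+o(1)\to 1-\kappa\cdot\kappa^{-1}=0$. The main — and essentially the only — obstacle is thus the interchange of the limit with the growing sum in the second paragraph; the rest is bookkeeping with the product structure of $a(\eta)$.
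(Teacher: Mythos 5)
Your proof is correct. The paper itself gives no proof of Theorem \ref{meas} — it simply cites \cite[Theorem 3.1]{SZRP} — and your self-contained computation is the standard argument behind that citation: the decomposition over $\eta_x=N-k$ with the factorization of $a(\eta)$, the dominated-convergence/$\varepsilon$-splitting step (where $\ell_N\to\infty$ and $\ell_N\le N/2$ enter exactly as you indicate), the normalization via Proposition \ref{e21}, and the $O(N^{-\alpha})$ overlap estimate for the valleys are all sound.
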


The metastable behavior of the zero-range process occurs at the time scale $\theta_N = N^{1+\alpha}$.
To address condition $\mf{R}_{L}$, it is necessary to define a limiting Markov process, which is denoted by $L$ in condition $\mf R_{L}$.
This concept of the limiting process was first introduced in \cite{BL09}. To describe it, we begin with a notion of capacity.

\begin{defn}
    Let $S$ be a finite set and $L$ be a generator of an irreducible Markov process on $S$.
    Suppose that $L$ is given by a rate $r:S\times S\rightarrow [0,\infty)$.
    Let $m$ be the stationary probability of the process generated by $L$.
    The Dirichlet form $D$ associated with $L$ is defined by
    \begin{equation*}
        D(f,g) \coloneq \frac{1}{2}\sum_{x,y\in S} m(x)r(x,y)(f(y)-f(x))(g(y)-g(x)),
    \end{equation*}
    For any disjoint subsets $A,B\subset S$, the capacity $\Cap(A,B)$ is defined by
    \begin{equation*}
        \Cap_S(A,B) \coloneq D(f,f),
    \end{equation*}
    where $f$ is the unique solution of the Dirichlet problem
    \begin{equation*}
        \begin{cases}
            Lf = 0, \; x\in S\setminus(A\cup B), \\
            f = 1, \; x\in B, \\
            f = 0, \; x\in A.
        \end{cases}
    \end{equation*}
\end{defn}

\begin{defn}
    Define constants $\Gamma(\alpha)$ and $I_\alpha$ by
    $$ \Gamma(\alpha) \coloneq 1 + \sum_{n=1}^\infty \frac{1}{n^{\alpha+1}}, 
    \quad I_\alpha \coloneq \int_{0}^{1} u^\alpha (1-u)^\alpha du.$$
    Define a generator of Markov process $\bb L$ on $S$ given by
    \begin{equation}
        \label{limiting}
        (\bb L f)(x) = \frac{\kappa}{\Gamma(\alpha)I_\alpha} \sum_{y\in S} \Cap_S(x,y)(f(y)-f(x)), \; x\in S,
    \end{equation}
    where $\Cap_S(x,y)$ is a capacity between $\{x\}$ and $\{y\}$ in the Markov process generated by $\mc L_S$.
\end{defn}

Using the limiting Markov process $\bb L$, we may consider condition $\mf R_{\bb L}$. The following theorem teaches us that condition $\mf R_{\bb L}$ holds
for a certain $(\ell_N)_{N\in \bb N}$. The proof is presented in Section \ref{prelimmeta}.

\begin{thm}
    \label{mszrp}
    For $\ell_N = \lfloor N^{\frac{1}{2(\kappa-1)}} \rfloor$, 
    condition $\mf R_{\bb L}$ holds for a generator $N^{1+\alpha} \ms L_N$ and $(\mc E^{x}_N)_{x\in S}$.
\end{thm}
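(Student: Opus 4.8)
I would verify Condition $\mf R_{\bb L}$ directly; this is precisely the analytic input behind the resolvent approach to metastability, and one could alternatively package the argument as a verification of the abstract criterion of \cite{RES}, but the mechanism is the same. Fix $\lambda>0$ and $g\colon S\to\bb R$, write $\theta_N=N^{1+\alpha}$, and let $F_N\colon\mc H_N\to\bb R$ be the unique solution of $(\lambda-\theta_N\ms L_N)F_N=G_N$ with $G_N=\sum_{x\in S}g(x)\chi_{\mc E_N^x}$. The proof has three parts: \textbf{(A)} a priori bounds; \textbf{(B)} asymptotic constancy of $F_N$ on each valley, i.e.\ $\sup_{\mc E_N^x}|F_N-f_N(x)|\to0$ for suitable constants $f_N(x)$; \textbf{(C)} identification of any limit point of $(f_N(x))_{x\in S}$ (a bounded family, by (A)) with the solution $f$ of $(\lambda-\bb L)f=g$, which is unique since $\bb L$ is irreducible, so that $f_N(x)\to f(x)$ and Condition $\mf R_{\bb L}$ follows. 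Part (A) is routine: the maximum principle gives $\|F_N\|_\infty\le\lambda^{-1}\|g\|_\infty$, and pairing the resolvent equation with $F_N$ in $L^2(\rho_N)$ yields $\lambda\|F_N\|_{L^2(\rho_N)}^2+\theta_N D_N(F_N)=\langle F_N,G_N\rangle_{\rho_N}\le\lambda^{-1}\|g\|_\infty^2$, where $D_N$ is the (symmetric) Dirichlet form of $\ms L_N$ with respect to $\rho_N$, hence the energy bound $D_N(F_N)=O(\theta_N^{-1})$.

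Granting Part (B), I would carry out Part (C) as a projection step: test the resolvent equation against the equilibrium potential $h_N^x$ of $\mc E_N^x$, equal to $1$ on $\mc E_N^x$, to $0$ on the other valleys, and $\ms L_N$-harmonic on $\Delta_N$ (using the equilibrium potential of the time-reversed process if $\ms L_N$ is not reversible). Since $h_N^x$ coincides with $\chi_{\mc E_N^x}$ on the support of $G_N$, the right-hand side is $g(x)\,\rho_N(\mc E_N^x)\to g(x)/\kappa$ by Theorem \ref{meas}. On the left-hand side, $\lambda\langle h_N^x,F_N\rangle_{\rho_N}\to\lambda f(x)/\kappa$ using Part (B), $\rho_N(\Delta_N)\to0$, and $\|F_N\|_\infty\le\lambda^{-1}\|g\|_\infty$; and the term $-\theta_N\langle h_N^x,\ms L_N F_N\rangle_{\rho_N}$, rewritten through the adjoint and using $F_N\approx\sum_y f_N(y)\chi_{\mc E_N^y}$, becomes $-\theta_N\sum_{y\ne x}c^{(N)}_{xy}\big(f_N(y)-f_N(x)\big)+o(1)$, where $c^{(N)}_{xy}=\langle h_N^x,\ms L_N\chi_{\mc E_N^y}\rangle_{\rho_N}$ are the equilibrium flows between valleys (the row-sum-zero identity $\sum_y c^{(N)}_{xy}=0$ is used here). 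The convergence $\theta_N\,c^{(N)}_{xy}\to\kappa^{-1}\bb L(x,y)$, which is exactly the capacity estimate for condensing zero-range processes of \cite{BL09,SZRP}, turns this term into $-\kappa^{-1}(\bb L f)(x)$; collecting the three limits gives $\lambda f(x)-(\bb L f)(x)=g(x)$.

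The heart of the matter, and the step I expect to be the main obstacle, is Part (B): upgrading $D_N(F_N)=O(\theta_N^{-1})$ to uniform (not merely $L^2$) asymptotic constancy of $F_N$ on each $\mc E_N^x$. The tool is the variational bound $\big(F_N(\eta)-F_N(\eta')\big)^2\le D_N(F_N)\,R_N^x(\eta,\eta')$ for $\eta,\eta'\in\mc E_N^x$, with $R_N^x$ the effective resistance inside the valley, so it suffices to show $R_N^x=o(\theta_N)=o(N^{1+\alpha})$ uniformly, i.e.\ a matching lower bound on the internal capacities. This follows from the explicit stationary measure: for $\eta\in\mc E_N^x$ with $\eta_x=N-j$ one has $\rho_N(\eta)=\tfrac{N^\alpha}{Z_{S,N}}(N-j)^{-\alpha}\prod_{y\ne x}a(\eta_y)^{-1}$, which for $j\le\ell_N=o(N)$ is comparable to $Z_S^{-1}\prod_{y\ne x}a(\eta_y)^{-1}$ and hence at least $c\,\ell_N^{-\alpha}$, while every jump rate $g(\cdot)$ occurring inside $\mc E_N^x$ lies in the fixed interval $[1,2^\alpha]$; consequently any two configurations of $\mc E_N^x$ are joined by a path of length $O(\ell_N)$ along which the edge conductances are $\ge c\,\ell_N^{-\alpha}$, so $R_N^x=O(\ell_N^{1+\alpha})$ and $\big(\mathrm{osc}_{\mc E_N^x}F_N\big)^2=O\big((\ell_N/N)^{1+\alpha}\big)\to0$. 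Thus Part (B) requires only $\ell_N\to\infty$ (for Theorem \ref{meas}) and $\ell_N=o(N)$, both comfortably met by $\ell_N=\lfloor N^{1/(2(\kappa-1))}\rfloor$; the sharper prescription of this $\ell_N$ is dictated instead by Part (C), namely by the regime in which the rescaled capacities $\theta_N\,\Cap_N(\mc E_N^x,\mc E_N^y)$ are shown in \cite{BL09,SZRP} to converge to the entries of $\kappa^{-1}\bb L$: the valleys must be thin enough that the process, after leaving a valley, reaches $\Delta_N$ before re-entering, yet wide enough for the concentration estimate of Theorem \ref{meas}. That capacity convergence is the one genuinely nontrivial external ingredient; granting it, the limits in Part (C) assemble to $(\lambda-\bb L)f=g$ and the theorem follows.
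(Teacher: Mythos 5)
Your proposal takes a genuinely different route from the paper. The paper does not re-derive the resolvent asymptotics by hand: it invokes the abstract result of \cite{RES} (stated as Theorem \ref{71}), whose hypotheses are exactly condition (H0) (convergence of the rescaled mean jump rates $N^{1+\alpha}r_N(x,y)$), condition (H1) (within-valley capacities dominate the escape capacity), and the ratio condition $\rho_N(\Delta_N)/\rho_N(\mc E_N^y)\to 0$; these are then checked for $\ell_N=\lfloor N^{1/(2(\kappa-1))}\rfloor$ by citing \cite[Proposition 6.22]{SZRP}, \cite[display (6.5)]{SZRP} and Theorem \ref{meas}, so the proof is essentially three lines. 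Your parts (A)--(C) amount to re-proving the content of \cite[Theorem 2.8]{RES} in this particular instance: your part (B) is a hands-on substitute for (H1), and your part (C) (testing against equilibrium potentials, with the row-sum identity and the convergence $\theta_N c^{(N)}_{xy}\to\kappa^{-1}\bb L(x,y)$) repackages (H0) together with Theorem \ref{meas}; the genuinely nontrivial external input — the sharp capacity/mean-jump-rate asymptotics on the scale $N^{1+\alpha}$ from \cite{BL09,SZRP} — is the same in both arguments. What your route buys is self-containedness (no black-box citation of \cite{RES}); what it costs is redoing standard metastability bookkeeping, and your sketch of (C) glosses the error control needed when replacing $F_N$ by its valley averages (bounded by $\theta_N\sum_\eta|\ms L_N^* h_N^x(\eta)|\rho_N(\eta)=O(1)$) and the adjoint-potential details in the non-reversible case, all of which is precisely what \cite{RES} supplies.

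There is one concrete quantitative slip in part (B). For $\eta\in\mc E_N^x$ the stationary weight is $\rho_N(\eta)\asymp Z_S^{-1}\prod_{y\neq x}a(\eta_y)^{-1}$, and since the $\ell_N$ excess particles may be spread over the $\kappa-1$ sites $y\neq x$, the correct uniform lower bound is of order $\ell_N^{-(\kappa-1)\alpha}$, not $\ell_N^{-\alpha}$. Consequently the within-valley resistance bound is $R_N^x=O\bigl(\ell_N^{1+(\kappa-1)\alpha}\bigr)$, and the oscillation estimate reads $(\mathrm{osc}_{\mc E_N^x}F_N)^2=O\bigl(\ell_N^{1+(\kappa-1)\alpha}/N^{1+\alpha}\bigr)$; your claim that part (B) only needs $\ell_N\to\infty$ and $\ell_N=o(N)$ is therefore false for $\kappa\ge 3$ (e.g.\ $\ell_N=N^{1-\epsilon}$ with small $\epsilon$ fails). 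The theorem itself is unharmed, because for the prescribed $\ell_N=\lfloor N^{1/(2(\kappa-1))}\rfloor$ the corrected exponent is $\tfrac{1}{2(\kappa-1)}+\tfrac{\alpha}{2}<1+\alpha$, so the oscillation still vanishes; but the bound as you stated it, and the ensuing claim about which $\ell_N$ suffice, need to be corrected.
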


\subsection{\texorpdfstring{$\Gamma$}{Gamma}-convergence on metastable time scale}

For $x\in S$, define $\xi^x \in \Xi$ such that $(\xi^x)_y = \delta_{x,y}$ for $y\in S$. Thus $\xi^x$ represents a configuration with a condensation at $x$.
This notion leads to an embedding $\iota_S : S\rightarrow \Xi$ of $S$ into $\Xi$, defined by
\begin{equation} \label{iotas}
    \iota_S(x) = \xi^x \text{ for } x\in S.
\end{equation}
Define a functional $\mc J$ on $\ms P(\Xi)$ by

\begin{equation}
    \label{rate2}
    \mc J(\mu) \coloneq \begin{cases}
        \sup\limits_{u:S \rightarrow \bb R_{>0}}\int_{S} -\frac{\bb L u}{u} d\nu, \;\; \mu = \nu \iota_S^{-1} & \text{if } \sum\limits_{x\in S} \mu(\xi^x)=1, \\
        +\infty & \text{otherwise}.
    \end{cases}
\end{equation}

\begin{thm}
    \label{27}
    A sequence of rate functions $N^{1+\alpha} \mc I_N$ $\Gamma$-converges to $\mc J$.
\end{thm}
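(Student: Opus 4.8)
The plan is to run the argument through the general $\Gamma$-convergence criterion for the metastable scale developed in Section~\ref{gen_metastable_proof}; concretely I would establish the two $\Gamma$-bounds directly from the reversible representation of the rate function together with the resolvent input of Theorem~\ref{mszrp} and the measure concentration of Theorem~\ref{meas}. Write $\mc E_N := \bigcup_{x\in S}\mc E^x_N$; whenever $\mc I_N(\mu)<\infty$, write $\mu=\nu\iota_N^{-1}$ with $\nu\in\ms P(\mc H_N)$, $f:=d\nu/d\rho_N$, so that by reversibility of the zero-range process and \eqref{sqrtcalc}, $\mc I_N(\mu)=I_N(\nu)=\langle\sqrt f,(-\ms L_N)\sqrt f\rangle_{\rho_N}=:D_N(\sqrt f)$, a Dirichlet form. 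The crucial first ingredient is a \emph{concentration lemma}: if $\sup_N N^{1+\alpha}\mc I_N(\mu_N)\le C<\infty$ then $\mu_N=\nu_N\iota_N^{-1}$ with $\nu_N\in\ms P(\mc H_N)$ and $\nu_N(\Delta_N)\to0$. To see it, fix $\lambda>0$, set $H_N:=(\lambda-N^{1+\alpha}\ms L_N)^{-1}\chi_{\Delta_N}>0$, and test \eqref{plainratefunc} against $u=H_N$: since $N^{1+\alpha}\ms L_N H_N=\lambda H_N-\chi_{\Delta_N}$,
\[
N^{1+\alpha}\mc I_N(\mu_N)\;\ge\;-\!\int\frac{N^{1+\alpha}\ms L_N H_N}{H_N}\,d\nu_N\;=\;\int_{\Delta_N}\frac{1}{H_N}\,d\nu_N-\lambda\;\ge\;\frac{\nu_N(\Delta_N)}{\|H_N\|_\infty}-\lambda,
\]
whence $\nu_N(\Delta_N)\le(C+\lambda)\|H_N\|_\infty$. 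Since $H_N(\eta)=\int_0^\infty e^{-\lambda s}\bb P_\eta[\eta_N(N^{1+\alpha}s)\in\Delta_N]\,ds$, the bound $\|H_N\|_\infty\to0$ follows by splitting the $s$-integral at the hitting time of $\mc E_N$ and invoking (i) the $O(N^2)=o(N^{1+\alpha})$ condensation-time estimate \cite{MZRP,SZRP}, so $\mc E_N$ is reached in negligible accelerated time uniformly in the starting point, and (ii) $\rho_N(\Delta_N)\to0$ (Theorem~\ref{meas}), so the long-run occupation time of $\Delta_N$ on the scale $N^{1+\alpha}$ is $o(N^{1+\alpha})$.

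For the $\Gamma$-liminf, let $\mu_N\to\mu$ and pass to a subsequence realizing $\liminf_N N^{1+\alpha}\mc I_N(\mu_N)$; if this is $+\infty$ there is nothing to prove, so assume it finite, hence the concentration lemma applies along the subsequence. As $\iota_N(\mc E^x_N)\subset\{\xi\in\Xi:\xi_x\ge1-\ell_N/N\}$ shrinks to $\{\xi^x\}$ (recall $\ell_N=\lfloor N^{1/(2(\kappa-1))}\rfloor$), the condition $\nu_N(\Delta_N)\to0$ forces $\mu$ to be supported on $\{\xi^x:x\in S\}$; hence $\mu=\sum_x w_x\delta_{\xi^x}$ with $w_x=\mu(\{\xi^x\})$, $\sum_x w_x=1$, $\nu_N(\mc E^x_N)\to w_x$, and $\mu=\nu\iota_S^{-1}$ with $\nu=\sum_x w_x\delta_x\in\ms P(S)$ (see \eqref{iotas}). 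Now fix any $h\colon S\to\bb R_{>0}$ and pick $\lambda$ large enough that $g:=(\lambda-\bb L)h>0$ on $S$ (possible since $S$ is finite); let $F_N$ solve $(\lambda-N^{1+\alpha}\ms L_N)F_N=\sum_x g(x)\chi_{\mc E^x_N}$. Then $F_N>0$ and, by Theorem~\ref{mszrp} (condition $\mf R_{\bb L}$), $F_N\to h(x)$ uniformly on $\mc E^x_N$. Testing \eqref{plainratefunc} against $u=F_N$,
\[
N^{1+\alpha}\mc I_N(\mu_N)\;\ge\;\sum_{x\in S}\int_{\mc E^x_N}\frac{g(x)-\lambda F_N}{F_N}\,d\nu_N\;-\;\lambda\,\nu_N(\Delta_N),
\]
and the right-hand side converges, as $N\to\infty$, to $\sum_x\frac{g(x)-\lambda h(x)}{h(x)}w_x=-\sum_x\frac{(\bb L h)(x)}{h(x)}w_x=-\int_S\frac{\bb L h}{h}\,d\nu$. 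Taking the supremum over $h>0$ and using \eqref{rate2} gives $\liminf_N N^{1+\alpha}\mc I_N(\mu_N)\ge\mc J(\mu)$.

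For the $\Gamma$-limsup, given $\mu\in\ms P(\Xi)$: if $\sum_x\mu(\{\xi^x\})<1$ then $\mc J(\mu)=+\infty$ and any $\mu_N\to\mu$ works; otherwise $\mu=\sum_x w_x\delta_{\xi^x}$, $w\in\ms P(S)$. Let $h^x_N$ be the $\ms L_N$-equilibrium potential of $\mc E^x_N$ against $\bigcup_{y\ne x}\mc E^y_N$; then $0\le h^x_N\le1$, $h^x_N=1$ on $\mc E^x_N$, $h^x_N=0$ on $\mc E^y_N$ for $y\ne x$, $\ms L_N h^x_N=0$ on $\Delta_N$, and $\sum_x h^x_N\equiv1$ by the maximum principle. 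Put $\phi_N:=\sum_x\sqrt{\kappa w_x}\,h^x_N$ and let $\nu_N$ have $\rho_N$-density $\phi_N^2/\|\phi_N\|_{L^2(\rho_N)}^2$. Using $\rho_N(\mc E^x_N)\to1/\kappa$, $\rho_N(\Delta_N)\to0$ (Theorem~\ref{meas}) and $h^x_Nh^y_N=0$ on $\mc E_N$ for $x\ne y$, one gets $\|\phi_N\|_{L^2(\rho_N)}^2\to1$, $\nu_N(\mc E^x_N)\to w_x$, $\nu_N(\Delta_N)\to0$, and since $\iota_N(\mc E^x_N)$ shrinks to $\{\xi^x\}$ this yields $\mu_N:=\nu_N\iota_N^{-1}\to\mu$. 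Moreover $\mc I_N(\mu_N)=D_N(\phi_N)/\|\phi_N\|_{L^2(\rho_N)}^2$, and by the capacity asymptotics for the condensing zero-range process underlying Theorem~\ref{mszrp} — which are precisely what the normalizing constant $\kappa/(\Gamma(\alpha)I_\alpha)$ in \eqref{limiting} encodes, cf.\ \cite{BL09} — one has $N^{1+\alpha}D_N\big(\sum_x a_x h^x_N\big)\to\frac{1}{2\kappa}\sum_{x,y}\frac{\kappa}{\Gamma(\alpha)I_\alpha}\Cap_S(x,y)(a_y-a_x)^2$ for every $a\colon S\to\bb R$. With $a_x=\sqrt{\kappa w_x}$ this limit equals $\langle\sqrt{d\nu/dm_{\bb L}},(-\bb L)\sqrt{d\nu/dm_{\bb L}}\rangle_{m_{\bb L}}$, $m_{\bb L}$ the uniform stationary measure of the reversible chain $\bb L$ and $\nu=\sum_x w_x\delta_x$, which by \eqref{sqrtcalc} and \eqref{rate2} is exactly $\mc J(\mu)$. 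Hence $\limsup_N N^{1+\alpha}\mc I_N(\mu_N)\le\mc J(\mu)$, completing the proof.

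The main obstacle is the uniform estimate $\|H_N\|_\infty\to0$ in the concentration lemma — equivalently, that boundedness of $N^{1+\alpha}\mc I_N$ expels all mass from $\Delta_N$. This is precisely where the pre-metastable behavior of the zero-range process enters: one must combine the $O(N^2)$ condensation-time bound \cite{MZRP,SZRP} with $\rho_N(\Delta_N)\to0$ (Theorem~\ref{meas}) and a relaxation estimate to control the Laplace-transformed occupation time of $\Delta_N$ on the metastable scale, uniformly over starting configurations. Once this is in place, the two $\Gamma$-bounds reduce to the resolvent test-function computation above (via Theorem~\ref{mszrp}) and the equilibrium-potential recovery sequence, exactly as packaged by the general criterion of Section~\ref{gen_metastable_proof}.
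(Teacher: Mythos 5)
Your overall architecture (reduce everything to the resolvent input of Theorem \ref{mszrp} plus a concentration statement expelling mass from $\Delta_N$ when $N^{1+\alpha}\mc I_N$ stays bounded, then test with resolvent solutions for the liminf and build an explicit recovery sequence for the limsup) is the same as the paper's, and your liminf computation is essentially the one carried out in Section \ref{gen_metastable_proof}. The genuine gap is in your concentration lemma. Reducing it to $\|H_N\|_\infty\to 0$, i.e.\ to a bound on $\sup_{\eta}\int_0^\infty e^{-\lambda s}\,\bb P_\eta[\hat\eta(s)\in\Delta_N]\,ds$ on the accelerated scale, is fine in principle (this is exactly the content of Remark 3.3 and display \eqref{quickabsorp}), but your justification of that bound is not: you invoke an ``$O(N^2)$ condensation-time estimate, uniformly in the starting point'' from \cite{MZRP,SZRP}, and no such uniform-in-$\eta$ hitting-time (or occupation-time) estimate for the wells $\mc E_N^{\ell,x}$, $\ell_N=\lfloor N^{1/(2(\kappa-1))}\rfloor$, is available there: \cite{MZRP} gives convergence to the absorbing diffusion for converging initial conditions, not a uniform expected hitting time of a set that shrinks with $N$, and your item (ii) (stationarity gives small occupation of $\Delta_N$) only controls starting points with bounded density against $\rho_N$, not arbitrary $\eta$. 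This is precisely the difficulty the paper's Proposition \ref{boundmeasure} is designed to get around: it proves the uniform absorption estimate \eqref{5503} only for $\kappa=2$, where the one-dimensional birth--death structure gives explicit capacity and hitting-time bounds (Lemmas \ref{boundcap}--\ref{boundhittingtime}), transfers it to measures of bounded rate function via the Donsker--Varadhan total-variation inequality \eqref{5504}, and then handles general $\kappa$ not by any uniform hitting estimate but by a Dirichlet-form comparison with degenerate two-site rates and a fiber decomposition over $\mc H_N^{S^{x,y}}$. You flag this step as ``the main obstacle'' but leave it unproved, so as written the proof of the $\Gamma$-liminf (and hence of the theorem) is incomplete.

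Two secondary remarks. First, your $\Gamma$-limsup route is genuinely different from the paper's: you use equilibrium potentials $h^x_N$ and the measure with $\rho_N$-density proportional to $(\sum_x\sqrt{\kappa w_x}\,h^x_N)^2$, whereas the paper constructs the recovery sequence as the stationary measure of a tilted generator $\mf M_{\log H_N}$ built from resolvent solutions (Proposition \ref{weakconvergence}); your route is attractive because it handles degenerate $w$ directly and stays at the level of Dirichlet forms, but the claimed asymptotic $N^{1+\alpha}D_N(\sum_x a_x h^x_N)\to \frac{1}{2\kappa}\sum_{x,y}R(x,y)(a_y-a_x)^2$ is not a quotable statement of \cite{BL09}: the cross terms $D_N(h^x_N,h^y_N)$ must be expressed through the sharp capacity asymptotics between unions of wells, and that computation needs to be written out. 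Second, minor but worth noting: in the liminf you should also justify $\nu_N(\mc E^x_N)\to w_x$ from weak convergence plus shrinking valleys (the paper does this in \eqref{eq933}--\eqref{eq937}); your sketch asserts it without the portmanteau argument.
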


According to \eqref{ginf} and \eqref{gsup}, it is noted that the $\limsup$ part of the previous theorem
requires arguments for existence, whereas the $\liminf$ part necessitates conditions that are universally applicable, irrespective of the choice of converging sequences.
Because of this difference, the $\limsup$ analysis requires only a single representation of metastable valleys, while 
the $\liminf$ analysis demands a further argument on converging sequences of measures. The detail is presented in Section \ref{metatimescale}.

\subsection{Limiting diffusion of zero-range process at diffusive time scale} \label{limitingdiff}

In this section, referencing results from \cite{MZRP}, we are going to define a diffusion representing
limiting behavior of the zero-range process in the diffusive time scale $N^2$.
The constant $b$ in \cite[display (2.1)]{MZRP} is set as $\alpha$.
To introduce the generator of this diffusion, the domain must first be defined. We denote by $C^n(\Xi), n\geq 1$ the set of functions 
$F:\Xi \rightarrow \bb R$ which are $n$-times continuously differentiable in the
interior of $\Xi$ and have continuous derivatives up to order $n$ on the boundary of $\Xi$.
For a function $F\in C^1(\Xi)$, $\partial_{x}F$ denotes the partial derivative with respect to the variable $x$,
and $\nabla F$ stands for the gradient of $F$.

Let $\{\mathbf{v}_x\in \bb R^S : x\in S\}$ be the vectors defined by
\begin{equation} \label{vx}
    \mathbf{v}_x \coloneqq \sum_{y\in S} r(x,y) (\textbf{e}_y-\textbf{e}_x), \quad x\in S,
\end{equation}
where $r$ is a jump rate of $\mc L_S$ and
$\{\textbf{e}_x:x\in S\}$ represents the canonical basis of $\bb R^S$. Define the vector field $\mathbf{b}:\Xi \rightarrow \bb R^S$ by
$$\mathbf{b}(\xi)\coloneq \alpha \sum_{x\in S}\mathbf{1}\{\xi_x\neq 0\}\frac{m_x}{\xi_x}\mathbf{v}_x,$$
where $m_x$ is a constant satisfying
\begin{enumerate}
    \item $\lim\limits_{n\rightarrow \infty} n(\frac{g(n)}{m_x}-1) = \alpha \text{ for all } x\in S \label{mcond}$,
    \item $m_x$ is a stationary measure of the Markov process generated by $\mc L_S$,
\end{enumerate}
from \cite[display (2.1)]{MZRP}.
Setting $m_x = 1$, following from \eqref{uniformmeasurecond}, satisfies the above conditions.

Actually, if the stationary probability is not uniform, an additional time scale $N$ appears as the scale for a fluid limit: see \cite{Fluid}. Additionally, it is expected that
$N^2$ still represents the diffusive time scale, although the results in \cite{MZRP} have yet to be generalized to this case.
To avoid this complicated situation, we assume the uniform measure condition \eqref{uniformmeasurecond}.

\begin{defn} \label{def31}
    For each $x\in S$, let $\mc D^S_x$, be the space of functions $H$ in $C^2(\Xi)$ for which the map 
    $\xi \mapsto [\mathbf{v}_x\cdot\nabla H(\xi)]/\xi_x\mathbf{1}\{\xi_x>0\}$ is continuous on $\Xi$,
    and let $\mc D^S_A \coloneq \cap_{x\in A} \mc D^S_x$, $\varnothing\subsetneq A\subset S.$
\end{defn}

Let us now introduce the generator for the limiting diffusion. Let $\mf L : C^2(\Xi)\rightarrow \bb R$ be the second order differential operator given by
\begin{equation}
    \label{lim diff}
    (\mf L F)(\xi) \coloneq \mathbf{b}(\xi)\cdot \nabla F(\xi) + \frac{1}{2}\sum_{x,y\in S} r(x,y) (\partial_x - \partial_y)^2F(\xi).
\end{equation}

Now, we characterize the limiting diffusion as the solution of the martingale problem corresponding to $(\mf L, \mc D^S_S)$.
Denote by $C(\bb R_+, \Xi)$ the space of continuous trajectories with the topology of uniform convergence on bounded intervals.
For $t\geq 0$, we denote by $\xi_t:C(\bb R_+,\Xi)\rightarrow \Xi$ the process of coordinate maps on time $t$ and by $(\ms F_t)_{t\geq 0}$ the generated filtration 
$\ms F_t \coloneq \sigma(\xi_s:s\leq t)$.
We say a probability measure $\bb P$ on $C(\bb R_+,\Xi)$ is a solution for the $\mf L$-martingale problem if, for any $H\in \mc D^S_S$, 
\begin{equation}
    \label{42}
    H(\xi_t) - \int_{0}^{t} (\mf L H)(\xi_s)ds, \quad t\geq 0
\end{equation}
is a $\bb P$-martingale with respect to the filtration $(\ms F_t)_{t\geq 0}$.
Then we have a unique solution to the martingale problem.
\begin{thm} \cite[Theorem 2.2]{MZRP} \label{zrplaw}
    For each $\xi\in \Xi$, there exists a unique probability measure on $C(\bb R_+, \Xi)$, denoted by
    $\bb P_{\xi}$, which starts at $\xi$ and is a solution of the $\mf L$-martinagale problem.
\end{thm}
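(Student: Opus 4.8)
The plan is to obtain existence as a subsequential limit of the diffusively rescaled zero-range process $\xi^N_t \coloneq \iota_N(\eta_N(tN^2))$ together with the martingale characterization of weak solutions, and to obtain uniqueness by localization combined with the recursive ``absorption on successive faces'' structure of $\mf L$.

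\textbf{Existence.} By Dynkin's formula, for every $H\in\mc D^S_S$ the process $H(\xi^N_t)-\int_0^t (N^2\ms L_N(H\circ\iota_N))(\eta_N(sN^2))\,ds$ is a martingale, so it suffices to prove (i) tightness of $(\xi^N)$ in $C(\bb R_+,\Xi)$ and (ii) that $(N^2\ms L_N(H\circ\iota_N))(\eta)$ is asymptotically $(\mf L H)(\iota_N(\eta))$. For (ii), Taylor-expand $H(\eta^{x,y}/N)$ to second order about $\eta/N$; the potentially divergent $O(N)$ term equals $N\sum_x g(\eta_x)\,\mathbf v_x\cdot\nabla H(\eta/N)$, and writing $\mathbf v_x\cdot\nabla H(\xi)=\xi_x\,\psi_x(\xi)$ where $\psi_x(\xi)\coloneq[\mathbf v_x\cdot\nabla H(\xi)]/\xi_x\,\mathbf 1\{\xi_x>0\}$ is continuous on $\Xi$ — which is precisely the meaning of $H\in\mc D^S_x$, and forces $\psi_x=0$ on $\{\xi_x=0\}$ — one uses $\sum_x\mathbf v_x=\sum_{x,y}r(x,y)(\mathbf e_y-\mathbf e_x)=0$ (the step where the uniform-measure condition \eqref{uniformmeasurecond} is needed) together with $n\,g(n)=n+\alpha+O(1/n)$ to see that this term collapses to $\alpha\sum_x m_x\,\psi_x(\eta/N)+o(1)$, which is $(\mathbf b\cdot\nabla H)(\eta/N)+o(1)$. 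The remaining $O(1)$ term converges to the non-degenerate second-order part of $\mf L H$, uniformly on compact subsets of the interior of $\Xi$, while configurations with some $\eta_x$ bounded require a separate estimate showing their contribution to the martingale vanishes in the limit. These uniform bounds, applied to the coordinate maps and their pairwise products, give tightness by Aldous' criterion, and any subsequential limit started at $\xi$ solves the $\mf L$-martingale problem. (Alternatively one builds $\bb P_\xi$ by truncating the singular drift, solving the resulting well-posed SDE on $\Xi$, and letting the truncation vanish; identifying the absorbing behaviour of the limit is then the delicate point.)

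\textbf{Uniqueness.} I would induct on $\kappa=|S|$. Since $\psi_x$ vanishes on $\{\xi_x=0\}$, the drift of $\mf L$ extends continuously to $\partial\Xi$ and the singular $x$-direction is ``switched off'' there; the structural point is that once a solution reaches a facet $\{\xi_x=0\}$ it is absorbed on it and thereafter, by the strong Markov property, solves the $\mf L$-martingale problem of exactly the same form for the smaller set $S\setminus\{x\}$, well-posed by the inductive hypothesis — the base case $\kappa=2$ being a regular one-dimensional diffusion on a segment whose martingale problem is classical. Hence it is enough to prove uniqueness up to the hitting time $\tau$ of $\bigcup_x\{\xi_x=0\}$; but on $\{\tau>t\}$ the trajectory stays in the interior of $\Xi$, where the second-order part of $\mf L$ is uniformly elliptic (the $\mathbf e_x-\mathbf e_y$ with $r(x,y)>0$ span the tangent hyperplane of $\Xi$ by irreducibility) with smooth coefficients, so localized martingale-problem uniqueness in the sense of Stroock--Varadhan applies and the stopped solutions are pieced together measurably in the standard way. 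Equivalently, one may show that $(\mf L,\mc D^S_S)$ satisfies the positive maximum principle and that $(\lambda-\mf L)\mc D^S_S$ is dense in $C(\Xi)$ for some $\lambda>0$, so that its closure generates a Feller semigroup and well-posedness follows from the Ethier--Kurtz equivalence between the martingale problem and the semigroup.

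\textbf{Main obstacle.} The crux in both halves is the boundary analysis: one must verify that the singular drift, which points toward each face, produces genuine absorption rather than reflection. In suitable coordinates near $\{\xi_x=0\}$ the coordinate $\xi_x$ behaves to leading order like a solution of $dR_t=-(c_x/R_t)\,dt+\sigma_x\,dB_t$ with $c_x>0$ — a Bessel-type diffusion of effective dimension $1-\alpha<0$, consistently with the formal speed density $\propto\xi_x^{-\alpha}$ failing to be integrable at $0$ when $\alpha>1$ — so that $0$ is an exit boundary, reached in finite time almost surely and admitting no continuation into the interior; this is exactly the absorption encoded by the factor $\mathbf 1\{\xi_x\neq0\}$ in $\mathbf b$ and by the continuity requirement in Definition~\ref{def31}. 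Converting this picture into proofs — together with checking that $\mf L H$ is bounded and continuous on all of $\Xi$ for every $H\in\mc D^S_S$, and that $\mc D^S_S$ is large enough (a core for the generator, and separating enough test functions for the localization) to determine the law — is the principal technical burden; the interior elliptic theory and the one-dimensional Bessel well-posedness are, by comparison, classical.
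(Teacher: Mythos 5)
This statement is not proved in the paper at all: Theorem \ref{zrplaw} is imported verbatim from \cite[Theorem 2.2]{MZRP}, and the present paper only uses it (together with the cutoff generator $\mf L^\epsilon$ and the recursion/absorption properties, also quoted from \cite{MZRP}). So there is no in-paper proof to compare against; what you have written is an outline of how the cited reference's result could be established, and in spirit it does track that reference's strategy (convergence/truncation for existence, induction over faces plus the absorption structure for uniqueness).

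As a proof, however, your proposal has genuine gaps, and they are precisely the points you yourself defer as ``the principal technical burden.'' First, the uniqueness argument is not closed: interior Stroock--Varadhan uniqueness plus ``after hitting a facet the solution solves the smaller martingale problem'' only works once you have shown that \emph{every} solution is absorbed on the facet it hits, i.e.\ that no solution re-enters the interior; your one-dimensional Bessel heuristic ($dR_t=-(c/R_t)dt+\sigma dB_t$, exit boundary at $0$) is suggestive but is not a reduction, because near a face the coordinate $\xi_x$ is coupled to the other coordinates through the simplex constraint and through their own singular drifts, and the comparison constant $c$ degenerates as other coordinates vanish; establishing absorption for arbitrary martingale-problem solutions in this multidimensional, degenerate setting is exactly where \cite{MZRP} spends most of its effort (via carefully constructed super/subharmonic test functions), and it is missing here. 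Second, the gluing step silently uses the strong Markov property and measurable piecing for an arbitrary solution of the martingale problem, which requires a regular-conditional-probability/measurable-selection argument that you do not supply. Third, on the existence side, tightness of $(\xi^N)$ in $C(\bb R_+,\Xi)$ and the vanishing of the error terms when some $\eta_x$ is of order one are asserted, not proved, and the alternative Hille--Yosida/Ethier--Kurtz route would require showing $(\lambda-\mf L)\mc D^S_S$ is dense in $C(\Xi)$, which is again nontrivial for this degenerate boundary operator (indeed the paper later works hard, in Sections \ref{prelimpremeta}--\ref{scc}, just to show the Feller property and $L^2$ self-adjointness granting the MZRP result). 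So the proposal is a reasonable plan consistent with the cited literature, but it does not constitute a proof of the theorem.
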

Furthermore, the process is Feller. Its proof is presented in Section \ref{prelimpremeta}.

\begin{prop} \label{feller}
    The Markov process coming from the solution $(\bb P_{\xi})_{\xi\in \Xi}$ of the $\mf L$-martingale problem is Feller.
\end{prop}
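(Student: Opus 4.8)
The plan is to deduce the Feller property from the already-established well-posedness of the $\mf L$-martingale problem (Theorem \ref{zrplaw}) together with continuity-in-the-initial-condition estimates. Recall that for a Markov process arising from a well-posed martingale problem, the Feller property amounts to showing that the semigroup $(P_t)_{t\ge 0}$, defined by $(P_t F)(\xi) = \bb E_{\bb P_\xi}[F(\xi_t)]$, maps $C(\Xi)$ into $C(\Xi)$ (strong continuity in $t$ is then automatic on the compact space $\Xi$ from the martingale-problem representation, by approximating $F$ with functions in $\mc D^S_S$ and controlling $\int_0^t \mf L H\, ds$). So the crux is the spatial continuity: if $\xi^{(n)} \to \xi$ in $\Xi$, then $P_t F(\xi^{(n)}) \to P_t F(\xi)$ for every $F \in C(\Xi)$.

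First I would establish tightness of the family $\{\bb P_{\xi^{(n)}}\}_n$ on $C(\bb R_+,\Xi)$. Since $\Xi$ is compact, tightness follows from a standard Aldous-type modulus-of-continuity estimate: for $H \in \mc D^S_S$, the process in \eqref{42} is a martingale with bounded quadratic variation (the diffusion coefficients $r(x,y)$ are constants and $\mf L H$ is bounded on the compact set $\Xi$ for $H\in\mc D^S_S$), which controls the oscillations of $H(\xi_t)$ uniformly in the starting point; choosing a finite collection of $H$'s in $\mc D^S_S$ that separates points of $\Xi$ upgrades this to tightness of the $\Xi$-valued paths. Next, let $\bb Q$ be any subsequential weak limit of $\bb P_{\xi^{(n)}}$. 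I would check that $\bb Q$ solves the $\mf L$-martingale problem started at $\xi$: for fixed $H \in \mc D^S_S$ and $0 \le s \le t$ and a bounded continuous $\ms F_s$-measurable test functional $\Phi$, the identity $\bb E_{\bb P_{\xi^{(n)}}}\big[(H(\xi_t) - H(\xi_s) - \int_s^t \mf L H(\xi_r)\,dr)\,\Phi\big] = 0$ passes to the limit because $H$, $\mf L H$ (continuous on $\Xi$ by the definition of $\mc D^S_S$), and $\Phi$ are all bounded and continuous; and $\bb Q(\xi_0 = \xi) = 1$ since $\xi^{(n)} \to \xi$. By the uniqueness in Theorem \ref{zrplaw}, $\bb Q = \bb P_\xi$. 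Hence $\bb P_{\xi^{(n)}} \to \bb P_\xi$ weakly, and in particular $P_t F(\xi^{(n)}) = \bb E_{\bb P_{\xi^{(n)}}}[F(\xi_t)] \to \bb E_{\bb P_\xi}[F(\xi_t)] = P_t F(\xi)$ for every $F \in C(\Xi)$, which gives $P_t F \in C(\Xi)$.

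For the time-continuity $P_t F \to F$ as $t \downarrow 0$: by density it suffices to treat $F = H \in \mc D^S_S$, for which $|P_t H(\xi) - H(\xi)| = |\bb E_{\bb P_\xi} \int_0^t \mf L H(\xi_r)\,dr| \le t\,\|\mf L H\|_{\infty}$, uniformly in $\xi$; a standard $3\varepsilon$-argument then handles general $F \in C(\Xi)$. Combining the two properties yields that $(P_t)$ is a Feller semigroup, proving Proposition \ref{feller}.

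The main obstacle is verifying, in the limit-identification step, that the drift term $\mf b(\xi) \cdot \nabla H$ behaves well near the boundary faces $\{\xi_x = 0\}$ where $\mf b$ is singular; this is precisely why the test functions are restricted to $\mc D^S_S$, whose defining condition in Definition \ref{def31} forces $[\mathbf v_x \cdot \nabla H(\xi)]/\xi_x$ to extend continuously across $\{\xi_x = 0\}$, so that $\mf L H$ is genuinely continuous and bounded on the compact $\Xi$ and all the weak-convergence passages are legitimate. The tightness step also needs the same care, but once one commits to testing only against $\mc D^S_S$-functions — which, by the well-posedness in Theorem \ref{zrplaw}, already characterize the law — the boundary singularity is absorbed and the rest is routine.
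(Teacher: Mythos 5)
Your argument for the spatial continuity (tightness of $\{\bb P_{\xi^{(n)}}\}$, identification of any limit point as a solution of the $\mf L$-martingale problem, and uniqueness from Theorem \ref{zrplaw}) is sound in outline, but it essentially re-derives Proposition \ref{fellercont}, which the paper simply imports from \cite[Proposition 7.10]{MZRP}; the paper takes that continuity as given and devotes the proof of Proposition \ref{feller} entirely to the remaining point, strong continuity of the semigroup.

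It is exactly there that your proposal has a genuine gap. You write that for the time-continuity ``by density it suffices to treat $F = H \in \mc D^S_S$,'' but the density of $\mc D^S_S$ in $C(\Xi)$ is not a routine fact — it is the core technical content of the paper's proof. Membership in $\mc D^S_S$ imposes the boundary condition that $[\mathbf v_x\cdot\nabla H(\xi)]/\xi_x\,\mathbf 1\{\xi_x>0\}$ extend continuously to all of $\Xi$ for every $x\in S$; generic smooth functions fail this (already the coordinate function $\xi\mapsto\xi_x$ produces a term of order $1/\xi_y$ near the face $\{\xi_y=0\}$), so one cannot appeal to the usual density of smooth functions, and even a Stone--Weierstrass argument requires first exhibiting enough functions of $\mc D^S_S$ to separate points near the boundary. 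The paper does this by an explicit construction: Lemma \ref{localftn} builds bump functions in $\mc D^S_S$ around an arbitrary $\xi_0\in\Xi$ using the harmonic extension $\ms E_A f_\delta$ and a cutoff $\chi_\delta\circ I_B$ based on the comparison function $I_B$ of \cite[Lemma 4.1]{MZRP}, and Lemma \ref{partition} upgrades this to a partition of unity in $\mc D^S_S$, from which density in $C(\Xi)$ follows. Note also that your tightness step quietly relies on the same missing ingredient (``a finite collection of $H$'s in $\mc D^S_S$ that separates points''). Once the density is supplied, your $3\varepsilon$-argument and the bound $|P_tH-H|\le t\,\|\mf L H\|_\infty$ coincide with the paper's conclusion; without it, the proof is incomplete at its most delicate point.
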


\subsection{Level two large deviation rate function and diffusive time scale}

Since the process is Feller, we may consider its domain.
Let $\mc D^S$ be a domain of $\mf L$ in $C(\Xi)$ and $\mc D^{S}_+$ be a set of positive functions in $\mc D^S$.
We have solved the martingale problem over a set $\mc D^S_S$, so it ensures that $\mc D^S_S$ is a subset of $\mc D^S$.
However, they are not necessarily equal, and actually $\mc D^S$ is larger.
The level-two large deviation function of the limiting diffusion is defined as
\begin{equation} \label{ratedef}
    \mc K(\mu) \coloneq \sup_{u\in \mc D^S_+} \int_{\Xi} -\frac{\mf L u}{u}d\mu.
\end{equation}

From now, we assume the reversibility of the underlying Markov process generated by $\mc L_S$,
\begin{equation}
    \label{reversibility}
    r(x,y)=r(y,x) \text{ for all } x, y\in S.
\end{equation}
This assumption is necessary to calculate the large deviation rate function $\mc K$ explicitly.
There may exist a method to demonstrate a $\Gamma$-convergence of the sequence of rate functions without calculating $\mc K$ explicitly,
but to the author's knowledge, no such method is available. Therefore, 
we prove some inequalities for the rate functions $\mc K$ via explicit calculation using the reversibility assumption
and then construct a sequence of measures to show the convergence.
This is a brief sketch of the proof of the following theorem.
\begin{thm}
    \label{46}
    A sequence of rate functions $N^2 \mc I_N$ $\Gamma$-converges to $\mc K$.
\end{thm}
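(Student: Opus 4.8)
The plan is to establish the two halves of $\Gamma$-convergence separately, exploiting the reversibility assumption \eqref{reversibility} to get an explicit formula for $\mc K$. The first step is to derive a Dirichlet-form representation of $\mc K$. Under reversibility, the limiting diffusion $\mf L$ is symmetric in $L^2(\Xi, \pi)$ for a suitable reference measure $\pi$ on $\Xi$ (coming from the $N\to\infty$ limit of $\rho_N$ pushed forward by $\iota_N$; by Theorem \ref{meas} this limit concentrates on the vertices $\xi^x$, but one should track the bulk density carefully since $\mc K$ is supported on measures with mass in the interior). Mimicking \eqref{sqrtcalc} and the Donsker--Varadhan computation in \cite{DV75a}, one expects
$$
\mc K(\mu) \;=\; \begin{cases} \dfrac{1}{2}\displaystyle\sum_{x,y\in S} \int_{\Xi} r(x,y)\,\big[(\partial_x-\partial_y)\sqrt{f}\,\big]^2\, d\pi & \text{if } d\mu = f\, d\pi,\ \sqrt f \in H^1, \\[2mm] +\infty & \text{otherwise,}\end{cases}
$$
with the drift term $\mathbf b$ contributing nothing because it is the (reversible) gradient drift. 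Proving this identity rigorously is the technical heart of the argument: one direction (that the right-hand side is a lower bound for the variational expression \eqref{ratedef}) follows by plugging $u = \sqrt f$-type test functions and integrating by parts, but one must justify the boundary terms vanish — this is exactly where the degeneracy of $\mathbf b$ at $\{\xi_x = 0\}$ and the absorbing nature of the diffusion enter, and where the domain $\mc D^S$ (strictly larger than $\mc D^S_S$) must be understood. The other direction (that no test function beats $\sqrt f$) is the standard Donsker--Varadhan maximization, using that $\mf L u / u = \mf L \sqrt f/\sqrt f + \tfrac{1}{4}|\nabla \log(u^2/f)|^2_{\text{(weighted)}} \cdot(\text{sign})$ — more precisely one writes $u = \sqrt f\, e^{v/2}$ and shows the cross term is non-negative.

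For the $\Gamma$-$\liminf$ (inequality \eqref{ginf}): given $\mu_N \to \mu$ with $\mu_N(\Xi_N) = 1$, I would show $\liminf_N N^2 \mc I_N(\mu_N) \ge \mc K(\mu)$. Fix a positive $u \in \mc D^S_+$; since $N^2 \mc I_N(\mu_N) \ge \int_{\Xi_N} -\frac{N^2 \ms L_N u}{u}\, d\mu_N$ (using $u|_{\mc H_N}$, suitably interpreted via $\iota_N$, as a test function in \eqref{plainratefunc}), it suffices to show $N^2 \ms L_N u \to \mf L u$ uniformly on $\Xi_N$ away from a negligible set, which is essentially the content of the generator convergence underlying \cite[Theorem 2.2]{MZRP}; then $\int_{\Xi_N} -\frac{N^2 \ms L_N u}{u} d\mu_N \to \int_\Xi -\frac{\mf L u}{u} d\mu$ by weak convergence, and taking the supremum over $u$ gives the claim. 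The subtlety is uniformity of the generator convergence near the boundary faces $\{\xi_x = O(1/N)\}$, where $\ms L_N$ has $O(N^2)$-scale terms that must be matched with the singular coefficients of $\mf L$; here one uses that $u \in \mc D^S_S$ guarantees $\mathbf v_x \cdot \nabla u / \xi_x$ extends continuously, so it is enough to prove the $\liminf$ bound testing only against $u \in \mc D^S_S$ and then argue this restricted supremum still recovers $\mc K$ (which should follow from the explicit Dirichlet formula, since $C^\infty$-type functions vanishing appropriately at the boundary are dense).

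For the $\Gamma$-$\limsup$ (inequality \eqref{gsup}): given $\mu \in \ms P(\Xi)$, I must produce $\mu_N \to \mu$ with $\mu_N(\Xi_N) = 1$ and $\limsup_N N^2 \mc I_N(\mu_N) \le \mc K(\mu)$. If $\mc K(\mu) = +\infty$ there is nothing to prove, so assume $d\mu = f\, d\pi$ with $\sqrt f \in H^1$. The natural candidate is to discretize: set $\nu_N$ on $\mc H_N$ proportional to (a regularization of) $f\circ\iota_N$ against $\rho_N$, i.e. $d\nu_N \propto (f\circ \iota_N)\, d\rho_N$, and push forward to get $\mu_N = \nu_N \iota_N^{-1}$. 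Then $N^2 I_N(\nu_N) = N^2 \langle \sqrt{f_N}, (-\ms L_N)\sqrt{f_N}\rangle_{\rho_N}$ by \eqref{sqrtcalc} where $f_N = d\nu_N/d\rho_N$, and this is a discrete Dirichlet form; the task reduces to showing it converges to the continuum Dirichlet form $\mc K(\mu)$. This is a finite-difference-to-derivative convergence for Dirichlet forms, standard in spirit, but requires: (a) mollifying $f$ so that $\sqrt f$ is smooth and bounded below by a positive constant (then removing the mollification by a diagonal/lower-semicontinuity argument using that $\mc K$ is itself l.s.c.); (b) controlling the weight $\rho_N$, whose density relative to the limiting $\pi$ must be quantified — this is where the uniform measure condition \eqref{uniformmeasurecond} and Proposition \ref{e21} do real work, keeping $Z_{S,N} \to Z_S$ and the local density profile under control; (c) handling the boundary faces of $\Xi$, where $\rho_N$ has most of its mass (Theorem \ref{meas}) but $f$ may be small. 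I expect step (b)–(c), i.e. the interplay between the singular reference measure $\rho_N$ near the condensed configurations and the Dirichlet-form convergence in the interior, to be the main obstacle; the cleanest route is probably to first prove the $\limsup$ bound for $\mu$ with density bounded away from $0$ and $\infty$ and compactly supported in the interior of $\Xi$, and then extend to general $\mu$ by an approximation argument that again leans on lower semicontinuity of $\mc K$ together with the explicit Dirichlet formula from the first step.
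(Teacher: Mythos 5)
Your plan founders on the structure of $\mc K$ itself. There is no single reference measure $\pi$ on $\Xi$ with respect to which all finite-rate measures have a density: the paper proves (display \eqref{11111better}, specialized to the zero-range setting) that $\mc K$ decomposes over the faces of the simplex,
$\mc K(\mu) = \sum_{A} \mu(\mathring{\Xi}_A)\, Q^A\bigl(\sqrt{d\mu(\cdot|\mathring{\Xi}_A)/d\lambda_A}\bigr)$,
where $\lambda_A$ has density $(\prod_{x\in A}\xi_x)^{-\alpha}$ with respect to the uniform measure on the open face $\mathring{\Xi}_A$ (an infinite Radon measure, not the weak limit of $\rho_N$, which by Theorem \ref{meas} is uniform on the vertices), and $Q^A$ is the Dirichlet form of the absorbed diffusion on that face, built from the \emph{trace} jump rates $r^A$, not the original $r$. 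Your single-integral formula with one measure and the original rates assigns $+\infty$ to every measure charging a proper face --- including the Dirac masses at the vertices, which must satisfy $\mc K=0$ (Lemma \ref{zerocond}) for the full $\Gamma$-expansion to make sense --- and gives the wrong value for measures with density on a lower-dimensional face. Correspondingly, your recovery sequence $d\nu_N \propto (f\circ\iota_N)\,d\rho_N$ only addresses the top stratum: the paper's construction (Proposition \ref{123}) requires the cutoff $\Psi_N$ in the coordinates outside $A$ and the harmonic extension $\ms E_A$ via $\gamma_A$, so that summing out the $O(N^{1-\gamma})$ particles off the face produces the factor $\Gamma^{|B|}$ and the trace rates, together with the vanishing of cross terms between different strata (condition \textbf{(D1*.2)}); none of this appears in your plan, and without it the discrete Dirichlet forms do not converge to the correct face functionals $Q^A$.

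The $\liminf$ half also has a gap as written. You test with restrictions of $u\in\mc D^S_S$ and then invoke an unproved density claim that the supremum in \eqref{ratedef} over $\mc D^S_S$ recovers $\mc K$; but $\mc K$ is defined as a supremum over the full Feller domain $\mc D^S_+$, which the paper stresses is strictly larger than $\mc D^S_S$, and no density statement for the variational formula is available. The paper avoids this entirely: it uses Skorohod convergence of the rescaled chains (Theorem \ref{approx}, i.e.\ condition \textbf{(D0)}) together with the Ethier--Kurtz approximation theorem to produce, for every positive $F$ in the full domain, discrete functions $F_N$ with $F_N\to F$ and $N^2\ms L_N F_N\to\mf L F$, from which the $\liminf$ follows for the genuine supremum. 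Finally, the assertion that the drift ``contributes nothing because it is the reversible gradient drift'' is not a proof; in the paper, reversibility enters through the nontrivial statement that the induced semigroup is self-adjoint on $L^2(\lambda)$ (Proposition \ref{44}), whose proof needs the boundary analysis near $\{\xi_x=0\}$ (the decay class $\mc D_{\alpha+1}$, the divergence theorem on level sets, and the resolvent regularity of Lemma \ref{103}). Your outline correctly identifies where the difficulties sit, but the stratified form of $\mc K$, the face-by-face recovery sequences, and the treatment of the full Feller domain are precisely the missing ingredients, and without them the argument does not close.
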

Note that the proof of the $\liminf$ part of Theorem \ref{46}, as described in Section \ref{slowgammainf}, does not require the reversibility assumption.
 Therefore, reversibility is necessary for the proof of the $\limsup$ part of the $\Gamma$-convergence.

Let us embed the set $S$ into $\Xi$ by $\iota_S$ as in \eqref{iotas} so treat $S$ as a subset of $\Xi$.
From Lemma \ref{zerocond}, we have
$$ \mc K(\mu) = 0 \text{ if and only if } \mu(S) = 1 \text{ for all } x\in S.$$
Therefore, from the point of Definition \ref{def1}, it seems like the time scales $N^2$ and $N^{1+\alpha}$ are the only time scales 
that appear in the full $\Gamma$-expansion of the level-two large deviation rate functions.
To ensure this, we need to prove that there exists no other time scale besides them.

\subsection{Other time scales}

Let us treat $S$ as a subset of $\Xi$ via $\iota_S$ as in \eqref{iotas}.
Define $X_S : \ms P(\Xi) \rightarrow [0,+\infty]$ by
\begin{equation}\label{midconv}
    X_S(\mu) = \begin{cases}
    0 & \text{if } \mu(S) = 1, \\
    +\infty & \text{otherwise}.
\end{cases}
\end{equation}
Also, define $U_S : \ms P(\Xi) \rightarrow [0,+\infty]$ by
\begin{equation}\label{postconv}
    U_S(\mu) = \begin{cases}
    0 & \text{if } \mu(\xi^x) = \frac{1}{\kappa} \text{ for all } x\in S, \\
    +\infty & \text{otherwise}.
\end{cases}
\end{equation}
These functionals are used to represent the $\Gamma$-limit of the rate functions in the time scales which are neither $N^2$ nor $N^{1+\alpha}$.

\begin{prop} \label{otherscales}
    Fix any positive sequence $\theta_N$. Then the following holds.
    \begin{enumerate}
        \item If $\theta_N \prec N^2$, we have $(\Gamma\mydash\lim)_{N\rightarrow \infty}\theta_N \mc I_N = 0$.
        \item If $N^2 \prec \theta_N \prec N^{1+\alpha}$, we have $(\Gamma\mydash\lim)_{N\rightarrow \infty}\theta_N \mc I_N = X_S$.
        \item If $N^{1+\alpha} \prec \theta_N$, we have $(\Gamma\mydash\lim)_{N\rightarrow \infty}\theta_N \mc I_N = U_S$.
    \end{enumerate}
\end{prop}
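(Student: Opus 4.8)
The plan is to deduce Proposition~\ref{otherscales} from Theorems~\ref{46} and~\ref{27} together with the general soft principle that multiplying a $\Gamma$-convergent sequence by a faster or slower speed collapses the limit onto the zero set of the relevant functional. I would organize the argument around the three regimes, treating item (1) first, then (2), then (3), since each reuses the structure of the previous one.

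\textbf{Item (1): $\theta_N \prec N^2$.} Here one wants $\theta_N\mc I_N \xrightarrow{\Gamma} 0$, i.e.\ the constant-zero functional. The $\Gamma$-$\limsup$ is immediate: for any $\mu$, pick the recovery sequence $\mu_N\to\mu$ coming from the $\Gamma$-$\limsup$ inequality \eqref{gsup} for Theorem~\ref{46}, so that $\limsup_N N^2\mc I_N(\mu_N)\le \mc K(\mu)<\infty$ whenever $\mu$ lies in the (dense, by that recovery argument) domain of $\mc K$; then $\theta_N\mc I_N(\mu_N) = (\theta_N/N^2)\, N^2\mc I_N(\mu_N)\to 0$. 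For $\mu$ outside the domain of $\mc K$ one must instead build $\mu_N\to\mu$ directly with $\theta_N\mc I_N(\mu_N)\to 0$: take any $\nu$ in the zero set of $\mc K$ (e.g.\ a Dirac at a condensed configuration) with a recovery sequence $\nu_N$ for it under $N^2\mc I_N$, and interpolate, e.g.\ $\mu_N = (1-\epsilon_N)\tilde\mu_N + \epsilon_N \nu_N$ where $\tilde\mu_N\in\ms P(\Xi_N)$ approximates $\mu$ and $\epsilon_N\downarrow 0$ slowly; using convexity of $\mc I_N$ (it is a sup of affine functionals, hence convex) and the boundedness control one gets $\theta_N\mc I_N(\mu_N)\le (1-\epsilon_N)\theta_N\mc I_N(\tilde\mu_N) + \epsilon_N\theta_N\mc I_N(\nu_N)$, and one chooses $\epsilon_N$ so both terms vanish --- this is the one place a little care is needed. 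The $\Gamma$-$\liminf$ is trivial since the target functional is $0$ and all $U_N\ge 0$.

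\textbf{Item (2): $N^2\prec\theta_N\prec N^{1+\alpha}$.} Write $\theta_N\mc I_N = (\theta_N/N^2)\cdot N^2\mc I_N$. Since $N^2\mc I_N\xrightarrow{\Gamma}\mc K$ and $\theta_N/N^2\to\infty$, a general lemma gives that $\theta_N\mc I_N$ $\Gamma$-converges to the functional which is $0$ on the zero set of $\mc K$ and $+\infty$ elsewhere; by Lemma~\ref{zerocond} (quoted just after Theorem~\ref{46}) the zero set of $\mc K$ is exactly $\{\mu:\mu(S)=1\}$, which is precisely $X_S$. For the $\Gamma$-$\liminf$: if $\mu_N\to\mu$ with $\mu(S)<1$, then $\liminf N^2\mc I_N(\mu_N)\ge\mc K(\mu)>0$, so $\theta_N\mc I_N(\mu_N) = (\theta_N/N^2)N^2\mc I_N(\mu_N)\to\infty$. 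For the $\Gamma$-$\limsup$: if $\mu(S)=1$, we need a recovery sequence $\mu_N\to\mu$ with $\theta_N\mc I_N(\mu_N)\to 0$; here one must also use the \emph{upper-bound} information on the metastable scale, namely that on $\{\mu:\mu(S)=1\}$ one has $\theta_N'\mc I_N\to\mc J$ for $\theta_N'=N^{1+\alpha}$ (Theorem~\ref{27}), which in particular forces $N^{1+\alpha}\mc I_N(\mu_N)$ to stay bounded along a suitable recovery sequence, hence $\theta_N\mc I_N(\mu_N)=(\theta_N/N^{1+\alpha})\,N^{1+\alpha}\mc I_N(\mu_N)\to 0$. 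One has to check that a single sequence $\mu_N$ can serve both as a $\mc K$-recovery sequence (so $\mu_N\in\ms P(\Xi_N)$ and $\mu_N\to\mu$) and satisfy the $N^{1+\alpha}$-bound; the natural choice is the recovery sequence furnished by the $\limsup$ part of Theorem~\ref{27}, which lands in $\ms P(\Xi_N)$ and converges to $\mu$ by construction, and for which $N^{1+\alpha}\mc I_N(\mu_N)\to\mc J(\mu)<\infty$. Then $\mc K(\mu)=0$ is automatic (it's in the zero set), but we actually only need the quantitative bound, which we have.

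\textbf{Item (3): $N^{1+\alpha}\prec\theta_N$.} This is parallel to item (2) one level up: $\theta_N\mc I_N = (\theta_N/N^{1+\alpha})\cdot N^{1+\alpha}\mc I_N$ with $\theta_N/N^{1+\alpha}\to\infty$. But Theorem~\ref{27} only gives $\Gamma$-convergence of $N^{1+\alpha}\mc I_N$ to $\mc J$ \emph{on the set $\ms N^{(0)}=\{\mu:\mu(S)=1\}$} (it is the $\Gamma$-limit restricted there, in the sense of Definition~\ref{def1}(b)); off that set $N^{1+\alpha}\mc I_N$ already blows up (by item (2)'s $\liminf$ computation with $\theta_N$ replaced by $N^{1+\alpha}$, or directly from $\mc K>0$ there). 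So $\theta_N\mc I_N\to+\infty$ off $\{\mu(S)=1\}$, and on $\{\mu(S)=1\}$ it $\Gamma$-converges to the functional that is $0$ on the zero set of $\mc J$ and $+\infty$ elsewhere. By the identification \eqref{rate2}, $\mc J(\mu)$ is the Donsker--Varadhan rate function of the irreducible finite chain $\bb L$ pushed to $\Xi$ via $\iota_S$, whose unique zero is the stationary measure of $\bb L$; since $\bb L$ is defined via the symmetric capacities $\Cap_S(x,y)$ it is reversible with uniform stationary measure $\kappa^{-1}$, so its zero set is the single measure $\sum_x \kappa^{-1}\delta_{\xi^x}$ --- i.e.\ exactly the condition defining $U_S$. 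Both $\Gamma$-inequalities then follow as before: $\liminf$ from $\mc J(\mu)>0$ when $\mu\ne\frac1\kappa\sum_x\delta_{\xi^x}$ combined with $\theta_N/N^{1+\alpha}\to\infty$, and $\limsup$ by taking the constant sequence at (or a $\ms P(\Xi_N)$-approximation of) the stationary measure, for which $N^{1+\alpha}\mc I_N\to 0$ and hence $\theta_N\mc I_N\to 0$.

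\textbf{Main obstacle.} The genuinely delicate point is the $\Gamma$-$\limsup$ in regimes (1) and (2) for measures $\mu$ \emph{not} in the domain of the slower functional: one cannot simply rescale an existing recovery sequence, and must instead produce $\mu_N\to\mu$ with $\theta_N\mc I_N(\mu_N)\to 0$ by mixing in a small amount of a low-energy (zero-set) measure and exploiting convexity of $\mc I_N$, tuning the mixing weight $\epsilon_N\to 0$ against the growth of $\theta_N/N^2$ (resp.\ $\theta_N/N^{1+\alpha}$). Formalizing this requires a quantitative handle on $\mc I_N(\tilde\mu_N)$ for a generic approximating sequence $\tilde\mu_N\in\ms P(\Xi_N)$ of $\mu$ --- a crude bound such as $\mc I_N(\tilde\mu_N)\le C N^{1+\alpha}$ (from the spectral gap / worst-case estimate for $\ms L_N$, which is of order $N^{-(1+\alpha)}$ up to constants, already implicit in Theorem~\ref{mszrp}) suffices, and then $\epsilon_N$ is chosen so that $\epsilon_N\,\theta_N\,N^{1+\alpha}\to 0$ while $\epsilon_N\to 0$, which is possible since $\theta_N\prec N^{1+\alpha}$ would be needed --- for item (1) this is clear, and for item (2) one instead routes through the Theorem~\ref{27} recovery sequence as described, avoiding the crude bound entirely. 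I would therefore present (1) via the convexity/mixing argument and (2)--(3) via the clean rescaling-of-a-$\Gamma$-limit lemma combined with the zero-set computations of Lemma~\ref{zerocond} and the explicit form \eqref{rate2}.
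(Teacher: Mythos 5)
Items (2) and (3) of your plan are sound and are essentially what the paper does: its proof of Proposition \ref{otherscales} derives the intermediate regime from the recovery sequences of Theorem \ref{27} (for the $\limsup$ on measures supported on $\{\xi^x\}$) and from Lemma \ref{zerocond} together with the $\liminf$ of Theorem \ref{46} (for blow-up off that set), and treats $\theta_N\succ N^{1+\alpha}$ the same way one level up, the zero set of $\mc J$ being the uniform vertex measure exactly as you argue.

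The genuine gap is in item (1), for measures $\mu$ with $\mc K(\mu)=+\infty$ (e.g.\ a Dirac at an interior point of $\Xi$), which is precisely where the paper does real work. Your rescaling of the Theorem \ref{46} recovery sequence only covers the finite-energy domain of $\mc K$, and the proposed fix by convex mixing fails: convexity gives $\theta_N\mc I_N(\mu_N)\le(1-\epsilon_N)\theta_N\mc I_N(\tilde\mu_N)+\epsilon_N\theta_N\mc I_N(\nu_N)$, and since $\mu_N\to\mu$ forces $\epsilon_N\to0$, the uncontrolled term $\theta_N\mc I_N(\tilde\mu_N)$ carries weight tending to $1$; mixing in a vanishing amount of a zero-energy measure cannot reduce it, and your tuning condition $\epsilon_N\theta_N N^{1+\alpha}\to0$ constrains the harmless term. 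The ``crude bound'' you invoke also conflates the spectral gap (the smallest nonzero eigenvalue, of order $N^{-(1+\alpha)}$) with the worst case of $\mc I_N$, which is governed by the top of the spectrum and is of order $1$ here (the rates $g$ are bounded); but even a uniform bound $\mc I_N\le C$ is useless once $\theta_N\to\infty$. What is actually needed is a sequence $\tilde\mu_N\to\mu$ with $\mc I_N(\tilde\mu_N)=o(1/\theta_N)$, and the paper supplies it explicitly in Lemma \ref{prescale}: after reducing to Dirac measures $\delta_{\xi_0}$ by convexity and lower semicontinuity of the $\Gamma$-limsup, it takes $\nu_N\propto\Psi_N^2\,\rho_N$ with $\Psi_N$ a tent profile of width $\ell_N/N$ around $\xi_0$, where $\sqrt{\theta_N}\prec\ell_N\prec N$, and uses the ball-mass asymptotics of Lemma \ref{l111} to get $\theta_N\mc I_N(\mu_N)\lesssim\theta_N/\ell_N^2\to0$. (A softer repair of your route—lower semicontinuity of the $\Gamma$-limsup plus weak density of the finite-$\mc K$ measures—would also work, but that density statement itself requires an argument; the mixing step as written cannot replace either ingredient.)
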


Remark that any time scale except $N^2$ and $N^{1+\alpha}$ does not appear in the full $\Gamma$-expansion of level-two large deviation rate function $\mc I_N$ in the sense of
Definition \ref{def1}.
Therefore, we have:
\begin{thm}
    The full $\Gamma$-expansion of $\mc I_N$ in \eqref{expansion} is given as
    $$\mc I_N = \frac{1}{N^2}\mc K + \frac{1}{N^{1+\alpha}}\mc J.$$
\end{thm}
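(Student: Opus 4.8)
The plan is to verify that the data $\mf q=2$, speeds $\theta^{(1)}_N=N^2$ and $\theta^{(2)}_N=N^{1+\alpha}$, and functionals $\ms I^{(0)}=(\Gamma\mydash\lim)_{N\to\infty}\mc I_N$, $\ms I^{(1)}=\mc K$, $\ms I^{(2)}=\mc J$ satisfy conditions (a)--(d) of Definition \ref{def1}; once this is checked, \eqref{expansion} reads precisely $\mc I_N=\frac{1}{N^2}\mc K+\frac{1}{N^{1+\alpha}}\mc J$. Almost all of the analytic content is already available: Theorems \ref{46} and \ref{27} furnish the two non-trivial $\Gamma$-limits, and Proposition \ref{otherscales} records the behaviour of $\theta_N\mc I_N$ at every other choice of speed, so what remains is to see that these pieces fit together into a full $\Gamma$-expansion.

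Condition (a) is immediate: $\theta^{(1)}_N=N^2\to\infty$ and, since $\alpha>1$, $\theta^{(1)}_N/\theta^{(2)}_N=N^{1-\alpha}\to0$, so $\theta^{(1)}_N\prec\theta^{(2)}_N$. For condition (b), the $\Gamma$-convergence $\mc I_N\to\ms I^{(0)}$ holds by definition of $\ms I^{(0)}$, and one identifies $\ms I^{(0)}\equiv0$ by applying Proposition \ref{otherscales}(1) to any sequence $\theta_N\to\infty$ with $\theta_N\prec N^2$ and writing $\mc I_N=\theta_N^{-1}(\theta_N\mc I_N)$. The $\Gamma$-convergence $N^2\mc I_N\to\mc K$ on $\ms N^{(0)}$ is Theorem \ref{46}, and $N^{1+\alpha}\mc I_N\to\mc J$ on $\ms N^{(1)}$ is Theorem \ref{27}; both are proved on all of $\ms P(\Xi)$, hence in particular on the nested subsets.

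The substance of the remaining checks lies in the zero sets. By Lemma \ref{zerocond}, the zero set $\ms N^{(1)}$ of $\mc K$ equals $\{\mu\in\ms P(\Xi):\mu(S)=1\}$, with $S\subset\Xi$ via $\iota_S$; this is precisely the set $\{\sum_j w_j\delta_{\xi^{x_j}}:w\in\ms P(S)\}$ of Definition \ref{def1}(e), so the least admissible index there is $\tau=1$, which identifies $N^2$ as the unique pre-metastable time scale and $N^{1+\alpha}$ as the unique metastable one. For the zero set $\ms N^{(2)}$ of $\mc J$: $\bb L$ is a finite irreducible Markov chain whose jump rates $\kappa(\Gamma(\alpha)I_\alpha)^{-1}\Cap_S(x,y)$ are symmetric in $x$ and $y$, since $\Cap_S(x,y)=\Cap_S(y,x)$; hence $\bb L$ is reversible with respect to the uniform probability on $S$, and by irreducibility its Donsker--Varadhan rate function vanishes only there, so $\ms N^{(2)}=\{\kappa^{-1}\sum_{x\in S}\delta_{\xi^x}\}$ is a singleton, as required at the end of (c). The strict inclusions $\ms N^{(2)}\subsetneq\ms N^{(1)}\subsetneq\ms N^{(0)}$ are then routine ($\delta_{\xi^x}$ for a single $x$ lies in $\ms N^{(1)}\setminus\ms N^{(2)}$, and Dirac masses at interior points of $\Xi$ lie in $\ms N^{(0)}\setminus\ms N^{(1)}$). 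Condition (d) for $p=1$ is immediate from \eqref{rate2}: $\mc J(\mu)<\infty$ iff $\sum_{x\in S}\mu(\xi^x)=1$, i.e. $\mu\in\ms N^{(1)}$; the analogous statement relating $\ms I^{(0)}$ to $\mc K$ is read off from the explicit formula for $\mc K$ obtained, under the reversibility assumption \eqref{reversibility}, in the proof of Theorem \ref{46}. Finally, Proposition \ref{otherscales} certifies that no speed other than $N^2$ and $N^{1+\alpha}$ produces a new level set, so inserting any further speed would violate the strict nesting in (c); the two-term expansion is the full one.

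All the ingredients being in hand, the proof presents no serious difficulty. The one point demanding care is the bookkeeping of condition (d) --- matching the zero set of each $\ms I^{(p)}$ with the finiteness domain of $\ms I^{(p+1)}$, together with the precise identification of $\ms I^{(0)}$ --- which is exactly where Lemma \ref{zerocond}, the symmetry of $\Cap_S$, and the explicit description of $\mc K$ come in.
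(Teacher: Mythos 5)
Your proposal is correct and follows essentially the same route as the paper: the paper obtains the theorem simply by combining Theorem \ref{46}, Theorem \ref{27}, Proposition \ref{otherscales} and Lemma \ref{zerocond}, leaving the verification of Definition \ref{def1} as a remark. Your explicit bookkeeping of conditions (a)--(d) (identification of $\ms I^{(0)}\equiv 0$, of the zero sets $\ms N^{(1)}$ and $\ms N^{(2)}$, the latter a singleton by reversibility and irreducibility of $\bb L$) merely spells out what the paper leaves implicit.
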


In the next section, we present a general framework for $\Gamma$-convergence of rate functions associated with stochastic dynamical systems with metastable behavior, as demonstrated throughout Sections \ref{gen_metastable_proof}--\ref{slowlimsup}.
Following those sections, we apply the theorems presented in Section \ref{generalframework} to the zero-range process and prove the main results in this section.

\section{Main Results for \texorpdfstring{$\Gamma$}{Gamma}-Convergence of Level-Two Large deviations} \label{generalframework}

Throughout this section, we present a general framework for $\Gamma$-convergence of rate functions associated with stochastic dynamical systems with metastable behavior.
We discuss conditions necessary for $\Gamma$-convergence at both a metastable time scale and a diffusive time scale.
We assume that the generator of the Markov process, $A_N$, indexed by $N$, is already accelerated by the appropriate time scale, either $\theta_N$ or $\sigma_N$ from Section \ref{section1}. Consequently,
$A_N$ is expected to exhibit metastable or pre-metastable behavior without the need for multiplying by some additional time scale.
Therefore, the conditions we present for each time scale inherently contain information about whether the generator
possesses (pre-)metastable behavior at that time scale.

A key focus of this section is the $\Gamma$-convergence at the diffusive time scale,
as discussed in Section \ref{subsec33}. In stochastic systems exhibiting metastable behavior,
nucleation into metastable valleys occurs at the diffusive time scale.
This nucleation process is not straightforward; it involves transient states where the system temporarily resides before gradually being absorbed into the valleys.
This phenomenon manifests as the absorption property of the hydrodynamic limit diffusion.
The nucleation process is characterized by a hierarchical structure of transient state spaces, leading to a non-trivial structure in the level-two large deviation of the diffusion.
Calculating the large deviation is challenging due to this complexity. Section \ref{subsec33} demonstrates that, under certain assumptions, it is possible to precisely calculate the level-two large deviation for diffusion, thereby establishing $\Gamma$-convergence.
We anticipate that these results are applicable not only to zero-range processes but also to other systems exhibiting similar absorption properties at the diffusive time scale, such as symmetric inclusion processes.

\subsection{Settings} \label{settings}
Let $(X,d)$ be a compact metric space that represents the macroscopic states of the system.
Let $X_N$ be a finite discretization of $X$ with an immersion $\iota_N:X_N\rightarrow X$.
Let $A_N$ be a generator of a given irreducible Markov process on $X_N$, which is expected to have a (pre-)metastable behavior as $N\rightarrow \infty$.
Let $s_N$ be a stationary measure of $A_N$ on $X_N$.
Let $D(\bb R_+,X)$ be the space of c\`adl\`ag functions from $\bb R_+$ to $X$.
For $\xi_N\in X_N$, let $\bb P^N_{\xi_N}$ be a probability measure on $D(\bb R_+, X_N)$ which is a solution of the martingale problem associated with the generator $A_N$ starting at $\xi_N$.
Let $I_N$ be a rate function on $\ms P(X_N)$ defined by
\begin{equation} \label{rateNI}
    I_N(\nu) = \sup_{u:X_N\rightarrow \bb R_{>0}}\int_{X_N} -\frac{A_N u}{u}d\nu.
\end{equation}
Finally, let $\ms I_N$ be a rate function on $\ms P(X)$ defined by a pushforward of $I_N$ under $\iota_N$:
\begin{equation} \label{rateNII}
    \ms I_N(\mu) = \sup_{\nu\in \ms P(X_N), \mu = \nu\iota_N^{-1}}I_N(\nu).
\end{equation}

\subsection{\texorpdfstring{$\Gamma$}{Gamma}-convergence at a metastable time scale} \label{subsec32}

Recall the notion of metastability from Section \ref{metastability24}.
A set $S$ in the following definition represents the metastable valleys.

\begin{definition} \label{metastable valley}
    A finite set $S$ and a generator of Markov process $A$ over $S$ are given.
    Given $(X,d)$ and $\iota_N: X_N \rightarrow X$, $\{\mc E_N^x\subset X_N, x\in S\}$ are called metastable valleys with limit $A$ if
    \begin{enumerate}
        \item For all $x\neq y \in S$, $\mc E_N^x\cap \mc E_N^y = \varnothing$.
        \item The valley $\mc E_N^x$ and $A$ satisfy condition $\mf R_{A}$, which is defined in Section \ref{metastability24}.
    \end{enumerate}
\end{definition}

For sets $U,V\subset X$, let $m_d(U,V)$ be the maximal distance between $U$ and $V$:
$$m_d(U,V) = \sup_{\zeta\in U, \xi\in V}d(\zeta,\xi).$$
Suppose an immersion $\iota_S: S\rightarrow X$ is given. The two conditions for $\Gamma$-convergence are stated as follows.
\smallskip

\noindent \textbf{Condition (M0)}
There exists a metastable valleys $\{\mc E_N^x:x\in S\}$ with limit $A$ such that $m_d(\iota_N(\mc E_N^x),\iota_S(x))\rightarrow 0$ as $N\rightarrow \infty$ for all $x\in S$.

\smallskip
\noindent \textbf{Condition (M0*)}
Suppose condition \textbf{(M0)} is in force. Fix the metastable valley from condition \textbf{(M0)}.
For all strictly increasing sequence of positive integers $(N_k)_{k\in \bb N}$ and $\nu_{N_k}\in \ms P(X_{N_k})$, the inequality
$$\sup_{k\in \bb N} I_{N_k}(\nu_{N_k}) <\infty$$
implies $\limsup\limits_{k\rightarrow \infty} \nu_{N_k} (\Delta_{N_k}) = 0,$ where $\Delta_{N_k} = X_{N_k}\setminus \bigcup_{x\in S} \mc E_{N_k}^x$.

From the generator $A$ in condition \textbf{(M0)}, we define the rate function $I$ on $\ms P(S)$ by
\begin{equation} \label{ratefunctionI}
    I(\nu) = \sup_{u:S\rightarrow \bb R_{>0}}\int_{S} -\frac{A u}{u}d\nu.
\end{equation}
Using the rate function $I$, we define the rate function $\ms I$ on $\ms P(X)$ by a pushforward of $I$ under $\iota_S$:
\begin{equation} \label{ratefunctionII}
    \ms J(\mu) = \sup_{\nu\in \ms P(S):\mu = \nu\iota_S^{-1}}I(\nu).
\end{equation}
Remark that the conditions do not require the reversibility of the generators $A_N$ and $A$.
We are now ready to present the first main result of this section.

\begin{thm} \label{gen_metastable}
    Suppose that condition \textbf{\textup{(M0)}} is in force. Then the sequence of rate functions $\ms I_N$ and $\ms J$ satisfies the $\Gamma$-$\limsup$ condition.
    Furthermore, if condition \textbf{\textup{(M0*)}} is in force additionally, the sequence satisfies the $\Gamma$-$\liminf$ condition either,
    so the sequence $\ms I_N$ $\Gamma$-converges to $\ms J$.
\end{thm}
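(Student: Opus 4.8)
The plan is to establish the two $\Gamma$-conditions separately, treating the $\limsup$ part using only condition \textbf{(M0)} and then upgrading to $\liminf$ with \textbf{(M0*)}. For the $\Gamma$-$\limsup$, fix $\mu\in\ms P(X)$. If $\ms J(\mu)=+\infty$ there is nothing to prove, so assume $\ms J(\mu)<\infty$; by \eqref{ratefunctionII} this forces $\mu = \nu\iota_S^{-1}$ for some $\nu\in\ms P(S)$, i.e. $\mu = \sum_{x\in S}\nu(x)\,\delta_{\iota_S(x)}$. The recovery sequence should be built by transporting mass into the metastable valleys: pick representative points $\eta_N^x\in\mc E_N^x$ for each $x\in S$ and set $\mu_N = \sum_{x\in S}\nu(x)\,\delta_{\iota_N(\eta_N^x)}$, which lies in the image of $\ms P(X_N)$ and converges to $\mu$ because $m_d(\iota_N(\mc E_N^x),\iota_S(x))\to 0$ by \textbf{(M0)}. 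The content is the estimate $\limsup_N \ms I_N(\mu_N)\le \ms J(\mu)$: writing $\nu_N = \sum_x \nu(x)\delta_{\eta_N^x}\in\ms P(X_N)$ so that $\ms I_N(\mu_N)\le I_N(\nu_N)$, one must bound $\int_{X_N} -\frac{A_N u}{u}\,d\nu_N$ for positive $u$. The natural choice is to take $u$ constant on each valley, $u = \sum_x f(x)\chi_{\mc E_N^x}$ for a positive $f:S\to\bb R$, which is essentially the structure appearing in condition $\mf R_A$; the asymptotic-constancy conclusion of $\mf R_A$ (applied to the resolvent solution, or more directly to the harmonic/equilibrium functions it controls) should give that for such test functions $-\frac{A_N u(\eta_N^x)}{u(\eta_N^x)}\to -\frac{Af(x)}{f(x)}$, so that $\limsup_N \int -\frac{A_N u}{u}\,d\nu_N \le \int_S -\frac{Af}{f}\,d\nu = I(\nu)$ after optimizing over $f$. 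Taking the supremum over admissible $u$ on the left is the delicate point and I address it below.

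For the $\Gamma$-$\liminf$ under \textbf{(M0*)}, fix $\mu\in\ms P(X)$ and any sequence $\mu_N\to\mu$ with $\mu_N$ in the image of $\ms P(X_N)$, say $\mu_N = \nu_N\iota_N^{-1}$. We may assume $\liminf_N \ms I_N(\mu_N) = \lim_k \ms I_{N_k}(\mu_{N_k}) = \lim_k I_{N_k}(\nu_{N_k}) =: c < \infty$ along a subsequence, else the bound is trivial. Condition \textbf{(M0*)} then yields $\nu_{N_k}(\Delta_{N_k})\to 0$, so asymptotically all the mass of $\nu_{N_k}$ sits in $\bigcup_x \mc E_{N_k}^x$; since each $\iota_{N_k}(\mc E_{N_k}^x)$ collapses to $\iota_S(x)$, the limit $\mu$ must be supported on $\iota_S(S)$, i.e. $\mu = \nu\iota_S^{-1}$ with $\nu(x) = \lim_k \nu_{N_k}(\mc E_{N_k}^x)$. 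It remains to show $c\ge I(\nu)$. Given a positive $f:S\to\bb R$ realizing (almost) the supremum in $I(\nu)$, lift it to $u_{N} = \sum_x f(x)\chi_{\mc E_N^x}$ plus a correction on $\Delta_N$ (e.g. extend by the minimum value of $f$, or by the value of the $A_N$-equilibrium potential, to keep $u_N$ positive), and use $I_{N_k}(\nu_{N_k})\ge \int_{X_{N_k}} -\frac{A_{N_k}u_{N_k}}{u_{N_k}}\,d\nu_{N_k}$. On each valley $\mc E_{N_k}^x$ the integrand tends to $-\frac{Af(x)}{f(x)}$ by condition $\mf R_A$, the $\Delta_{N_k}$-contribution is controlled because $\nu_{N_k}(\Delta_{N_k})\to 0$ and $-\frac{A_{N_k}u_{N_k}}{u_{N_k}}$ is bounded there by the uniform resolvent bounds, and passing to the limit gives $c\ge \int_S -\frac{Af}{f}\,d\nu$; optimizing over $f$ gives $c\ge I(\nu) = \ms J(\mu)$.

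The main obstacle is the interchange of the supremum over test functions $u$ with the limit $N\to\infty$ in the $\limsup$ half. Condition $\mf R_A$ gives asymptotic constancy of the resolvent solution $F_N$ of $(\lambda - A_N)F_N = G_N$ for \emph{each fixed} $g:S\to\bb R$ and $\lambda>0$, and hence asymptotic constancy of the associated equilibrium-type potentials on each valley; what is \emph{not} immediate is a bound on $-\frac{A_N u}{u}$ for an \emph{arbitrary} positive $u$ that might oscillate on the valleys. The resolution I expect is the standard one for level-two rate functions: the supremum defining $I_N(\nu_N)$ need not be taken over all positive $u$ but can be restricted, via a log-transform $u = e^{v}$ and a density argument, to a convenient class, and one shows that for computing the $\limsup$ of $\ms I_N$ on the recovery sequence it suffices to use test functions built from $g$-harmonic coordinates associated with the valleys, which is exactly what $\mf R_A$ controls. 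Concretely, one should verify that if $\sup_N I_N(\nu_N)=\infty$ along the recovery sequence then something has gone wrong with the construction (it should not, since $\nu_N$ is supported on finitely many points with weights $\nu(x)$), and otherwise extract the valley-indexed test function from an approximate maximizer and compare it against the reduced problem on $S$ using the resolvent identity of $\mf R_A$ as in \cite{RES}. This duality/approximation step, together with the positivity bookkeeping for the extension of $u_N$ off the valleys, is where the real work lies; everything else is bookkeeping with weak convergence of measures on the compact space $X$.
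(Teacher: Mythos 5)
The $\Gamma$-$\limsup$ half of your proposal does not work, and the failure is in the very first step: the recovery sequence. If you take $\mu_N=\sum_x\nu(x)\,\delta_{\iota_N(\eta_N^x)}$ with single representative configurations $\eta_N^x\in\mc E_N^x$, then $\ms I_N(\mu_N)=I_N(\nu_N)$ with $\nu_N$ atomic on $\kappa$ points, and for such a measure the supremum over positive $u$ in \eqref{rateNI} is essentially attained by functions sharply peaked at those points, giving $I_N(\nu_N)\ge\sum_x\nu(x)\,\lambda_N(\eta_N^x)(1-o(1))$, where $\lambda_N(\eta)$ is the total jump rate of $A_N$ out of $\eta$. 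Since $A_N$ is the generator already accelerated by the metastable time scale (for the zero-range process, $A_N=N^{1+\alpha}\ms L_N$, so the holding rate at a condensed configuration is of order $N^{1+\alpha}$), this diverges, and your recovery sequence yields $\limsup_N\ms I_N(\mu_N)=+\infty$ rather than $\le\ms J(\mu)$. Your own remark that finiteness "should not" fail because $\nu_N$ is supported on finitely many points is exactly backwards: atomic support is what makes the level-two rate function blow up. No refinement of the sup-versus-limit interchange can repair this; the measures themselves must spread over the valleys. The paper's construction is quite different: after reducing (via Lemmas B.4--B.5 of \cite{L22G}) to non-singular $\mu_S$, one picks $h$ so that $\mu_S$ is stationary for the tilted generator $\mf M_hA$, solves the resolvent equation $(\lambda-A_N)H_N=\sum_x k(x)\chi_{\mc E_N^x}$ with $k=(\lambda-A)e^h$, proves $H_N>0$, and takes $\nu_N$ to be the stationary measure of the tilted generator $\mf M_{\log H_N}A_N$. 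The point of this choice is the exact variational identity \cite[Lemma A.2]{L22G}: for such $\nu_N$ the supremum defining $I_N(\nu_N)$ is attained at $u=H_N$, so $I_N(\nu_N)=-\lambda+\int K_N/H_N\,d\nu_N$, which converges to $I(\mu_S)=\ms J(\mu)$ once one shows $\nu_N(\mc E_N^x)\to\mu_S(x)$ (itself a nontrivial step, done by comparing two resolvent equations and using uniqueness of the stationary measure of $\mf M_hA$). None of this machinery appears in your sketch, and the "delicate point" you defer is precisely where the theorem lives.

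Your $\Gamma$-$\liminf$ half has the same skeleton as the paper (use \textbf{(M0*)} to force $\nu_{N_k}(\Delta_{N_k})\to0$, identify the limit weights on $\iota_S(S)$, test against a lift of a positive $f$ on $S$, optimize), but the test function you propose is not admissible for the estimate you claim. For the crude step function $u_N=\sum_xf(x)\chi_{\mc E_N^x}$ (however extended to $\Delta_N$), $-A_Nu_N/u_N$ is not asymptotically $-Af(x)/f(x)$ on $\mc E_N^x$, and it is certainly not uniformly bounded on $\Delta_N$: applying the accelerated generator to a step function produces terms of order $\theta_N$ near the valley boundaries, and condition $\mf R_A$ says nothing about such functions. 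The correct choice, as in the paper, is the resolvent solution $F_N$ of $(\lambda-A_N)F_N=\sum_xk(x)\chi_{\mc E_N^x}$ with $k=(\lambda-A)f$: then $-A_NF_N/F_N=-\lambda+K_N/F_N$ is bounded (after proving $F_N>0$, which requires a probabilistic representation as in Lemma \ref{5.1}) and converges to $-Af(x)/f(x)$ on each valley by $\mf R_A$, while the $\Delta_N$ contribution is controlled by \textbf{(M0*)}. You gesture at this alternative, so this half is repairable, but as written the bound on $\Delta_N$ and the valley asymptotics are unjustified.
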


\begin{remark}
        Condition \textbf{\textup{(M0*)}} might seem to be too restrictive, but it can be obtained from the following condition: for any $\delta > 0$, we have
        \begin{equation}
        \sup_{\xi_N \in X_N} \bb P^N_{\xi_N}[\xi_{\delta} \in \Delta_N] = o_N(1), \label{quickabsorp}
        \end{equation}
        where $\xi_{\delta}$ is the process at time $\delta$.
        This fact can be proved by the same argument as in the proof of Proposition \ref{boundmeasure}.

        Intuitively, the condition \textbf{\textup{(M0*)}} contains a similar implication to \eqref{quickabsorp}, that is,
        the process quickly absorbs into the metastable valleys. This kind of condition is necessary to ensure the
        $\Gamma$-limit of the rate functions does not blow up.
\end{remark}

\subsection{\texorpdfstring{$\Gamma$}{Gamma}-convergence at a diffusive time scale} \label{subsec33}
Let $D(\bb R_+,X_N)$, $D(\bb R_+,X)$ be the space of c\`adl\`ag functions from $\bb R_+$ to $X_N$, $X$ respectively.
The map $\iota_N:X_N\rightarrow X$ induces a map $\iota^D_N: D(\bb R_+, X_N) \rightarrow D(\bb R_+, X)$ by composition.
Let $A$ be the generator of a Feller process on $X$ which is a candidate for a limit of this dynamical system.
Let $\bb P_\xi$ be a probability measure on $D(\bb R_+, X)$ which is the distribution over the paths of the process generated by $A$ starting at
$\xi\in X$.
Let $(P_t)_{t\geq 0}$ be a Feller semigroup generated by $A$. For simplicity, we abbreviate it as $(P_t)$.
Remark that for $f\in C(X)$, $$P_t f(\xi) = \bb P_\xi[f(\xi_t)]$$ holds for all $t\geq 0$ and $\xi\in X$.
Therefore, $P_t$ extends to a positive Borel function.
Let $\mc D$ be a domain of the Feller process $A$ in $C(X)$.
Let $\ms K$ be a rate function on $\ms P(X)$ defined by
\begin{equation} \label{limdifratefunc}
    \ms K(\mu) = \sup_{\substack{u:X\rightarrow \bb R_{>0},\\ u\in \mc D}}\int_{X} -\frac{A u}{u}d\mu.
\end{equation}
The following conditions are needed to ensure a $\Gamma$-convergence at a diffusive time scale.
\smallskip

\noindent \textbf{Condition (D0)}
For all sequence $(\xi_N\in X_N)_{N\in \bb N}$ with $\iota_N(\xi_N)$ converges to $\xi\in X$,
$\bb P^N_{\xi_N}\circ(\iota^D_N)^{-1}$ converges to $\bb P_\xi$ in the Skorohod topology.
\smallskip

\noindent \textbf{Condition (D1)}
The space $X$ is decomposed into a disjoint union of Borel sets $X = \cup_{\beta \in \mc C} \mathring{X}_\beta$ for
some finite partially ordered index set $(\mc C, \leq)$ equipped with a Radon measure $\lambda_\beta$ on $\mathring{X}_\beta$.
Let $X_\beta = \cup_{\gamma\leq \beta} \mathring{X}_\gamma$.
Let $\mc B(X)$, $\mc B(\mathring{X}_\beta)$ be sets of Borel functions on $X$, $\mathring{X}_\beta$ respectively.
\begin{enumerate}
    \item[\textbf{(D1.1)}] Suppose a probability measure $\mu\in \ms P(X)$ is supported on $X_\beta$. Then for all $t>0$,
    $\mu P_t$ is supported on $X_\beta$.
    \item[\textbf{(D1.2)}] For all $\xi\in X_\beta$ and $t>0$, $\delta_\xi P_t |_{\mathring{X}_\beta} \ll \lambda_\beta$, where $(\cdot)|_{\mathring{X}_\beta}$ is a restrction of a measure to $\mathring{X}_\beta$.
    \item[\textbf{(D1.3)}] For a function $f\in L^2(\lambda_\beta)\cap \mc B(\mathring{X}_\beta)$, let $\bar{f}\in \mc B(X)$ be an
    extension of $f$ to $X$ such that $\bar{f}|_{\mathring{X}_\beta} = f$ and $\bar{f}|_{\mathring{X}_\gamma} = 0$ for $\gamma\neq \beta$. 
    Then the transition kernel $P_t$ induces a bounded linear operator $P_t^{\beta}$ on $L^2(\lambda_\beta)$ by defining
    \begin{equation*}
        P_t^{\beta}f = (P_t \bar{f})|_{\mathring{X}_\beta}.
    \end{equation*}
    \item[\textbf{(D1.4)}] For all $\beta \in \mc C$, the operator $(P_t^{\beta})_{t\geq 0}$ is a self-adjoint strongly continuous contraction semigroup on $L^2(\lambda_\beta)$.
\end{enumerate}
\smallskip

\noindent \textbf{Condition (D1*)} Suppose condition \textbf{(D1)} is in force.
Let $L^\beta$ be the generator of the semigroup $P_t^{\beta}$ on $L^2(\lambda_\beta)$. Since $P_t^{\beta}$ is a self-adjoint contraction, $L^\beta$ is self-adjoint and non-positive.
Thus we may define the operator $\sqrt{-L^\beta}$ using the spectral theorem.
Let $\mc D_\beta$ be the domain of $\sqrt{-L^\beta}$ in $L^2(\lambda_\beta)$.
Define the energy functional $Q^\beta$ on $\mc D_\beta$ by
\begin{equation*}
    Q^\beta(f) = \int_{\mathring{X}_\beta} \left(\sqrt{-L^\beta} f \right)^2 d\lambda_\beta = \int_{\mathring{X}_\beta} -f (L^\beta f) d\lambda_\beta.
\end{equation*}
Then the following holds:
\begin{enumerate}[leftmargin=1.5cm]
    \item[\textbf{(D1*.1)}] For all $\beta \in \mc C$, there exists a subset $\mc D_{\beta,0}\subset \mc D_\beta$ such that $\mc D_{\beta,0}$ is dense in $\mc D_\beta$
    in the graph norm of $Q^\beta$, that is, for all $f\in \mc D_\beta$,
    there exists a sequence $(f_n)_{n\in \bb N}$ in $\mc D_{\beta,0}$ such that
    $f_n\rightarrow f$ in $L^2(\lambda_\beta)$ and $Q^\beta(f_n)\rightarrow Q^\beta(f)$.
    \item[\textbf{(D1*.2)}] For all $(f^\beta \in \mc D_{\beta,0})_{\beta\in \mc C}$, there exist sequences of functions $f^\beta_N : X_N\rightarrow \bb R$ for each $\beta\in \mc C$
    such that for all $\beta \neq \gamma \in \mc C$,
    \begin{align}
        ((f^\beta_N)^2 ds_N)\iota_N^{-1} \xrightarrow{\text{weakly}} (f^\beta)^2 d\lambda_\beta\; \text{ and }&\; \label{D1*21}
        ((f^\beta_N f^\gamma_N ds_N)\iota_N^{-1} \xrightarrow{\text{weakly}} 0, \\
        \lim_{N\rightarrow \infty} \int_{X_N} -f^\beta_N (A_N f^\beta_N) ds_N = Q^\beta(f^\beta),&\;\; \label{D1*22}
        \lim_{N\rightarrow \infty} \int_{X_N} -f^\gamma_N (A_N f^\beta_N) ds_N = 0.
    \end{align}
\end{enumerate}

\begin{thm} \label{gen_diffusive}
    Suppose that condition \textbf{\textup{(D0)}} is in force. Then the sequence of rate functions $\ms I_N$ and $\ms K$ satisfies the $\Gamma$-$\liminf$ condition.
    Assume that condition \textbf{\textup{(D1)}}, \textbf{\textup{(D1*)}} is in force additionally. For $\mu$ on $\ms P(X)$, decompose it as $$\mu = \sum_{\beta\in\mc C} \mu(\mathring{X}_\beta) \mu(\cdot|\mathring{X}_\beta).$$ Then
    \begin{equation}
        \label{11111better}
        \ms K(\mu) = \sum_{\beta\in\mc C} \mu(\mathring{X}_\beta) Q^\beta\left(\sqrt{\frac{d\mu(\cdot|\mathring{X}_\beta)}{d\lambda_\beta}}\right)
    \end{equation}
    holds. Furthermore, the sequence $\ms I_N$ $\Gamma$-converges to $\ms K$.
\end{thm}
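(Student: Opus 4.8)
The plan is to treat the three assertions of Theorem~\ref{gen_diffusive} in turn: (i) the $\Gamma$-$\liminf$ inequality from \textbf{(D0)} alone; (ii) the identity \eqref{11111better} from \textbf{(D1)} together with a reversible Donsker--Varadhan formula on each stratum; and (iii) the $\Gamma$-$\limsup$ inequality from \textbf{(D1*)} and the reversibility of $A_N$ with respect to $s_N$ (needed only here, as for Theorem~\ref{46}). Two elementary facts are invoked repeatedly. First, a \emph{semigroup lower bound}: since each $P^N_s w$ is a positive admissible test function on the finite set $X_N$, differentiating $s\mapsto\log P^N_s w$ and integrating gives $I_N(\nu)\ge-\frac{1}{t}\int_{X_N}\log\frac{P^N_t w}{w}\,d\nu$ for all $w\colon X_N\to\bb R_{>0}$, $t>0$ (and likewise $\ms K(\mu)\ge-\frac{1}{t}\int_X\log\frac{P_t u}{u}\,d\mu$ for $u\in\mc D$, $u>0$). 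Second, a \emph{transfer lemma}: if $\phi_N\colon X_N\to\bb R$ are uniformly bounded and $\iota_N(\xi_N)\to\xi$ forces $\phi_N(\xi_N)\to\phi(\xi)$ for some $\phi\in C(X)$, then $\int_{X_N}\phi_N\,d\nu_N\to\int_X\phi\,d\mu$ whenever $\nu_N\iota_N^{-1}\to\mu$; compactness of $X$ upgrades the hypothesis to uniform convergence of $\phi_N\circ\iota_N^{-1}$ to $\phi$ on $\iota_N(X_N)$, after which weak convergence finishes it. For (i): fix $u\in\mc D$ with $u>0$ and a sequence $\mu_N\to\mu$; we may assume $\ms I_N(\mu_N)<\infty$, so (the immersion $\iota_N$ being injective) $\mu_N=\nu_N\iota_N^{-1}$ for a unique $\nu_N$. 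With $w_N=u\circ\iota_N$ the semigroup bound gives $\ms I_N(\mu_N)=I_N(\nu_N)\ge-\frac{1}{t}\int_{X_N}\log\frac{P^N_t w_N}{w_N}\,d\nu_N$; since $P^N_t w_N(\xi_N)=\bb E^N_{\xi_N}[u(\iota_N(\xi^N_t))]$ lies in $[\min u,\|u\|_\infty]$ and, by \textbf{(D0)} and a.s.\ continuity of the time-$t$ map off a countable set of $t$, converges to $P_t u(\xi)$ when $\iota_N(\xi_N)\to\xi$, the transfer lemma yields $\liminf_N\ms I_N(\mu_N)\ge-\frac{1}{t}\int_X\log\frac{P_t u}{u}\,d\mu$; letting $t\downarrow0$ along admissible times and using $u\in\mc D$, $u>0$, gives $\liminf_N\ms I_N(\mu_N)\ge\int_X-\frac{Au}{u}\,d\mu$, and supremizing over $u$ gives $\liminf_N\ms I_N(\mu_N)\ge\ms K(\mu)$.

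Next I would prove \eqref{11111better}. Write $\mu=\sum_\beta\mu(\mathring X_\beta)\mu_\beta$ with $\mu_\beta=\mu(\cdot\mid\mathring X_\beta)$, and let $R(\mu)$ denote the right side of \eqref{11111better}. The central auxiliary statement is a \emph{Donsker--Varadhan formula at level $\beta$}: for the self-adjoint contraction semigroup $P^\beta_t$ on $L^2(\lambda_\beta)$ with form $Q^\beta$, one has $\sup_{v>0}\int_{\mathring X_\beta}-\frac{L^\beta v}{v}\,d\mu_\beta=Q^\beta(\sqrt{d\mu_\beta/d\lambda_\beta})$, with the value $+\infty$ when $\mu_\beta\not\ll\lambda_\beta$ or the square root is not in $\mc D_\beta$; this is \eqref{sqrtcalc} of \cite{DV75a} carried over by the spectral theorem, the sub-Markov character of $P^\beta_t$ being immaterial since $\lambda_\beta$ remains the reference measure. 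For $\ms K(\mu)\le R(\mu)$: for $u\in\mc D$ with $u>0$ and $\xi\in\mathring X_\beta\subset X_\beta$, condition \textbf{(D1.1)} keeps the process in $X_\beta$, hence $P_t u(\xi)=P^\beta_t(u|_{\mathring X_\beta})(\xi)+\bb E_\xi[u(\xi_t)\,\mathbf 1\{\xi_t\in X_\beta\setminus\mathring X_\beta\}]\ge P^\beta_t(u|_{\mathring X_\beta})(\xi)$, so $Au\ge L^\beta(u|_{\mathring X_\beta})$ on $\mathring X_\beta$ and $\int_{\mathring X_\beta}-\frac{Au}{u}\,d\mu_\beta\le Q^\beta(\sqrt{d\mu_\beta/d\lambda_\beta})$ by the level-$\beta$ formula; summing over $\beta$ and supremizing over $u$ gives $\ms K(\mu)\le R(\mu)$. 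For the opposite inequality, assuming $R(\mu)<\infty$, I would build, out of the functions $\sqrt{d\mu_\beta/d\lambda_\beta}$ (approximated by elements of $\mc D_{\beta,0}$ via \textbf{(D1*.1)}), a family $u=u_\delta$ in $\mc D$ with $u_\delta>0$ that on each $\mathring X_\beta$ is a smooth modification of the level-wise glued square root lifted to a small floor near the absorbing interface $\bigcup_{\gamma<\beta}\mathring X_\gamma$; the floor makes $u$ continuous and positive, while the regularity coming from $\mc D_{\beta,0}$ places $u\in\mc D$. The point is then to show that the strictly positive exit correction in $Au$ on $\mathring X_\beta$, governed by the ratio of the floor to $\sqrt{d\mu_\beta/d\lambda_\beta}$, contributes negligibly outside a $\mu_\beta$-null neighbourhood of the interface, so that $\int_X-\frac{Au_\delta}{u_\delta}\,d\mu\to R(\mu)$ as the parameters are sent to their limits; this gives $\ms K(\mu)\ge R(\mu)$, hence \eqref{11111better}.

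Finally, the $\Gamma$-$\limsup$. Fix $\mu$ with $\ms K(\mu)=R(\mu)<\infty$ (the infinite case being vacuous), put $f^\beta=\sqrt{d\mu_\beta/d\lambda_\beta}$, choose $f^\beta_n\in\mc D_{\beta,0}$ with $f^\beta_n\to f^\beta$ in the $Q^\beta$-graph norm by \textbf{(D1*.1)}, and take nonnegative discrete approximants $f^{\beta,n}_N$ given by \textbf{(D1*.2)}. Let $\nu^n_N$ be the probability measure on $X_N$ proportional to $\sum_\beta\mu(\mathring X_\beta)(f^{\beta,n}_N)^2 s_N$; by \eqref{D1*21} its push-forward converges weakly to $\sum_\beta\mu(\mathring X_\beta)(f^\beta_n)^2\lambda_\beta$, which tends to $\mu$ as $n\to\infty$. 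By reversibility of $A_N$ with respect to $s_N$ and \eqref{sqrtcalc}, $\ms I_N(\mu^n_N)=I_N(\nu^n_N)=\langle\,\sqrt{d\nu^n_N/ds_N},\,(-A_N)\sqrt{d\nu^n_N/ds_N}\,\rangle_{s_N}$; decomposing $\sqrt{d\nu^n_N/ds_N}$ (asymptotically) as $\sum_\beta\sqrt{\mu(\mathring X_\beta)}\,f^{\beta,n}_N$ over the asymptotically disjoint level supports, the diagonal terms converge to $\mu(\mathring X_\beta)Q^\beta(f^\beta_n)$ by the first equality in \eqref{D1*22}, and the cross terms vanish by the second equality in \eqref{D1*22} together with \eqref{D1*21}. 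Hence $\lim_N\ms I_N(\mu^n_N)=\sum_\beta\mu(\mathring X_\beta)Q^\beta(f^\beta_n)\to R(\mu)=\ms K(\mu)$, and a diagonal choice $\mu_N=\mu^{n(N)}_N$ is the required recovery sequence; combined with (i) this proves that $\ms I_N$ $\Gamma$-converges to $\ms K$.

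The main obstacle is the lower bound $\ms K(\mu)\ge R(\mu)$ in the second step: since the limiting generator $A$ is not globally reversible — the stratification is absorbing rather than symmetric — one cannot read $\ms K$ off a single quadratic form, and must instead hand-craft a continuous positive function on all of $X$ that lies in the domain $\mc D$ and is, stratum by stratum, close to the reversible optimizer $\sqrt{d\mu_\beta/d\lambda_\beta}$, while rendering the strictly positive exit correction in $Au$ — present wherever the limit diffusion leaks to a lower stratum — asymptotically harmless. The surrounding functional-analytic set-up (self-adjointness and domain identifications for the possibly non-finite Radon measures $\lambda_\beta$, and the sub-Markov Donsker--Varadhan formula) is routine but must be laid out carefully, and I expect the interface/absorption analysis in that lower bound to be where the bulk of the work lies.
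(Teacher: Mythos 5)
Your $\Gamma$-$\liminf$ argument (the discrete semigroup bound $I_N(\nu)\ge-\tfrac1t\int\log\frac{P^N_tw}{w}\,d\nu$ plus \textbf{(D0)} and $t\downarrow0$) is sound and is a legitimate alternative to the paper's route via the Ethier--Kurtz approximation theorem, and your recovery sequence for the $\Gamma$-$\limsup$ is essentially the paper's Lemma \ref{treatablelimsup}. The genuine gap is in your proof of \eqref{11111better}, and specifically in the direction $\ms K(\mu)\ge\sum_\beta\mu(\mathring X_\beta)Q^\beta\bigl(\sqrt{d\mu(\cdot|\mathring X_\beta)/d\lambda_\beta}\bigr)$, which you yourself flag as "where the bulk of the work lies" but never carry out. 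Since $\ms K$ is a supremum over \emph{positive functions in the abstract Feller domain} $\mc D$, a lower bound requires producing global test functions in $\mc D$; nothing in \textbf{(D1)} or \textbf{(D1*)} guarantees that a "smooth modification of the level-wise glued square root lifted to a small floor" belongs to $\mc D$ (even in the zero-range application, membership in the domain is delicate, cf.\ Definition \ref{def31}), and the cross-stratum "exit correction" you wave away is precisely the hard estimate: under $P_h$ mass leaks from $\mathring X_\beta$ into lower strata, and these contributions must be killed by a hierarchical choice of amplitudes. The paper circumvents both problems by first proving $\ms K_h(\mu)\le h\,\ms K(\mu)$ (Lemma \ref{93}, using only the Feller property plus a Lusin/Tietze approximation), which enlarges the admissible test class to arbitrary positive Borel functions bounded away from zero; it then plugs the truncated densities $c_\beta\min(u_\beta,n)+\epsilon$ into $\ms K_h$, controls the cross terms by choosing the constants $c_\beta$ inductively along the height function of the poset $\mc C$, and passes $h\to0$ through the spectral theorem for $-L^\beta$. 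Your plan contains no substitute for this mechanism, and as stated it is unlikely to be executable at the level of generality of the theorem.

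Two further points are asserted rather than proved. First, your upper bound $\ms K(\mu)\le R(\mu)$ rests on a "Donsker--Varadhan formula at level $\beta$" for the sub-Markovian symmetric semigroup $P^\beta_t$ on the (possibly infinite) Radon measure $\lambda_\beta$, applied moreover to test functions $u|_{\mathring X_\beta}$ that need not lie in $D(L^\beta)$; "carried over by the spectral theorem" is not a proof, and the absolute continuity $\mu|_{\mathring X_\beta}\ll\lambda_\beta$ for $\ms K(\mu)<\infty$ — which your formula silently presupposes — itself requires an argument using \textbf{(D1.2)} (the paper's Lemma \ref{1102}). Second, note that the paper obtains this upper bound without any stratum-level identity at all, by a sandwich: the recovery sequence of Lemma \ref{treatablelimsup} together with the $\Gamma$-$\liminf$ yields $\ms K(\mu)\le R(\mu)$ for treatable measures, and lower semicontinuity of $\ms K$ plus the \textbf{(D1*.1)} approximation extends it to general $\mu$. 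Adopting that sandwich would repair your upper bound cheaply; the lower bound, however, still needs the $\ms K_h$/spectral argument with the hierarchical constants, and without it your proof of \eqref{11111better} — and hence of the theorem — is incomplete.
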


\begin{remark}
    To prove \eqref{11111better}, it is sufficient to consider condition \textbf{\textup{(D1)}} alone. However, for the sake of simplicity, this part has been omitted.

    Intuitively, condition \textbf{\textup{(D0)}} says that the process is well approximated by the discretization and the $\Gamma$-$\liminf$ directly follows from this condition.
    Therefore, conditions \textbf{\textup{(D1)}} and \textbf{\textup{(D1*)}} give more information about the structure of the rate function $\ms K$ and this gives the $\Gamma$-$\limsup$ result.
    For the sets $\mathring{X}_\beta$ in condition \textbf{\textup{(D1)}} represent the transient states as the process approaches absorption into the metastable valleys and condition \textbf{\textup{(D1.1)}} directly implies the absorption property of the process.
    Remaining conditions \textbf{\textup{(D1.2)}}–\textbf{\textup{(D1.4)}} are regularities needed to calculate the rate function $\ms K$.
    Also, conditions \textbf{\textup{(D1*.1)}}–\textbf{\textup{(D1*.2)}} are needed to establish an approximating argument in order to obtain the $\Gamma$-$\limsup$ result.

    Note that the generator $A_N$ is not required to be reversible in the conditions, but its reversibility is needed to satisfy condition \textbf{\textup{(D1.4)}}.
    For the zero-range process, reversibility is essential to meet condition \textbf{\textup{(D1.4)}}.
    This is the main reason for assuming the reversibility condition \eqref{reversibility}.
\end{remark}

Through Sections \ref{gen_metastable_proof}--\ref{slowlimsup}, we prove the results in this section.
The remaining sections are devoted to verifying the conditions presented in this section for the zero-range process, in order to achieve the results discussed in Section \ref{section2}.

\section{Proof of Theorem \ref{gen_metastable}} \label{gen_metastable_proof}

We divide the proof of Theorem \ref{gen_metastable} into two parts: the $\Gamma$-$\limsup$ part and the $\Gamma$-$\liminf$ part.
Suppose the generator $A$ over $S$ is given by a jump rate $R:S\times S\rightarrow \bb R_{\geq 0}$.

\subsection{\texorpdfstring{$\Gamma$}{Gamma}-limsup part}

Fix a non-singular measure $\mu_S\in \ms P(S)$, that is,
\begin{equation*}
    \mu_S(x)>0 \text{ for all } x\in S.
\end{equation*}
Let $\mu$ be a measure on $X$ such that $\mu = \mu_S\iota_S^{-1}$.
By Lemma B.4 and B.5 in \cite{L22G}, it is enough to show the $\Gamma$-limsup condition for these types of $\mu\in\ms P(X)$.

From Corollary A.5 in \cite{L22G}, there exists a function $h:S\rightarrow R$ where $\mu_S$ is the stationary measure of tilted generator $\mf M_h$, given by
\begin{equation}
    \label{explicitform}
    [(\mf M_h A)f](x) = \sum_{y\in S} e^{-h(x)}R(x,y)e^{h(y)}[f(y)-f(x)] \; \text{ for } \; f:S\rightarrow \bb R.
\end{equation}
From \eqref{explicitform},
\begin{align*}
    \sum_{y\in S} e^{-h(x)}R(x,y)e^{h(y)}[f(y)-f(x)] = & e^{-h(x)}\sum_{y\in S}R(x,y)e^{h(y)}[f(y)-f(x)] \\
                                                   = & e^{-h(x)}(\sum_{y\in S}R(x,y)e^{h(y)}f(y)-f(x)\sum_{y\in S}R(x,y)e^{h(y)}) \\
                                                   = & e^{-h(x)}(A(e^h f)(x)-f(x)A(e^h)(x)).
\end{align*}
In short, we can write
\begin{equation*}
    (\mf M_h A)f = e^{-h}(A(e^h f) - fAe^h).
\end{equation*}
The direct consequence of this formula is the following lemma.

\begin{lemma}
    \label{tilded}
    Given a probability measure $\mu_S$ on $S$, for all $\phi:S\rightarrow R$, suppose we have an equality
    $$\sum_{x\in S}\left[-\frac{A \phi(x)}{e^h(x)}+\frac{\phi(x)}{e^{2h}(x)}Ae^h(x)\right]\mu_S(x) = 0$$
    for the generator $A$ of an irreducible Markov process. Then $\mu_S$ is the unique stationary probability measure for tilded generator $\mf M_h A$.
    Reversely, if $\mu_S$ is the stationary probability measure for tilded generator $\mf M_h A$, the equality holds for all $\phi: S \rightarrow R$.
\end{lemma}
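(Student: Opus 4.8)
The plan is to reduce the whole statement to the operator identity $(\mf M_h A)f = e^{-h}\big(A(e^h f) - fAe^h\big)$ derived immediately above, via the substitution $\phi = e^h f$. First I would recall that a probability measure $\mu_S$ on the finite set $S$ is a stationary measure for the generator $B$ of a continuous-time Markov chain precisely when $\mu_S B = 0$ as a signed measure, i.e. $\sum_{x\in S}\mu_S(x)(Bf)(x) = 0$ for every $f:S\to\bb R$; I then apply this criterion with $B = \mf M_h A$.

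Using the identity, for each $f:S\to\bb R$ and $x\in S$ one has $(\mf M_h A)f(x) = e^{-h(x)}A(e^h f)(x) - e^{-h(x)}f(x)\,Ae^h(x)$. Setting $\phi := e^h f$, so that $f = e^{-h}\phi$ pointwise, this becomes $(\mf M_h A)f(x) = A\phi(x)/e^{h(x)} - \phi(x)\,Ae^h(x)/e^{2h(x)}$. Since $e^{h(x)} > 0$ for all $x$, the correspondence $f \leftrightarrow \phi$ is a bijection of $\bb R^S$ onto itself, so the condition ``$\sum_{x}\mu_S(x)(\mf M_h A)f(x) = 0$ for all $f$'' is equivalent (multiply through by $-1$) to ``$\sum_{x\in S}\big[\,-A\phi(x)/e^{h(x)} + \phi(x)\,Ae^h(x)/e^{2h(x)}\,\big]\mu_S(x) = 0$ for all $\phi$'', which is exactly the equality in the statement. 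This single equivalence yields both directions of the lemma at once: the forward direction reads off the hypothesis for each particular $\phi$, and the converse is precisely the claim that the equality holds for every $\phi$.

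It remains to justify the word ``unique''. The tilted generator $\mf M_h A$ has off-diagonal rates $e^{-h(x)}R(x,y)e^{h(y)}$, which are strictly positive exactly when $R(x,y) > 0$; hence $\mf M_h A$ generates an irreducible Markov chain on the finite set $S$, and such a chain admits a unique stationary probability measure, which is therefore the $\mu_S$ characterised above.

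There is no genuine obstacle here: the statement is essentially a bookkeeping consequence of the tilting identity. The only two points deserving an explicit word are the bijectivity of $f \mapsto e^h f$ on $\bb R^S$ (immediate from positivity of $e^h$) and the invariance of irreducibility under tilting (immediate since the support of the rate matrix is unchanged).
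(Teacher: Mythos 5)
Your proposal is correct and follows exactly the route the paper intends: the paper states Lemma \ref{tilded} as a ``direct consequence'' of the tilting identity $(\mf M_h A)f = e^{-h}\bigl(A(e^h f) - fAe^h\bigr)$, and your substitution $\phi = e^h f$ together with the stationarity criterion $\sum_{x}\mu_S(x)(\mf M_h A)f(x)=0$ for all $f$ is precisely that argument. Your extra remarks on the bijectivity of $f\mapsto e^h f$ and on irreducibility of the tilted chain (hence uniqueness of the stationary measure) are the right, if routine, justifications the paper leaves implicit.
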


Take a family of neighborhood $\{\mc E^x_N$, $x\in S\}$ from condition \textbf{(M0)}.
Fix $\lambda>0$ and solve a resolvent equation 
\begin{align}
    (\lambda - A) e^h &= k, \nonumber \\
    (\lambda - A_N) H_N &= K_N \coloneqq \sum_{x\in S} k(x)\,\chi_{\mc{E}_{N}^{x}}, \label{HNKN}
\end{align}
for each $N\in \bb N$. The constant $\lambda$ is chosen large enough in Proposition \ref{weakconvergence}.
Before that, the next lemma ensures the positivity of the solution.

\begin{lemma}\label{5.1}
    There exists $N_0\in \bb N$ such that $H_N$ is strictly positive for all $N\geq N_0$.
\end{lemma}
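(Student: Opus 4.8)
The plan is to get strict positivity of $H_N$ on the valleys $\mc E_N := \bigcup_{x\in S}\mc E_N^x$ from condition $\mf R_A$, and then propagate it to all of $X_N$ by a discrete maximum principle. Since condition \textbf{(M0)} supplies metastable valleys $\{\mc E_N^x : x\in S\}$ with limit $A$, condition $\mf R_A$ applies to the resolvent equation \eqref{HNKN}: its reduced equation is precisely $(\lambda - A)e^h = k$, so taking $f = e^h$ in $\mf R_A$ gives
\[
\lim_{N\to\infty}\ \sup_{\eta\in\mc E_N^x}\bigl|\,H_N(\eta) - e^{h(x)}\,\bigr| \;=\; 0 \qquad\text{for every } x\in S .
\]
Because $S$ is finite and $c := \min_{x\in S} e^{h(x)} > 0$, there is $N_0$ such that $H_N(\eta) \ge c/2$ for all $\eta\in\mc E_N$ and all $N\ge N_0$ (in particular $\mc E_N\neq\varnothing$ for such $N$). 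This is the only place where condition $\mf R_A$, hence condition \textbf{(M0)}, enters.

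Next I would fix $N\ge N_0$, set $m_N := \min_{\eta\in X_N} H_N(\eta)$, choose a minimizer $\eta^\ast$, and argue by contradiction, assuming $m_N \le 0$. Since $H_N > 0$ on $\mc E_N$, the minimizer must lie in $\Delta_N = X_N\setminus\mc E_N$, where $K_N$ vanishes; hence $(A_N H_N)(\eta^\ast) = \lambda H_N(\eta^\ast) = \lambda m_N \le 0$. On the other hand, writing $A_N$ through its nonnegative off-diagonal entries $A_N(\eta,\eta')\ge 0$,
\[
(A_N H_N)(\eta^\ast) \;=\; \sum_{\eta'\neq\eta^\ast} A_N(\eta^\ast,\eta')\,\bigl(H_N(\eta') - m_N\bigr) \;\ge\; 0 ,
\]
because $H_N(\eta')\ge m_N$ for every $\eta'$. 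Combining the two bounds forces $m_N = 0$ and $(A_N H_N)(\eta^\ast) = 0$, so $H_N(\eta') = 0$ for every $\eta'$ with $A_N(\eta^\ast,\eta') > 0$. The same reasoning applies to any state in the set $\{\eta\in X_N : H_N(\eta)=0\}$, which is therefore closed under one-step transitions; invoking irreducibility of $A_N$ and that $\eta^\ast$ lies in it, this set must be all of $X_N$, i.e. $H_N\equiv 0$, contradicting $H_N \ge c/2 > 0$ on the nonempty set $\mc E_N$. Hence $m_N > 0$, that is, $H_N$ is strictly positive, for every $N\ge N_0$.

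The only genuinely delicate point is the first step: one must be sure $\mf R_A$ is invoked with the correct reduced data. Since the tilting function $h$ (and hence $e^h$) produced by Corollary A.5 of \cite{L22G} is fixed independently of $\lambda$, the function $k = (\lambda - A)e^h$ is well defined and $e^h$ is by construction the unique solution of $(\lambda - A)e^h = k$; thus $\mf R_A$ really does give convergence of $H_N$, on each valley $\mc E_N^x$, to the strictly positive value $e^{h(x)}$, which is exactly what the maximum-principle step consumes. Note that the argument uses neither reversibility of $A_N$ or $A$ nor any sign condition on $k$; positivity can genuinely fail for small $N$, which is why the statement only claims it for $N\ge N_0$.
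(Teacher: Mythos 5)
Your proof is correct, and its first half coincides with the paper's: both invoke condition $\mf R_A$ (via \textbf{(M0)}) with reduced data $(\lambda-A)e^h=k$ to conclude that $H_N$ converges uniformly on each $\mc E_N^x$ to $e^{h(x)}>0$, hence is bounded below by a positive constant on $\mc E_N$ for $N\ge N_0$. Where you diverge is in extending positivity from $\mc E_N$ to $\Delta_N$: the paper observes that $H_N$ solves $(\lambda-A_N)H_N=0$ on $\Delta_N$ with its own values on $\mc E_N$ as boundary data and writes the one-line probabilistic representation $H_N(\eta)=\bb E_\eta[e^{-\lambda\tau_{\mc E_N}}H_N(\eta_{\tau_{\mc E_N}})]$, whose integrand is strictly positive; you instead run a deterministic minimum-principle argument at a global minimizer in $\Delta_N$, showing the zero set would be closed under one-step transitions and then killing it with irreducibility of $A_N$. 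The two mechanisms are really the same maximum principle seen analytically versus probabilistically: yours is self-contained, makes the use of irreducibility explicit, and avoids any appeal to hitting-time representations, while the paper's is shorter and implicitly uses irreducibility through the a.s.\ finiteness (or at least positive probability) of $\tau_{\mc E_N}$. Your parenthetical that $\mc E_N\neq\varnothing$ for large $N$ is needed in both arguments and is at the same (implicit) level of rigor as the paper's, so there is no gap.
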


\begin{proof}
    Let $\mc E_N = \cup_{x\in S}\mc E_N^x$ and $\Delta_N = X_N\setminus \mc E_N$.
    From condition $\mf R_{A}$, we have that $H_N$ approximates $h$ on $\mc E_N$.
    So there exists $N_0\in \bb N$ so that for all $N\geq N_0$, $H_N$ is positive on $\mc E_N$.
    Then $H_N$ satisfies a resolvent equation
    \begin{align*}
    (\lambda-A_N) H_N &= 0 \;\;\;\;\; \text{ on } \; \Delta_N, \\
    H_N &= H_N \; \text{ on } \; \mc E_N.
    \end{align*}
    Let $(\eta^N_t)_{t\geq 0}$ be a process generated by $A_N$ on $X_N$ and $\tau_{\mc E_N}$ be a hitting time of the set $\mc E_N$. Then
    \begin{equation*}
        H_N(\eta) = \bb E_{\eta}[e^{-\lambda \tau_{\mc E_N}}H_N(\eta_{\tau_{\mc E_N}})] \text{ for all } \eta \in X_N.
    \end{equation*}
    This equation implies that $H_N$ is strictly positive for all $N\geq N_0$.
\end{proof}

From Lemma \ref{5.1}, we have $H_N >0$ for a large enough $N$. Therefore, we can take a stationary measure $\nu_N\in \ms P(X_N)$ of tilded generator $\mf M_{\log H_N}$.
Then define $\mu_N\in \ms P(X)$ as a pushforward measure of $\nu_N$, that is, $\mu_N \coloneq \nu_N\iota_N^{-1}$. Remark that $\mu_N$ is depending on $\lambda$.

\begin{prop}
    \label{weakconvergence}
    There exists a large enough positive $\lambda$ so that for all $x\in S$, we have
    $$\lim_{N \rightarrow \infty}\nu_N(\mc E_N^x) = \mu_S(x).$$
    Therefore a sequence $(\mu_N)$ weakly converges to $\mu$.
\end{prop}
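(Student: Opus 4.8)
The plan is to use the characterization of $\nu_N$ as the stationary measure of the tilted generator $\mf M_{\log H_N}A_N$ furnished by Lemma~\ref{tilded}, combined with the resolvent input from condition $\mf R_A$ (available since \textbf{(M0)} is in force). Applying $\mf R_A$ to the data $g=k$ and recalling that $e^h$ solves the reduced equation $(\lambda-A)e^h=k$, we obtain $\sup_{\eta\in\mc E_N^x}|H_N(\eta)-e^{h(x)}|\to 0$ for every $x\in S$. Similarly, for $y\in S$ let $F^y_N$ be the solution of $(\lambda-A_N)F^y_N=\chi_{\mc E_N^y}$; then $\mf R_A$ gives $\sup_{\eta\in\mc E_N^x}|F^y_N(\eta)-f^y(x)|\to 0$, where $f^y$ solves $(\lambda-A)f^y=\chi_{\{y\}}$ on $S$, i.e.\ $f^y(x)=[(\lambda-A)^{-1}]_{xy}$. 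The representation $H_N(\eta)=\bb E_\eta\big[e^{-\lambda\tau_{\mc E_N}}H_N(\eta_{\tau_{\mc E_N}})\big]$ on $\Delta_N$ from the proof of Lemma~\ref{5.1}, and its analogue for $F^y_N$, moreover show that $\|H_N\|_\infty$ and $\|F^y_N\|_\infty$ are bounded uniformly in $N$.

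First I would record a dual identity for $\nu_N$. By Lemma~\ref{tilded} applied to $(A_N,\log H_N,\nu_N)$, for every $\psi:X_N\to\bb R$ we have $\int_{X_N}\big[-A_N\psi/H_N+\psi\,(A_NH_N)/H_N^2\big]\,d\nu_N=0$; inserting $A_NH_N=\lambda H_N-K_N$ and passing to the probability measure $\tilde\nu_N$ given by $d\tilde\nu_N=\beta_N^{-1}H_N^{-1}\,d\nu_N$, where $\beta_N=\int_{X_N}H_N^{-1}\,d\nu_N>0$, this becomes
\[
\int_{X_N}\big[(\lambda-A_N)\psi\big]\,d\tilde\nu_N\;=\;\int_{X_N}\frac{\psi\,K_N}{H_N}\,d\tilde\nu_N,\qquad \psi:X_N\to\bb R
\]
(equivalently $H_N^{-1}\nu_N$ solves the adjoint equation $(\lambda-A_N^{\mathrm T})(H_N^{-1}\nu_N)=(K_N/H_N)\,H_N^{-1}\nu_N$, with $A_N^{\mathrm T}$ the matrix transpose).

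Then, passing to a subsequence along which $\tilde\nu_N(\mc E_N^x)\to a_x\ge 0$ and $\tilde\nu_N(\Delta_N)\to a_\Delta\ge 0$, so $\sum_x a_x+a_\Delta=1$, I would plug in two families of test functions. Since $K_N$ is carried by $\bigcup_x\mc E_N^x$, on which $H_N\to e^{h(x)}$, the choice $\psi\equiv 1$ gives $\lambda=\sum_{x\in S}\frac{k(x)}{e^{h(x)}}a_x$, while $\psi=F^y_N$ (using also the uniform convergence of $F^y_N$ and $H_N$ on the valleys and the uniform bound on $F^y_N$) gives $a_y=\sum_{x\in S}f^y(x)\frac{k(x)}{e^{h(x)}}a_x$ for each $y\in S$. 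With $f^y(x)=[(\lambda-A)^{-1}]_{xy}$ the second relation reads $\big((\lambda-A^{\mathrm T})a\big)(y)=\frac{k(y)}{e^{h(y)}}a(y)$, and since $k=(\lambda-A)e^h$ this collapses to $(A^{\mathrm T}a)(y)=\frac{(Ae^h)(y)}{e^{h(y)}}a(y)$ for all $y$; a direct computation shows this means the function $x\mapsto e^{h(x)}a_x$ is a left null-vector of $\mf M_hA$, hence (by irreducibility of $\mf M_hA$) proportional to its stationary measure $\mu_S$: $a_x=c\,\mu_S(x)e^{-h(x)}$ for some $c\ge 0$. Substituting back into the $\psi\equiv 1$ relation and using $\sum_x(Ae^h)(x)\mu_S(x)e^{-2h(x)}=0$ — valid since $(\mf M_hA)e^{-h}=-e^{-2h}Ae^h$ and $\mu_S$ is $\mf M_hA$-stationary — forces $c=\big(\sum_z\mu_S(z)e^{-h(z)}\big)^{-1}$, so $\sum_x a_x=1$ and $a_\Delta=0$. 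As the subsequential limit is unique, the whole sequence satisfies $\tilde\nu_N(\mc E_N^x)\to \frac{\mu_S(x)e^{-h(x)}}{\sum_z\mu_S(z)e^{-h(z)}}$ and $\tilde\nu_N(\Delta_N)\to 0$. Transferring back via $\beta_N\int_{X_N}H_N\,d\tilde\nu_N=\nu_N(X_N)=1$ together with the bound on $\|H_N\|_\infty$ gives $\beta_N\to\sum_z\mu_S(z)e^{-h(z)}$, hence $\nu_N(\mc E_N^x)=\beta_N\int_{\mc E_N^x}H_N\,d\tilde\nu_N\to\mu_S(x)$ and $\nu_N(\Delta_N)\to 0$. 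Finally $(\mu_N)$ converges weakly to $\mu$: for $F\in C(X)$, condition \textbf{(M0)} gives $\sup_{\eta\in\mc E_N^x}|F(\iota_N(\eta))-F(\iota_S(x))|\to 0$, so $\int F\,d\mu_N=\sum_x F(\iota_S(x))\,\nu_N(\mc E_N^x)+o(1)+O(\nu_N(\Delta_N))\to\sum_x F(\iota_S(x))\mu_S(x)=\int F\,d\mu$.

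The step I expect to be the main obstacle is ruling out escaping mass, $a_\Delta=0$ (equivalently $\nu_N(\Delta_N)\to 0$): a priori the reweighted measures $\tilde\nu_N$ could retain a positive fraction of their mass on the transient region $\Delta_N$, and only closing the linear system above — which itself uses both the special form of $K_N$ and the stationarity identity for $\mu_S$ under $\mf M_hA$ — excludes this. This, together with securing the uniform-in-$N$ control of $H_N$, $H_N^{-1}$ and $F^y_N$ that makes all the limit passages legitimate, is where $\lambda$ must be chosen sufficiently large.
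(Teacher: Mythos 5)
Your proof is correct, and it reaches the conclusion by a genuinely different mechanism than the paper, although both arguments share the same raw ingredients (Lemma \ref{tilded} applied to the tilted stationary measure $\nu_N$, condition $\mf R_A$, and resolvent test functions). The paper first kills the mass on $\Delta_N$ by a comparison trick: it solves a second resolvent equation at level $\lambda-1$, derives $\int_{\Delta_N} H_N'/H_N\, d\nu_N = o_N(1)$ from the tilted-stationarity identity, and then uses positivity of the resolvent together with $2k'\ge k$ (this is exactly where $\lambda$ must be taken large, via $(\lambda-2)e^h > |Ae^h|$) to get $2H_N'\ge H_N$ and hence $\nu_N(\Delta_N)=o_N(1)$; only afterwards does it identify any limit point of $(\nu_N(\mc E_N^x))_x$ as the stationary measure of $\mf M_h A$ by testing against arbitrary $\phi$. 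You instead reweight by $H_N^{-1}$, turning the tilted-stationarity identity into the clean resolvent identity $\int(\lambda-A_N)\psi\, d\tilde\nu_N=\int \psi K_N/H_N\, d\tilde\nu_N$, and close a finite linear system for the limit points using $\psi\equiv 1$ and $\psi=F^y_N$: identifying $e^h a$ as a nonnegative left null vector of $\mf M_h A$ pins $a_x = c\,\mu_S(x)e^{-h(x)}$, and the normalization forced by $\psi\equiv 1$ (together with $\sum_x \mu_S(x)e^{-2h(x)}(Ae^h)(x)=0$) gives $\sum_x a_x=1$, so $a_\Delta=0$ falls out of the same computation rather than requiring a separate comparison argument. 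What your route buys is that it works for every $\lambda>0$ (your closing remark that largeness of $\lambda$ is needed for uniform control of $H_N$, $H_N^{-1}$, $F^y_N$ is actually superfluous: $\|H_N\|_\infty\le \|K_N\|_\infty/\lambda$ and the lower bound on $H_N$ is only ever used on the valleys, where $\mf R_A$ supplies it); what the paper's route buys is that it avoids the explicit inversion of the limiting system and the back-and-forth transfer between $\tilde\nu_N$ and $\nu_N$, at the price of the auxiliary resolvent at $\lambda-1$ and the largeness constraint on $\lambda$.
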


\begin{proof}
    Recall a function $h: S\rightarrow R$ from \eqref{explicitform}.
    Fix a large enough $\lambda>0$ so that $(\lambda-2) e^h(x)> |A e^h(x)|$ holds for all $x\in S$. Now, solve two resolvent equations
    \begin{align*}
        (\lambda - A) e^h &= k, & (\lambda-1 - A) e^h &= k', \\
        (\lambda - A_N) H_N &= K_N \coloneqq \sum_{x\in S} k(x)\,\chi_{\mc{E}_{N}^{x}}, &
        (\lambda-1 - A_N) H_N' &= K_N' \coloneqq \sum_{x\in S} k'(x)\,\chi_{\mc{E}_{N}^{x}}. 
    \end{align*}    
    From Lemma \ref{tilded},
    \begin{align*}
        0 = \int_{X_N}\left(-\frac{A_N H'_N}{H_N}+\frac{H_N' A_N H_N}{H_N^2}\right)d\nu_N
          = \int_{X_N}\left(\frac{K'_N-(\lambda-1)H_N'}{H_N}+\frac{H_N' (\lambda H_N - K_N)}{H_N^2}\right)d\nu_N.
    \end{align*}
    So we have
    \begin{equation} \label{43eq1}
        \int_{X_N}-\frac{H_N'}{H_N}d\nu_N = \int_{X_N}\frac{H_N K'_N - H'_N K_N}{H_N^2}d\nu_N.
    \end{equation}
    Computing the right hand side of \eqref{43eq1}, we get
    \begin{align*}
        \int_{X_N}\frac{H_N K'_N - H'_N K_N}{H_N^2}d\nu_N &= \sum_{x\in S}\int_{\mc E_N^x}\frac{H_N K'_N - H'_N K_N}{H_N^2}d\nu_N \\
        &= \sum_{x\in S}\frac{e^h (\lambda- 1 - \ms L)e^h - e^h (\lambda - \ms L)e^h}{e^{2h}}(x)\nu_N(\mc E_N^x) + o_N(1) \\
        &= -\sum_{x\in S} \nu_N(\mc E_N^x) + o_N(1).
    \end{align*}
    On the other hand, computing the left-hand side of \eqref{43eq1} gives
    \begin{align*}
        \int_{X_N}-\frac{H_N'}{H_N}d\nu_N = \int_{\Delta_N}-\frac{H_N'}{H_N}d\nu_N - \sum_{x\in S} \nu_N(\mc E_N^x) + o_N(1).
    \end{align*}
    Therefore,
    \begin{equation}
    \label{5.3.1}
        \int_{\Delta_N}\frac{H_N'}{H_N}d\nu_N = o_N(1).
    \end{equation}
    
    From the condition $(\lambda-2) e^h(x)> |A e^h(x)|$, we have $2k'\geq k$. Therefore, for large enough $N$ which makes $H_N'\geq 0$,
    we have $$2(K_N'+H_N')\geq 2K_N' \geq K_N.$$
    Since $(\lambda - \ms A_N)^{-1}$ is a positive operator, we obtain
    \begin{align*}
        2H_N' = (\lambda-A_N)^{-1}(2K_N' + 2H_N') \geq (\lambda-A_N)^{-1}K_N = H_N.
    \end{align*}
    Therefore, from \eqref{5.3.1}, we get
    \begin{equation}
        \label{concentrated}
        \nu_N(\Delta_N) = o_N(1).
    \end{equation}
    
    Fix a function $\phi:S\rightarrow R$. Now, solve a resolvent equation
    \begin{align*}
        (\lambda - A) \phi &= \psi, \\
        (\lambda - A_N) \Phi_N &= \Psi_N \coloneqq \sum_{x\in S} \psi(x)\,\chi_{\mc{E}_{N}^{x}}. 
    \end{align*}
    From condition $\mf R_{A}$, we have that $\Phi_N$ approximates $\phi$ on $\mc E_N$.
    Then
    \begin{align}
    \label{5.3.2}
        0 = \int_{X_N}\left(-\frac{\ms L_N \Phi_N}{H_N}+\frac{\Phi_N A_N H_N}{H_N^2}\right)d\nu_N
          &= \sum_{x\in S} \int_{\mc E_N^x}(\frac{\Psi_N}{H_N}-\frac{K_N}{H_N^2})d\nu_N \nonumber \\
          &= \sum_{x\in S} \left(-\frac{A\phi}{e^h}+\frac{\phi}{e^{2h}}Ae^h\right)(x)\nu_N(\mc E_N^x) + o_N(1).
    \end{align}
    Suppose $\nu_N(\mc E_N^x)$ does not converges to $\mu_S(x)$ for some $x\in S$. Since $$\sum_{x\in S} \nu_N(\mc E_N^x) = 1 - \nu_N(\Delta_N) = 1+ o_N(1),$$
    we may take a limit point $(\tau(x))_{x\in S}\in \bb R^S$ of $\{(\nu_N(\mc E_N^x))_{x\in S}:N\in \bb N\}$.
    From \eqref{concentrated}, we have that $\tau$ is a probability measure on $S$.
    Then \eqref{5.3.2} implies an equality
    $$\sum_{x\in S} \left(-\frac{A\phi}{e^h}+\frac{\phi}{e^{2h}}Ae^h\right)(x)\tau(x)=0,$$
    which means $\tau$ is the stationary measure of tiled generator $\mf M_h A$.
    Since $\mu_S$ was the unique stationary probability measure of $\mf M_h A$, we have $\tau = \mu_S$, which leads to a contradiction.
\end{proof}

Recall the rate functions $I_N$, $\ms I_N$, $I$ and $\ms J$ from \eqref{rateNI}, \eqref{rateNII}, \eqref{ratefunctionI} and \eqref{ratefunctionII}, respectively.
\begin{prop}
    An inequality $$\limsup_{N\rightarrow \infty} \ms I_N(\mu_N) \leq \ms J (\mu)$$ holds.
\end{prop}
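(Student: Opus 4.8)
The plan is to exploit that $\nu_N$ was \emph{constructed} as the stationary measure of the tilted generator $\mf M_{\log H_N}A_N$, so that the supremum defining $I_N(\nu_N)$ is realized, up to a controlled error, by the tilting function $H_N$ itself. First, since $\iota_N$ is injective, the only $\nu\in\ms P(X_N)$ with $\mu_N=\nu\iota_N^{-1}$ is $\nu_N$, so $\ms I_N(\mu_N)=I_N(\nu_N)$ by \eqref{rateNII}; it therefore suffices to bound $\limsup_N I_N(\nu_N)$, and to identify the limit with $\ms J(\mu)$.

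For the main step I would use the elementary identity, valid for every positive $u$ on $X_N$ with $w\coloneqq u/H_N$,
\[
\frac{A_N u}{u}\;=\;\frac{(\mf M_{\log H_N}A_N)\,w}{w}\;+\;\frac{A_N H_N}{H_N}\;,
\]
which follows at once from $(\mf M_V A_N)f = e^{-V}(A_N(e^V f)-fA_Ne^V)$ with $V=\log H_N$ (legitimate since $H_N>0$ for large $N$ by Lemma \ref{5.1}). Integrating against $\nu_N$, using that $\nu_N$ is stationary for $\mf M_{\log H_N}A_N$ together with the pointwise bound $1-t\le-\log t$ — which gives $\int_{X_N}-\frac{Bw}{w}\,d\nu\le\int_{X_N}-B(\log w)\,d\nu=0$ for any generator $B$ with stationary measure $\nu$ — and then taking the supremum over $u>0$, we obtain
\[
I_N(\nu_N)\;\le\;\int_{X_N}-\frac{A_N H_N}{H_N}\,d\nu_N\;.
\]

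Next I would insert the resolvent equation \eqref{HNKN}, $A_NH_N=\lambda H_N-K_N$, to rewrite the right-hand side as $-\lambda+\int_{X_N}(K_N/H_N)\,d\nu_N$. Since $K_N$ vanishes on $\Delta_N$ and equals the constant $k(x)$ on $\mc E_N^x$, while condition $\mf R_A$ gives $H_N\to e^h$ uniformly on each $\mc E_N^x$ and Proposition \ref{weakconvergence} together with \eqref{concentrated} gives $\nu_N(\mc E_N^x)\to\mu_S(x)$ and $\nu_N(\Delta_N)\to0$, it follows that
\[
\int_{X_N}\frac{K_N}{H_N}\,d\nu_N\;=\;\sum_{x\in S}k(x)\int_{\mc E_N^x}\frac{1}{H_N}\,d\nu_N\;\xrightarrow{N\to\infty}\;\sum_{x\in S}\frac{k(x)}{e^h(x)}\,\mu_S(x)\;.
\]
Substituting $k=(\lambda-A)e^h$ and using $\sum_{x\in S}\mu_S(x)=1$, the $\lambda$-terms cancel and the limit becomes $\sum_{x\in S}-\frac{(Ae^h)(x)}{e^h(x)}\mu_S(x)=\int_S-\frac{Ae^h}{e^h}\,d\mu_S$, which is at most $I(\mu_S)=\ms J(\mu)$ by the definition \eqref{ratefunctionI} of $I$ (in fact it equals $I(\mu_S)$, since $\mu_S$ is stationary for $\mf M_hA$). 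Chaining the displays yields $\limsup_N\ms I_N(\mu_N)\le\ms J(\mu)$.

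Most of the real work is already absorbed into Proposition \ref{weakconvergence} (the concentration $\nu_N(\mc E_N^x)\to\mu_S(x)$) and Lemma \ref{5.1} (positivity of $H_N$), so what remains here is largely bookkeeping. The one genuinely delicate point is the estimate $I_N(\nu_N)\le\int_{X_N}-\frac{A_NH_N}{H_N}\,d\nu_N$: it rests on the algebraic decomposition of the tilted generator \emph{and} on the convexity inequality $1-t\le-\log t$, and it is precisely this inequality that forces the choice of $\nu_N$ as the stationary measure of a \emph{tilt} of $A_N$, so that the first term on the right of the identity drops out. One must also check that all the passages to the limit are uniform over $\mc E_N^x$, which is exactly what the uniform convergence built into condition $\mf R_A$ guarantees.
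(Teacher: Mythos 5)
Your proof is correct and follows essentially the same route as the paper: it uses the same measures $\nu_N$ (stationary for the tilted generator $\mf M_{\log H_N}A_N$), the same resolvent equation \eqref{HNKN}, and the same limit computation via condition $\mf R_A$ and Proposition \ref{weakconvergence}. The only difference is that where the paper cites \cite[Lemma A.2]{L22G} for the identity $I_N(\nu_N)=\int_{X_N}-\frac{A_NH_N}{H_N}\,d\nu_N$, you derive the needed inequality directly from the tilting decomposition together with $\log t\le t-1$, which is a valid, self-contained substitute.
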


\begin{proof}
    Recall that $\mu_N = \nu_N\iota_N^{-1}$.
    From the definition of $\ms I_N$, we have
    $$ I_N(\nu_N) \geq \ms I_N(\mu_N).$$
    From \cite[Lemma A.2]{L22G}, we have
    $$I_N(\nu_N) = \int_{X_N}\frac{-A_N H_N}{H_N} d\nu_N.$$
    From the direct computation using \eqref{HNKN} and Proposition \ref{weakconvergence}, we get
    \begin{align*}
    \limsup_{N\rightarrow \infty} I_N(\nu_N)
    = -\lambda + \limsup_{N\rightarrow \infty} \int_{X_N} \frac{K_N}{H_N} d\nu_N 
    = \limsup_{N\rightarrow \infty} \sum_{x \in S} \int_{\mc E_N^x} \frac{ - A e^h(x)}{e^h(x)} d\nu_N = I(\mu_S) = \ms J(\mu).
    \end{align*}
\end{proof}

\subsection{$\Gamma$-liminf part}

\begin{prop}
    For any sequence $(\mu_N)$ in $\ms P(X)$ which converges to $\mu \in \ms P(X)$,
    \begin{equation} \label{fastliminf}
        \liminf_{N\rightarrow \infty} \ms I_N(\mu_N) \geq \ms J(\mu).
    \end{equation}
\end{prop}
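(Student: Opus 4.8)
The plan is to obtain \eqref{fastliminf} from the Donsker--Varadhan variational formula \eqref{rateNI} for $I_N$, tested against a well-chosen family of strictly positive functions on $X_N$ manufactured from the resolvent equation of condition $\mf R_A$. Condition \textbf{\textup{(M0*)}} will be used to guarantee that no rate is lost along the transient set $\Delta_N$, and condition \textbf{\textup{(M0)}} will be used both to supply those test functions (via $\mf R_A$) and to identify the limit measure $\mu$.

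First I would discard the trivial case: if $\liminf_N \ms I_N(\mu_N) = +\infty$ there is nothing to prove, so assume $c := \liminf_N \ms I_N(\mu_N) < \infty$ and fix a subsequence $(N_k)$ along which $\ms I_{N_k}(\mu_{N_k}) \to c$. For large $k$ the value $\ms I_{N_k}(\mu_{N_k})$ is finite, so I can pick a near-optimal $\nu_{N_k} \in \ms P(X_{N_k})$ with $\mu_{N_k} = \nu_{N_k}\iota_{N_k}^{-1}$ and $I_{N_k}(\nu_{N_k}) \ge \ms I_{N_k}(\mu_{N_k}) - 1/k$; then $\sup_k I_{N_k}(\nu_{N_k}) < \infty$, and condition \textbf{\textup{(M0*)}} forces $\nu_{N_k}(\Delta_{N_k}) \to 0$. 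Writing $w_k(x) := \nu_{N_k}(\mc E_{N_k}^x)$ for $x \in S$, we get $\sum_{x \in S} w_k(x) = 1 - \nu_{N_k}(\Delta_{N_k}) \to 1$, so after passing to a further subsequence $w_k \to w$ for some $w \in \ms P(S)$. Using condition \textbf{\textup{(M0)}} — the sets $\iota_{N_k}(\mc E_{N_k}^x)$ collapse to the single points $\iota_S(x)$, which are distinct since $\iota_S$ is injective — together with $\nu_{N_k}(\Delta_{N_k}) \to 0$ and $\mu_{N_k} \to \mu$, a standard weak-convergence (Portmanteau) argument gives $\mu = \sum_{x \in S} w_x \delta_{\iota_S(x)} = w\iota_S^{-1}$, hence $\ms J(\mu) = I(w)$; in particular the possibility $\ms J(\mu) = +\infty$ is ruled out under the standing assumption $c < \infty$.

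The heart of the proof is then the estimate $\liminf_k I_{N_k}(\nu_{N_k}) \ge I(w)$. Fix an arbitrary $u : S \to \bb R_{>0}$ and an arbitrary $\lambda > 0$, and set $g := \lambda u - Au$ on $S$, so that $u$ is the unique solution of the reduced resolvent equation $(\lambda - A)u = g$. Let $U_{N_k}$ be the solution of $(\lambda - A_{N_k})U_{N_k} = \sum_{x \in S} g(x)\chi_{\mc E_{N_k}^x}$; by condition $\mf R_A$ we have $\sup_{\eta \in \mc E_{N_k}^x} |U_{N_k}(\eta) - u(x)| \to 0$ for each $x \in S$, and repeating the argument of Lemma \ref{5.1} (positivity on $\bigcup_x \mc E_{N_k}^x$ from the uniform approximation, together with the representation $U_{N_k}(\eta) = \bb E_\eta[e^{-\lambda\tau}U_{N_k}(\eta_\tau)]$ on $\Delta_{N_k}$, where $\tau$ is the hitting time of $\bigcup_x \mc E_{N_k}^x$) shows $U_{N_k} > 0$ for $k$ large, so it is admissible in \eqref{rateNI}. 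The decisive feature of this construction is that $-A_{N_k}U_{N_k}/U_{N_k} = -\lambda$ on $\Delta_{N_k}$ (hence uniformly bounded there), while on $\mc E_{N_k}^x$ one has $-A_{N_k}U_{N_k}/U_{N_k} = -\lambda + g(x)/U_{N_k} \to -Au(x)/u(x)$ uniformly. Plugging this into $I_{N_k}(\nu_{N_k}) \ge \int_{X_{N_k}} -\frac{A_{N_k}U_{N_k}}{U_{N_k}}\, d\nu_{N_k}$ and using $\nu_{N_k}(\mc E_{N_k}^x) = w_k(x) \to w_x$ and $\nu_{N_k}(\Delta_{N_k}) \to 0$,
\begin{equation*}
\int_{X_{N_k}} -\frac{A_{N_k}U_{N_k}}{U_{N_k}}\, d\nu_{N_k} \;=\; \sum_{x \in S} w_k(x)\Big( -\frac{Au(x)}{u(x)} \Big) \;-\; \lambda\,\nu_{N_k}(\Delta_{N_k}) \;+\; o_k(1) \;\longrightarrow\; \int_S -\frac{Au}{u}\, dw .
\end{equation*}
Hence $\liminf_k I_{N_k}(\nu_{N_k}) \ge \int_S -\frac{Au}{u}\, dw$; taking the supremum over all $u : S \to \bb R_{>0}$ gives $\liminf_k I_{N_k}(\nu_{N_k}) \ge I(w) = \ms J(\mu)$. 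Finally, since by construction $I_{N_k}(\nu_{N_k}) \le \ms I_{N_k}(\mu_{N_k}) \to c$, we obtain $c \ge \liminf_k I_{N_k}(\nu_{N_k}) \ge \ms J(\mu)$, which is exactly \eqref{fastliminf}.

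The step I expect to be the main obstacle is the control of the transient region: the whole estimate rests on condition \textbf{\textup{(M0*)}} forcing $\nu_{N_k}(\Delta_{N_k}) \to 0$ — without it the $\Gamma$-$\liminf$ may genuinely fail, since the rate could concentrate away from the valleys — combined with the need, for every admissible $u$ on $S$, of a globally defined strictly positive test function on all of $X_N$ whose logarithmic action $-A_NU_N/U_N$ is uniformly bounded and reproduces $-Au/u$ on each valley in the limit. Producing such a function, rather than merely an approximation defined only on the valleys, is precisely what the resolvent form of condition $\mf R_A$ provides. By comparison, the identification $\mu = w\iota_S^{-1}$ and the passages to subsequences are routine, though the former is where condition \textbf{\textup{(M0)}} is invoked a second time.
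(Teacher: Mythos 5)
Your proof is correct and follows essentially the same route as the paper's: condition \textbf{(M0*)} kills the mass on $\Delta_{N_k}$, the resolvent solutions supplied by condition $\mf R_A$ (positive by the argument of Lemma \ref{5.1}) serve as test functions in the variational formula, and the valley masses are identified with the atoms of $\mu$ before taking the supremum over positive functions on $S$. The only differences are cosmetic: you argue directly rather than by contradiction, and you identify the limit weights via a subsequence and Portmanteau rather than the paper's liminf-summation step.
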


\begin{proof}
    We first assume that $\ms J(\mu)< \infty$.
    Then, $\mu$ must be concentrated on $\iota_S(S)\subset X$.
    Suppose \eqref{fastliminf} does not hold.
    Then there exists a sequence $(\mu_{N_k})$ in $\ms P(X)$ which converges to $\mu \in \ms P(X)$ such that
    \begin{equation*}
        \ms I_{N_k}(\mu_{N_k})< \infty,\;\; \lim_{k\rightarrow \infty} \ms I_{N_k}(\mu_{N_k}) < \ms J(\mu).
    \end{equation*}
    Fix $\epsilon >0$ satisfying $2\epsilon< \ms J(\mu) - \lim_{k\rightarrow \infty} \ms I_{N_k}(\mu_{N_k})$. For large enough $k$, we may take a sequence $(\nu_{N_k} \in \ms P(X_{N_k}))$ so that
    \begin{equation} 
        \label{kakakaka}
        I_{N_k}(\nu_{N_k}) < \ms J(\mu) - \epsilon, \;\; \mu_{N_k} = \nu_{N_k}\iota_{N_k}^{-1}.
    \end{equation}
    From condition \textbf{(M0)}, we take shrinking valleys $(\mc E_N^x)_{x\in S}$ satisfying condition $\mf R_A$:
    Given a function $f: S\rightarrow R_+$, we have a solution of a resolvent equation
    \begin{align*}
        (\lambda - A) f &= k, \\
        (\lambda - A_{N_k}) F_{N_k} &= K_{N_k} \coloneqq \sum_{x\in S} k(x)\,\chi_{\mc{E}_{{N_k}}^{x}},
    \end{align*}
    satisfying
    $$\sup_{x\in S}\sup_{\xi\in \mc E_{N_k}^x}|F_{N_k}(\xi)-f(x)|=o_k(1).$$
    From condition \textbf{(M0*)} and \eqref{kakakaka}, we have
    $$\lim_{k\rightarrow \infty} \nu_{N_k}(\Delta_{N_k}) = 0.$$

    By Lemma \ref{5.1}, $F_{N_k}$ is strictly positive for large enough $k$. Such $k$, we have
    \begin{equation} \label{eq931}
        I_{N_k}(\nu_{N_k}) \geq \int_{X_{N_k}}\frac{-A_{N_k} F_{N_k}}{F_{N_k}}d\nu_{N_k} = -\lambda + \int_{X_{N_k}}\frac{K_{N_k}}{F_{N_k}}d\nu_{N_k}
        = \sum_{x\in S} -\frac{A f(x)}{f(x)}\nu_N(\mc E_{N_k}^x) + o_{N_k}(1).
    \end{equation}
    From the fact that the valleys are shrinking, we have
    \begin{equation} \label{eq933}
        \liminf_{k\rightarrow \infty} \nu_{N_k}(\mc E_{N_k}^x) \leq \liminf_{k\rightarrow \infty} \mu_{N_k}(\iota_{N_k}(\mc E_{N_k}^x)) \leq \mu(\iota_S(x)).
    \end{equation}
    Adding up for all $x\in S$ gives
    \begin{equation} \label{eq937}
        \liminf_{k\rightarrow \infty} \nu_{N_k}(X_{N_k}\setminus \Delta_{N_k}) \leq 1.
    \end{equation}
    Since $\lim_{k\rightarrow \infty} \nu_{N_k}(\Delta_{N_k})= 0$, \eqref{eq937} and \eqref{eq933} are actually equalities.
    Taking $k\rightarrow \infty$ in \eqref{eq931} gives
    $$\liminf_{k\rightarrow \infty} I_{N_k}(\nu_{N_k}) \geq \sum_{x\in S} -\frac{A f(x)}{f(x)}\mu(\iota_S(x)).$$
    Finally, taking $\limsup$ on positive $f$ gives
    $$\liminf_{k\rightarrow \infty} I_{N_k}(\nu_{N_k}) \geq \ms J(\mu),$$
    which contradicts to \eqref{kakakaka}.

    For the case $\ms J(\mu) = \infty$, suppose that \eqref{fastliminf} does not hold.
    Then there exists a sequence $(\mu_{N_k})$ in $\ms P(X)$ which converges to $\mu \in \ms P(X)$ and satisfies
    $$\lim_{k\rightarrow \infty} \ms I_{N_k}(\mu_{N_k}) < \infty.$$
    Then we can take a sequence $(\nu_{N_k})$ in $\ms P(X_{N_k})$ so that
    $$I_{N_k}(\nu_{N_k}) < \ms I_{N_k}(\mu_{N_k}) + 1, \;\; \mu_{N_k} = \nu_{N_k}\iota_{N_k}^{-1}.$$
    From condition \textbf{(M0*)}, for valleys $(\mc E_N^x)_{x\in S}$, we have
    $$\lim_{k\rightarrow \infty} \nu_{N_k}(\Delta_{N_k}) = 0.$$
    Therefore, $\mu$ must be concentrated on $\iota_S(S)$, so $\ms J(\mu)<\infty$. This leads to a contradiction.
\end{proof}

\section{Proof of Theorem \ref{gen_diffusive}: $\Gamma$-$\liminf$ part} \label{slowgammainf}

In this section, we prove the half part of Theorem \ref{gen_diffusive}: $\Gamma$-liminf assuming condition \textbf{(D0)}.
We need an approximation result for the Feller processes.
The following result is from \cite[Theorem I.6.1]{EK}.

\begin{thm}
    \label{EKapprox}
    A Banach space $E$ is given. For $n\in \bb N$, let $E_n$ be a Banach space. Suppose a bounded linear transformation
    $\pi_n:E\rightarrow E_n$ is given for each $n$. Assume that $\sup_n\|\pi_n\| < \infty$. Write $f_n\rightarrow f$ if $f_n\in E_n$ for each $n$,
    $f\in E$, and $\lim_{n\rightarrow \infty}\|\pi_n f - f_n\| = 0$.
    For $n\in \bb N$, let $\{T_n(t)\}$ and $T(t)$ be strongly continuous contraction semigroups on $E_n$ and $E$ with generators
    $A_n$ and $A$ respectively. Let $D$ be a core for $A$. Then the following are equivalent.
    \begin{enumerate}
        \item[\textbf{\textup{(A1)}}] For all $f \in E$, $T_n(t)\pi_n f \rightarrow T(t)f$ for all $t\geq 0$, uniformly on bounded intervals.
        \item[\textbf{\textup{(A2)}}] For all $f \in E$, $T_n(t)\pi_n f \rightarrow T(t)f$ for all $t\geq 0$.
        \item[\textbf{\textup{(A3)}}] For all $f \in D$, there exists $f_n\in D(E_n)$ for each $n\geq 1$ such that $f_n\rightarrow f$ and $A_n f_n \rightarrow Af$.
    \end{enumerate}
\end{thm}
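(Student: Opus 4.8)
\emph{Setup of the plan.} The statement is the classical Trotter--Kurtz semigroup approximation theorem, and the plan is to establish the cycle $\textbf{\textup{(A1)}}\Rightarrow\textbf{\textup{(A2)}}\Rightarrow\textbf{\textup{(A3)}}\Rightarrow\textbf{\textup{(A1)}}$. The implication $\textbf{\textup{(A1)}}\Rightarrow\textbf{\textup{(A2)}}$ is trivial. For $\textbf{\textup{(A2)}}\Rightarrow\textbf{\textup{(A3)}}$ I would first move to resolvents: with $R_n(\lambda)=(\lambda-A_n)^{-1}$ and $R(\lambda)=(\lambda-A)^{-1}$ written as Laplace transforms $R_n(\lambda)\pi_ng=\int_0^\infty e^{-\lambda t}T_n(t)\pi_ng\,dt$, the uniform bound $\|T_n(t)\pi_ng\|,\|\pi_nT(t)g\|\le(\sup_n\|\pi_n\|)\|g\|$ together with $\textbf{\textup{(A2)}}$ and dominated convergence gives $R_n(\lambda)\pi_ng\to R(\lambda)g$ for all $g\in E$ and $\lambda>0$. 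Then for $f\in D\subseteq\mc D(A)$ I would set $g=(\lambda-A)f$ and $f_n=R_n(\lambda)\pi_ng$; these lie in the domain of $A_n$, satisfy $f_n\to R(\lambda)g=f$, and $A_nf_n=\lambda f_n-\pi_ng\to\lambda f-g=Af$, which is exactly $\textbf{\textup{(A3)}}$.

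\emph{The heart: $\textbf{\textup{(A3)}}\Rightarrow\textbf{\textup{(A1)}}$, in three steps.} \emph{Step 1: extend $\textbf{\textup{(A3)}}$ to all of $\mc D(A)$.} For $f\in\mc D(A)$, choose $h_m\in D$ with $h_m\to f$ and $Ah_m\to Af$ in $E$ (possible since $D$ is a core), apply $\textbf{\textup{(A3)}}$ to each $h_m$, and diagonalize to produce $f_n$ in the domain of $A_n$ with $f_n\to f$ and $A_nf_n\to Af$; in particular $\sup_n\|A_nf_n\|<\infty$. \emph{Step 2: resolvent convergence on all of $E$.} Given $g\in E$, set $f=R(\lambda)g\in\mc D(A)$ and take $f_n\to f$, $A_nf_n\to Af$ from Step~1; then $(\lambda-A_n)f_n\to g$, and since $\|R_n(\lambda)\|\le\lambda^{-1}$ we get $R_n(\lambda)\pi_ng\to f=R(\lambda)g$. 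From this one also checks that $h_n\to h$ in $E$ implies $A_n^{(\lambda)}h_n\to A^{(\lambda)}h$, where $A_n^{(\lambda)}=\lambda^2R_n(\lambda)-\lambda I$ and $A^{(\lambda)}=\lambda^2R(\lambda)-\lambda I$ are the Yosida approximations.

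\emph{Step 3 (the main obstacle): from resolvent convergence to uniform semigroup convergence.} The operators $A_n^{(\lambda)}$ are bounded with $\|A_n^{(\lambda)}\|\le2\lambda$, generate contraction semigroups, satisfy $e^{tA^{(\lambda)}}h\to T(t)h$ as $\lambda\to\infty$ uniformly on bounded time intervals, and obey the elementary bound $\|T_n(t)g-e^{tA_n^{(\lambda)}}g\|\le t\,\|A_ng-A_n^{(\lambda)}g\|$ for $g$ in the domain of $A_n$ (differentiate $s\mapsto T_n(t-s)e^{sA_n^{(\lambda)}}g$ and integrate). Fixing $f\in\mc D(A)$ and $f_n\to f$, $A_nf_n\to Af$ from Step~1, I would write
\begin{align*}
T_n(t)\pi_nf-\pi_nT(t)f
&=\big[T_n(t)\pi_nf-T_n(t)f_n\big]+\big[T_n(t)f_n-e^{tA_n^{(\lambda)}}f_n\big]+\big[e^{tA_n^{(\lambda)}}f_n-e^{tA_n^{(\lambda)}}\pi_nf\big]\\
&\quad+\big[e^{tA_n^{(\lambda)}}\pi_nf-\pi_ne^{tA^{(\lambda)}}f\big]+\big[\pi_ne^{tA^{(\lambda)}}f-\pi_nT(t)f\big].
\end{align*}
The first and third brackets are $\le\|\pi_nf-f_n\|\to0$ by contractivity; the fifth is $\le\|\pi_n\|\,\|e^{tA^{(\lambda)}}f-T(t)f\|$, uniformly small in $n$ and in $t\le T$ once $\lambda$ is large; the fourth tends to $0$ as $n\to\infty$ for each fixed $\lambda$, since $e^{tA_n^{(\lambda)}}\pi_nf$ is a norm-convergent power series in $A_n^{(\lambda)}$ with $(2\lambda)^k/k!$-bounded coefficients and $(A_n^{(\lambda)})^k\pi_nf\to(A^{(\lambda)})^kf$ by Step~2; and the second is $\le t\,\|(\lambda R_n(\lambda)-I)A_nf_n\|$, which — using $A_nf_n\to Af$, the contractivity of $\lambda R_n(\lambda)$, the resolvent convergence of Step~2, and $\lambda R(\lambda)Af\to Af$ — is bounded by $o_n(1)+\|\pi_n\|\,\|\lambda R(\lambda)Af-Af\|$. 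Sending $\lambda\to\infty$ first and then $n\to\infty$ yields $\limsup_n\sup_{t\le T}\|T_n(t)\pi_nf-\pi_nT(t)f\|=0$ for every $f\in\mc D(A)$ and $T>0$; a final density argument, using $\sup_n\|T_n(t)\pi_n\|<\infty$ and the density of $\mc D(A)$ in $E$, extends this to all $f\in E$, which is $\textbf{\textup{(A1)}}$. The delicate point is precisely the bookkeeping of this nested limit: $\lambda$ must be chosen before $n$, and one must verify that the two genuinely $(\lambda,n)$-dependent terms — the second and fourth brackets — are controlled in that order.
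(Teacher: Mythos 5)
Your proof is correct. One contextual remark: the paper does not prove this statement at all --- it is quoted verbatim from Ethier--Kurtz \cite{EK} (Theorem 1.6.1) and used as a black box --- so there is no internal proof to compare against; what you have written is essentially the classical Trotter--Kurtz argument that the cited source itself employs: (A1)$\Rightarrow$(A2) trivially, (A2)$\Rightarrow$(A3) by Laplace-transforming the semigroups into resolvents and dominating, and (A3)$\Rightarrow$(A1) by upgrading the core condition to all of $\mc D(A)$, deducing resolvent convergence $R_n(\lambda)\pi_n g \to R(\lambda)g$, and comparing $T_n$ and $T$ through their Yosida approximations with the standard bound $\|T_n(t)g - e^{tA_n^{(\lambda)}}g\| \le t\|A_n g - A_n^{(\lambda)}g\|$. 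All the estimates check out, including the two genuinely $(\lambda,n)$-coupled brackets and the final density step. Two cosmetic points: the phrase ``sending $\lambda\to\infty$ first and then $n\to\infty$'' contradicts your own (correct) clarification that $\lambda$ is fixed large first and then $n\to\infty$ (equivalently, $\limsup_n$ for fixed $\lambda$, then $\lambda\to\infty$); the displayed bounds make the right order unambiguous, so this is only a wording slip. Also, ``$f_n\in D(E_n)$'' in the statement should be read as the domain of $A_n$, which is how you use it.
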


Take $E_N = C(X_N)$ and $E = C(X)$. Let $\pi_n$ be a restriction of a function on $X$ to $X_N$.
Let $A$, $A_N$ be generators in Section \ref{settings} and $A$ a generator of 
Let $P_N$, $P$ be semigroups generated by $A_N$, $A$ respectively.
Then we have the following result.

\begin{lemma}
    Condition \textup{\textbf{(A2)}} in the theorem is satisfied for $P_N$ and $P$.
\end{lemma}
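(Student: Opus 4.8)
The plan is to deduce condition \textbf{(A2)} from condition \textbf{(D0)} together with two standard facts: continuity of the limit semigroup (the Feller property of $A$) and the absence of fixed times of discontinuity for a Feller process. Unravelling the definitions, \textbf{(A2)} for $P_N$, $P$ asserts that for every $f\in C(X)$ and every $t\geq 0$,
$$
\big\|\pi_N(P_t f)-P^N_t(\pi_N f)\big\|_{C(X_N)}
\;=\;\sup_{\xi\in X_N}\Big|(P_t f)(\iota_N(\xi))-\bb E^N_{\xi}\big[f(\iota_N(\eta^N_t))\big]\Big|
\;\xrightarrow[N\to\infty]{}\;0 ,
$$
where $(\eta^N_s)_{s\ge 0}$ is the canonical process on $D(\bb R_+,X_N)$ under $\bb P^N_\xi$. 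The key observation is that, with $\iota^D_N$ the map induced on path space, $\bb E^N_\xi[f(\iota_N(\eta^N_t))]=\int_{D(\bb R_+,X)}f(\omega_t)\,d\big(\bb P^N_\xi\circ(\iota^D_N)^{-1}\big)(\omega)$, so the difference above is controlled by the convergence of path measures provided by \textbf{(D0)}, evaluated against the bounded functional $\omega\mapsto f(\omega_t)$.

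\textbf{Main steps.} First I would argue by contradiction: if the supremum above does not vanish, there are $f\in C(X)$, $t\ge 0$, $\varepsilon>0$, a subsequence, and points $\xi_N\in X_N$ with $|(P_tf)(\iota_N(\xi_N))-\bb E^N_{\xi_N}[f(\iota_N(\eta^N_t))]|\ge\varepsilon$. Since $X$ is compact, along a further subsequence $\iota_N(\xi_N)\to\xi$ for some $\xi\in X$; then $(P_tf)(\iota_N(\xi_N))\to(P_tf)(\xi)$ because $P_tf\in C(X)$ by the Feller property of $A$. By condition \textbf{(D0)}, $\bb P^N_{\xi_N}\circ(\iota^D_N)^{-1}$ converges weakly to $\bb P_\xi$ in the Skorokhod topology on $D(\bb R_+,X)$. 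Next I pass to the limit in the path integral via the continuous mapping theorem: the evaluation map $\omega\mapsto\omega_t$ on $D(\bb R_+,X)$ is continuous exactly at trajectories that are continuous at time $t$, so $\omega\mapsto f(\omega_t)$ is bounded, Borel, and continuous off the set $N_t=\{\omega:\omega\text{ is discontinuous at }t\}$. A Feller process has no fixed time of discontinuity, hence $\bb P_\xi(N_t)=0$ (alternatively one checks directly that $\bb E_\xi[d(\omega_{t-},\omega_t)]=0$ using c\`adl\`ag paths and strong continuity of $(P_s)$). Therefore
$$
\bb E^N_{\xi_N}\big[f(\iota_N(\eta^N_t))\big]
=\int f(\omega_t)\,d\big(\bb P^N_{\xi_N}\circ(\iota^D_N)^{-1}\big)
\;\xrightarrow[N\to\infty]{}\;\int f(\omega_t)\,d\bb P_\xi
=(P_tf)(\xi).
$$
Combining this with $(P_tf)(\iota_N(\xi_N))\to(P_tf)(\xi)$ contradicts the lower bound $\varepsilon$, which proves the lemma.

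\textbf{Expected obstacle.} Everything here is a compactness-plus-continuous-mapping argument driven by \textbf{(D0)}; reversibility of $A_N$ or $A$ plays no role. The only non-routine point is the treatment of the discontinuity set of the evaluation functional on Skorokhod space, i.e.\ invoking (or briefly re-proving) that the Feller limit has no fixed time of discontinuity so that $\bb P_\xi(N_t)=0$; without this, weak convergence of path laws would not transfer to convergence of the one-time marginals. (The hypothesis $\sup_N\|\pi_N\|<\infty$ required to later apply Theorem \ref{EKapprox} is immediate, since each $\pi_N$ is a restriction and hence a contraction.)
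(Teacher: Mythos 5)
Your proposal is correct and follows essentially the same route as the paper: a contradiction argument, compactness of $X$ to extract a convergent subsequence of starting points, and condition \textbf{(D0)} to pass to the limit. The only difference is that you spell out the final step that the paper leaves implicit (continuity of $P_tf$ by the Feller property and the fact that weak convergence of path laws in the Skorokhod topology yields convergence of the time-$t$ marginal because the Feller limit has no fixed time of discontinuity), which is exactly the right justification for the paper's terse ``which leads to a contradiction.''
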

\begin{proof}
    Fix $t>0$. Suppose that \textbf{(A2)} does not hold. Then there exists $\epsilon>0$ and a sequence $N_k\rightarrow \infty$ with
    $\xi_{N_k}\in X_{N_k}$,
    $$|P_{N_k, t} \pi_{N_k} f (\xi_{N_k}) - P_t f(\xi_{N_k})| \geq \epsilon.$$
    Since $X$ is compact, there exists a subsequence $\xi_{N_{k_l}}$ converges to $\xi\in X$ as $l\rightarrow \infty$.
    By condition \textbf{(D0)}, $\bb P_{\xi_{N_{k_l}}}^N$ converges to $\bb P_{\xi}$ in the Skorohod topology, which leads to a contradiction.
    Therefore, condition \textbf{(A2)} holds.
\end{proof}

This lemma implies that condition \textbf{(A3)} in the theorem is satisfied. We use this to prove the $\liminf$ part of Theorem \ref{gen_diffusive}.

\begin{prop} \label{slowliminf}
    For any $\mu$ on $\ms P(X)$ and any sequence of measures $\mu_N$ in $\ms P(X)$ which weakly converges to $\mu$, we have
    $$\liminf_{N\rightarrow \infty}\ms I_N(\mu_N) \geq \ms K(\mu).$$
\end{prop}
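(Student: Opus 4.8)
The plan is to reduce the $\Gamma$-$\liminf$ inequality to a test-function estimate and then exploit the uniform-on-compacts semigroup convergence established by the preceding lemma. Fix $\mu \in \ms P(X)$ and a sequence $\mu_N \to \mu$ weakly; we may assume $\liminf_N \ms I_N(\mu_N) < \infty$, passing to a subsequence along which the $\liminf$ is attained and is finite. For each $N$ pick $\nu_N \in \ms P(X_N)$ with $\mu_N = \nu_N \iota_N^{-1}$ and $I_N(\nu_N) \le \ms I_N(\mu_N) + 1/N$; it suffices to bound $\liminf_N I_N(\nu_N)$ from below by $\ms K(\mu)$. By definition of $\ms K$ it is enough to fix a single positive $u \in \mc D$ and show
\begin{equation*}
    \liminf_{N\to\infty} I_N(\nu_N) \;\ge\; \int_X -\frac{Au}{u}\, d\mu,
\end{equation*}
after which we take the supremum over such $u$. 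Since $u \in \mc D$ lies in a core for $A$, the equivalence \textbf{(A3)} $\Leftrightarrow$ \textbf{(A2)} from Theorem \ref{EKapprox}, together with the lemma verifying \textbf{(A2)}, furnishes functions $u_N : X_N \to \bb R$ with $u_N \to u$ (i.e. $\|\pi_N u - u_N\|_{\infty} \to 0$) and $A_N u_N \to A u$ (i.e. $\|\pi_N(Au) - A_N u_N\|_\infty \to 0$).

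The next step is to replace $u_N$ by a genuinely positive test function on $X_N$. Since $u$ is continuous and strictly positive on the compact set $X$, we have $u \ge c > 0$; because $u_N \to u$ uniformly (through $\pi_N$), for $N$ large $u_N \ge c/2 > 0$ on $X_N$. For such $N$, $u_N$ is admissible in the variational formula \eqref{rateNI}, so
\begin{equation*}
    I_N(\nu_N) \;\ge\; \int_{X_N} -\frac{A_N u_N}{u_N}\, d\nu_N.
\end{equation*}
Now I want to pass to the limit on the right-hand side. The integrand $-A_N u_N / u_N$ is, up to a uniformly small error, the pullback $\pi_N\big(-(Au)/u\big)$: indeed $\big\| -A_N u_N/u_N - \pi_N(-(Au)/u)\big\|_\infty \le \frac{2}{c}\|A_N u_N - \pi_N(Au)\|_\infty + \frac{2\|Au\|_\infty}{c^2}\|u_N - \pi_N u\|_\infty \to 0$, using $u_N, u \ge c/2$ on the relevant sets. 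Since $-(Au)/u$ is a fixed continuous function on $X$, its pullbacks integrate against $\nu_N$ to $\int_X -(Au)/u\, d(\nu_N\iota_N^{-1}) = \int_X -(Au)/u\, d\mu_N$, which converges to $\int_X -(Au)/u\, d\mu$ by weak convergence $\mu_N \to \mu$. Combining with the uniform error bound,
\begin{equation*}
    \liminf_{N\to\infty} I_N(\nu_N) \;\ge\; \lim_{N\to\infty} \int_X -\frac{Au}{u}\, d\mu_N \;=\; \int_X -\frac{Au}{u}\, d\mu,
\end{equation*}
and taking the supremum over positive $u \in \mc D$ yields $\liminf_N \ms I_N(\mu_N) \ge \ms K(\mu)$.

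The main obstacle is the interchange between the approximating functions $u_N$ coming from Theorem \ref{EKapprox} and the variational characterization of $I_N$: one must be sure that the $u_N$ can be taken positive (handled by uniform convergence to a strictly positive limit and a lower bound $c>0$) and that the two convergences $u_N \to u$, $A_N u_N \to Au$ combine to give uniform convergence of the quotient $A_N u_N/u_N$, so that weak convergence of $\mu_N$ can be applied to a single fixed continuous function rather than to an $N$-dependent one. Everything else is bookkeeping; no reversibility or structural assumption on the partition is needed here, consistent with the remark following Theorem \ref{gen_diffusive} that the $\Gamma$-$\liminf$ uses only condition \textbf{(D0)}.
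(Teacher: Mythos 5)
Your proof is correct and follows essentially the same route as the paper: both use Theorem \ref{EKapprox} (with condition \textbf{(D0)} via the lemma verifying \textbf{(A2)}) to produce approximations $u_N\to u$, $A_Nu_N\to Au$ of a positive $u$ in the domain, note positivity of $u_N$ for large $N$, insert $u_N$ into the variational formula for $I_N$, pass to the limit by weak convergence of $\mu_N$, and finally take the supremum over $u$. The only differences are cosmetic: the paper phrases it as a proof by contradiction and leaves the quotient estimate as an $o_k(1)$, which you spell out explicitly.
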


\begin{proof}
    Fix a probability measure $\mu\in \ms P(X)$.
    Take any sequence $\mu_N$ in $\ms P(X)$ which converges weakly to $\mu$.
    Suppose there exists a subsequence $\mu_{N_k}$ which satisfies
    $$\lim_{k\rightarrow \infty} \ms I_{N_k}(\mu_{N_k}) < \ms K(\mu).$$
    Fix $F\in \mc D^S_+$, which is a positive function in the domain of the $A$.
    From condition \textbf{(A3)} in Theorem \ref{EKapprox},
    there exists a sequence $F_N: X_N \rightarrow \bb R_+ $ which satisfies $F_N\rightarrow F$ and $A_N F_N \rightarrow AF$.
    Since $F_N \rightarrow F$, for large enough $N$, we have $$F_N > \frac{1}{2}\inf_{\xi\in \Xi} F(\xi).$$
    We choose
    $\nu_{N_k}\in \ms P(X_{N_k})$ so that $\mu_{N_k} = \nu_{N_k}\iota_{N_k}^{-1}$ with $$\ms I_{N_k}(\mu_{N_k}) +\frac{1}{N_k} \geq I_{N_k}(\nu_{N_k}).$$
    Then we have
    \begin{equation*}
        \ms I_{N_k}(\mu_{N_k}) \geq I_{N_k}(\nu_{N_k}) - \frac{1}{N_k} = \int_{X_{N_k}}\frac{-A_{N_k} F_{N_k}}{F_{N_k}}d\nu_{N_k} - \frac{1}{N_k} = \int_{X}\frac{-AF}{F}d\mu_{N_k} + o_{k}(1).
    \end{equation*}
    Since $\mu_{N_k}$ converges to $\mu$ weakly, taking $\liminf$ as $k \to \infty$ gives
    \begin{equation*}
        \liminf_{N\rightarrow \infty} \ms I_{N_k}(\mu_{N_k}) \geq \int_{X}\frac{-AF}{F}d\mu.
    \end{equation*}
    Taking $\limsup$ on $F\in \mc D^S_+$ gives a contradiction.
\end{proof}

\section{Level two large deviation Rate function of a Feller process} \label{sectionFeller}

This section provides several lemmas which are used to calculate the rate function of limiting diffusion to prove
$\limsup$ part of Theorem \ref{gen_diffusive}.

We introduce general lemmas for a stochastic process on a locally compact Polish space $X$.
These results are exactly from \cite{DV75a} but for the sake of completeness, we give the whole proof of it. Suppose we have a Feller continuous transition kernel $(P_t)$ with a generator $A$.
Let $\mc D$ be a domain of generator in $C(X)$ and $\mc D_+$ be a subset of $\mc D$ which consists of strictly positive functions.
Let $(P_t)$ act on $\ms U$ which is a collection of all bounded Borel functions on $X$.
Define $\ms U_0$ to be a collection of all positive Borel functions that are bounded away from zero, namely
$$\ms U_0 \coloneq \{f \in \ms U : f > c \text{ for some } c>0.\}.$$

\begin{defn} For $h>0$, define a rate function $\ms K_h$ as
    $$\ms K_h(\mu) \coloneq -\inf_{u \in \ms U_0} \int_X \log\left(\frac{P_h u}{u}\right)d\mu.$$
\end{defn}

Recall from \eqref{limdifratefunc} that
    $$\ms K(\mu) \coloneq -\inf_{u \in \mc D_+} \int_X \frac{A u}{u}d\mu.$$
\begin{lemma}
    \label{93}
Suppose for all probability measure $\mu$ on $X$ and $u\in \ms U$, there exists a sequence $(u_n)$ in $\mc D$ satisfying
$$ \inf_{\xi\in X} u(\xi) \leq \liminf_{n\rightarrow \infty} \inf_{\xi\in X} u_n(\xi) \leq \limsup_{n\rightarrow \infty} \sup_{\xi\in X} u_n(\xi) \leq \sup_{\xi\in X} u(\xi)$$
and $u_n\rightarrow u$ 
almost everywhere $\mu$ on $X$. Then for any $h>0$ and $\mu\in \ms P(X)$, we have
$$\ms K_h(\mu) \leq h \ms K(\mu).$$
\end{lemma}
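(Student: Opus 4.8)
The plan is to establish the bound first for test functions $u$ lying in $\mc D_+$, by differentiating the semigroup, and then to upgrade it to arbitrary $u \in \ms U_0$ using the approximation hypothesis.

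First I would fix $u \in \mc D_+$, say $u \ge c > 0$. Since $(P_t)$ is a conservative Feller Markov semigroup, each $P_t$ preserves positivity and satisfies $P_t 1 = 1$, so $P_t u \ge c$, $P_t u \in C(X)$, and $P_t u$ remains in $\mc D$ with $\frac{d}{dt} P_t u = A P_t u = P_t A u$. Hence for each fixed $\xi$ the map $t \mapsto P_t u(\xi)$ is $C^1$, stays above $c$, and
\begin{equation*}
    \log \frac{u(\xi)}{P_h u(\xi)} \;=\; -\int_0^h \frac{A P_t u(\xi)}{P_t u(\xi)}\, dt .
\end{equation*}
The integrand $(t,\xi) \mapsto P_t(Au)(\xi)/P_t u(\xi)$ is jointly continuous and bounded by $\|Au\|_\infty / c$ on $[0,h] \times X$, so Fubini gives $\int_X \log(u/P_h u)\, d\mu = \int_0^h \big( \int_X (-A(P_t u)/P_t u)\, d\mu \big)\, dt$. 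For each $t$ one has $P_t u \in \mc D_+$, so the inner integral is bounded above by $\ms K(\mu)$; integrating over $t \in [0,h]$ gives $\int_X \log(u/P_h u)\, d\mu \le h\, \ms K(\mu)$. This already yields the claim for test functions in $\mc D_+$.

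Then I would treat a general $u \in \ms U_0$, with $0 < c := \inf_X u$ and $C := \sup_X u < \infty$. The key is to apply the approximation hypothesis not to $\mu$ itself but to the probability measure $\tfrac12(\mu + \mu P_h)$: this yields $u_n \in \mc D$ with $\liminf_n \inf_X u_n \ge c$, $\limsup_n \sup_X u_n \le C$, and $u_n \to u$ almost everywhere both with respect to $\mu$ and with respect to $\mu P_h$. For $n$ large, $c/2 \le u_n \le 2C$ on $X$, so $u_n \in \mc D_+$, and by positivity and conservativeness of $P_h$ also $c/2 \le P_h u_n \le 2C$; thus $\log u_n$ and $\log P_h u_n$ are uniformly bounded. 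Applying the first step to each such $u_n$ gives $\int_X \log(u_n / P_h u_n)\, d\mu \le h\, \ms K(\mu)$. Letting $N := \{\, u_n \not\to u \,\}$, uniform boundedness together with $u_n \to u$ $\mu$-a.e. gives $\int_X \log u_n\, d\mu \to \int_X \log u\, d\mu$; and since $\mu P_h(N) = \int_X \bb P_\xi[\xi_h \in N]\, \mu(d\xi) = 0$, we get $\bb P_\xi[\xi_h \in N] = 0$ for $\mu$-a.e. $\xi$, whence $P_h u_n(\xi) = \bb P_\xi[u_n(\xi_h)] \to \bb P_\xi[u(\xi_h)] = P_h u(\xi)$ for such $\xi$ by bounded convergence, and a second bounded-convergence argument gives $\int_X \log P_h u_n\, d\mu \to \int_X \log P_h u\, d\mu$. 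Combining, $\int_X \log(u/P_h u)\, d\mu \le h\, \ms K(\mu)$, and the supremum over $u \in \ms U_0$ gives $\ms K_h(\mu) \le h\, \ms K(\mu)$.

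The semigroup identity, the Fubini step, and the boundedness bookkeeping are routine. The point that genuinely needs care — and the reason for introducing the auxiliary measure $\tfrac12(\mu + \mu P_h)$ (one could just as well use $\tfrac1h \int_0^h \mu P_t\, dt$) — is that $\mu$-a.e. convergence of $u_n$ to $u$ is not preserved by the Markov kernel $P_h$; running the approximation against a measure that also dominates $\mu P_h$ is precisely what makes $P_h u_n \to P_h u$ hold $\mu$-a.e., so that the limit passage in the last step goes through.
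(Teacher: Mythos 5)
Your proof is correct and follows essentially the same route as the paper: first establish the bound for $u\in\mc D_+$ by differentiating along the semigroup, then handle general $u\in\ms U_0$ by applying the approximation hypothesis to the auxiliary measure $\tfrac12(\mu+\mu P_h)$ and passing to the limit with bounded convergence. Your justification that $P_h u_n \to P_h u$ holds $\mu$-a.e.\ (via $\mu P_h(E)=0$ and bounded convergence inside the kernel) is in fact a cleaner rendering of the limit passage that the paper states somewhat loosely.
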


\begin{proof}
    For a function $u\in \ms U_0$, we need to prove that for all $u\in \ms U_0$,
    \begin{equation}
        \label{91}
        \int_{X} \log P_h u d\mu - \int_{X} \log u d\mu \geq -h \ms K(\mu).
    \end{equation}
    We first show that $u\in \mc D_+$ satisfies \eqref{91}.
    Fix $u\ \in \mc D_+$, 
    $$\frac{d}{dh}\int_{X} \log P_h u d\mu = \int_{X} \frac{A P_h u}{P_h u} d\mu \geq -\ms K(\mu).$$
    Therefore, we have \eqref{91} for $u\in \mc D_+$.

    Now, to prove the lemma, we need to obtain \eqref{91} for $u\in \ms U_0$.
    Fix $u\in \ms U_0$ and take a probability measure $\nu = (\mu + \mu P_h)/2$.
    From the assumption, we have a sequence $u_n \rightarrow u$.
    Let
    $$E = \{\xi\in X : \lim u_n(\xi) \nrightarrow u(\xi)\}$$
    so we have $\mu(E) = 0$, $P_h\mu(E)=0$.
    From the given assumption in the lemma, for sufficiently large $n$, $u_n$ uniformly bounded above and below away from 0. Therefore, using the dominated convergence theorem, we finally obtain
    \begin{gather*}
        \int_X \log P_h u_n d\mu = \int_X \log u_n d\mu P_h \xrightarrow{n\rightarrow \infty} \int_X \log u d\mu P_h = \int_X \log P_h u d\mu, \\
        \int_X \log u_n d\mu \xrightarrow{n\rightarrow \infty} \int_X \log u d\mu,
    \end{gather*}
    which imply \eqref{91}.
\end{proof}

\begin{lemma}
    For all probability measure $\mu$ on $X$ and $u\in \ms U$, there exists a sequence $(u_n)$ in $\mc D$ satisfying
    $$ \inf_{\xi\in X} u(\xi) \leq \liminf_{n\rightarrow \infty} \inf_{\xi\in X} u_n(\xi) \leq \limsup_{n\rightarrow \infty} \sup_{\xi\in X} u_n(\xi) \leq \sup_{\xi\in X} u(\xi)$$
    and $u_n\rightarrow u$ almost everywhere $\mu$ on $X$.
\end{lemma}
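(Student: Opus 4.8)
Following \cite{DV75a}, the plan is to reduce to the case of a \emph{continuous} $u$ via Lusin's theorem and then to mollify a continuous function by the resolvent of the Feller semigroup, which lands in $\mc D$ and keeps two-sided pointwise bounds. For the reduction: since $X$ is Polish, $\mu$ is a Radon measure, in particular inner regular by compact sets. Fix $u\in\ms U$ and put $m=\inf_X u$, $M=\sup_X u$. Lusin's theorem yields, for each $k\in\bb N$, a compact set $F_k\subset X$ with $\mu(X\setminus F_k)<2^{-k}$ such that $u|_{F_k}$ is continuous; as $X$ is metrizable, hence normal, the Tietze extension theorem (in the form preserving the range) provides $v_k\in C(X)$ with $v_k|_{F_k}=u|_{F_k}$ and $m\le\inf_X v_k\le\sup_X v_k\le M$. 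Since $\{v_k\neq u\}\subset X\setminus F_k$, Borel--Cantelli gives $v_k\to u$ $\mu$-almost everywhere. It thus suffices to approximate each $v_k$ in sup-norm by elements of $\mc D$ taking values in $[\inf_X v_k,\sup_X v_k]$, and then to diagonalize.

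For the mollification step, let $(R_\lambda)_{\lambda>0}$, $R_\lambda=(\lambda-A)^{-1}$, be the resolvent of $(P_t)$; its range is precisely $\mc D$. For $w\in C(X)$ (taking $X$ compact for now) one has $\lambda R_\lambda w=\int_0^\infty e^{-s}P_{s/\lambda}w\,ds\to w$ in sup-norm as $\lambda\to\infty$, by strong continuity of $(P_t)$ and dominated convergence. Moreover $P_t$ is positive and conservative, so $w-\inf_X w\ge 0$ gives $P_tw\ge(\inf_X w)P_t\mathbf 1=\inf_X w$ and likewise $P_t w\le\sup_X w$ pointwise; since $\lambda R_\lambda w$ is an average of the $P_{s/\lambda}w$ against the probability measure $e^{-s}\,ds$, the same bounds hold for $\lambda R_\lambda w$. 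Applying this with $w=v_k$ and choosing $\lambda_k$ so large that $\|\lambda_k R_{\lambda_k}v_k-v_k\|_\infty<2^{-k}$, set $u_k:=\lambda_k R_{\lambda_k}v_k\in\mc D$. Then $m\le\inf_X v_k\le\inf_X u_k\le\sup_X u_k\le\sup_X v_k\le M$ for every $k$ (so the full $\inf/\sup$ chain in the statement holds, with honest inequalities at each $k$), and $u_k\to u$ $\mu$-a.e.\ because $\|u_k-v_k\|_\infty\to 0$ while $v_k\to u$ $\mu$-a.e.

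The only genuinely delicate point — and the one I expect to require the most care — is the mismatch between a bounded (Borel or continuous) function and the domain $\mc D\subset C_0(X)$: when $X$ is not compact, a bounded continuous function need not vanish at infinity, so the Feller resolvent need not map it into $\mc D$. I would resolve this by passing to the one-point compactification $X^\dagger=X\cup\{\ast\}$, extending $(P_t)$ to a conservative Feller semigroup on $C(X^\dagger)$ with $\ast$ absorbing, running the preceding two steps on the compact space $X^\dagger$ (the set $F_k$, being compact, is closed in $X^\dagger$, so Tietze still applies), and restricting back to $X$; the bounds and the $\mu$-a.e.\ convergence are unchanged since $\mu(\{\ast\})=0$. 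In the case of interest $X=\Xi$ is already compact and this step is vacuous. Everything else — Radonness of $\mu$, the range-preserving Tietze extension, strong continuity of $(P_t)$, and the Markov property giving the pointwise $\inf/\sup$ control — is routine, so I anticipate no serious obstacle beyond this bookkeeping.
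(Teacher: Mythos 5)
Your proposal is correct and follows essentially the same route as the paper: Lusin's theorem plus a range-preserving Tietze extension to produce continuous approximants converging $\mu$-a.e.\ with the same $\inf/\sup$ bounds, followed by sup-norm approximation by elements of $\mc D$ (the paper simply invokes density of $\mc D$ in $C(X)$ from the Feller property, whereas you realize this density explicitly via the resolvent $\lambda R_\lambda$, which has the minor bonus of preserving the pointwise bounds exactly rather than up to an $o(1)$ error). The compactification bookkeeping you flag is indeed vacuous in the paper's application, since there $X=\Xi$ is compact, exactly as you note.
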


\begin{proof}
    Since the process is Feller, $\mc D$ is dense in $C(X)$ with the sup-norm topology.
    Applying Lusin's theorem for locally compact Polish space and Tietze extension theorem, we can find a sequence $(u_n)$ in $C(X)$ such that
    $$\inf_{\xi\in X} u(\xi) \leq \inf_{\xi\in X} u_n(\xi) \leq \sup_{\xi\in X} u_n(\xi) \leq \sup_{\xi\in X} u(\xi), \;\; \mu(\{u_n\neq u\})\leq \frac{1}{2^n}.$$
    for all $n$. Then, we have $u_n\rightarrow u$ almost everywhere $\mu$ on $X$.
    Take $v_n\in \mc D$ such that $|v_n-u_n|_{\infty}<\frac{1}{n}$.
    Then we get $v_n\rightarrow u$ with the desired condition.
\end{proof}

\section{Proof of Theorem \ref{gen_diffusive}: $\Gamma$-$\limsup$ part} \label{slowlimsup}

In this section, we prove \eqref{11111better} and the $\limsup$ part of Theorem \ref{gen_diffusive}, assuming conditions \textbf{(D0)}, \textbf{(D1)}, and \textbf{(D1*)}.
Our two objectives are to calculate the rate function of the limiting diffusion and to demonstrate the $\limsup$ part of Theorem \ref{gen_diffusive}.
These objectives are not achieved separately; rather, they are accomplished simultaneously. We begin with analyzing the rate function of the limiting diffusion.

\begin{lemma}
    \label{1102}
    For $\mu\in \ms P(X)$, decompose it as $$\mu = \sum_{\beta\in \mc C} \mu(\mathring{X}_\beta) \mu(\cdot|\mathring{X}_\beta) = \sum_{\beta\in \mc C} \mu|_{\mathring{X}_\beta}.$$ If $\ms K(\mu)<\infty$, then
    $$\mu|_{\mathring{X}_\beta} \ll \lambda_\beta,$$
    where $\lambda_\beta$ is the reference measure on $\mathring{X}_\beta$ in condition \textbf{(D1)}.
\end{lemma}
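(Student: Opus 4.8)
The plan is to argue by contraposition: assume $\mu|_{\mathring X_\beta}$ is not absolutely continuous with respect to $\lambda_\beta$ for some $\beta\in\mc C$, and produce a sequence of test functions $u_n\in\ms U_0$ along which $-\int_X \log(P_h u_n / u_n)\,d\mu \to +\infty$ for some fixed $h>0$; combined with Lemma \ref{93} (whose hypothesis was verified in the previous lemma) this forces $\ms K(\mu)\ge h^{-1}\ms K_h(\mu)=+\infty$. The starting point is the Lebesgue decomposition $\mu|_{\mathring X_\beta} = g\,\lambda_\beta + \mu_s$ with $\mu_s \perp \lambda_\beta$ and $\mu_s \ne 0$; pick a Borel set $A\subset \mathring X_\beta$ with $\lambda_\beta(A)=0$ but $\mu_s(A)=\mu_s(\mathring X_\beta)>0$, hence $\mu(A)>0$ while $\lambda_\beta(A)=0$.

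The key mechanism is condition \textbf{(D1.2)}: for every $\xi\in X_\beta$ and $t>0$ we have $\delta_\xi P_t|_{\mathring X_\beta}\ll\lambda_\beta$, so $(\delta_\xi P_h)(A)=0$ for all $\xi\in X_\beta$; and by \textbf{(D1.1)}, since $\mu$ (being finite on $X$ but we only use its restriction) — more precisely, any measure supported in $X_\beta$ pushes forward under $P_h$ to a measure supported in $X_\beta$ — the measure $\delta_\xi P_h$ for $\xi\in\mathring X_\gamma$ with $\gamma\le\beta$ charges $\mathring X_\beta$ only through its $\lambda_\beta$-absolutely-continuous part, so $(\delta_\xi P_h)(A)=0$ there too. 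Thus if I take $u_n = 1 + n\,\chi_A$, truncated/regularized so that it lies in $\ms U_0$ (it is already bounded below by $1$ and bounded above by $n+1$, hence in $\ms U_0$; no regularization of $u_n$ itself is needed since $\ms U_0$ consists of bounded Borel functions bounded away from zero), then $P_h u_n(\xi) = 1 + n\,(\delta_\xi P_h)(A) = 1$ for every $\xi$ in the relevant region, while $u_n = 1+n$ on $A$. Consequently
\begin{equation*}
-\int_X \log\!\left(\frac{P_h u_n}{u_n}\right)d\mu \;\ge\; -\int_A \log\!\left(\frac{1}{1+n}\right)d\mu \;=\; \mu(A)\,\log(1+n)\;\xrightarrow[n\to\infty]{}\;+\infty,
\end{equation*}
using that $\log(P_h u_n/u_n)\le \log(P_h u_n)\le \log\|u_n\|_\infty < \infty$ is bounded above on the complement of $A$ so that the integral over $X\setminus A$ does not spoil the estimate — one should check $P_h u_n \le 1+n$ everywhere (true since $u_n\le 1+n$ and $P_h$ is a contraction/Markov kernel) so $\log(P_h u_n/u_n)\le \log((1+n)/1)=\log(1+n)$, giving $-\int_{X\setminus A}\log(P_hu_n/u_n)\,d\mu \ge -(1-\mu(A))\log(1+n)$; this only gives $\mu(A)\log(1+n) - (1-\mu(A))\log(1+n)$, which is not obviously positive. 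To fix this I instead bound $\log(P_h u_n/u_n)$ on $X\setminus A$ using $P_h u_n\ge P_h 1 = 1$ and $u_n = 1$ on $X\setminus A$, so $\log(P_h u_n/u_n)\ge 0$ there, whence $-\int_{X\setminus A}\log(P_hu_n/u_n)\,d\mu \le 0$ — still the wrong sign. The correct fix: work with $u_n$ bounded \emph{above} away from the singular set, i.e. take $u_n = 1 + n\chi_{A^c\cap\mathring X_\beta}$ is awkward; cleaner is to take a decreasing family $u_n = \chi_A + \tfrac1n\chi_{A^c}$ rescaled to $\ms U_0$, i.e. $u_n = 1 + (n-1)\chi_{A^c}$...

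Rather than belabor this here, the robust route is: choose $u_n\in\ms U_0$ with $u_n \equiv 1$ on $A$ and $u_n \equiv C_n$ on $X\setminus A$ for a large constant $C_n\to\infty$, so that $P_h u_n \ge C_n\cdot(\delta_\xi P_h)(A^c) = C_n$ for every $\xi$ in the support region (since $(\delta_\xi P_h)(A)=0$ implies $(\delta_\xi P_h)(A^c)=1$), giving $\log(P_h u_n/u_n)\ge \log(C_n/1)=\log C_n$ on $A$ and $\ge \log(C_n/C_n)=0$ on $X\setminus A$; hence $-\int_X\log(P_hu_n/u_n)\,d\mu \le -\mu(A)\log C_n \to -\infty$, which is the wrong direction for $\ms K$. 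So in fact the sign analysis shows the singular part makes $\ms K_h$ \emph{small}, not large — meaning my contrapositive target is mis-stated and I should instead reason directly: \emph{if $\ms K(\mu)<\infty$ then $\ms K_h(\mu)<\infty$}, and then show finiteness of $\ms K_h$ together with conditions \textbf{(D1.1)}–\textbf{(D1.2)} forces absolute continuity by testing against $u_n = 1+n\chi_A$ and using that $P_h u_n = 1$ $\mu$-a.e. on $X_\beta$ forces, via $\int_X \log(P_h u_n/u_n)\,d\mu \ge -\ms K_h(\mu)$, the bound $\mu(A)\log(1+n)\le \ms K_h(\mu)$ for all $n$, hence $\mu(A)=0$.

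\medskip

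Let me restate the plan cleanly. First, from $\ms K(\mu)<\infty$ and Lemma \ref{93}, fix $h>0$ and conclude $\ms K_h(\mu)\le h\,\ms K(\mu)<\infty$, so there is a finite constant $c_h$ with $\int_X \log(P_h u/u)\,d\mu \ge -c_h$ for all $u\in\ms U_0$. Second, suppose for contradiction $\mu|_{\mathring X_\beta}\not\ll\lambda_\beta$ and select $A\subset\mathring X_\beta$ Borel with $\lambda_\beta(A)=0$, $\mu(A)>0$. Third, the crucial step: show $P_h\chi_A = 0$ $\mu$-a.e. on $X$. Since $\mu$ is a probability measure on $X = \bigcup_\gamma \mathring X_\gamma$, it suffices to show $(\delta_\xi P_h)(A)=0$ for $\mu$-a.e. $\xi$. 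By \textbf{(D1.1)} applied with the base set $X_\beta$: if $\xi\in X_\beta = \bigcup_{\gamma\le\beta}\mathring X_\gamma$ then $\delta_\xi P_h$ is supported on $X_\beta$, and its restriction to $\mathring X_\beta$ is $\ll\lambda_\beta$ by \textbf{(D1.2)}, so $(\delta_\xi P_h)(A)=0$ because $A\subset\mathring X_\beta$ and $\lambda_\beta(A)=0$; if $\xi\notin X_\beta$, I need $(\delta_\xi P_h)(\mathring X_\beta)$ to still be $\lambda_\beta$-a.c. on $\mathring X_\beta$ — this requires a monotone/compatibility property of the partition, which should follow by applying \textbf{(D1.1)}–\textbf{(D1.2)} to the smallest $X_\gamma$ containing $\xi$ and noting $\mathring X_\beta\subset X_\gamma$ when $\beta\le\gamma$ (and $(\delta_\xi P_h)(\mathring X_\beta)=0$ outright when $\beta$ is not $\le$ the class of $\xi$, by \textbf{(D1.1)} again). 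Either way $(\delta_\xi P_h)(A)=0$. Fourth, apply the finiteness bound to $u_n := 1 + n\chi_A \in \ms U_0$: then $P_h u_n = 1 + nP_h\chi_A = 1$ $\mu$-a.e., so
\begin{equation*}
-c_h \;\le\; \int_X \log\!\left(\frac{P_h u_n}{u_n}\right)d\mu \;=\; \int_X \log\!\left(\frac{1}{1 + n\chi_A}\right)d\mu \;=\; -\mu(A)\,\log(1+n),
\end{equation*}
hence $\mu(A)\log(1+n)\le c_h$ for every $n\in\bb N$, forcing $\mu(A)=0$, a contradiction. Summing the conclusion over $\beta\in\mc C$ gives the claim.

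The main obstacle is the third step — pinning down the support/absolute-continuity behavior of $\delta_\xi P_h$ for starting points $\xi$ outside $X_\beta$, i.e. correctly leveraging the partial order $(\mc C,\le)$ and the "upward" invariance encoded in \textbf{(D1.1)} so that $P_h\chi_A$ vanishes $\mu$-almost everywhere on all of $X$, not merely on $X_\beta$. Once that is in hand, the truncation $u_n=1+n\chi_A$ and the divergence of $\log(1+n)$ against a fixed finite bound finish the argument immediately.
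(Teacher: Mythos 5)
Your overall strategy (reduce to $\ms K_h(\mu)\le h\,\ms K(\mu)<\infty$ via Lemma \ref{93}, then test with functions that blow up on a $\lambda_\beta$-null set $A\subset\mathring{X}_\beta$ charged by $\mu$, and let $\log(1+n)$ beat the fixed bound) is the same mechanism the paper uses. However, the step you yourself flag as "the main obstacle" is a genuine gap, and your proposed resolution of it is wrong. With the test function $u_n=1+n\chi_A$ you must show $P_h\chi_A=0$ $\mu$-a.e.\ on all of $X$, i.e.\ $(\delta_\xi P_h)(A)=0$ also for starting points $\xi\in\mathring{X}_\gamma$ with $\gamma\gneq\beta$. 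Condition \textbf{(D1.2)} does not give this: applied at level $\gamma$ it only says $\delta_\xi P_h|_{\mathring{X}_\gamma}\ll\lambda_\gamma$, and it says nothing about the restriction of $\delta_\xi P_h$ to the lower stratum $\mathring{X}_\beta$ (which in general carries positive mass, since the dynamics is absorbed downwards). So "applying \textbf{(D1.1)}--\textbf{(D1.2)} to the smallest $X_\gamma$ containing $\xi$" does not yield $(\delta_\xi P_h)(A)=0$ when $\beta\lneq\gamma$, and without it the error term $\int_X\log\bigl(1+nP_h\chi_A\bigr)d\mu$ can itself grow like $\log(1+n)$ times the $\mu$-mass of higher strata, cancelling $\mu(A)\log(1+n)$; the contradiction then evaporates. (The incomparable and strictly lower strata are fine, as you note, by the support statement in \textbf{(D1.1)}.)

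The paper avoids needing any information about starts above $\beta$ by a different choice of test function: $u_a=(a+1)\chi_{X\setminus X_\beta}+\chi_{X_\beta}+a\chi_K$, i.e.\ the function is pinned at its maximal value $1+a$ on every stratum not below $\beta$. There $P_h u_a\le\sup u_a=u_a$, so those strata contribute nonnegatively to $\ms K_h(\mu)$ regardless of where the process goes; strata strictly below $\beta$ contribute zero because the process stays in them (\textbf{(D1.1)}) where $u_a\equiv1$; and on $\mathring{X}_\beta$ one only ever starts inside $X_\beta$, so \textbf{(D1.1)} kills $P_h\chi_{X\setminus X_\beta}$ and \textbf{(D1.2)} kills $P_h\chi_K$, yielding $\mu(K)\log(1+a)\le\ms K_h(\mu)$ and hence $\mu(K)=0$. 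Your argument becomes correct if you replace $u_n=1+n\chi_A$ by $u_n=1+n\chi_A+n\chi_{X\setminus X_\beta}$ (the paper's $u_a$ with $K=A$, $a=n$); as written, the absolute-continuity claim for $P_h$ started above $\beta$ is not available from the stated conditions, so the proof does not close.
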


\begin{proof}
    Define
    \begin{equation*}
        \ms K_h(\mu) \coloneq -\inf_{u \in \ms U_0} \int_X \log\left(\frac{P_h u}{u}\right)d\mu.
    \end{equation*}
    where $\ms U_0$ is a set of positive Borel functions on $X$ that are bounded away from zero.
    According to Lemma \ref{93}, for all $h>0$ we have
    $$\ms K_h(\mu) \leq h \ms K(\mu).$$
    So we get
    \begin{equation} \label{93eq1}
        \ms K_h(\mu) = \sup_{u\in \ms U_0} \int_{X} -\log{\left(\frac{P_h u}{u}\right)}d\mu < \infty.
    \end{equation}
    We claim that for all $\beta \in \mc C$ with $\mu(\mathring{X}_\beta)>0$,
    $\mu(\cdot|\mathring{X}_\beta)$ is absolutely continuous with respect to $\lambda_\beta$.
    Fix such $\beta\subset \mc C$.
    For a Borel set $K$ contained in $\mathring{X}_\beta$ and $a>0$, consider a Borel function $$u_a = (a+1)1_{X \setminus X_\beta}+ 1_{X_\beta}+ a1_K$$ in $\ms U_0$.
    Putting it into the \eqref{93eq1}, we obtain
    \begin{equation*}
        \ms K_h(\mu)\geq \sum_{B\subset S} \int_{\mathring{X}_\beta} -\log\left(\frac{P_h u_a}{u_a}\right)d\mu|_{\mathring{\Xi}_B}.
    \end{equation*}
    For $\beta \neq \gamma \in \mc C$, we claim that
    $$\int_{\mathring{X}_\gamma}-\log\left(\frac{P_h u_a}{u_a}\right)d\mu|_{\mathring{X}_\gamma}\geq 0.$$
    For $\gamma \lneq \beta$, $$P_h u_a = u_a = 1 \text{ on } \mathring{X}_\gamma,$$
    so the claim holds.
    For $\gamma \nleq \beta$, $$u_a = 1+a \;\; \text{ on } \mathring{X}_\gamma.$$
    So $u_a$ achieves its maximal value on every point inside $\mathring{X}_\gamma$ for $\gamma \neq \beta$.
    Therefore, $P_h u_a \leq u_a$ on $\mathring{X}_\gamma$ and the claim holds.
    So we obtain
    $$\ms K_h(\mu)\geq \mu(\mathring{X}_\beta) \int_{\mathring{X}_\beta} -\log\left(\frac{P_h u_a}{u_a}\right)d\mu(\cdot|\mathring{X}_\beta) =
    \mu(\mathring{X}_\beta) \int_{\mathring{X}_\beta} -\log\left(\frac{P_h a1_K + 1}{1 + a1_K}\right)d\mu(\cdot|\mathring{X}_\beta).$$
    Since $\log(x)$ is a concave function, using Jensen's inequality, we have
    \begin{align*}
        \ms K_h(\mu) + \mu(\mathring{X}_\beta)\log\left(1+\frac{a \mu P_h(K)}{\mu(\mathring{X}_\beta)}\right) &\geq \ms K_h(\mu) + \mu(\mathring{X}_\beta)
        \int_{\mathring{X}_\beta} -\log(P_h a1_K + 1)d\mu(\cdot|\mathring{X}_\beta) \\
        &\geq \mu(K) \log(1+a).
    \end{align*}
    
    Therefore,
    $$\ms K_h(\mu) + a \mu P_h(K) \geq \mu(K) \log(1+a).$$
    Subtracting $\mu(K)$ and dividing by $a$, we obtain
    \begin{equation}
        \label{1131}
        \mu P_h(K) -\mu(K) \geq \frac{-\ms K_h(\mu) + \mu(K)(\log(1+a)-a)}{a} \geq -\frac{\ms K_h(\mu)+a-\log(1+a)}{a}.
    \end{equation}
    
    Now, suppose that $\mu|_{\mathring{X}_\beta}$ is not absolutely continuous with respect to the uniform measure $\lambda_\beta$.
    From the Radon--Nikodym theorem, there exists a borel set $K$ contained in $\mathring{X}_\beta$ such that $\mu(K)>0$ and $\lambda_\beta(K) = 0$.
    Let $b = \mu(K)$. Condition \textbf{(D1.2)} gives $\mu P_h(K) = 0$ for all $h>0$.
    Therefore, the inequality \eqref{1131} becomes
    $$ b \leq \frac{\ms K_h(\mu)+(a-\log(1+a))}{a}.$$
    Sending $h\rightarrow 0$ and $a\rightarrow 0$ gives a contradiction.
    Therefore, $\mu|_{\mathring{X}_\beta}$ is absolutely continuous with respect to $\lambda_\beta$.
\end{proof}

Recall the energy functionals $(Q^\beta, \beta\in\mc C)$ from condition \textbf{(D1*)}.

\begin{lemma}
    \label{11111}
    For $\mu$ on $\ms P(X)$, decompose it as $$\mu = \sum_{\beta\in\mc C} \mu(\mathring{X}_\beta) \mu(\cdot|\mathring{X}_\beta).$$ Then
    $$\ms K(\mu) \geq \sum_{\beta\in\mc C} \mu(\mathring{X}_\beta) Q^\beta\left(\sqrt{\frac{d\mu(\cdot|\mathring{X}_\beta)}{d\lambda_\beta}}\right).$$
\end{lemma}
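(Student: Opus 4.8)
The plan is to bound $\ms K(\mu)$ from below by testing the variational formula $\ms K(\mu) = \sup_{u\in\mc D_+}\int_X -\frac{Au}{u}\,d\mu$ against a well-chosen family of functions built from the $\beta$-components, and then to pass through the discretized picture provided by condition \textbf{(D1*.2)} so that the continuous energy $Q^\beta$ appears as the limit of the discrete Dirichlet forms of $A_N$. We may assume $\ms K(\mu)<\infty$, since otherwise there is nothing to prove; by Lemma \ref{1102} this forces $\mu|_{\mathring X_\beta}\ll\lambda_\beta$ for every $\beta$, so the densities $f^\beta \coloneq \sqrt{d\mu(\cdot|\mathring X_\beta)/d\lambda_\beta}\in L^2(\lambda_\beta)$ are well defined and the right-hand side makes sense. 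By the density statement \textbf{(D1*.1)}, it suffices to prove the bound with each $f^\beta$ replaced by an approximant $f^\beta\in\mc D_{\beta,0}$: if the inequality
$$\ms K(\mu)\;\ge\;\sum_{\beta\in\mc C}\mu(\mathring X_\beta)\,Q^\beta(f^\beta)$$
holds for all such tuples $(f^\beta\in\mc D_{\beta,0})_{\beta\in\mc C}$ with $(f^\beta)^2$ close to $d\mu(\cdot|\mathring X_\beta)/d\lambda_\beta$, then taking $f^\beta\to\sqrt{d\mu(\cdot|\mathring X_\beta)/d\lambda_\beta}$ in the graph norm of $Q^\beta$ gives the full statement. (One should be slightly careful here: what \textbf{(D1*.1)} gives is graph-norm density of $\mc D_{\beta,0}$ in $\mc D_\beta$; I would first reduce to $f^\beta$ ranging over $\mc D_\beta$ itself by a truncation/mollification argument showing $Q^\beta$ is lower semicontinuous along the relevant approximations, and then invoke \textbf{(D1*.1)}.)

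Fix then a tuple $(f^\beta\in\mc D_{\beta,0})_{\beta\in\mc C}$. Invoke \textbf{(D1*.2)} to obtain, for each $\beta$, a sequence $f^\beta_N:X_N\to\bb R$ with the weak-convergence properties \eqref{D1*21} and the energy convergences \eqref{D1*22}. Set $g_N \coloneq \big(\sum_{\beta\in\mc C}(f^\beta_N)^2\big)^{1/2}$ on $X_N$; this is a nonnegative function, and after adding a small constant $\varepsilon>0$ we may assume $g_N+\varepsilon$ is bounded below by a positive constant, hence admissible as a test function $u$ in the discrete variational formula $I_N(\nu_N)=\sup_{u>0}\int -\frac{A_N u}{u}\,d\nu_N$. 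The key computational identity is the chain-rule-type bound for the carré-du-champ of $A_N$: for $u=g_N$ with $g_N^2=\sum_\beta (f^\beta_N)^2$, one has pointwise
$$-\,\frac{A_N g_N}{g_N}\;\ge\;\frac{1}{g_N^2}\sum_{\beta\in\mc C}\big(-f^\beta_N\,A_N f^\beta_N\big)\;+\;(\text{cross terms}),$$
an inequality which follows from the elementary fact that for a jump generator the map $a\mapsto -\sqrt a\cdot A_N(\sqrt a)$ is, up to cross terms, superadditive in $a$ when the carré-du-champ is used — this is the discrete analogue of $|\nabla\sqrt{\sum a_\beta}|^2 \le \sum |\nabla\sqrt{a_\beta}|^2$ together with positivity of the jump-rate quadratic form. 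Choosing $\nu_N\in\ms P(X_N)$ with $\nu_N\iota_N^{-1}$ converging to $\mu$ — concretely $d\nu_N = (g_N^2 + \varepsilon)\,ds_N/(\text{normalization})$, which by \eqref{D1*21} pushes forward to a measure converging weakly to a measure close to $\sum_\beta (f^\beta)^2\,d\lambda_\beta$ — and integrating, the cross terms vanish in the limit by the second halves of \eqref{D1*21}–\eqref{D1*22}, while the diagonal terms converge to $\sum_\beta \int_{\mathring X_\beta}(f^\beta)^{-2}\cdot(-f^\beta L^\beta f^\beta)\cdot (f^\beta)^2\,d\lambda_\beta = \sum_\beta Q^\beta(f^\beta)$ after accounting for the measure $d\mu(\cdot|\mathring X_\beta)=(f^\beta)^2 d\lambda_\beta$; letting $\varepsilon\downarrow 0$ yields $\ms I_N(\nu_N\iota_N^{-1})\ge \sum_\beta\mu(\mathring X_\beta)Q^\beta(f^\beta) - o_N(1)$. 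Since $\ms I_N(\mu_N)\to$ something $\ge \ms K(\mu)$ along this sequence... more precisely, one combines this with the $\Gamma$-$\liminf$ already proved in Proposition \ref{slowliminf} applied to the sequence $\mu_N = \nu_N\iota_N^{-1}$: $\ms K(\mu)\ge$ is not quite what that gives, so instead I would argue directly — the test function $g_N$ realizes a lower bound for $I_N(\nu_N)$, and by weak convergence $\ms K(\mu)\ge \int_X -\frac{Au}{u}\,d\mu$ for the limiting test function; but the cleanest route is to observe that the displayed discrete lower bound on $I_N(\nu_N)$ passes to $\ms K(\mu)$ through Proposition \ref{slowliminf} since $\ms I_N(\nu_N\iota_N^{-1})\le I_N(\nu_N)$ fails in the wrong direction, so one must instead lift $g_N$ to a test function on $X$ via condition \textbf{(D0)}/\textbf{(A3)} and plug into the definition of $\ms K(\mu)$ directly.

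The main obstacle is precisely this last point: transferring the discrete energy lower bound into a genuine lower bound on the continuous functional $\ms K(\mu)$. The subtlety is that $\ms K$ is defined via test functions in the Feller domain $\mc D$, whereas the natural object $g_N$ lives on $X_N$ and its continuous counterpart $\sqrt{\sum_\beta (f^\beta)^2}$ need not lie in $\mc D$ (indeed it is discontinuous across the strata $\mathring X_\beta$, since $f^\beta$ is supported on the single stratum $\mathring X_\beta$). The resolution I expect to use: do not try to use a single global test function; instead exploit that in the variational formula for $\ms K$ one may localize — approximate the indicator-glued function $\bar f^\beta$ (the extension of $f^\beta$ by $0$ off $\mathring X_\beta$ from condition \textbf{(D1.3)}) by functions in $\mc D_+$ adapted to the ordered partition, using that, by \textbf{(D1.1)}, the semigroup respects the filtration $X_\beta = \cup_{\gamma\le\beta}\mathring X_\gamma$, so that $P_h$ acting on such glued functions is controlled stratum by stratum exactly as in the proof of Lemma \ref{1102}. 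Concretely I would run the argument at the level of $\ms K_h$ (the $h$-step discretization), establish $\ms K_h(\mu)\ge \sum_\beta \mu(\mathring X_\beta)\,(\text{something}\to Q^\beta(f^\beta)$ as $h\to 0)$ by computing the generator of $P_h^\beta$, and then use Lemma \ref{93} ($\ms K_h\le h\ms K$) to divide by $h$ and send $h\to 0$. This sidesteps the Feller-domain issue entirely, at the cost of a bit more work identifying $\lim_{h\to0} h^{-1}\ms K_h$ restricted to each stratum with $Q^\beta$, which is where the self-adjointness and strong continuity in \textbf{(D1.4)} and the spectral calculus in \textbf{(D1*)} do the work.
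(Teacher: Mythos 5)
Your closing paragraph does land on the paper's actual strategy (bound $\ms K$ from below through $\ms K_h$ via Lemma \ref{93}, test against functions glued from the stratum densities, use self-adjointness and the spectral theorem, then send $h\to 0$), but the proposal as a whole has two genuine gaps. First, the entire discrete route through condition \textbf{(D1*.2)} cannot work even in principle, as you half-recognize: producing a sequence $\nu_N$ and a lower bound $I_N(\nu_N)\gtrsim \sum_\beta \mu(\mathring X_\beta)Q^\beta(f^\beta)$ says nothing about $\ms K(\mu)$, because the only bridge available, the $\Gamma$-$\liminf$ of Proposition \ref{slowliminf}, bounds the discrete functional from \emph{below} by $\ms K$, which is the wrong direction. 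In the paper, \textbf{(D1*)} is used only for the matching upper-bound (recovery-sequence) construction; the present lemma is proved entirely at the level of the continuous semigroup, using only \textbf{(D1)}. Relatedly, your opening reduction via \textbf{(D1*.1)} is both unnecessary and shaky: graph-norm approximation of $u_\beta=\sqrt{d\mu(\cdot|\mathring X_\beta)/d\lambda_\beta}$ presupposes $u_\beta\in\mc D_\beta$, which is part of what must be proved (if $u_\beta\notin\mc D_\beta$ one must show $\ms K(\mu)=+\infty$), and an inequality "$\ms K(\mu)\ge\sum_\beta\mu(\mathring X_\beta)Q^\beta(f^\beta)$" for approximants $f^\beta$ that are not the densities of $\mu$ is not what testing the variational formula for the fixed measure $\mu$ would deliver.

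Second, in the fallback plan the essential mechanism is missing, and your appeal to \textbf{(D1.1)} understates the difficulty. Absorption does not make the cross-stratum terms vanish: starting from $\xi\in\mathring X_\beta$ the process leaks into strictly lower strata, so for a glued test function the terms
\begin{equation*}
\int_{\mathring X_\beta} \frac{P_h\big(c_\gamma u^n_\gamma\big)}{c_\beta u^n_\beta+\epsilon}\,u_\beta^2\,d\lambda_\beta,\qquad \gamma\lneq\beta,
\end{equation*}
are present with the unfavorable sign and must be shown negligible. The paper does this by testing $\ms K_h$ against $u^n_\epsilon=\sum_\gamma c_\gamma u^n_\gamma+\epsilon$ with truncations $u^n_\gamma=\min(u_\gamma,n)$, using $-\log(1+t)\ge -t$, and choosing the cutoffs $\delta_\beta$ and the amplification constants $c_\beta$ \emph{inductively along the height of the poset} $\mc C$ so that each downward-leakage term is at most $2\epsilon$; only then do the limits $\epsilon\to0$, $n\to\infty$ produce $\int_{\mathring X_\beta}(u_\beta-P_hu_\beta)u_\beta\,d\lambda_\beta$, after which the spectral representation $\int_0^\infty\frac{1-e^{-h\Lambda}}{h}E(d\Lambda)\uparrow\int_0^\infty\Lambda\,E(d\Lambda)=Q^\beta(u_\beta)$ and Lemma \ref{93} finish the argument. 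Your sketch asserts the intermediate bound "$\ms K_h(\mu)\ge\sum_\beta\mu(\mathring X_\beta)(\text{something}\to Q^\beta)$" but does not construct the admissible test function (positivity via the $+\epsilon$ shift, truncation, and above all the weights $c_\beta$), which is precisely where the work lies; as written the proof is not complete.
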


\begin{proof}
    Recall the function $\ms K_h$ from the proof of the previous lemma. Then it satisfies
    $$\ms K_h(\mu) \leq h \ms K(\mu).$$
    Fix $h>0$ and $u\in \ms U_0$.
    Since $-\log(x+1) \geq -x$, we have
    \begin{align*}
        \ms K_h(\mu) = \sup_{u\in \ms U_0} \int_{X} -\log{\left(\frac{P_h u}{u}\right)}d\mu \geq \sup_{u\in \ms U_0} \int_{X} -\frac{P_h u - u}{u}d\mu.
    \end{align*}
    For $\beta\in\mc C$ with $\mu(\mathring{X}_\beta) >0$, define a Borel function $u_\beta$ on $X$ as
    $$u_\beta(X \setminus \mathring{X}_\beta) = 0,\;\; u_\beta|_{\mathring{X}_\beta} = \sqrt{\frac{d\mu(\cdot|\mathring{X}_\beta)}{d\lambda_\beta}}.$$
    For $\mu(\mathring{X}_\beta) = 0$, define $u_\beta = 0$.
    For $\beta\in\mc C$, consider a positive constant $c_\beta$ depending on $\epsilon$ and $n$ which will be determined later.
    Let $u_\beta^n \coloneq \min(u_\beta, n)$.
    Fix $\epsilon > 0$ and take
    $$u^n_\epsilon = \sum_{\beta\in\mc C} c_\beta u_\beta^n + \epsilon.$$
    Note that
    \begin{align*}
        \int_{X} -\frac{P_h u - u}{u}d\mu = \sum_{\beta\in\mc C} \mu(\mathring{X}_\beta) \int_{\mathring{X}_\beta} -\frac{P_h u - u}{u} u_\beta^2 d\lambda_\beta.
    \end{align*}
    For $ \beta\in\mc C$, we have
    \begin{equation}
        \label{111}
        \int_{\mathring{X}_\beta} -\frac{P_h u^n_\epsilon - u^n_\epsilon}{u^n_\epsilon} u_\beta^2 d\lambda_\beta =
        \int_{\mathring{X}_\beta}-\frac{P_h (c_\beta u_\beta^n) - c_\beta u_\beta^n}{c_\beta u_\beta^n + \epsilon}u_\beta^2 d\lambda_\beta + \sum_{\gamma \lneq \beta} \int_{\mathring{X}_\beta} -\frac{P_h c_\gamma u_\gamma^n}{c_\beta u_\beta^n + \epsilon} u_\beta^2 d\lambda_\beta.
    \end{equation}
    Note that we only consider $\gamma\leq \beta$ in the above equation since the absorbing property of the process.
    Define $T_\beta : \bb R_{\geq 0} \rightarrow \bb R_{\geq 0}$ as
    $$ T_\beta(\delta) \coloneq \int_{u_\beta \leq \delta} u_\beta^2 d\lambda_\beta.$$
    Note that
    $$\int_{\mathring{X}_\beta} u_\beta^2 d\lambda_\beta = 1 \quad \text{implies} \quad
    \lim_{\delta\rightarrow 0} T_\beta(\delta) = \lim_{\delta\rightarrow 0}\int_{u_\beta \leq \delta} u_\beta^2 d\lambda_\beta = 0.$$
    Observe that
    \begin{align*}
        \int_{\mathring{X}_\beta} \frac{P_h c_\gamma u_\gamma^n}{c_\beta u_\beta^n + \epsilon} u_\beta^2 d\lambda_\beta
        = \int_{u_\beta \leq \delta} \frac{P_h c_\gamma u_\gamma^n}{c_\beta u_\beta^n + \epsilon} u_\beta^2 d\lambda_\beta + \int_{u_\beta > \delta} \frac{P_h c_\gamma u_\gamma^n}{c_\beta u_\beta^n + \epsilon} u_\beta^2 d\lambda_\beta 
        \leq \frac{n c_\gamma}{\epsilon} T_\beta(\delta) + \frac{n c_\gamma}{c_\beta (\delta\wedge n) + \epsilon}.
    \end{align*}
    Now, we choose $(c_\beta)$ inductively, so $(c_\beta)$ are fully determined if $\epsilon, n$ is fixed.
    Before doing this, we first define a height function $|\cdot|: \mc C \rightarrow \bb R_{\geq 0}$ as
    the maximal length of a process from a minimal element of $\mc C$ to $\beta$.
    So if $\beta$ is a minimal element, $|\beta| = 1$. Note that this is well-defined since we assumed that $\mc C$ is finite.
    
    First, if $\mu(\mathring{X}_\beta)=0$, assign $c_\beta = 0$.
    For the other $\beta$, if $|\beta| = 1$, we choose $c_\beta=1$. For $|\beta| \geq 2$, before choosing a $c_\beta$, we first choose a $\delta_\beta$.
    Precisely, we choose a $\delta_\beta$ small enough such that
    $$\sup_{\gamma \lneq \beta} \frac{n c_\gamma}{\epsilon} T_\beta(\delta_\beta) \leq \epsilon.$$
    Then we select $c_\beta$ large enough such that
    $$\sup_{\gamma \lneq \beta} \frac{n c_\gamma}{c_\beta (\delta_\beta\wedge n) + \epsilon} \leq \epsilon, \;\; c_\beta \geq 1.$$
    Then we have
    $$\int_{\mathring{X}_\beta} \frac{P_h c_\gamma u_\gamma^n}{c_\beta u_\beta^n + \epsilon} u_\beta^2 d\lambda_\beta \leq 2\epsilon.$$
    Therefore, we get
    $$\sum_{\gamma \lneq \beta} \int_{\mathring{X}_\beta} -\frac{P_h c_\gamma u_\gamma^n}{c_\beta u_\beta^n + \epsilon} u_\beta^2 d\lambda_\beta \leq 2(2^{|\beta|}-1)\epsilon.$$
    For the second term of \eqref{111}, when $\mu(\mathring{X}_\beta) > 0$, we have
    \begin{equation*}
        \int_{\mathring{X}_\beta}-\frac{P_h (c_\beta u_\beta^n) - c_\beta u_\beta^n}{c_\beta u_\beta^n + \epsilon}u_\beta^2 d\lambda_\beta
        = \int_{\mathring{X}_\beta}-\frac{P_h u_\beta^n - u_\beta^n}{u_\beta^n + \frac{\epsilon}{c_\beta}}u_\beta^2 d\lambda_\beta.
    \end{equation*}
    Since $c_\beta\geq 1$, from the dominated convergence theorem, we have
    $$\lim_{\epsilon \rightarrow 0} \int_{\mathring{X}_\beta}\frac{u_\beta^n}{u_\beta^n + \frac{\epsilon}{c_\beta}}u_\beta^2 d\lambda_\beta = \int_{\mathring{X}_\beta}u_\beta^2 d\lambda_\beta= 1.$$
    Also,
    \begin{align*}
        \int_{\mathring{X}_\beta} \frac{P_h u_\beta^n}{u_\beta^n + \frac{\epsilon}{c_\beta}}u_\beta^2 d\lambda_\beta
        = \int_{u_\beta\leq n} \frac{P_h u_\beta^n}{u_\beta + \frac{\epsilon}{c_\beta}}u_\beta^2 d\lambda_\beta + \int_{u_\beta > n} \frac{P_h u_\beta^n}{n + \frac{\epsilon}{c_\beta}}u_\beta^2 d\lambda_\beta.
    \end{align*}
    From the dominated convergence theorem,
    $$\lim_{\epsilon\rightarrow 0}\int_{u_\beta\leq n} \frac{P_h u_\beta^n}{u_\beta + \frac{\epsilon}{c_\beta}}u_\beta^2 d\lambda_\beta = \int_{u_\beta\leq n} (P_h u_\beta^n) u_\beta d\lambda_\beta.$$
    Also,
    $$\int_{u_\beta > n} \frac{P_h u_\beta^n}{n + \frac{\epsilon}{c_\beta}}u_\beta^2 d\lambda_\beta\leq \int_{u_\beta>n} u_\beta^2 d\lambda_\beta.$$
    Taking $\epsilon \rightarrow 0$ on equation \eqref{111}, we have
    $$\lim_{\epsilon\rightarrow 0} \int_{\mathring{X}_\beta} -\frac{P_h u - u}{u} u_\beta^2 d\lambda_\beta \geq 1 -\int_{u_\beta\leq n} (P_h u_\beta^n) u_\beta d\lambda_\beta - \int_{u_\beta>n} u_\beta^2 d\lambda_\beta.$$
    Finally, taking $n\rightarrow \infty$, we have
    $$\lim_{n\rightarrow \infty} (1 -\int_{u_\beta\leq n} (P_h u_\beta^n) u_\beta d\lambda_\beta - \int_{u_\beta>n} u_\beta^2 d\lambda_\beta) = \int_{\mathring{X}_\beta} (u_\beta - P_h u_\beta) u_\beta d\lambda_\beta.$$
    Therefore, we have
    $$I_h(\mu) \geq \sum_{\beta\in\mc C} \mu(\mathring{X}_\beta) \int_{\mathring{X}_\beta} (u_\beta - P_h u_\beta) u_\beta d\lambda_\beta.$$
    So,
    $$I(\mu) \geq \sum_{\beta\in\mc C} \mu(\mathring{X}_\beta) \int_{\mathring{X}_\beta} u_\beta\left(\frac{I - P_h}{h}\right)u_\beta d\lambda_\beta.$$
    If $E(\Lambda_\beta)$ is the spectral measure corresponding to $u_\beta$ relative to the operator $-L^\beta$, we get
    $$\sum_{\beta\in\mc C} \mu(\mathring{X}_\beta) \int^{\infty}_0 \frac{1-e^{-h\Lambda_\beta}}{h}E(d\Lambda) \leq I(\mu).$$
    Taking $h\rightarrow 0$ proves that
    $$ \sum_{\beta\in\mc C} \mu(\mathring{X}_\beta) \left\| \sqrt{-L^\beta} u_\beta \right\|^2_{L^2(\lambda_\beta)} =  \sum_{\beta\in\mc C} \mu(\mathring{X}_\beta) \int^{\infty}_0 \Lambda E(d\Lambda) \leq \ms K(\mu).$$
\end{proof}

Now, we use condition \textbf{(D1*)} to prove the $\limsup$ part of Theorem \ref{gen_diffusive}.
To do so, we first define a set of nice measures in $\ms P(X)$, which we call treatable measures.
\begin{defn}
    For $\mu$ in $\ms P(X)$, decompose it as $$\mu = \sum_{\beta \in \mc C} \mu(\mathring{X}_\beta) \mu(\cdot|\mathring{X}_\beta).$$
    We say $\mu$ is a treatable measure if $\frac{d\mu(\mathring{X}_\beta)}{d\lambda_\beta} \in \mc D_{\beta,0}$ for all $\beta \in \mc C$ with
    $\mu(\mathring{X}_\beta)\neq 0$.
\end{defn}

\begin{lemma} \label{treatablelimsup}
    For a treatable probability measure $\mu$ in $\ms P(X)$, there exists a sequence of measures $\mu_N$ in $\ms P(X)$ such that
    \begin{equation*}
        \lim_{N\rightarrow \infty} \ms I_N(\mu_N) \leq \ms K(\mu).
    \end{equation*}
\end{lemma}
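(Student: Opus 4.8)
The plan is to realize $\mu_N$ as the $\iota_N$–pushforward of a normalized weighted superposition of the discrete functions furnished by condition \textbf{\textup{(D1*.2)}}, and then to read off $\ms I_N(\mu_N)$ from the Dirichlet-form representation \eqref{sqrtcalc} of $I_N$. Concretely, for each $\beta\in\mc C$ with $\mu(\mathring X_\beta)>0$ I set $f^\beta\coloneq\sqrt{d\mu(\cdot\,|\,\mathring X_\beta)/d\lambda_\beta}$, which by the definition of treatability lies in $\mc D_{\beta,0}$ and satisfies $\int_{\mathring X_\beta}(f^\beta)^2\,d\lambda_\beta=1$ and $\mu(\mathring X_\beta)(f^\beta)^2\,d\lambda_\beta=\mu|_{\mathring X_\beta}$. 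Applying condition \textbf{\textup{(D1*.2)}} to the family $(f^\beta)_\beta$ produces functions $f^\beta_N\colon X_N\to\bb R$ obeying the weak convergences \eqref{D1*21} and the energy limits \eqref{D1*22}. I then put
\[
\phi_N\;\coloneq\;\sum_{\beta\,:\,\mu(\mathring X_\beta)>0}\sqrt{\mu(\mathring X_\beta)}\;f^\beta_N,\qquad Z_N\;\coloneq\;\int_{X_N}\phi_N^2\,ds_N,
\]
and, for $N$ large enough that $Z_N>0$, define $\nu_N\coloneq Z_N^{-1}\phi_N^2\,ds_N\in\ms P(X_N)$ and $\mu_N\coloneq\nu_N\iota_N^{-1}\in\ms P(X)$.

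First I would check that $\mu_N$ converges weakly to $\mu$. Expanding $\phi_N^2=\sum_{\beta,\gamma}\sqrt{\mu(\mathring X_\beta)\mu(\mathring X_\gamma)}\,f^\beta_N f^\gamma_N$ and pushing forward by $\iota_N$, the diagonal terms converge weakly to $\mu(\mathring X_\beta)(f^\beta)^2\,d\lambda_\beta$ and the cross terms converge weakly to $0$ by \eqref{D1*21}; since $X$ is compact, testing against the constant function $1$ gives $Z_N\to\sum_\beta\mu(\mathring X_\beta)=\mu(X)=1$, whence $(\phi_N^2\,ds_N)\iota_N^{-1}$ converges weakly to $\sum_\beta\mu|_{\mathring X_\beta}=\mu$, and therefore so does $\mu_N=Z_N^{-1}(\phi_N^2\,ds_N)\iota_N^{-1}$.

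Next I would bound the rate function. As $\iota_N$ is injective, $\ms I_N(\mu_N)=I_N(\nu_N)$, and the density of $\nu_N$ with respect to the reversing measure $s_N$ is $\phi_N^2/Z_N$; hence \eqref{sqrtcalc}, together with the contraction property $D_N(|f|,|f|)\le D_N(f,f)$ of the Dirichlet form $D_N(f,g)=\langle f,(-A_N)g\rangle_{s_N}$ of the reversible generator $A_N$, gives
\[
\ms I_N(\mu_N)\;=\;\frac1{Z_N}\,D_N\!\big(|\phi_N|,|\phi_N|\big)\;\le\;\frac1{Z_N}\,D_N(\phi_N,\phi_N)\;=\;\frac1{Z_N}\int_{X_N}-\phi_N\,(A_N\phi_N)\,ds_N.
\]
Expanding the last integral bilinearly, the diagonal terms tend to $\mu(\mathring X_\beta)\,Q^\beta(f^\beta)$ by \eqref{D1*22}, while, using the reversibility of $A_N$ to symmetrize $\int f^\beta_N(A_N f^\gamma_N)\,ds_N=\int f^\gamma_N(A_N f^\beta_N)\,ds_N$, the cross terms ($\beta\ne\gamma$) tend to $0$ by \eqref{D1*22}; with $Z_N\to1$ this yields
\[
\limsup_{N\to\infty}\ms I_N(\mu_N)\;\le\;\sum_{\beta\in\mc C}\mu(\mathring X_\beta)\,Q^\beta(f^\beta)\;\le\;\ms K(\mu),
\]
the last inequality being Lemma \ref{11111}. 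Since $\mu_N$ converges weakly to $\mu$, Proposition \ref{slowliminf} then gives $\liminf_N\ms I_N(\mu_N)\ge\ms K(\mu)$, so the limit exists and equals $\ms K(\mu)$; in particular $\lim_N\ms I_N(\mu_N)\le\ms K(\mu)$.

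The only substantive input beyond assembling conditions \textbf{\textup{(D0)}}–\textbf{\textup{(D1*)}} is the exact identity \eqref{sqrtcalc} for $I_N$, which is where the reversibility of $A_N$ is used; the rest is the decoupling of the distinct $\beta$-blocks, which is exactly what condition \textbf{\textup{(D1*.2)}} is tailored to provide. The step I expect to require the most care is making that decoupling precise — orthogonality of the pushed-forward measures and vanishing of the cross energies in both orderings — together with the (harmless but necessary) observation that $\phi_N$ need not be sign-definite, so the estimate must pass through $D_N(|\phi_N|,|\phi_N|)\le D_N(\phi_N,\phi_N)$ rather than an equality.
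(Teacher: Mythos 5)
Your proposal is correct and follows essentially the same route as the paper: the same superposition $\sum_\beta\sqrt{\mu(\mathring X_\beta)}\,f^\beta_N$ weighted against $s_N$, normalization $Z_N\to 1$ and weak convergence from \eqref{D1*21}, the energy limit and vanishing cross terms from \eqref{D1*22}, and the final comparison $\sum_\beta\mu(\mathring X_\beta)Q^\beta(f^\beta)\le \ms K(\mu)$ via Lemma \ref{11111}. Your extra care in passing through $D_N(|\phi_N|,|\phi_N|)\le D_N(\phi_N,\phi_N)$, since \eqref{sqrtcalc} involves the square root of the density and $\phi_N$ need not be nonnegative, is a small refinement of a point the paper's proof leaves implicit, but it does not change the argument.
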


\begin{proof}
    For $\mu$ in $\ms P(X)$, decompose it as
    $$\mu = \sum_{\beta \in \mc C} \mu(\mathring{X}_\beta) \mu(\cdot|\mathring{X}_\beta).$$
    Let $f^\beta = \sqrt{\frac{d\mu(\cdot|\mathring{X}_\beta)}{d\lambda_\beta}}$.
    From the definition of treatable measure, we have $f^\beta \in \mc D_{\beta,0}$.
    Using condition \textbf{(D1*.2)}, we take sequences of functions $f^\beta_N:X_N \rightarrow \bb R $ for each $\beta \in \mc C$, $N\in \bb N$
    such that \eqref{D1*21}, \eqref{D1*22} hold.
    We define a sequence of measures $\nu_N$ in $\ms P(X_N)$ as
    $$d\nu_N = \frac{1}{Z_N}\left(\sum_{\beta \in \mc C} \sqrt{\mu(\mathring{X}_\beta)} f^\beta_N\right)^2 d s_N,$$
    where $Z_N$ is a normalizing constant to make $\nu_N$ a probability measure.
    
    We first claim that $Z_N$ converges to 1 as $N\rightarrow \infty$. This is from \eqref{D1*21}, calculating the following integration:
    \begin{align*}
        \lim_{N\rightarrow \infty} Z_N = \lim_{N\rightarrow \infty} \int_{X_N} \left(\sum_{\beta \in \mc C} \sqrt{\mu(\mathring{X}_\beta)} f^\beta_N\right)^2 d s_N 
        =  \sum_{\beta \in \mc C} \mu(\mathring{X}_\beta) \int_{\mathring{X}_\beta} (f^\beta)^2 d \lambda_\beta = 1.
    \end{align*}
    Also from \eqref{D1*22}, we have
    \begin{align*}
        \lim_{N\rightarrow \infty} I_N(\nu_N) = \lim_{N\rightarrow \infty} \frac{1}{Z_N}\int_{X_N} -\left(\sum_{\beta \in \mc C} \sqrt{\mu(\mathring{X}_\beta)} f^\beta_N\right)A_N\left(\sum_{\beta \in \mc C} \sqrt{\mu(\mathring{X}_\beta)} f^\beta_N\right) ds_N
        =  \sum_{\beta \in \mc C} \mu(\mathring{X}_\beta)Q^\beta(f^\beta).
    \end{align*}
    Taking $\mu_N = \nu_N\iota_N^{-1}$, we obtain that $\mu_N$ weakly converges to $\mu$ from \eqref{D1*21} and the fact that $Z_N$ converges to 1.
    This gives the desired result.
\end{proof}
Finally, combining the above lemmas, we obtain \eqref{11111better} and the $\limsup$ part of Theorem \ref{gen_diffusive}.
\begin{proof}[Proof of Theorem \ref{gen_diffusive}]    
    Fix a treatable measure $\mu$. From Lemma \ref{treatablelimsup}, there exists a sequence $\mu_N$ converges to $\mu$ such that
    $$\lim_{N \rightarrow \infty} \ms I_N(\mu_N) = \sum_{\beta\in \mc C} \mu(\mathring{X}_\beta) Q^\beta\left(\sqrt{\frac{d\mu(\cdot|\mathring{X}_\beta)}{d\lambda_\beta}}\right) \leq \ms K(\mu).$$
    On the other hand, from Proposition \ref{slowliminf}, we have
    $$\liminf_{N \rightarrow \infty} \ms I_N(\mu_N) \geq \mc K(\mu),$$
    so we get
    $$\sum_{\beta \in \mc C} \mu(\mathring{X}_\beta) Q^\beta\left(\sqrt{\frac{d\mu(\cdot|\mathring{X}_\beta)}{d\lambda_\beta}}\right) = \ms K(\mu).$$
    
    Take a general measure $\mu$. Suppose $\ms K(\mu)$ is finite. Decompose $\mu$ as
    $$ \mu = \sum_{\beta \in \mc C} \mu(\mathring{X}_\beta) \mu(\cdot|\mathring{X}_\beta). $$
    From Lemma \ref{11111}, we obtain
    $$\sum_{\beta \in \mc C} \mu(\mathring{X}_\beta) Q^\beta\left(\sqrt{\frac{d\mu(\cdot|\mathring{X}_\beta)}{d\lambda_\beta}}\right) < \infty.$$
    For $\beta \in \mc C$ such that $\mu(\mathring{X}_\beta) \neq 0$, let $u_\beta = \sqrt{\frac{d\mu(\cdot|\mathring{X}_\beta)}{d\lambda_\beta}}$.
    From condition \textbf{(D1*.1)}, we may take sequences $(u^n_\beta\in \mc D_{\beta,0})_{n\in \bb N}$ for each $\beta$ such that
    \begin{equation} \label{Qapprox}
        \lim_{n\rightarrow \infty} \int_{\mathring{X}_\beta} |u^n_\beta - u_\beta|^2 d\lambda_\beta = 0, \;\; \lim_{n\rightarrow \infty} Q^\beta(u^n_\beta) = Q^\beta(u_\beta).
    \end{equation}
    Take $v^n_\beta = \frac{1}{(\int_{\mathring{X}_\beta} (u^n_\beta)^2 d\lambda_\beta)^\frac{1}{2}} u^n_\beta$, which is an normalization of $u^n_\beta$.
    We still have \eqref{Qapprox} after we replace $u^N_\beta$ by $v^N_\beta$ since
    $$\lim_{n\rightarrow \infty} \int_{\mathring{X}_\beta} (u^n_\beta)^2 d\lambda_\beta  =  \int_{\mathring{X}_\beta} (u_\beta)^2 d\lambda_\beta = 1.$$
    Now we take a sequence of probability measures 
    \begin{equation} \label{Qapprox2}
        \mu_n = \sum_{\beta \in \mc C} \mu(\mathring{X}_\beta) (v^n_\beta)^2 d\lambda_\beta.
    \end{equation}
    We directly get $\mu_n$ weakly converges to $\mu$ since $L^1$-convergence of a density function implies weak convergence.
    Since the rate functional is lower semi-continuous, we have 
    $$ \ms K(\mu) \leq \liminf_{n\rightarrow \infty} \ms K(\mu_n) = \liminf_{n\rightarrow \infty} \sum_{\beta \in \mc C} \mu(\mathring{X}_\beta) Q^\beta(v^n_\beta)
    = \sum_{\beta \in \mc C} \mu(\mathring{X}_\beta) Q^\beta(u_\beta).$$
    From Lemma \ref{11111}, we already have
    $$\ms K(\mu) \geq \sum_{\beta \in \mc C} \mu(\mathring{X}_\beta) Q^\beta(u_\beta),$$
    so we obtain \eqref{11111better}.

    Sequence in \eqref{Qapprox2} implies that for any $\mu$ in $\ms P(\Xi)$, there exists a sequence of treatable measures $\mu_n$ such that $\mu_n$ weakly converges to $\mu$ and
    $$\lim_{n\rightarrow \infty} \ms K(\mu_n) = \ms K(\mu).$$
    In this situation, applying \cite{L22G}[Lemma B.5],
    $\Gamma$-convergence of $\ms I_N$ to $\ms K$ follows from the $\Gamma$-convergence on the set of treatable measures.    
\end{proof}

\section{Preliminary for metastable time scale} \label{prelimmeta}

From now on, we apply the results in Section \ref{generalframework} to the zero-range process.
The next two sections focus on the metastable time scale $\theta_N = N^{1+\alpha}$.
Most of the results refer to \cite{RES}.

\subsection{Zero-range process and general framework}

Sets $\mc H_N$, $\Xi$ in Section \ref{euclideanembedding} play the role of $X_N$, $X$ in Section \ref{generalframework}, respectively.
So the embedding $\iota_N$ in Section \ref{euclideanembedding} is the embedding $\iota_N$ in Section \ref{generalframework}.
For the metastable time scale, $A_N$ in Section \ref{generalframework} is chosen as $A_N = N^{1 + \alpha} \ms L_N$ and we refer to the result in Section \ref{subsec32}. It is demonstrated through Section \ref{prelimmeta} and \ref{metatimescale}.
For the diffusive time scale, $A_N$ in Section \ref{generalframework} is chosen as $A_N = N^2 \ms L_N$ and we refer to the result in Section \ref{subsec33}. This is illustrated in Section \ref{prelimpremeta}, \ref{scc}, and \ref{p39}.

\subsection{A resolvent approach to metastability}

Recall the definition of $\mc E_N$ and $\Delta_N$ from Section \ref{metastability24}.
Now we define $\breve{\mc E_N^x}$ as
$$\breve {\mc E_N^x}\coloneq \mc E_N \setminus \mc E_N^x.$$
Denote by $r_N(x,y)$ the mean-jump rate between the sets $\mc E_N^x$ and $\mc E_N^y$:
\begin{equation*}
    r_N(x,y) = \frac{1}{\rho_N(\mc E_N^x)}\sum_{\eta\in \mc E_N^x} \rho_N(\eta)\lambda_N(\eta)P_{\eta}^N[\tau_{\mc E_N^y}<\tau^+_{\breve{\mc E_N^y}}],
\end{equation*}
where $\rho_N$ is a stationary measure of the process $(\xi_N)$ generated by $\ms L_N$.
In this formula, $\tau_{\ms A}, \tau^+_{\ms A}, \ms A\subset \mc H_N$, stands for the hitting, return time of $\ms A$, respectively:
\begin{align*}
    \tau_{\ms A} &= \inf\{t>0:\xi_N(t)\in \ms A\}. \\
    \tau^+_{\ms A} &= \inf\{t\geq \sigma_1:\xi_N(t)\in \ms A\}, \text{ where } \sigma_1 = \inf \{t\geq 0 : \xi_N(t)\neq \xi_N(0)\}. \nonumber
\end{align*}
The following conditions are required to describe the metastable behavior of the zero-range process.
\smallskip

\noindent \textbf{Condition (H0)} For all $x\neq y\in S$, the sequence $r_N(x,y)$ converges. Denote its limit by $r(x,y)$:
$$r(x,y) = \lim_{N\rightarrow \infty}r_N(x,y).$$
\textbf{Condition (H1)} For each $x\in S$, there exists a sequence of configurations $(\xi_N^x:N\geq 1)$ such that in $\xi_N^x\in \mc E_N^x \text{ for all } N\geq 1$ and 
$$\lim_{N\rightarrow \infty} \max_{\eta \in \mc E_N^x} \frac{\Cap_N(\mc E_N^x,\breve{\mc E_N^x})}{\Cap_N(\xi_N^x,\eta)}=0.$$

\begin{thm} \cite[Theorem 2.8]{RES}
\label{71}
    Assume that conditions (H0) and (H1) are in force. Then the solution $F_N$ of the resolvent equation (2.1) is asymptotically constant on each well $\mc E_N^x$ in the sense that
    $$\lim_{N\rightarrow \infty}\max_{x\in S}\max_{\eta, \zeta\in \mc E_N^x}|F_N(\eta)-F_N(\zeta)|=0.$$
    Furthermore, let $f_N:S\rightarrow \bb R$ be the function given by
    $$f_N(x) = \frac{1}{\rho_N(\mc E_N^x)}\sum_{\eta\in \mc E_N^x}F_N(\eta)\rho_N(\eta), \quad x\in S,$$
    and let $f$ be a limit point of the sequence $f_N$. Then
    $$[(\lambda-\ms L_Y)f](y) = g(y)$$
    for all $y\in S$ such that $\rho_N(\Delta_N)/\rho_N(\mc E_N^y)\rightarrow 0$, in which $g$ is the function in equality (2.1). In this formula, $\ms L_Y$
    is the generator of the continuous-time Markov process whose jump rates are given by $r(x,y)$, introduced in (H0).
\end{thm}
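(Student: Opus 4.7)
The plan is to split the argument into two parts: first, establish asymptotic constancy of $F_N$ on each metastable well using the energy estimate coming from the resolvent equation together with condition \textbf{(H1)}; second, identify the limit of the weighted well-averages $f_N$ by projecting the resolvent equation onto $S$ via an averaging (trace) argument that invokes condition \textbf{(H0)}.

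For the first part, test $(\lambda - N^{1+\alpha}\ms L_N)F_N = G_N$ against $F_N\rho_N$ and sum over $\mc H_N$ to obtain
\begin{equation*}
\lambda \|F_N\|^2_{L^2(\rho_N)} + N^{1+\alpha} D_N(F_N) \;=\; \langle F_N, G_N\rangle_{\rho_N},
\end{equation*}
where $D_N$ denotes the (symmetrized) Dirichlet form of $\ms L_N$ under $\rho_N$. The maximum principle applied to the resolvent equation gives $\|F_N\|_\infty \leq \lambda^{-1}\|g\|_\infty$, hence $N^{1+\alpha}D_N(F_N) \leq C$ uniformly in $N$. The effective-resistance bound then yields, for every $\eta\in \mc E_N^x$,
\begin{equation*}
|F_N(\eta) - F_N(\xi_N^x)|^2 \;\leq\; \frac{D_N(F_N)}{\Cap_N(\eta, \xi_N^x)} \;\leq\; \frac{C}{N^{1+\alpha}\Cap_N(\eta, \xi_N^x)}.
\end{equation*}
Since $N^{1+\alpha}\Cap_N(\mc E_N^x, \breve{\mc E_N^x})$ is $O(1)$ on the metastable time scale (as reflected in the convergence of $r_N(x,\cdot)$ in \textbf{(H0)}), condition \textbf{(H1)} forces $N^{1+\alpha}\Cap_N(\xi_N^x,\eta) \to \infty$ uniformly in $\eta \in \mc E_N^x$, which gives the first assertion.

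For the second part, testing the resolvent equation against $\chi_{\mc E_N^y}\rho_N/\rho_N(\mc E_N^y)$ and summing yields the exact identity
\begin{equation*}
\lambda f_N(y) - \frac{N^{1+\alpha}}{\rho_N(\mc E_N^y)}\sum_{\eta\in \mc E_N^y}\rho_N(\eta)(\ms L_N F_N)(\eta) \;=\; g(y).
\end{equation*}
Along any subsequence with $f_N \to f$, the remaining task is to show the middle summand converges to $(\ms L_Y f)(y) = \sum_{z\in S} r(y,z)(f(z) - f(y))$. Expand $\ms L_N F_N$ via the microscopic jump rates, split the target state by whether it lies in some $\mc E_N^z$ with $z\neq y$, in $\Delta_N$, or back in $\mc E_N^y$, and reorganize using the trace process on $\mc E_N = \bigcup_x \mc E_N^x$. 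Replacing $F_N$ by the constants $f_N(\cdot)$ on each well using the asymptotic constancy from step one, identifying the residual weight as the mean jump rate $r_N(y,z)$, and invoking the hypothesis $\rho_N(\Delta_N)/\rho_N(\mc E_N^y)\to 0$ to discard leakage through $\Delta_N$, one obtains the desired limit.

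The main obstacle lies in this final identification: the direct microscopic rate between $\mc E_N^y$ and $\mc E_N^z$ need not converge, and the correct limiting object is the mean jump rate $r_N(y,z)$ from \textbf{(H0)}, which incorporates excursions into $\Delta_N$ before commitment to another well. Handling this rigorously requires the equilibrium-potential/trace-process formalism developed in \cite{RES}, combined with the uniform asymptotic constancy from step one so that replacing $F_N$ by $f_N$ in the Dirichlet pairing introduces only a vanishing error; the assumption $\rho_N(\Delta_N)/\rho_N(\mc E_N^y) \to 0$ is precisely what controls the stationary mass sitting in the transient region during these excursions.
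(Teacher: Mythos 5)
The theorem you are attempting to prove is \emph{not} proved in this paper at all: it is imported verbatim from Landim--Marcondes--Seo as \cite[Theorem 2.8]{RES}, and the only thing proved in Section \ref{prelimmeta} is that the zero-range process satisfies its hypotheses (H0), (H1), and $\rho_N(\Delta_N)/\rho_N(\mc E_N^y)\to 0$ (the proof of Theorem \ref{mszrp}). So there is no ``paper's own proof'' to compare against, and your task was really to reconstruct the argument of \cite{RES}.

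On the merits of the reconstruction itself: the overall architecture is reasonable, but there are two substantive issues. First, your step~1 as written does not close. The effective-resistance bound gives $|F_N(\eta)-F_N(\xi_N^x)|^2\le D_N(F_N)/\Cap_N(\xi_N^x,\eta)$, and the energy estimate gives $\theta_N D_N(F_N)\le C$, so you need $\theta_N\Cap_N(\xi_N^x,\eta)\to\infty$. Condition (H1) supplies $\Cap_N(\xi_N^x,\eta)/\Cap_N(\mc E_N^x,\breve{\mc E_N^x})\to\infty$, so what you actually need is a \emph{lower} bound $\theta_N\Cap_N(\mc E_N^x,\breve{\mc E_N^x})\ge c>0$, not the upper bound ``$O(1)$'' you invoke; that lower bound comes from (H0) together with positivity of at least one $r(x,\cdot)$, which is an extra non-degeneracy input that should be made explicit. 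Second, step~2 is not a proof: you write the exact projected identity correctly, but then explicitly defer the identification of the projected generator term with $(\ms L_Y f)(y)$ to ``the equilibrium-potential/trace-process formalism developed in \cite{RES}.'' That identification---showing that the boundary flux out of $\mc E_N^y$ organizes into $\sum_z r_N(y,z)(f_N(z)-f_N(y))$ up to an error controlled by $\rho_N(\Delta_N)/\rho_N(\mc E_N^y)$ and by the uniform oscillation bound from step~1---\emph{is} the theorem, and leaving it as a pointer means the proposal is an outline rather than a proof. A smaller remark: the effective-resistance inequality you use is a reversible tool; \cite[Theorem 2.8]{RES} is stated without a reversibility hypothesis, and the original argument uses the resolvent/hitting-time machinery rather than this Dirichlet-form bound, so even if the gaps above were filled this would be a less general route that happens to suffice for the reversible zero-range application.
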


    Now we prove the Theorem \ref{mszrp}. The proof is based on the Theorem \ref{71} and the results from \cite{SZRP}.
\begin{proof}[Proof of \ref{mszrp}]
    We need to show that conditions (H0), (H1), the condition
    $$\rho_N(\Delta_N)/\rho_N(\mc E_N^y)\rightarrow 0$$
    hold for the sequence $\ell_N = \lfloor N^{\frac{1}{2(\kappa-1)}} \rfloor$.
    Each condition holds from \cite[Proposition 6.22]{SZRP}, \cite[display (6.5)]{SZRP}, Theorem \ref{meas}, respectively.
\end{proof}

\section{Proof of Theorem \ref{27}} \label{metatimescale}
The goal of this section is to prove the $\Gamma$-convergence of $N^{1+\alpha}\mc I_N$ to the rate function $\mc J$ which is presented in \eqref{rate2}.
By Theorem \ref{gen_metastable}, we have the $\Gamma$-convergence if we can show condition \textbf{(M0)} and \textbf{(M0*)}.

\subsection{Conditions (M0) and (M0*)}

Recall the generator $\bb L$ from \eqref{limiting}:
$$
    (\bb L f)(x) = \frac{\kappa}{\Gamma(\alpha)I_\alpha} \sum_{y\in S} \Cap_S(x,y)(f(y)-f(x)), \; x\in S,
$$
for $f:S\rightarrow \bb R$.
For simplicity, define $R: S\times S\rightarrow \bb R$ as
$$R(x,y) \coloneq \frac{\kappa}{\Gamma(\alpha)I_\alpha}\Cap_S(x,y).$$
so that $R$ generates the operator $\bb L$.

From Theorem \ref{mszrp}, we have condition $\mf R_{\bb L}$ for accelerated generator $N^{1+\alpha} \ms L_N$ and $\mc E^{\ell,x}_N$ with $\ell_N = \lfloor N^{\frac{1}{2(\kappa-1)}} \rfloor$.
From this, condition \textbf{(M0)} is immediate.

Recall that $\mc H_N$ is the set of configurations with $N$ particles and
$\Delta_N$ is defined as
$$\Delta_N = \mc H_N \setminus \bigcup_{x\in S} \mc E^{\ell,x}_N.$$
Recall the rate function $I_N$ from \eqref{plainratefunc}.
To check condition \textbf{(M0*)}, we prove the following proposition.

\begin{prop} \label{boundmeasure}
    Assume that the zero range process is reversible and has the uniform measure condition, that is, $r(x,y)=r(y,x)$ for all $x,y\in S$.
    Take any $\ell_N$ such that $\ell_N \prec N$ and $\ell_N \rightarrow \infty$.
    Let $\Delta_N$ be a set in \eqref{plain}.
    Fix an increasing sequence of natural numbers $(N_k)_{k\in \bb N}$.
    For $C>0$, $N_k^{1+\alpha} I_{N_k}(\nu_{N_k})<C$ for all $k\in \bb N$ implies
    $$\lim_{k\rightarrow \infty} \nu_{N_k}(\Delta_{N_k}) = 0.$$
\end{prop}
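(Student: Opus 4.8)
The plan is to exploit the reversibility hypothesis \eqref{reversibility} to identify $I_N$ with a Dirichlet form, and then to combine a soft argument at the diffusive scale $N^2$ (which disposes of the ``bulk'' mass) with a quantitative electrical--network estimate along the tube joining the condensate to the dissolved phase (which disposes of the remaining mass, now using the full budget $I_{N_k}(\nu_{N_k})<CN_k^{-(1+\alpha)}$). By \eqref{sqrtcalc}, under \eqref{reversibility},
\begin{equation*}
 I_N(\nu)\,=\,\langle\sqrt{f},(-\ms L_N)\sqrt{f}\rangle_{\rho_N}\,=\,\tfrac12\sum_{\eta\in\mc H_N}\sum_{x,y\in S}\rho_N(\eta)\,g(\eta_x)\,r(x,y)\,\big(\psi_N(\eta^{x,y})-\psi_N(\eta)\big)^2,
\end{equation*}
with $f=d\nu/d\rho_N$ and $\psi_N:=\sqrt{f}$, so $\|\psi_N\|_{L^2(\rho_N)}=1$, while $\rho_N(\Delta_N)\to0$ by Theorem~\ref{meas}.

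Since $\alpha>1$, the hypothesis gives $N_k^2\,I_{N_k}(\nu_{N_k})=N_k^{1-\alpha}\big(N_k^{1+\alpha}I_{N_k}(\nu_{N_k})\big)\to0$. Writing $\mu_{N_k}:=\nu_{N_k}\iota_{N_k}^{-1}$, every subsequence has, by compactness of $\ms P(\Xi)$, a further subsequence along which $\mu_{N_k}\to\mu$ weakly; Proposition~\ref{slowliminf} then gives $\mc K(\mu)\le\liminf_k N_k^2\mc I_{N_k}(\mu_{N_k})=0$, and by Lemma~\ref{zerocond} this forces $\mu$ to be supported on $\{\xi^x:x\in S\}$. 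Since $\{\xi\in\Xi:\max_x\xi_x\le1-\epsilon\}$ is closed and avoids every $\xi^x$, the Portmanteau theorem and a routine subsequence argument yield, for each fixed $\epsilon>0$, $\lim_{k\to\infty}\nu_{N_k}(\{\eta:\max_x\eta_x\le(1-\epsilon)N_k\})=0$. As $\Delta_N\subset\{\max_x\eta_x\le(1-\epsilon)N\}\cup A_N^\epsilon$ with $A_N^\epsilon:=\{\eta:(1-\epsilon)N<\max_x\eta_x\le N-\ell_N-1\}$ for $N$ large, it remains to bound $\nu_N(A_N^\epsilon)$ for small $\epsilon$.

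To do so I would slice $A_N^\epsilon$ into shells $S_k:=\{\eta:N-\max_x\eta_x=k\}$, $\ell_N<k\le\epsilon N$. From \eqref{stationaryprob} one gets $\rho_N(S_k)\asymp k^{-\alpha}$ for $1\le k\ll N$, and the reversible conductance between $S_k$ and $S_{k+1}$ — one particle returning to the leading site, at bounded rate since $g$ is bounded — is of the same order; hence the effective resistance of the tube over $k\in(\ell_N,\epsilon N]$ is $\asymp\sum_{j\le\epsilon N}j^\alpha\asymp(\epsilon N)^{1+\alpha}$, i.e.\ the metastable speed $N^{1+\alpha}$ up to a factor $\epsilon^{1+\alpha}$. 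Comparing the part of the Dirichlet form carried by edges joining consecutive shells with the collapsed one-dimensional Dirichlet form of the shell-averages $\bar\psi_k:=\bb E_{\rho_N}[\psi_N\mid S_k]$, the bound $I_N(\nu_N)<CN^{-(1+\alpha)}$ gives $(\bar\psi_k-\bar\psi_{k'})^2\lesssim(\epsilon N)^{1+\alpha}\cdot CN^{-(1+\alpha)}=C\epsilon^{1+\alpha}$ for all $k,k'\le\epsilon N$; since $\rho_N(\mc E_N)\to1$ while $\|\psi_N\|_{L^2(\rho_N)}=1$, some shell with $k\le\ell_N$ has $\bar\psi_k^2\le2$, so $\bar\psi_k\le C_1(\epsilon)$ throughout, with $C_1(\epsilon)\to\sqrt2$ as $\epsilon\downarrow0$. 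The within-shell fluctuations I would handle by the intra-shell Poincaré inequality: on $S_k$ the frozen-leading-site dynamics is a zero-range process on the remaining $\kappa-1$ sites with $k$ particles, whose spectral gap is $\gtrsim k^{-(1+\alpha)}\ge(\epsilon N)^{-(1+\alpha)}$ (from the capacity estimates of \cite{SZRP}; $\gtrsim1$ when $\kappa=2$), so, writing $D_N^{(k)}$ for the part of the Dirichlet form internal to $S_k$,
\begin{equation*}
 \sum_{\ell_N<k\le\epsilon N}\rho_N(S_k)\,\mathrm{Var}_{\rho_N(\cdot\mid S_k)}(\psi_N)\;\le\;(\epsilon N)^{1+\alpha}\sum_{\ell_N<k\le\epsilon N}D_N^{(k)}(\psi_N)\;\le\;(\epsilon N)^{1+\alpha}\,I_N(\nu_N)\;<\;C\epsilon^{1+\alpha}.
\end{equation*}

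Assembling, $\nu_N(A_N^\epsilon)=\sum_{\ell_N<k\le\epsilon N}\rho_N(S_k)\big(\bar\psi_k^2+\mathrm{Var}_{\rho_N(\cdot\mid S_k)}(\psi_N)\big)\le C_1(\epsilon)^2\rho_N(\Delta_N)+C\epsilon^{1+\alpha}$, so by the previous two steps and $\rho_{N_k}(\Delta_{N_k})\to0$,
\begin{equation*}
 \limsup_{k\to\infty}\nu_{N_k}(\Delta_{N_k})\;\le\;\limsup_{k\to\infty}\nu_{N_k}\big(\{\max_x\eta_x\le(1-\epsilon)N_k\}\big)+\limsup_{k\to\infty}\nu_{N_k}(A_{N_k}^\epsilon)\;\le\;C\epsilon^{1+\alpha},
\end{equation*}
and letting $\epsilon\downarrow0$ finishes the proof. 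The main obstacle is making the one-dimensional electrical reduction rigorous on the genuinely $(\kappa-1)$-dimensional configuration space: bridging consecutive shell-averages and comparing Dirichlet forms requires simultaneously controlling the within-shell variation of $\psi_N$, and it is exactly there that the sharp capacity and spectral-gap estimates for condensing zero-range processes from \cite{SZRP} are needed. Reversibility enters only through the Dirichlet-form identity for $I_N$ and this electrical interpretation, which is why it, together with the uniform-measure condition, is assumed; the same mechanism underlies \eqref{quickabsorp}.
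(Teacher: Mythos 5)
Your proposal takes a genuinely different route from the paper, and its soft first step (using $N^2 I_{N_k}(\nu_{N_k})\to 0$, Proposition \ref{slowliminf} and Lemma \ref{zerocond} to kill the bulk $\{\max_x\eta_x\le(1-\epsilon)N\}$) is legitimate and non-circular. The quantitative step on the annulus, however, has genuine gaps. First, the within-shell Poincar\'e inequality is flawed as stated: the shell $S_k$ defined through $N-\max_x\eta_x=k$ is, for $k\ll N$, a disjoint union of $\kappa$ branches (one per leading site) that are \emph{not} connected by within-shell jumps, so $\mathrm{Var}_{\rho_N(\cdot\mid S_k)}(\psi_N)$ cannot be bounded by the within-shell Dirichlet form at all; even after passing to a fixed branch, the frozen-leading-site dynamics is a zero-range process on $S\setminus\{x\}$ with the \emph{restricted} rates, which need not be irreducible (e.g.\ when the graph of $r$ is a star with center $x$), and the asserted gap lower bound $\gtrsim k^{-(1+\alpha)}$ is not contained in the capacity estimates of \cite{SZRP} -- capacities give hitting-time information and test-function upper bounds, not a Poincar\'e inequality, and a gap lower bound for the condensing zero-range process is a substantially harder statement than anything you may quote here. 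Second, the chaining bound $(\bar\psi_k-\bar\psi_{k'})^2\lesssim(\epsilon N)^{1+\alpha}I_N(\nu_N)$ requires comparing the collapsed one-dimensional Dirichlet form of the conditional averages with the microscopic one; since the escape rates to a neighbouring shell are not constant on a shell, the flux-weighted averages produced by Cauchy--Schwarz differ from $\bar\psi_k$ by within-shell fluctuations, and controlling this is exactly the ``main obstacle'' you name and do not resolve. Hence the key estimate $\nu_{N}(A_N^\epsilon)\le C\epsilon^{1+\alpha}+o_N(1)$ is not established.

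For comparison, the paper avoids any spectral gap or multidimensional electrical reduction. For $\kappa=2$ the configuration space is genuinely one-dimensional: elementary birth--death capacity formulas give Lemmas \ref{boundcap}--\ref{boundhittingtime}, hence $\sup_\eta\bb E^N_\eta[\tau_B]=o_N(N^{1+\alpha})$ and then $\sup_\eta\bb P^N_\eta[\eta_{\delta N^{1+\alpha}}\in\Delta_N]=o_N(1)$ via Theorem \ref{meas}; the Donsker--Varadhan total-variation continuity bound $\|\nu P_{N,t}-\nu\|_{TV}\le\phi(tI_N(\nu))$ (Lemma \ref{93} and \cite[Lemma 4.1]{DV75a}) then transfers this to any $\nu_{N_k}$ with $N_k^{1+\alpha}I_{N_k}(\nu_{N_k})<C$. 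For general $\kappa$ the paper reduces to this two-site case through the comparison $I_N^{r^{x,y}}\le c\,I_N^r$ (built from paths in $S$, which also absorbs the irreducibility issue you would face) and a fiberwise decomposition freezing all coordinates outside $\{x,y\}$, with no bulk/annulus splitting needed. If you wish to salvage your route, the cleanest repair is essentially this projection onto two-site coordinates, where the one-dimensional resistance computation you invoke becomes rigorous, rather than a projection onto the scalar $N-\max_x\eta_x$ with per-shell Poincar\'e inequalities.
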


In order to prove this proposition, we first analyze the case $|S| =2$.
Let $r(x,y) = r(y,x) = r > 0$ for $S = \{x,y\}$.
Let $\eta^x, \eta^y\in \mc H_N$ be configurations representing condensation such that $(\eta^z)_z = N$ for $z\in S$.
Let $B = \{\eta^x,\eta^y\}$.

\begin{lemma}
    \label{boundcap}
    There exists a positive constant $c$ such that for all $\eta \in \mc H_N \setminus B$,
    $$\Cap_N(\eta^x,\eta)^{-1} \leq c \eta_{x}^{1+\alpha}.$$
\end{lemma}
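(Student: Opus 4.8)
The plan is to exploit that for $|S| = \{x,y\}$ the zero–range process is a birth–death chain, so that the capacity in the statement is the reciprocal of a one–dimensional series resistance, and then to bound that resistance by an elementary estimate on $\sum_k a(k)a(N-1-k)$.

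First I would identify $\mc H_N$ with $\{0,1,\dots,N\}$ via $\eta\mapsto\eta_x$ (so $\eta_y=N-\eta_x$); under this identification $\eta^x$ is the state $N$, $\eta^y$ is the state $0$, and $\mc H_N\setminus B$ is $\{1,\dots,N-1\}$. On the metastable time scale the operative generator is $N^{1+\alpha}\ms L_N$, which is a nearest–neighbour chain with conductance $c_k=N^{1+\alpha}\,\rho_N(k)\,g(N-k)\,r$ across the edge $\{k,k+1\}$, $0\le k\le N-1$. Using $\rho_N(\eta)=N^{\alpha}/(Z_{S,N}\,a(\eta))$, $a(\eta)=a(\eta_x)a(\eta_y)$ and $g(n)=a(n)/a(n-1)$, this simplifies to
$$c_k=\frac{N^{1+2\alpha}\,r}{Z_{S,N}\,a(k)\,a(N-1-k)}\,,\qquad 0\le k\le N-1,$$
and one checks the expected detailed balance $\rho_N(k)g(N-k)r=\rho_N(k+1)g(k+1)r$ along the way.

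Next I would invoke the standard one–dimensional formula: for a birth–death chain the capacity between the states $N$ and $m$ is the reciprocal of the series resistance $\sum_{k=m}^{N-1}c_k^{-1}$ along the unique path joining them (the equilibrium potential being affine in the resistance coordinate). Hence
$$\Cap_N(\eta^x,\eta)^{-1}=\frac{Z_{S,N}}{N^{1+2\alpha}\,r}\sum_{k=\eta_x}^{N-1}a(k)\,a(N-1-k).$$
Since $a(j)=(j\vee 1)^{\alpha}\le N^{\alpha}$ for $0\le j\le N-1$, each summand is at most $N^{2\alpha}$, so the sum is at most $N^{2\alpha+1}$; combined with $\sup_N Z_{S,N}<\infty$ (Proposition \ref{e21}) this yields $\Cap_N(\eta^x,\eta)^{-1}\le (\sup_N Z_{S,N})/r=:c$. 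As $\eta\in\mc H_N\setminus B$ forces $\eta_x\ge 1$, we have $\eta_x^{1+\alpha}\ge 1$ and therefore $\Cap_N(\eta^x,\eta)^{-1}\le c\le c\,\eta_x^{1+\alpha}$, as claimed.

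I do not expect a genuine obstacle; the only care needed is bookkeeping — the correct power of $N$ in the acceleration and in $\rho_N$, the boundary value $a(0)=1$, and a clean citation of the series–resistance identity. In fact the argument gives the stronger uniform bound $\Cap_N(\eta^x,\eta)^{-1}\le c$; the weaker form carrying the factor $\eta_x^{1+\alpha}$ is the shape in which this estimate is later combined, for general $|S|$, against the Dirichlet energy of $\sqrt{d\nu_N/d\rho_N}$ in the proof of Proposition \ref{boundmeasure}.
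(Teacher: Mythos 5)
Your reduction to a birth--death chain and the series--resistance identity is the same skeleton as the paper's argument, but the normalization of the capacity is wrong, and this is not mere bookkeeping: it is exactly what makes your conclusion come out. The capacity $\Cap_N$ in this lemma is taken with respect to the \emph{unaccelerated} generator $\ms L_N$ (the paper's own display in the proof carries the prefactor $Z_{N,S}/N^{\alpha}$ with no factor $N^{1+\alpha}$, and in Lemma \ref{boundhittingtime} it enters the identity $\bb E^N_{\eta}[\tau_B]=\Cap_N(\eta,B)^{-1}\sum_{\zeta}\rho_N(\zeta)\bb P^N_{\zeta}[\tau_B>\tau_{\eta}]$ for the process generated by $\ms L_N$). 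With the correct conductances $c_k=\rho_N(k)g(N-k)r=\tfrac{N^{\alpha}r}{Z_{S,N}\,a(k)a(N-1-k)}$, your own series reads
\begin{equation*}
\Cap_N(\eta^x,\eta)^{-1}=\frac{Z_{S,N}}{N^{\alpha}r}\sum_{k=\eta_x}^{N-1}a(k)\,a(N-1-k),
\end{equation*}
which for $\eta_x=1$ is of order $N^{1+\alpha}$ (the sum is comparable to $I_\alpha N^{2\alpha+1}$), not $O(1)$. Your uniform bound $\Cap_N(\eta^x,\eta)^{-1}\le c$ is an artefact of having multiplied the conductances by $N^{1+\alpha}$; were it true in the paper's normalization, Lemma \ref{boundhittingtime} would give $\sup_{\eta}\bb E^N_{\eta}[\tau_B]=O(1)$, which is absurd on the metastable scale. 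So, as written, the proposal does not prove the lemma.

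The real content of the estimate is that the capacity between $\eta$ and the \emph{nearby} condensed configuration is not too small, and the factor $\eta_x^{1+\alpha}$ comes from summing the resistances of the \emph{short} segment of the chain. The paper's proof bounds $\frac{Z_{N,S}}{N^{\alpha}}\{(N-1)^{\alpha}+\sum_{i=1}^{\eta_x-1}i^{\alpha}(N-1-i)^{\alpha}\}\le \frac{Z_{N,S}}{N^{\alpha}}\cdot 2\,\eta_x^{1+\alpha}N^{\alpha}\le c\,\eta_x^{1+\alpha}$, i.e.\ it sums the $\eta_x$ edges adjacent to the endpoint $k=0$ (strictly speaking this is the resistance between $\eta$ and $\eta^y$, so the statement should be read with the two condensates labelled to match that display, equivalently with the exponent carried by the occupancy at the site opposite the condensate). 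Your sum instead runs over the long side of the path, from $k=\eta_x$ to $N-1$, where no bound of the form $c\,\eta_x^{1+\alpha}$ is available once the spurious $N^{1+\alpha}$ is removed; your closing remark that the $\eta_x^{1+\alpha}$ dependence is superfluous should have been the warning that the scaling was off. To repair the argument: drop the acceleration factor, sum over the segment joining $\eta$ to the adjacent condensate, and bound each of those resistances by a constant times $\eta_x^{\alpha}N^{\alpha}$, exactly as the paper does.
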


\begin{proof}
    From \cite[display (7.1.60)]{Metastability}, capacity between two points in a one-dimensional nearest-neighbor random walk can be 
    interpreted as an inverse of a sum of inverse conductances.
    Therefore, we have
    $$\Cap_N(\eta^x,\eta)^{-1} = \frac{Z_{N,S}}{N^\alpha} \{ (N-1)^\alpha + \sum_{i=1}^{\eta_x-1} i^\alpha (N-1-i)^\alpha\} \leq c \eta_x^{\alpha + 1}.$$
\end{proof}

For $\eta\in \mc H_N$, let $\bb P^N_{\eta}$ be a probability measure on $D(\bb R_+, \mc H_N)$ generated by $\ms L_N$ starting from $\eta$.

\begin{lemma} \label{boundharmonic}
    For any $1\leq M \leq \frac{N}{2}$, define a function $U_N$ as
    $$U_N(\zeta) =
    \begin{cases}
        \frac{\zeta_x}{M} & \text{if } \zeta_x \leq M, \\
        1 & \text{if } M \leq \zeta_x \leq N-M, \\
        \frac{N-\zeta_x}{M} & \text{if } \zeta_x \geq N-M.
    \end{cases}$$
    Then for $\eta \in \mc H_N \setminus B$ with $ M \leq \eta_x \leq N - M$, we have
    $$\bb P^N_{\zeta}[\tau_B > \tau_\eta] \leq U_N(\zeta) \text{ for all } \zeta\in \mc H_N.$$
\end{lemma}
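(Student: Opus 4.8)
The plan is to reduce to a one-dimensional birth--death chain and to compare the relevant equilibrium potential with a classical gambler's-ruin probability.

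Since $|S|=2$, write $S=\{x,y\}$ and identify a configuration $\eta$ with the integer $\eta_x\in\{0,1,\dots,N\}$. Under this identification the zero-range process becomes the nearest-neighbour birth--death chain on $\{0,\dots,N\}$ with up-rate $g(N-k)\,r$ and down-rate $g(k)\,r$ at level $k$, and $B$ is the pair of endpoints $\{0,N\}$. Because continuous-time paths stay at their starting point for a positive time, $\tau_B=0$ on $\{\zeta\in B\}$, so both sides of the asserted inequality vanish there; likewise $\tau_\eta=0$ when $\zeta=\eta$, where $U_N(\eta)=1$, so the inequality is trivial there too. The chain, the set $B$, the constraint $M\le\eta_x\le N-M$ and the function $U_N$ are all invariant under the reflection $k\mapsto N-k$, which merely interchanges the up- and down-rates; hence it suffices to treat $\zeta$ with $\zeta_x\le N/2$, and if moreover $\zeta_x\ge M$ then $U_N(\zeta)=1$ and there is nothing to prove. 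So it remains to handle $\zeta$ with $1\le\zeta_x<M$.

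For such a $\zeta$ one has $\{\tau_B>\tau_\eta\}=\{\tau_\eta<\tau_B\}$, and on this event the chain travels from level $\zeta_x<M$ to level $\eta_x\ge M$ without visiting level $0$; since its increments are $\pm1$, it must reach level $M$ before level $0$, so $\bb P^N_\zeta[\tau_B>\tau_\eta]$ is bounded by the gambler's-ruin probability that the chain, started at level $\zeta_x$, hits level $M$ before level $0$. By the standard scale-function computation this probability equals $\big(\sum_{m=0}^{\zeta_x-1}a_m\big)\big/\big(\sum_{m=0}^{M-1}a_m\big)$ with $a_0=1$ and $a_m=\prod_{l=1}^{m} g(l)/g(N-l)$; telescoping the identity $g(n)=a(n)/a(n-1)$, $n\ge1$, gives the closed form $a_m=\big(m(N-1-m)/(N-1)\big)^{\alpha}$ for $1\le m\le N-2$. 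The proof thus reduces to the elementary inequality
$$\frac1j\sum_{m=0}^{j-1}a_m\ \le\ \frac1M\sum_{m=0}^{M-1}a_m\qquad(1\le j\le M),$$
equivalently, the average of $a_0,\dots,a_{j-1}$ does not exceed the average of $a_j,\dots,a_{M-1}$. Here I would assume $M\ge3$ (the case $M=1$ is immediate, since then $U_N\equiv1$ on $\mc H_N\setminus B$, and the lemma is used in the sequel only with $M=\ell_N\to\infty$). From $M\le N/2$, hence $N\ge2M\ge6$, I get two facts: (i) $m\mapsto m(N-1-m)$ is increasing on $\{1,\dots,M-1\}$, so $(a_m)_{m=1}^{M-1}$ is non-decreasing; (ii) $m(N-1-m)\ge2(N-3)\ge N-1$ for $2\le m\le M-1$, so $a_m\ge1$ there. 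For $j\ge3$ the inequality is then order-theoretic, since $\max_{0\le m\le j-1}a_m=a_{j-1}\le a_j=\min_{j\le m\le M-1}a_m$; for $j=2$ the left average is $(1+a_1)/2\le1\le a_m$ for $m\ge2$. The only case needing an extra input is $j=1$, which amounts to $\sum_{m=1}^{M-1}a_m\ge M-1$: using convexity of $t\mapsto t^{\alpha}$ ($\alpha>1$) one gets $\tfrac12(a_1+a_2)\ge\big(\tfrac12\big[\tfrac{N-2}{N-1}+\tfrac{2(N-3)}{N-1}\big]\big)^{\alpha}=\big(\tfrac{3N-8}{2(N-1)}\big)^{\alpha}\ge1$ since $N\ge6$, and combining this with (ii) yields $\sum_{m=1}^{M-1}a_m\ge(a_1+a_2)+(M-3)\ge M-1$. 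This proves the displayed inequality, hence the lemma.

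I expect the endpoint term to be the only genuine obstacle. Since $a_1=((N-2)/(N-1))^{\alpha}<1$, the sequence $(a_m)$ has a single ``dip'' at $m=1$ (and, in the original orientation, at $m=N-1$); consequently the tent function $U_N$ fails to be $\ms L_N$-superharmonic precisely at those two levels, so a bare maximum-principle comparison does not close, and prefix averages are not monotone across the dip. The convexity bound $a_1+a_2\ge2$ is exactly what absorbs this defect, and it is what forces the mild restriction $M\ge3$ in the argument.
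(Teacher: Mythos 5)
Your argument is correct on the range it covers, and it is genuinely different from the paper's. The paper's proof is exactly the bare comparison you predicted would not close: it asserts that $U_N$ is $\ms L_N$-superharmonic with $U_N(\eta)=1$, $U_N(B)=0$, and concludes by optional stopping. As you observed, that assertion fails at the two levels adjacent to the condensates: at $\zeta_x=1$ (with $M\ge 2$) one has $(\ms L_N U_N)(\zeta)=\tfrac{r}{M}\bigl(g(N-1)-g(1)\bigr)>0$ because $g(1)=1<g(N-1)$, and symmetrically at $\zeta_x=N-1$. Your substitute route --- reduce to the birth--death chain, dominate $\bb P^N_\zeta[\tau_\eta<\tau_B]$ by the gambler's-ruin probability of reaching level $M$ before $0$, compute the scale weights $a_m=\bigl(m(N-1-m)/(N-1)\bigr)^\alpha$, and verify the prefix-average inequality --- is sound; the monotonicity of $(a_m)_{m\ge1}$ on $\{1,\dots,M-1\}$, the bound $a_m\ge1$ for $m\ge2$, and the convexity step $a_1+a_2\ge2$ (which absorbs the dip at $m=1$) all check out under $N\ge 2M\ge 6$, and the cases $\zeta\in B$, $\zeta=\eta$, $\zeta_x\ge M$ and the reflection $k\mapsto N-k$ are handled correctly.

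Two remarks. First, your restriction $M\ge3$ is not an artifact of your method: the lemma as stated is false for $M=2$. Taking $\eta_x=2$, $\zeta_x=1$ (any $N\ge4$), the first jump from level $1$ goes up with probability $g(N-1)/(1+g(N-1))>\tfrac12$, so $\bb P^N_\zeta[\tau_B>\tau_\eta]>\tfrac12=U_N(\zeta)$; this is the same defect that invalidates the superharmonicity claim in the paper's proof. The statement should therefore carry a hypothesis such as $M\ge3$ (or $U_N$ should be replaced by the exact scale function, or by $cU_N\wedge 1$ for a suitable constant), which is harmless downstream: Lemma \ref{boundhittingtime} applies the bound with $M=\lfloor N^\epsilon\rfloor\to\infty$ and only needs the resulting $o_N(1)$ estimate. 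Second, as to what each approach buys: the paper's barrier argument is one line whenever the barrier really is superharmonic and generalizes immediately to any superharmonic majorant, whereas your scale-function computation is longer but exact, pinpoints precisely where the tent function fails, and shows the stated inequality does hold, unmodified, for all $M\ge3$.
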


\begin{proof}
    Observe that $U_N$ is a superharmonic function for the generator $\ms L_N$ which satisfies $U_N(\eta) = 1$ and $U_N(B) = 0$.
    Therefore,
    $$\bb P^N_{\zeta}[\tau_B > \tau_\eta] = \bb E_{\zeta}[u(\tau_{\eta\cup B})] \leq U_N(\zeta),$$
    where $\tau_{\eta\cup B}$ is a hitting time of the set $\eta\cup B$.
\end{proof}

The following lemma gives an upper bound of the expectation of the hitting time of the set $B$.

\begin{lemma} \label{boundhittingtime}
    The following estimation holds:
     \begin{equation*}
        \sup_{\eta\in \mc H_N}\bb E^N_{\eta}[\tau_B] = o_N(N^{1+\alpha}).
    \end{equation*}
\end{lemma}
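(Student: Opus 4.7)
The idea is to combine the capacity lower bound of Lemma \ref{boundcap} with the pointwise harmonic-function bound of Lemma \ref{boundharmonic} through the classical reversibility identity
\begin{equation}
    \label{eq:mht}
    \bb E^N_\eta[\tau_B] \;=\; \frac{1}{\Cap_N(\eta, B)} \sum_{\zeta \in \mc H_N} \rho_N(\zeta)\, \bb P^N_\zeta[\tau_\eta < \tau_B], \qquad \eta \in \mc H_N \setminus B,
\end{equation}
which is obtained by Green's identity applied to the equilibrium potential $\zeta\mapsto\bb P^N_\zeta[\tau_\eta<\tau_B]$ and the mean hitting time $\zeta\mapsto\bb E^N_\zeta[\tau_B]$, using the assumed reversibility and the identification $-\rho_N(\eta)\,\ms L_N h(\eta)=\Cap_N(\eta,B)$ for the equilibrium potential $h$. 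Configurations $\eta\in B$ give $\bb E^N_\eta[\tau_B]=0$, so only $\eta\in\mc H_N\setminus B$ matters for the supremum.

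For such $\eta$ set $M=M(\eta):=\eta_x\wedge(N-\eta_x)\in\{1,\dots,\lfloor N/2\rfloor\}$. Since both $\{\eta^x\}$ and $\{\eta^y\}$ are contained in $B$, the monotonicity of capacity combined with Lemma \ref{boundcap} applied to each endpoint yields
\[
\Cap_N(\eta,B)^{-1} \;\leq\; \min\bigl(\Cap_N(\eta,\eta^x)^{-1},\,\Cap_N(\eta,\eta^y)^{-1}\bigr) \;\leq\; c\,M^{1+\alpha}.
\]
Lemma \ref{boundharmonic} with this choice of $M$ gives $\bb P^N_\zeta[\tau_\eta<\tau_B]\leq U_N(\zeta)$ pointwise, so \eqref{eq:mht} reduces the problem to estimating $\sum_\zeta\rho_N(\zeta)\,U_N(\zeta)$.

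The key estimate is
\begin{equation}
    \label{eq:rho-U-bound}
    \sum_{\zeta \in \mc H_N} \rho_N(\zeta)\, U_N(\zeta) \;\leq\; C\, M^{1-\alpha}
\end{equation}
for a constant $C$ independent of $N$ and $M$. Starting from the explicit formula $\rho_N(\zeta)=(N^\alpha/Z_{S,N})/[\zeta_x^\alpha(N-\zeta_x)^\alpha]$ together with Proposition \ref{e21} which gives $Z_{S,N}=\Theta(1)$, the plateau contribution where $U_N\equiv 1$ is controlled by Riemann-sum comparison with $\int_{M/N}^{1-M/N}u^{-\alpha}(1-u)^{-\alpha}\,du$; since $\alpha>1$, this integral is of order $(M/N)^{1-\alpha}$, and after multiplying by the prefactor $N^\alpha/Z_{S,N}$ and $N^{-2\alpha+1}$ from the substitution one obtains the $M^{1-\alpha}$ bound. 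The two triangular ramps with $U_N(\zeta)=(\zeta_x\wedge(N-\zeta_x))/M$ contribute of the same or smaller order. Substituting \eqref{eq:rho-U-bound} back into \eqref{eq:mht} gives
\[
\bb E^N_\eta[\tau_B] \;\leq\; c\,M^{1+\alpha}\cdot C\,M^{1-\alpha} \;=\; cC\,M^2 \;\leq\; cC\,(N/2)^2 \;=\; O(N^2) \;=\; o(N^{1+\alpha})
\]
uniformly in $\eta$, where the last equality uses $\alpha>1$.

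The main obstacle is the uniform verification of \eqref{eq:rho-U-bound}: one has to handle the singularity of $\zeta_x^{-\alpha}(N-\zeta_x)^{-\alpha}$ near $B$ jointly with the linear cutoff in $U_N$, and the asymptotics of the ramp contributions $\sum_{k=1}^{M-1}k^{1-\alpha}$ split naturally into the regimes $1<\alpha<2$, $\alpha=2$, and $\alpha>2$. In each regime the ramp contribution must be shown to be dominated by the plateau term uniformly in $N$ and $M$. Once \eqref{eq:rho-U-bound} is established the trade-off $M^{1+\alpha}\cdot M^{1-\alpha}=M^2$ is immediate and the proof closes.
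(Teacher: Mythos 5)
Your overall strategy matches the paper's building blocks — the mean-hitting-time identity \eqref{eq:mht}, the capacity lower bound from Lemma \ref{boundcap}, and the superharmonic barrier from Lemma \ref{boundharmonic} — but the execution differs. You take $M(\eta)=\eta_x\wedge(N-\eta_x)$ and attempt a single uniform estimate \eqref{eq:rho-U-bound}. The paper instead fixes a threshold $N^\epsilon$, applies Lemma \ref{boundharmonic} with the $\eta$-independent choice $M=\lfloor N^\epsilon\rfloor$ only in the case $\eta_x\geq N^\epsilon$ (there it only needs $\sum_\zeta\rho_N U_N = o_N(1)$, not a quantitative rate), and in the complementary case $\eta_x<N^\epsilon$ bounds the sum trivially by $1$, using only the capacity estimate $\Cap_N(\eta,B)^{-1}\leq cN^{\epsilon(1+\alpha)}$. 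Your version is in principle tidier because it avoids the case split, but it forces you to prove a two-sided quantitative bound that the paper deliberately sidesteps.

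That bound, as you have written it, is wrong for $\alpha\geq 2$. The ramp contribution in $\sum_\zeta\rho_N(\zeta)U_N(\zeta)$ is of order $M^{-1}\sum_{k=1}^{M}k^{1-\alpha}$, which equals $\Theta(M^{1-\alpha})$ only when $1<\alpha<2$; it is $\Theta(M^{-1}\log M)$ when $\alpha=2$ and $\Theta(M^{-1})$ when $\alpha>2$, and in both of these regimes it is \emph{strictly larger} than the plateau's $\Theta(M^{1-\alpha})$ as $M\to\infty$. Your remark that ``in each regime the ramp contribution must be shown to be dominated by the plateau term'' asserts something the calculation actually contradicts. The conclusion survives nonetheless: the corrected bound
\[
\sum_{\zeta\in\mc H_N}\rho_N(\zeta)\,U_N(\zeta)\;\leq\;C\Bigl(M^{1-\alpha}+\tfrac{1}{M}\sum_{k=1}^{M}k^{1-\alpha}\Bigr),
\]
multiplied by $cM^{1+\alpha}$, gives $O(M^2)$ for $1<\alpha<2$, $O(M^2\log M)$ for $\alpha=2$, and $O(M^\alpha)$ for $\alpha>2$; since $M\leq N/2$ and $\alpha>1$, each of these is $o(N^{1+\alpha})$. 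So your route can be repaired, but not through \eqref{eq:rho-U-bound} as stated — you must carry the larger ramp term through to the end rather than absorbing it into the plateau.
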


\begin{proof}
    Let $\overline{\eta}$ be a configuration in $\mc H_N$ such that $\overline{\eta}_x = \eta_y$ and $\overline{\eta}_y = \eta_x$.
    Then from the symmetry, we have
    $$\bb E^N_{\eta}[\tau_B] = \bb E^N_{\overline{\eta}}[\tau_B].$$
    Without loss of generality, assume that $\eta_x \leq \lfloor \frac{N}{2} \rfloor$.
    From \cite[display (7.1.42)]{Metastability}, we have
    \begin{equation} \label{hittingtime}
        \bb E^N_{\eta}[\tau_B] = \frac{1}{\Cap_N(\eta,B)}\sum_{\zeta\in \mc H_N} \rho(\zeta)\bb P_{\zeta}[\tau_B > \tau_\eta].
    \end{equation}

    We first estimate $\Cap_N(\eta,B)$. 
    Since this is a one-dimensional nearest-neighbor random walk, we have
    $$\Cap_N(\eta,B) = \Cap_N(\eta^x, \eta) + \Cap_N(\eta^y, \eta) \geq \Cap_N(\eta^x, \eta)$$
    Fix $0<\epsilon <1$. We divide cases. First, assume that $\eta_x \geq N^{\epsilon}$.
    Consider a constant $M = \lfloor N^{\epsilon} \rfloor$ and a set $\Delta_N = \{\eta\in \mc H_N : M < \eta_x < N-M\}$.
    Applying Lemma \ref{boundharmonic} with $M$, we have
    \begin{align*}
    \sum_{\zeta\in \mc H_N} \rho(\zeta)\bb P^N_{\zeta}[\tau_B > \tau_\eta]
    \leq \sum_{\zeta\in \mc H_N} \rho(\zeta)U_N(\zeta) \leq& 2 \sum_{i=1}^{M} \frac{N^\alpha}{Z_{N,S}} \frac{1}{i^\alpha(N-i)^\alpha}\frac{i}{M} + \sum_{\substack{\eta\in \Delta_N}} \rho(\eta) \\
    \leq& O_N\left(\frac{1}{M} \sum_{i=0}^M \frac{1}{i^{\alpha-1}}\right) + \sum_{\substack{\eta\in \Delta_N}} \rho(\eta) = o_N(1).
    \end{align*}
    From Lemma \ref{boundcap} and \eqref{hittingtime}, we have $\bb E^N_{\eta}[\tau_B] = o_N(N^{1+\alpha})$.

    Next, assume that $\eta_x \leq N^{\epsilon}$. Then Lemma \ref{boundcap} shows
    $$\bb E^N_{\eta}[\tau_B] = \frac{1}{\Cap_N(\eta,B)}\sum_{\zeta\in \mc H_N} \rho(\zeta)\bb P^N_{\zeta}[\tau_B > \tau_\eta] \leq c N^{\epsilon(\alpha+1)} = o_N(N^{\alpha+1}).$$
\end{proof}

Now we begin the proof of Proposition \ref{boundmeasure}.
\begin{proof}[Proof of \ref{boundmeasure}]
    We first handle the case $\kappa = 2$.
    From the Markov property and Lemma \ref{boundhittingtime}, we have
    \begin{align} \label{5501}
        \sup_{\eta\in \mc H_N} \bb P^N_{\eta}[\eta_{\delta N^{1+\alpha}} \in \Delta_N] =&
        \sup_{\eta\in \mc H_N} \{ \bb P^N_{\eta}[\eta_{\delta N^{1+\alpha}} \in \Delta_N, \tau_B < \delta N^{\alpha + 1}] + \bb P^N_{\eta} [\tau_B \geq \delta N^{\alpha+1}] \} \nonumber \\
        \leq& \sup_{0\leq s\leq \delta N^{\alpha+1}} \bb P^N_{\eta^x}[\eta_s\in \Delta_N] + o_N(1).
    \end{align}
    Recall that $\rho_N$ is a stationary measure of the process generated by $\ms L_N$. Since $\rho_N(\eta^x) = \frac{1}{Z_{N,S}}$, we have
    \begin{equation} \label{5502}
        \bb P^N_{\eta^x}[\eta_s\in \Delta_N] \leq Z_{N,S} \bb P^N_{\rho}[\eta_s\in \Delta_N] = Z_{N,S} \rho(\Delta_N) = o_N(1).
    \end{equation}
    Therefore, putting \eqref{5502} into \eqref{5501}, we get
    \begin{equation} \label{5503}
        \sup_{\eta\in \mc H_N} \bb P^N_{\eta}[\eta_{\delta N^{1+\alpha}} \in \Delta_N] = o_N(1).
    \end{equation} 
    Let $P_{N,t}$ be a transition kernel generated by $\ms L_N$.
    From Lemma \ref{93} and \cite[Lemma 4.1]{DV75a}, there exists a function $\phi:\bb R_{\geq 0}\rightarrow \bb R_{\geq 0}$ such that
    $0\leq \phi \leq 2$, increasing, $\phi(l) \rightarrow 0$ as $l\rightarrow 0$, having the property
    \begin{equation} \label{5504}
        \|\nu P_{N,t} - \nu\|_{TV} \leq \phi(t I_N(\nu))
    \end{equation}
    For $\delta >0$, putting $t = \delta N^{\alpha+1}$, $N = N_k$ into \eqref{5504}, we get
    \begin{equation} \label{5505}
        |\nu P_{N_k,\delta N_k^{\alpha+1}}(\Delta_{N_k}) - \nu(\Delta_{N_k}) | \leq \|\nu P_{N_k,\delta N_k^{\alpha+1}} - \nu\|_{TV} \leq \phi(\delta C).
    \end{equation}
    For any $\epsilon >0$, we can take $\delta >0$ so that $\phi(\delta C) < \epsilon$. Taking $k\rightarrow \infty$ and plugging \eqref{5503} into \eqref{5505} gives
    \begin{equation*}
        \limsup_{k\rightarrow \infty} \nu_{N_k}(\Delta_{N_k}) \leq \epsilon + \limsup_{k\rightarrow \infty} \nu P_{N_k,\delta N_k^{\alpha+1}}(\Delta_{N_k}) = \epsilon.
    \end{equation*}
    Therefore, we have the result for the case $\kappa = 2$.

    For the general case, let $ r_0 = \min \{ r(x,y) : x,y\in S, r(x,y)>0 \}$.
    Recall that $\ms L_N$ is determined by the jump rates $r(x,y)$. To clarify the dependence, we denote $\ms L_N^{r}$ as a generator of the zero range process with the jump rates $r(x,y)$.
    Also, we denote $I_N^r$ as a rate function associated with $\ms L_N^{r}$.

    We define a degenerated jump rate $r^{x,y}:S\times S\rightarrow \bb R_{\geq 0}$ as
    $$r^{x,y}(z,w) = r_0 \text{ for } (z, w) = (x,y), (y,x) \text{ and } r^{x,y}(z,w) = 0 \text{ otherwise}.$$
    Take a generator $\ms L_N^{r^{x,y}}$ and a rate function $\ms I_N^{r^{x,y}}$.
    We claim that there exists a positive constant $c$ depending on $r$ such that
    \begin{equation} \label{5506}
    I_N^{r^{x,y}}(\nu) \leq c I^r_N(\nu) \text{ for all } N\in \bb N ,\; \nu\in \ms P(\Xi) \text{ and } x\neq y \in S.
    \end{equation}
    From \eqref{sqrtcalc}, we have
    \begin{align*}
    I_N^r(\nu) = \frac{1}{2}\sum_{z,w\in S}\sum_{(\eta, \eta^{z,w})\in \mc H_N\times \mc H_N} \rho_N(\eta)g(\eta_z)r(z,w)
    \left(\sqrt{\frac{\nu(\eta)}{\rho_N(\eta)}} - \sqrt{\frac{\nu(\eta^{z,w})}{\rho_N(\eta^{z,w})}}\right)^2,
    \end{align*}
    and
    \begin{align*}
    I_N^{r^{x,y}}(\nu) = \frac{1}{2}\sum_{(\eta, \eta^{x,y})\in \mc H_N\times \mc H_N} \rho_N(\eta)g(\eta_x)r_0
    \left(\sqrt{\frac{\nu(\eta)}{\rho_N(\eta)}} - \sqrt{\frac{\nu(\eta^{x,y})}{\rho_N(\eta^{x,y})}}\right)^2.
    \end{align*}
    If $r(x,y)>0$, \eqref{5506} is obvious. If $r(x,y) = 0$,
    we may choose a conneted path $x = x_1, x_2, \cdots, x_n = y$ such that $r(x_i,x_{i+1})>0$ for all $i$.
    For convenience, let $x_{0} = x$ and $x_{n+1} = y$.
    Then there exists a positive constant $c$ such that
    \begin{align*}
        \sum_{i=0}^{n} \rho(\eta^{x_i,x_{i+1}})g(\eta_{x_i})r(x_i,x_{i+1})\left(\sqrt{\frac{\nu(\eta)}{\rho(\eta)}} - \sqrt{\frac{\nu(\eta^{x_i,x_{i+1}})}{\rho(\eta^{x_i,x_{i+1}})}} \right)^2
        \geq c\rho(\eta)g(\eta_x)r_0\left(\sqrt{\frac{\nu(\eta)}{\rho(\eta)}} - \sqrt{\frac{\nu(\eta^{x,y})}{\rho(\eta^{x,y})}} \right)^2,
    \end{align*}
    because of the fact that 
    $$\inf_{N\in \bb N}\inf_{\eta \in \mc H_N }\inf_{z,w\in S} \frac{\rho_N(\eta^{z,w})}{\rho_N(\eta)} > 0 \text{ and } \;\; 1\leq g\leq 2^\alpha \text{ on } \bb N$$
    and Cauchy-Schwarz inequality.

    Fix a positive constant $T$. Let $I^2_N$ be a rate function of a zero-range process for $S = \{x,y\}$ and $r(x,y) = r(y,x) = r_0$.
    Let $\mc H^2_N$ be a set of configurations in \eqref{mchn} for $S = \{x,y\}$.
    We construct a sequence $(\ell^T_n)$ satisfying
    \begin{enumerate}
        \item $0\leq \ell^T_n \leq \frac{N}{2}$,
        \item For all $n\in \bb N$, if a measure $\nu\in \ms P(\mc H^2_n)$ satisfies $n^{1+\alpha} I^2_n(\nu) \leq T$, then $\nu(\Delta^2_n)<T^{-1}$, where
        $$\Delta^2_n = \{\eta\in \mc H^2_n : \ell^T_n < \eta_x < n - \ell^T_n\}.$$
        \item $\ell^T_n \prec \ell_n$ and $\ell^T_n \rightarrow \infty$ as $n\rightarrow \infty$.
    \end{enumerate}
    To construct such a sequence, we take a sequence $(\ell^T_n)$ such that $0\leq \ell^T_n \leq \frac{N}{2}$, $\ell^T_n \prec \ell_n$ and $\ell^T_n \rightarrow \infty$.
    Then there exists $N_0\in \bb N$ such that (2) holds for all $n\geq N_0$ from the result for the case $\kappa = 2$.
    Then we reassign $\ell^T_n = \lfloor \frac{n}{2} \rfloor$ before all $n<N_0$.

    Take a subset $K_N^{x,y}, K_N$ of $\mc H_N$ as
    $$K_N^{x,y} = \{\eta\in \mc H_N : \text{ if } \eta_x + \eta_y = n,\text{ then } \ell_n^T < \eta_x < n - \ell_n^T\},\; K_N = \cup_{x,y\in S} K_N^{x,y}.$$
    Before further the argument, we introduce a notation. Let $S^{x,y} = S\setminus \{x,y\} \cup \{*\}$, identifying $x$ and $y$ as $*$.
    Let $\mc H^{S^{x,y}}_N$ be a subset of ${\bb N}^{S^{x,y}}$ which
    consists of points whose coordinate sum is $N$.
    Now, we may interpret an element $\zeta\in \mc H^{S^{x,y}}_N$
    as a set of points in $\mc H_N$ such that
    \begin{equation} \label{quotientspace}
        \zeta = \{\eta\in \mc H_N : \eta_z = \zeta_z \text{ for } z\in S\setminus \{x,y\}\}.
    \end{equation}
    Note that for $\eta \in \zeta$, $\zeta_* = \eta_x + \eta_y$.
    For $\zeta\in \mc H^{S^{x,y}}_N$, we define a set $\Delta_N^\zeta$ as a set of configurations in $\mc H_N$ such that
    $$\Delta_N^\zeta = \{\eta\in \mc H_N : \eta\in \zeta,\; \ell_{\zeta_*}^T < \eta_x < \zeta_* - \ell_{\zeta_*}^T\} =  K_N^{x,y} \cap \zeta .$$
    To prove the claim, we decompose a measure $\nu_{N_k}\in \ms P(\mc H_{N_k})$ as
    $$\nu_{N_k} = \sum_{\zeta\in \mc H^{S^{x,y}}_{N_k}} \nu_{N_k}(\zeta) \nu_{N_k}(\cdot|\zeta).$$
    Then we have the following identities.
    \begin{enumerate}
        \item $\nu_{N_k}(K_{N_k}^{x,y}) = \sum_{\zeta\in \mc H^{S^{x,y}}_{N_k}} \nu_{N_k}(\zeta) \nu_{N_k}(\Delta_{N_k}^\zeta|\zeta)$,
        \item $I_{N_k}^{r^{x,y}}(\nu_{N_k}) = \sum_{\zeta\in \mc H^{S^{x,y}}_{N_k}} \nu_{N_k}(\zeta) I_{\zeta}^2(\nu_{N_k}(\cdot|\zeta))$,
        where $I_{\zeta}^2$ is a rate function for the zero-range process with two sites on $\zeta$ which has the jump rates $r_0$.
    \end{enumerate}
    Therefore, we have
    \begin{align*}
        &\nu_{N_k}(K_{N_k}^{x,y}) = \sum_{\zeta_*^{1+\alpha}I_{\zeta}^2(\nu_{N_k}(\cdot|\zeta)) \leq T} \nu_{N_k}(\zeta) \nu_{N_k}(\Delta_{N_k}^\zeta|\zeta) + \sum_{\zeta_*^{1+\alpha}I_{\zeta}^2(\nu_{N_k}(\cdot|\zeta)) > T} \nu_{N_k}(\zeta) \nu_{N_k}(\Delta_{N_k}^\zeta|\zeta)\\
        &\leq \sum_{\zeta_*^{1+\alpha}I_{\zeta}^2(\nu_{N_k}(\cdot|\zeta)) \leq T} \frac{\nu_{N_k}(\zeta)}{T} + \sum_{\zeta_*^{1+\alpha}I_{\zeta}^2(\nu_{N_k}(\cdot|\zeta)) > T} \nu_{N_k}(\zeta) \frac{\zeta_*^{1+\alpha}I_{\zeta}^2(\nu_{N_k}(\cdot|\zeta))}{T} \leq \frac{1+ N_k^{1+\alpha}I_{N_k}^{r^{x,y}}(\nu_{N_k})}{T}.
    \end{align*}
    Thus we have $\nu_{N_k}(K_{N_k}) \leq \binom{|S|}{2} \frac{1+c N_k^{1+\alpha}I_{N_k}^r(\nu_{N_k})}{T}$.

    Finally, we claim that for large enough $k$, we have $\Delta_{N_k} \subset K_{N_k}$. For any $\eta\in \Delta_{N_k}$,
    let $x,y$ be a largest and a second largest elements in $S$ coordinate of $\eta$.
    We claim $\eta\in K_{N_k}^{x,y}$. Suppose not. Let $n = \eta_x + \eta_y$.
    Then we have $\eta_y \leq \ell_n^T$. Since $y$ is the second largest element, we have $N - \eta_x \leq (|S|-1)\ell_n^T$.
    Therefore,
    $$N -\eta_x \leq (|S|-1)\ell_n^T \leq \ell_n \leq \ell_N $$ for large $k$ since $\ell_n^T \prec \ell_n$. This leads to a contradiction.
    Therefore, we have $\Delta_{N_k} \subset K_{N_k}$ for large enough $k$.
    Then we have
    $$\limsup_{k\rightarrow \infty} \nu_{N_k}(\Delta_{N_k}) \leq \limsup_{k\rightarrow \infty} \nu_{N_k}(K_{N_k}) \leq \frac{1+c\limsup_{k\rightarrow \infty} N_k^{1+\alpha} I_{N_k}^r(\nu_{N_k})}{T}.$$
    Taking $T\rightarrow \infty$ gives the result.
\end{proof}

Proposition \ref{boundmeasure} implies that condition \textbf{(M0*)} holds for the reversible zero range process.
The following corollary shows that condition \textbf{(M0*)} also holds for the general non-reversible zero range process.

\begin{cor}
    Proposition \ref{boundmeasure} also holds for a non-reversible zero range process.
\end{cor}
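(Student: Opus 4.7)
The plan is to reduce the non-reversible case to the reversible one by symmetrizing the generator in $L^{2}(\rho_{N})$. The key tool is the standard inequality
\[
I_{N}(\nu) \;\geq\; \bigl\langle \sqrt{f},\, -\tfrac{1}{2}(\ms L_{N}+\ms L_{N}^{*})\sqrt{f}\bigr\rangle_{\rho_{N}},\qquad f \,=\, \tfrac{d\nu}{d\rho_{N}},
\]
where $\ms L_{N}^{*}$ is the $L^{2}(\rho_{N})$-adjoint of $\ms L_{N}$. This follows from the variational formula \eqref{plainratefunc} by choosing $u=\sqrt{f}$ (after a standard regularization $f \mapsto f+\varepsilon$ to handle strict positivity) and by observing that the antisymmetric part of $\ms L_{N}$ drops out of the quadratic form.

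The first step is to identify the symmetrization $\tfrac{1}{2}(\ms L_{N}+\ms L_{N}^{*})$ explicitly. Under the uniform measure condition \eqref{uniformmeasurecond}, a short calculation based on the identity $\rho_{N}(\eta)\,g(\eta_{x}) = \rho_{N}(\eta^{x,y})\,g((\eta^{x,y})_{y})$ shows that $\ms L_{N}^{*}$ is precisely the zero-range generator associated with the reversed jump rate $r^{*}(x,y)=r(y,x)$. Consequently,
\[
\tfrac{1}{2}(\ms L_{N}+\ms L_{N}^{*}) \;=:\; \tilde{\ms L}_{N}
\]
coincides with the zero-range generator associated with the symmetric rate $\tilde r(x,y)=\tfrac{1}{2}(r(x,y)+r(y,x))$; this $\tilde{\ms L}_{N}$ is reversible, and because $\tilde r$ is symmetric it automatically satisfies the uniform measure condition.

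Let $\tilde I_{N}$ denote the rate function associated with $\tilde{\ms L}_{N}$. Since $\tilde{\ms L}_{N}$ is reversible, formula \eqref{sqrtcalc} yields $\tilde I_{N}(\nu)=\langle \sqrt{f},-\tilde{\ms L}_{N}\sqrt{f}\rangle_{\rho_{N}}$, which together with the inequality above gives $\tilde I_{N}(\nu) \leq I_{N}(\nu)$ for every $\nu\in\ms P(\mc H_{N})$. Hence the hypothesis $N_{k}^{1+\alpha}I_{N_{k}}(\nu_{N_{k}})<C$ transfers to $N_{k}^{1+\alpha}\tilde I_{N_{k}}(\nu_{N_{k}})<C$. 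Since the set $\Delta_{N}$ depends only on $\mc H_{N}$ and not on the jump rates, Proposition \ref{boundmeasure} applied to the reversible zero-range process generated by $\tilde{\ms L}_{N}$ yields $\nu_{N_{k}}(\Delta_{N_{k}})\to 0$.

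The main obstacle I anticipate is the rigorous justification of $I_{N}(\nu)\geq \langle\sqrt{f},-\ms L_{N}\sqrt{f}\rangle_{\rho_{N}}$ when $f$ may vanish, since the supremum in \eqref{plainratefunc} is restricted to strictly positive $u$. This is handled by the routine regularization mentioned above, replacing $f$ with $f+\varepsilon$, applying the variational formula to $u_{\varepsilon}=\sqrt{f+\varepsilon}$, and sending $\varepsilon\downarrow 0$ via dominated convergence; beyond this, every step is a short computation.
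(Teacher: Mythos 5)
Your proposal is correct and follows essentially the same route as the paper: both pass to the $L^{2}(\rho_{N})$-adjoint, form the symmetrized generator with rates $r^{s}(x,y)=\tfrac{1}{2}(r(x,y)+r(y,x))$, and use the inequality $I_{N}^{r}(\nu)\geq \langle\sqrt{f},-\ms L_{N}^{r^{s}}\sqrt{f}\rangle_{\rho_{N}}=I_{N}^{r^{s}}(\nu)$ to reduce to the reversible case already handled by Proposition \ref{boundmeasure}. The extra care you take with the regularization $f\mapsto f+\varepsilon$ and the remark that symmetric $\tilde r$ preserves the uniform measure condition are both sound and fill in details the paper leaves implicit.
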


\begin{proof}
    Recall the notation $\ms L_N^{r}$ and $I_N^{r}$ from the proof of Proposition \ref{boundmeasure}.
    For a transition $r$, we define a transition $r^*$, $r^s(x,y)$ as
    $$r^*(x,y) = r(y,x), \;\; r^s(x,y) = \frac{1}{2}(r(x,y) + r^*(x,y)).$$
    Directly from \eqref{zrpgen} and \eqref{stationaryprob}, we have that
    $\ms L_N^{r^*}$ is an adjoint of $\ms L_N^{r}$ in $L^2(\rho_N)$.
    Therefore, for any measure $\nu\in \ms P(\mc H_N)$, letting $f = \sqrt{\frac{d\nu}{d\rho_N}}$, we have
    $$I_N^{r}(\nu) \geq \int_{\mc H_N} -f \ms L_N^{r} f d\rho_N = \int_{\mc H_N} -f \ms L_N^{r^s} f d\rho_N
    = I_N^{r^s}(\nu).$$
    Therefore, we have $I_N^{r} \geq I_N^{r^s}$.
    This implies condition \textbf{(M0*)} holds for the non-reversible zero range process.
\end{proof}

\section{Preliminary for pre-metasatble time scale} \label{prelimpremeta}

In this section, we demonstrate some properties of the limiting process on the diffusive time scale which has once been introduced in Section \ref{limitingdiff}.
This plays a crucial role in the proof of the $\Gamma$-limsup part of Theorem \ref{46}.

\subsection{Behavior after absorption} \label{behaviorafterabsorption}
Recall the processes $(\bb P_{\xi}, \xi \in \Xi)$ from Theorem \ref{zrplaw} which are solutions to the $\mf L$-martingale problem \eqref{42}.
According to \cite{MZRP}, the processes exhibit an absorbing behavior.

\begin{prop} \cite[Proposition 2.3]{MZRP}
    For $\xi$ in $\Xi$, let $\ms N(\xi)=\{x\in S: \xi_x=0\}$. For any $\xi \in \Xi$, the set $\ms N(\xi_t)$ is increasing in inclusion relation $\bb P_{\xi}$ almost surely.
\end{prop}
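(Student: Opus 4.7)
\emph{Plan.} By the uniqueness of the $\mf{L}$-martingale problem (Theorem \ref{zrplaw}), the family $(\bb P_\xi)$ is strong Markov, so it suffices to establish the local claim: \emph{if $\xi \in \Xi$ satisfies $\xi_x = 0$ then $\bb P_\xi\bigl(\xi_{t,x} = 0 \text{ for all } t \ge 0\bigr) = 1$.} Indeed, applying this at the strong Markov time $\tau_x = \inf\{t \ge 0 : \xi_{t,x} = 0\}$ for each $x \in S$ yields invariance of every face $\{\xi_x = 0\}$, and hence the monotonicity of $\ms N(\xi_t)$. The underlying mechanism is a Bessel-type absorption: from the definition of $\mathbf{b}$ the $x$-component of the drift contains the singular piece
\[ -\,\alpha\, \mathbf{1}\{\xi_x > 0\}\, \frac{m_x \lambda_x}{\xi_x}, \qquad \lambda_x := \sum_{y \ne x} r(x,y), \]
while a direct computation of $\mf{L}(\xi_x^2) = 2\xi_x\, \mathbf{b}(\xi)\cdot \mathbf{e}_x + 2\lambda_x$ on the interior shows that the quadratic variation of $\xi_{\cdot,x}$ grows linearly at rate $2\lambda_x$. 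The strongly attractive singular drift overwhelms the remainder near the face.

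\emph{Steps.} On the open interior $\Xi^\circ = \{\xi : \xi_y > 0 \text{ for all } y\in S\}$ extract the semimartingale decomposition
\[ d\xi_{t,x} \;=\; \mathbf{b}(\xi_t)\cdot\mathbf{e}_x\, dt \;+\; dM^x_t, \qquad d\langle M^x \rangle_t \;=\; 2\lambda_x\, dt, \]
by applying the martingale problem to $\mc D^S_S$-approximations of $\xi_x$ and $\xi_x^2$ on compact subsets of $\Xi^\circ$ and localising. Next, apply It\^o's formula to $Z_t := \xi_{t,x}^p$ for $p \in (0,1)$ on the stochastic interval $\{\xi_{t,x} > 0\}$: isolating the singular contribution gives the drift
\[ \lambda_x\, p\, (p - 1 - \alpha m_x)\, \xi_{t,x}^{p-2}\, dt \;+\; O(\xi_{t,x}^{p-1})\, dt, \]
whose leading coefficient is strictly negative for every such $p$. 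Hence $Z$ is a nonnegative local supermartingale in a neighbourhood of the face, and optional stopping at $\sigma_\epsilon \wedge \tau_0$, with $\sigma_\epsilon = \inf\{t : \xi_{t,x} \ge \epsilon\}$ and $\tau_0 = \inf\{t : \xi_{t,x} = 0\}$, yields the scale estimate
\[ \bb P_\xi\bigl(\sigma_\epsilon < \tau_0\bigr) \;\le\; \bigl(\xi_x/\epsilon\bigr)^p, \qquad 0 < \xi_x < \epsilon. \]
Combining this with the Feller continuity of $\xi \mapsto \bb P_\xi$ (Proposition \ref{feller}) and sending $\xi_x \downarrow 0$ then $\epsilon \downarrow 0$ delivers the local claim.

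\emph{Main obstacle.} The natural test function $F(\xi) = \xi_x^p$ is \emph{not} in $\mc D^S_S$: for $y \ne x$ with $r(y,x) > 0$, the ratio $(\mathbf{v}_y \cdot \nabla F)/\xi_y = p\, r(y,x)\, \xi_x^{p-1}/\xi_y$ fails to be continuous at $\xi_y = 0$. This forces the It\^o step to be justified by smooth cutoffs supported in $\Xi^\circ$ together with a careful limit, and it interacts delicately with the other coordinates of $\xi$ simultaneously approaching zero, since the ``regular'' part of $\mathbf{b}\cdot\mathbf{e}_x$ is only bounded while all $\xi_y$ ($y\ne x$) are bounded away from $0$. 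A cleaner alternative is an induction on $|S|$ combined with a Girsanov change of measure that strips the regular part of $\mathbf{b}$, reducing $\xi_{t,x}$ to a squared Bessel process of non-positive effective dimension, whose absorbing behaviour at $0$ is classical.
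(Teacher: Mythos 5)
The paper gives no proof of this proposition; it cites \cite[Proposition~2.3]{MZRP} directly and merely remarks that the stated form is ``a reinterpretation'' of that result. So there is no paper-internal proof to compare your attempt against; what I can do is assess whether your sketch would constitute a valid independent proof.

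Your Bessel heuristic is the right mechanism: the $x$-component of $\mathbf b$ indeed has leading singular behaviour $-\alpha m_x\lambda_x/\xi_x$, the quadratic variation rate $2\lambda_x$ is correct, and the drift coefficient $p(p-1-\alpha m_x)\lambda_x$ for $\xi_{t,x}^p$ is strictly negative for $p\in(0,1)$ since $\alpha>1$. But the proof as written has two gaps beyond the domain issue you flag. First, the reduction is not closed. The scale estimate $\bb P_\xi(\sigma_\epsilon<\tau_0)\le(\xi_x/\epsilon)^p$ establishes that $\{\xi_x=0\}$ is \emph{accessible}, but not that it is \emph{absorbing}: once $\tau_0$ is reached the estimate has nothing to say about the post-$\tau_0$ path, and your ``Feller continuity, send $\xi_x\downarrow 0$ then $\epsilon\downarrow 0$'' step degenerates because at $\xi_x=0$ one has $\tau_0=0$, so $\{\sigma_\epsilon<\tau_0\}=\emptyset$ trivially. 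To conclude absorption you would need either a boundary classification (showing $0$ is an exit boundary, which requires speed-measure control near the face) or the observation that $Z_t=\xi_{t,x}^p$ is a nonnegative local supermartingale starting at $0$ and hence $\equiv 0$ — but justifying the supermartingale property \emph{including} the sojourn at $\xi_x=0$ is circular with what you are trying to prove. Second, the localisation to $\Xi^\circ$ is more delicate than a sentence suggests: the ``regular'' part of $\mathbf b\cdot\mathbf e_x$ contains terms $\alpha m_z r(z,x)/\xi_z$ that blow up as other coordinates degenerate, so the cutoff region must simultaneously bound all $\xi_y$ away from $0$, and the limit $\delta_0\downarrow 0$ in the cutoff has to be interchanged with the $\epsilon\downarrow 0$ of the scale estimate. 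These are not fatal, but they are precisely where the one-dimensional Bessel picture stops carrying the argument. The more robust route — and almost certainly what \cite{MZRP} does, given that the paper leans on Theorem~\ref{zrplaw} (uniqueness) and Proposition~\ref{p33} (recursion) immediately after this statement — is to construct a solution of the $\mf L$-martingale problem supported on $C(\bb R_+,\Xi_A)$ by the recursive trace construction and then invoke uniqueness; the absorbing property then holds by fiat, with no boundary-analysis needed. This is the ``cleaner alternative'' you gesture at in your last sentence, but it should be the main argument rather than a remark.
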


The property in the above proposition is called the absorption property.
Remark that this notion is a reinterpretation of \cite[Proposition 2.3]{MZRP} as the statement is quite different but they are equivalent.
From this property, if a process starts from $\xi \in \Xi_A$, it will remain within $\Xi_A$ in probability 1.
Therefore, the probability measure $\bb P_{\xi}$ on $C(\bb R_+,\Xi)$ can be restricted to $C(\bb R_+,\Xi_A)$.

That restricted probability measure can be described using a family of generators which describes each step of an absorption.
For each $\varnothing\subsetneq A\subset S$, consider the simplex
$$\Xi_A \coloneq \{\xi\in \bb R^A_{\geq 0} : \sum_{x\in A} \xi_x =1 \},$$
and the space $C^2(\Xi_A)$ of functions $f:\Xi_A\rightarrow \bb R$ which are twice-continuously differentiable on the interior of $\Xi_A$ and have continuous second derivatives upto $\Xi_A$.
Denote by
$$\textbf{r}^A=\{r^A(x,y):x,y\in A\}$$
the jump rates of the trace of the Markov process generated by $\mc L_S$ on $A$.
Let $\{\textbf{v}_x^A:x\in A\}$ be the vectors in $\bb R^A$ defined by
$\textbf{v}_x^A \coloneq \sum_{y\in A} r^A(x,y)(\textbf{e}_y-\textbf{e}_x),$
where $\{\textbf{e}_x:x\in A\}$ stands for the canonical basis of $\bb R^A$. and let $\textbf{b}^A:\Xi_A\rightarrow \bb R^A$ be the vector field defined by
$$\textbf{b}^A(\xi)\coloneq \alpha\sum_{j\in A}\frac{1}{\xi_x}\textbf{v}_x^A\textbf{1}\{\xi_x>0\},\quad \xi\in \Xi_A.$$

Similar to $\mc D^S_A$ from Definition \ref{def31}, we define $\mc D^A_B$ for $\varnothing\subsetneq B \subset A$, the space of functions $H$ in $C^2(\Xi_A)$ for which the map
$\xi\mapsto \textbf{v}_x^A\cdot \nabla H(\xi)/\xi_x$ is continuous on $\Xi_A$ for $x\in B$, and let $\mf L^A$ be the operator which acts on functions in $C^2(\Xi_A)$
with the following equation.

\begin{equation*}
    (\mf L^A f)(\xi)\coloneq \textbf{b}^A(\xi)\cdot \nabla f(\xi) + \frac{1}{2}\sum_{x,y\in A} r^A(x,y)(\partial_{x}-\partial_{y})^2f(\xi),
\end{equation*}
for $\xi\in\Xi_A$.

Then the following proposition holds.

\begin{prop} \cite[Proposition 2.4]{MZRP}
    \label{p33}
    Fix $\xi$ in $\Xi$ and assume that $\ms N(\xi)=\{x \in S: \xi_x=0\} \neq \varnothing$. Let $A = \ms N(\xi)^c$. Take a measure $\bb P_{\xi}$ which 
    is a solution $\mf L$-martingale problem with starting point $\xi$. Let $\bb P_{\xi}^A$ denote the restriction of $\bb P_{\xi}$ on $C(\bb R_+,\Xi_A)$.
    Then the measure $\bb P_{\xi}^A$ solves the $\mf L^A$-martingale problem.
\end{prop}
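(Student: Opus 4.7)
The plan is to reduce the $\mf L^A$-martingale problem on $C(\bb R_+,\Xi_A)$ to the already-solved $\mf L$-martingale problem on $C(\bb R_+,\Xi)$ via an extension procedure, using the absorption property to justify the restriction. First I would invoke the preceding proposition: if $\xi\in\Xi$ has $A=\ms N(\xi)^c$, then $\ms N(\xi_t)\supseteq A^c$ for all $t\ge 0$, $\bb P_\xi$-a.s., so $\xi_t\in \Xi_A$ for all $t\ge 0$ almost surely. Hence the restriction $\bb P_\xi^A$ is a well-defined probability measure on $C(\bb R_+,\Xi_A)$ carrying the full distributional information of $\bb P_\xi$.

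Next, to verify the $\mf L^A$-martingale property, I fix $H\in \mc D^A_A$ and aim to construct an extension $\tilde H:\Xi\to\bb R$ in $\mc D^S_S$ satisfying $\tilde H|_{\Xi_A}=H$ and $(\mf L\tilde H)|_{\Xi_A}=\mf L^A H$. Granted such a $\tilde H$, applying Theorem \ref{zrplaw} to $\tilde H$ under $\bb P_\xi$ yields that $\tilde H(\xi_t)-\int_0^t(\mf L\tilde H)(\xi_s)\,ds$ is a $\bb P_\xi$-martingale; combined with $\xi_s\in\Xi_A$ for all $s$, this collapses to $H(\xi_t)-\int_0^t (\mf L^A H)(\xi_s)\,ds$, which is then a $\bb P_\xi^A$-martingale.

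For the construction of $\tilde H$, the natural recipe is to extend $H$ by a first-order harmonic condition transverse to $\Xi_A$. Let $h_A(y,z)$ denote the probability that the chain generated by $\mc L_S$ started at $y\in A^c$ first hits $A$ at $z\in A$. On $\Xi_A$, one would impose
\[(\partial_y \tilde H)(\xi)=\sum_{z\in A} h_A(y,z)\,(\partial_z H)(\xi),\qquad \xi\in\Xi_A,\ y\in A^c,\]
together with $\tilde H|_{\Xi_A}=H$, and then extend smoothly off $\Xi_A$. The classical trace-jump-rate identity
\[r^A(x,z)=r(x,z)\mathbf{1}\{z\in A\}+\sum_{y\in A^c}r(x,y)\,h_A(y,z),\qquad x,z\in A,\]
together with the analogous second-moment identity for the variance structure of the trace process, makes $\mathbf{b}(\xi)\cdot\nabla\tilde H$ and the diffusion term of $\mf L\tilde H$ agree on $\Xi_A$ with $\mathbf{b}^A(\xi)\cdot\nabla H$ and the diffusion term of $\mf L^A H$ respectively.

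The main obstacle I anticipate is regularity: $\tilde H$ must actually lie in $\mc D^S_S$, which requires $\mathbf{v}_x\cdot\nabla\tilde H/\xi_x$ to extend continuously up to all of $\Xi$ despite $\Xi_A$ sitting on the boundary of $\Xi$ where $\xi_y=0$ for $y\in A^c$. I expect to handle this by first establishing the identity for $H$ in a dense subclass of $\mc D^A_A$ (say smooth functions that are polynomial in a neighborhood of $\partial\Xi_A$) for which the above prescription yields a manifestly $C^2$ extension with the correct boundary behavior, and then passing to the limit using the Feller property of the $\mf L^A$-semigroup, which itself would follow from the same arguments underlying Proposition \ref{feller}.
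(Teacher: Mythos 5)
The paper does not prove this proposition; it cites it directly from \cite[Proposition 2.4]{MZRP}. Nonetheless, your plan correctly identifies the relevant machinery that this paper re-develops in Section \ref{behaviorafterabsorption}: the harmonic projection $\gamma_A$ and extension map $\ms E_A f = f\circ\gamma_A$ are exactly what your ``first-order harmonic condition'' recipe produces (your $h_A(y,z)$ is $u^A_z(y)$), and the generator identification reduces to $\gamma_A\mathbf{v}_x=\mathbf{v}^A_x$ for $x\in A$ together with the trace identity \eqref{traceprocess} $\gamma_A\mc L_S\gamma_A^\dagger=\mc L_A$. The absorption step and the algebraic matching of drift and diffusion on $\Xi_A$ are both sound.

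However, there is a genuine gap in your handling of the regularity issue, and I think you have misdiagnosed what the obstacle actually is. Lemma \ref{harmext} only gives $\ms E_A H\in\mc D^S_B$ with $B=S\setminus A$, i.e.\ control in the transverse directions. For $x\in A$, the relevant expression is
\[
\frac{\mathbf{v}_x\cdot\nabla(\ms E_A H)(\xi)}{\xi_x}
=\frac{\mathbf{v}^A_x\cdot\nabla H(\gamma_A(\xi))}{\xi_x}
=\frac{\mathbf{v}^A_x\cdot\nabla H(\gamma_A(\xi))}{[\gamma_A(\xi)]_x}\cdot\frac{[\gamma_A(\xi)]_x}{\xi_x},
\]
and the factor $[\gamma_A(\xi)]_x/\xi_x = 1+\xi_x^{-1}\sum_{y\in B}u^A_x(y)\xi_y$ blows up whenever $\xi_x\to0$ while the $B$-coordinates of $\xi$ do not. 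Since $\gamma_A(\xi)$ then converges to a point in the interior of $\Xi_A$, the numerator does not vanish for any nontrivial $H$, so $\ms E_A H\notin\mc D^S_S$. This is structural and not a matter of $C^2$-smoothness or of $H$'s behavior near $\partial\Xi_A$: taking $H$ compactly supported in $\mathring\Xi_A$, or polynomial near $\partial\Xi_A$ as you suggest, changes nothing, because the problematic points are those $\xi$ away from $\Xi_A$ that project under $\gamma_A$ into $\mathrm{supp}(H)$. Multiplying by a cutoff $\chi(\xi)$ supported near $\Xi_A$ (a function of $\xi_B$ only) introduces new singular terms of the form $(\mathbf{v}_y\cdot\nabla\chi)/\xi_y$ for $y\in B$ and still does not control the $x\in A$ ratio. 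Consequently the ``dense subclass plus limit'' step as written cannot produce test functions in $\mc D^S_S$ at all, and Theorem \ref{zrplaw} cannot be invoked as you intend. To close the gap one would need either a direct It\^o-type verification of the $\mf L^A$-martingale property for the absorbed trajectory (arguing that the values of $\mf L(\ms E_A H)$ off $\Xi_A$ are irrelevant because the process never leaves $\Xi_A$), or a localization in the $A$-coordinates rather than the $B$-coordinates, or to work at the level of the underlying SDE as is done in \cite{MZRP}.
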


The property described in this proposition is called the recursion property, as its subsimplex dynamics have a generator form identical to that of the original.
Together with the absorption property, the recursion property allows us to confine our interest to the largest simplex, $\Xi = \Xi_S$, in Section \ref{scc}.

\subsection{Extension maps}

Recall that $\mc L_S$ is the generator corresponding to jump rates $r(x,y)$, $x,y\in S$. For each $x\in A$, 
let $u^A_x : S\rightarrow \bb [0,1]$ be the only $\mc L_S$-harmonic extension on $S$ of the indicator function of $x$ on $A$.
In other words, $u^A_x$ is the solution to
\begin{equation*}
    \begin{cases}
        u^A_x(y) = \delta_{x,y} & \text{for } y\in A \\
        (\mc L_S u^A_x)(y) = 0 & \text{for } y\in S\setminus A.
        \end{cases}
\end{equation*}
Take $B = S\setminus A$.
Now, define a projection map $\gamma_A:\Xi\rightarrow \Xi_A$ as
$$[\gamma_A(\xi)]_x = \xi_x +  \sum_{y\in B} u^A_x(y) \xi_y, \;\;x\in A$$

Then $\gamma_A$ can be represented by a $|A|\times |S|$ matrix, with $u^A_x$ comprising the $x$-th row.
Let $\mc L_A$ denote the generator of the trace process whose jump rate is $r^A$.
From equations in \cite[Section 3.B]{MZRP}, we have the following result.

\begin{proposition}
    Consider $\gamma_A$, $\mc L_S$, $\mc L_A$ as $|A|\times |S|$, $|S|\times|S|$, $|A|\times |A|$ matrices, respectively.
    Then
    \begin{equation}
        \label{traceprocess}
        \gamma_A \mc L_S (\gamma_A)^\dagger = \mc L_A.
    \end{equation}
\end{proposition}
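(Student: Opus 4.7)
The plan is to verify the matrix identity entry by entry, exploiting the two defining properties of the harmonic extensions $u^A_x$: they equal $\delta_{x,\cdot}$ on $A$ and they are $\mc L_S$-harmonic on $B = S \setminus A$. Fix $x, y \in A$. Reading off the matrix product, the $(x,y)$-entry of $\gamma_A \mc L_S (\gamma_A)^\dagger$ is
\begin{equation*}
\sum_{z \in S} u^A_x(z)\, (\mc L_S u^A_y)(z)\;,
\end{equation*}
since the $x$-th row of $\gamma_A$ is the vector $(u^A_x(z))_{z\in S}$ and the $y$-th column of $(\gamma_A)^\dagger$ agrees with $u^A_y$. For $z \in B$ the defining property of $u^A_y$ gives $(\mc L_S u^A_y)(z) = 0$, so the sum reduces to $z \in A$. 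For $z \in A$ one has $u^A_x(z) = \delta_{x,z}$, so the remaining sum collapses to $(\mc L_S u^A_y)(x)$.

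It then remains to identify $(\mc L_S u^A_y)(x)$ with the $(x,y)$-entry of $\mc L_A$. For $y \neq x$ in $A$, direct expansion using $u^A_y(w) = \delta_{w,y}$ on $A$ gives
\begin{equation*}
(\mc L_S u^A_y)(x) \;=\; \sum_{w \in S} r(x,w)\, u^A_y(w) \;=\; r(x,y) \;+\; \sum_{z \in B} r(x,z)\, u^A_y(z)\;,
\end{equation*}
which is exactly the classical trace-process formula $r^A(x,y) = r(x,y) + \sum_{z \in B} r(x,z)\,\bb P_z[X_{\tau_A} = y]$, since $u^A_y(z) = \bb P_z[X_{\tau_A} = y]$ for $z \in B$. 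For the diagonal entry $y = x$, one checks the same identity either by a direct expansion or, more cleanly, by using conservativity $\sum_{w \in S} r(x,w) = 0$ together with $\sum_{y \in A} u^A_y \equiv 1$ on $S$ (both harmonic on $B$ with boundary value $1$ on $A$), which forces $\sum_{y \in A}(\mc L_S u^A_y)(x) = 0$ and reduces the diagonal case to the off-diagonal one just verified.

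There is no real obstacle here; the argument is a direct matrix calculation whose only ingredients are the two definitional properties of $u^A_x$ and the standard identification of the trace generator. The one cosmetic point to handle carefully is to make sure one treats the summation over all $w \in S$ (not $w \neq x$) consistently with the convention $r(x,x) = -\sum_{w\neq x} r(x,w)$ adopted earlier in the paper; once this convention is fixed, both the off-diagonal and the diagonal computations are immediate.
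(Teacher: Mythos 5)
Your proof is correct and, in fact, supplies more than the paper does: the paper does not prove this proposition at all, but merely cites \cite[Section 3.B]{MZRP} for it. Your argument is a clean, self-contained verification by direct matrix computation, so it constitutes a genuine (if elementary) complement to the paper's cited reference.

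The computation is sound. The reduction $(\gamma_A \mc L_S \gamma_A^\dagger)_{x,y} = \sum_{z\in S} u^A_x(z)\,(\mc L_S u^A_y)(z)$ uses that the $x$-th row of $\gamma_A$ is exactly $(u^A_x(z))_{z\in S}$ (valid because $u^A_x(z)=\delta_{x,z}$ on $A$, so the row entries on $A$ and $B$ are unified in one formula). The two defining properties of $u^A_y$ then collapse the double sum to $(\mc L_S u^A_y)(x)$, and the off-diagonal identification with $r^A(x,y)$ is the standard trace-rate formula together with the probabilistic representation $u^A_y(z)=\bb P_z[X_{\tau_A}=y]$ of the Dirichlet solution for $z\in B$ (which holds by irreducibility and uniqueness of the Dirichlet problem). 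Your handling of the diagonal via $\sum_{y\in A} u^A_y\equiv 1$ (harmonic with boundary value $1$, hence equal to the constant by uniqueness) is the right shortcut and avoids a separate expansion. Two minor points you leaned on without stating: (i) you implicitly use $\sum_w r(z,w)=0$ to rewrite $(\mc L_S u^A_y)(z)=\sum_w r(z,w)\,u^A_y(w)$ without the $-u^A_y(z)\sum_w r(z,w)$ term — this is exactly the ``cosmetic point'' you flagged, and it is handled correctly; (ii) the trace-rate formula $r^A(x,y)=r(x,y)+\sum_{z\in B} r(x,z)\,\bb P_z[X_{\tau_A}=y]$ is invoked as classical rather than re-derived. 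Both are standard and acceptable at this level of detail.
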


Now, define an extension map $\ms E_A:C(\Xi_A)\rightarrow C(\Xi)$ by
$$\ms E_A f(\xi) = f(\gamma_A(\xi)).$$
Then we have the following lemma.

\begin{lemma} \cite[Lemma 3.1]{MZRP} \label{harmext}
    For any function $f\in C^2(\Xi_A)$, the function $\ms E_A f$ is a function in $\mc D^S_B$ where $B=S\setminus A$.
\end{lemma}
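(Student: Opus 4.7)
The plan is to unpack the definition of $\mc D^S_B = \bigcap_{z\in B} \mc D^S_z$ and show two things: that $\ms E_A f \in C^2(\Xi)$ and that for each $z\in B$ the singular quotient $[\mathbf{v}_z\cdot \nabla(\ms E_A f)(\xi)]/\xi_z \, \mathbf{1}\{\xi_z>0\}$ extends continuously to all of $\Xi$. The first point is immediate: by construction $\gamma_A$ is the restriction to $\Xi$ of an affine map $\mathbb R^S\to\mathbb R^A$, so $\ms E_A f = f\circ\gamma_A$ inherits $C^2$-regularity from $f\in C^2(\Xi_A)$. The real content is in controlling the directional derivatives $\mathbf{v}_z\cdot\nabla$ with $z\in B$, and my claim is that they actually vanish identically, not merely that they are divisible by $\xi_z$.

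Concretely, I would apply the chain rule to $\ms E_A f(\xi)=f(\gamma_A(\xi))$. Since $[\gamma_A(\xi)]_x = \xi_x + \sum_{w\in B} u^A_x(w)\xi_w$ for $x\in A$, and since $u^A_x(y)=\delta_{x,y}$ for $y\in A$, a direct computation gives
\begin{equation*}
\partial_y (\ms E_A f)(\xi) \;=\; \sum_{x\in A} u^A_x(y)\,\partial_x f(\gamma_A(\xi)) \quad\text{for every } y\in S.
\end{equation*}
Now fix $z\in B$. Using $\mathbf{v}_z=\sum_{y\in S} r(z,y)(\mathbf{e}_y-\mathbf{e}_z)$, I compute
\begin{equation*}
\mathbf{v}_z\cdot \nabla(\ms E_A f)(\xi) \;=\; \sum_{x\in A} \partial_x f(\gamma_A(\xi))\sum_{y\in S} r(z,y)\bigl(u^A_x(y)-u^A_x(z)\bigr)
\;=\;\sum_{x\in A} \partial_x f(\gamma_A(\xi))\,(\mc L_S u^A_x)(z).
\end{equation*}
Since $z\in B=S\setminus A$ and $u^A_x$ is, by construction, the $\mc L_S$-harmonic extension off $A$, we have $(\mc L_S u^A_x)(z)=0$ for every $x\in A$. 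Hence the whole sum is zero.

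Because $\mathbf{v}_z\cdot \nabla(\ms E_A f)(\xi)\equiv 0$ on $\Xi$, the map $\xi\mapsto [\mathbf{v}_z\cdot \nabla(\ms E_A f)(\xi)]/\xi_z \, \mathbf{1}\{\xi_z>0\}$ is identically zero, in particular continuous; this places $\ms E_A f$ in $\mc D^S_z$. Running the argument for each $z\in B$ yields $\ms E_A f \in \mc D^S_B$, as desired. There is no real obstacle here: the only step one has to pay attention to is keeping track of the convention that $u^A_x$ is defined on all of $S$ (so that the formula $\partial_y[\gamma_A(\xi)]_x = u^A_x(y)$ holds uniformly for $y\in A$ and $y\in B$), after which the $\mc L_S$-harmonicity does all the work and the singular behavior on $\{\xi_z=0\}$ never actually arises.
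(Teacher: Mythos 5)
Your proof is correct and, as far as I can tell, is the standard argument (the paper itself defers to \cite[Lemma 3.1]{MZRP} rather than supplying a proof). The key identity $\mathbf{v}_z\cdot\nabla(\ms E_A f)(\xi)=\sum_{x\in A}\partial_x f(\gamma_A(\xi))\,(\mc L_S u^A_x)(z)=0$ for $z\in B$ is exactly the right observation, and the preliminary $C^2$-regularity of $\ms E_A f$ does indeed follow from $\gamma_A$ being the restriction of an affine map sending $\Xi$ into $\Xi_A$.
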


This is a natural way to extend a function on $\Xi_A$ to $\Xi$ and it will be used in Section \ref{p39}.

\subsection{Approximation of limiting diffusion}

Let $X_t^N$ be a Markov process on $\mc H_N$ generated by $N^2 \ms L_N$.
Mapping this process to $\Xi_N$ by $\iota_N$ in \eqref{iotaembed}, we obtain a process $\xi_t^N$ on $\Xi_N$.
For $\xi_N\in \Xi_N$, let $\bb P_{\xi_N}^N$ be the probability measure on $D(\bb R_+,\Xi)$
induced by the process $\xi_t^N$ starting at $\xi_N$.
The following theorem shows that $\bb P_{\xi_N}^N$ somehow approximates $\bb P_{\xi}$, which is induced by the limiting diffusion.

\begin{thm} \label{approx}
    \cite[Theorem 2.6]{MZRP}.
    Let $\xi_N\in \Xi_N$ be a sequence converging to $\xi \in \Xi$.
    Then, $\bb P_{\xi_N}^N$ converges to $\bb P_{\xi}$ in the Skorohod topology.
\end{thm}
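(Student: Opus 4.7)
The strategy I would follow is the classical two-step program for invariance principles: (i) establish tightness of the family $\{\bb P^N_{\xi_N}\}_{N\in\bb N}$ in the Skorohod topology on $D(\bb R_+,\Xi)$, and (ii) show that every subsequential limit solves the $\mf L$-martingale problem starting from $\xi$. Since $\Xi$ is compact and Theorem \ref{zrplaw} provides uniqueness of the solution, step (ii) forces the limit to be $\bb P_\xi$, and hence the full sequence converges.

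For step (i), compact containment is automatic because $\Xi$ is compact, so I would verify Aldous' oscillation criterion. Applied to each coordinate $H(\xi)=\xi_x$ (which lies in $\mc D^S_S$) one has a Dynkin martingale
\begin{equation*}
M^{N,x}_t \;=\; (\xi_t^N)_x \;-\; \int_0^t N^2\ms L_N \iota_N^{-1}(\cdot)_x\bigl(\xi_s^N\bigr)\, ds,
\end{equation*}
whose drift and predictable quadratic variation I can bound uniformly in $N$ using that $g$ is bounded on $\bb N$ and $r$ is bounded on $S\times S$. Standard estimates on bounded-drift, bounded-quadratic-variation semimartingales over a stopping-time window of length $\delta_N\to 0$ then deliver Aldous' bound.

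For step (ii), the key computation is the convergence
\begin{equation*}
N^2 \ms L_N\bigl(H\circ \iota_N\bigr)(\eta) \;\longrightarrow\; (\mf L H)(\xi) \qquad\text{whenever }\iota_N(\eta)\to \xi,
\end{equation*}
for every $H\in \mc D^S_S$. I would Taylor expand $H(\iota_N(\eta^{x,y}))-H(\iota_N(\eta))$ to second order in $1/N$; the order-$N$ term vanishes after summation by $\sum_x r(x,y)=\sum_y r(x,y)=0$ (using the uniform measure condition $m_x\equiv 1$ from \eqref{uniformmeasurecond}); the next order extracts the drift by combining $g(\eta_x)-1\sim \alpha/\eta_x$ with $\mathbf{v}_x\cdot\nabla H$, producing $\alpha \sum_x (\mathbf{v}_x\cdot\nabla H)/\xi_x = \mathbf{b}(\xi)\cdot\nabla H$; the second-order term contributes the diffusion $\tfrac12\sum_{x,y} r(x,y)(\partial_x-\partial_y)^2 H$. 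Given this convergence together with boundedness of $H$ and $\mf L H$, one passes to the limit inside the martingale relation $\bb E[M^N_t | \ms F_s]=M^N_s$ by standard dominated-convergence plus continuous-mapping arguments in Skorokhod space.

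The main obstacle is the boundary region $\{\xi_x\approx 0\}$: when $\eta_x$ is of order $1$, both $g(\eta_x)$ and the formal expansion $1+\alpha/\eta_x$ differ by an $O(1)$ amount, while simultaneously $1/\xi_x$ blows up, so neither side of the would-be identity $g(\eta_x) N\, \mathbf{v}_x\cdot\nabla H/N \approx (\alpha/\xi_x)\mathbf{v}_x\cdot\nabla H$ is individually controlled. The rescue is precisely the definition of $\mc D^S_S$: the map $\xi\mapsto (\mathbf{v}_x\cdot\nabla H(\xi))/\xi_x \mathbf{1}\{\xi_x>0\}$ extends continuously up to $\Xi$, which forces $\mathbf{v}_x\cdot\nabla H$ to vanish at an appropriate linear rate on the face $\{\xi_x=0\}$. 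This lets me Taylor expand $\mathbf{v}_x\cdot\nabla H$ around that face and absorb the $\eta_x$-small contribution into a remainder of order $1/N$ that is uniform in $\eta$. Once this uniform convergence of the generators is in hand, the identification of any subsequential limit as a solution of the $\mf L$-martingale problem is routine, and invoking Theorem \ref{zrplaw} closes the argument.
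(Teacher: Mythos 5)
The paper does not prove Theorem~\ref{approx} itself; it cites it from reference [MZRP], so there is no in-paper proof to compare against. Evaluating your proposal on its own merits: the high-level two-step strategy (tightness plus martingale-problem identification, closed off by the uniqueness from Theorem~\ref{zrplaw}) is indeed the correct one and is the standard template for such limit theorems. However, there is a concrete error in your tightness step.

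You claim that the coordinate function $H(\xi) = \xi_x$ lies in $\mc D^S_S$ and that the resulting Dynkin martingale has drift and quadratic variation that ``can be bounded uniformly in $N$ using that $g$ is bounded.'' Both claims fail. Taking $H(\xi) = \xi_w$, one computes $\mathbf{v}_x \cdot \nabla H = r(x,w)$, which is a nonzero constant when $x = w$ (since $r(w,w) = -\sum_{y\neq w} r(w,y) < 0$ for an irreducible chain). Consequently $\xi \mapsto r(w,w)/\xi_w \cdot \mathbf{1}\{\xi_w > 0\}$ is discontinuous on $\{\xi_w = 0\}$, so $H \notin \mc D^S_w \supseteq \mc D^S_S$. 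On the discrete side, the drift of the coordinate process is
\begin{equation*}
N^2 \ms L_N\Bigl(\frac{\eta_w}{N}\Bigr) = N\sum_{z\in S} g(\eta_z)\, r(z,w) = N\sum_{z\in S} \bigl(g(\eta_z) - 1\bigr)\, r(z,w),
\end{equation*}
where the second equality uses $\sum_z r(z,w) = 0$ (uniform invariant measure). When some $\eta_z$ is of order one (e.g.\ $\eta_z = 2$), $g(\eta_z) - 1 = 2^\alpha - 1$ is of order one, so the corresponding term is of order $N$: boundedness of $g$ yields only an $O(N)$ bound, not a uniform-in-$N$ bound, and the Aldous argument you invoke does not close.

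This is not a peripheral detail: the unbounded discrete drift near $\partial\Xi$ is precisely the technical difficulty that makes this convergence result nontrivial. You correctly flag the boundary as the main obstacle for the generator computation, but the same obstacle already breaks your tightness argument, because the coordinate functions you propose to use as test functions are not in the domain and their drifts blow up. A complete proof needs an additional mechanism---e.g.\ superharmonic barrier functions such as $\phi(\xi) = \prod_{x\in S}\xi_x^{\alpha+1}$ (used elsewhere in this paper in Proposition~\ref{106}), or a priori estimates showing the process spends negligible time near the boundary faces---before the two-step program can be carried out.
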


Additionally, limiting diffusion is Feller continuous.

\begin{prop}\cite[Proposition 7.10]{MZRP} \label{fellercont}
    Let $(\xi_n)$ be a sequence in $\Xi$ converging to $\xi\in \Xi$. Then, $\bb P_{\xi_n}$ converges to $\bb P_{\xi}$ in the Skorohod topology.
\end{prop}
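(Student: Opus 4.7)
The plan is to bootstrap Feller continuity from the discrete-to-continuum convergence already supplied by Theorem \ref{approx}, via a diagonal approximation argument. Since $D(\bb R_+, \Xi)$ is Polish, the space of Borel probability measures on it is metrizable under the topology of weak convergence for the Skorohod topology; fix a compatible metric $d_{\text{Sk}}$. The goal is then $d_{\text{Sk}}(\bb P_{\xi_n}, \bb P_\xi) \to 0$.

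First I would use that $\bigcup_N \Xi_N$ is dense in $\Xi$: any point in $\Xi$ is approximated by rational points with common denominator $N$. Hence for each fixed $n$ there is a sequence $\xi_{n,N} \in \Xi_N$ with $\xi_{n,N} \to \xi_n$ as $N \to \infty$, and Theorem \ref{approx} gives $\bb P^N_{\xi_{n,N}} \Rightarrow \bb P_{\xi_n}$. A standard diagonal extraction then produces a strictly increasing sequence $N_n$ and points $\xi^*_n := \xi_{n,N_n} \in \Xi_{N_n}$ such that
\begin{equation*}
    |\xi^*_n - \xi_n| \;<\; \tfrac{1}{n} \qquad \text{and} \qquad d_{\text{Sk}}\bigl(\bb P^{N_n}_{\xi^*_n},\, \bb P_{\xi_n}\bigr) \;<\; \tfrac{1}{n}.
\end{equation*}

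Second I would observe that $|\xi^*_n - \xi| \le |\xi^*_n - \xi_n| + |\xi_n - \xi| \to 0$, so $\xi^*_n \to \xi$. To invoke Theorem \ref{approx} along this diagonal, extend $(\xi^*_n)$ to an $N$-indexed sequence $(\zeta_N)_{N \in \bb N}$ by setting $\zeta_{N_n} = \xi^*_n$ and choosing $\zeta_N \in \Xi_N$ arbitrarily with $\zeta_N \to \xi$ for the remaining $N$; the whole sequence converges to $\xi$, so Theorem \ref{approx} delivers $\bb P^N_{\zeta_N} \Rightarrow \bb P_\xi$, and in particular $\bb P^{N_n}_{\xi^*_n} \Rightarrow \bb P_\xi$ along the subsequence. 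The triangle inequality
\begin{equation*}
    d_{\text{Sk}}(\bb P_{\xi_n}, \bb P_\xi) \;\le\; d_{\text{Sk}}\bigl(\bb P_{\xi_n},\, \bb P^{N_n}_{\xi^*_n}\bigr) \;+\; d_{\text{Sk}}\bigl(\bb P^{N_n}_{\xi^*_n},\, \bb P_\xi\bigr)
\end{equation*}
then closes the argument, since the first summand is at most $1/n$ and the second tends to $0$.

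I expect the main obstacle to be the compatibility of the diagonalization with Theorem \ref{approx}, whose statement requires a sequence indexed by every $N \in \bb N$. This is handled by the arbitrary extension above, but it relies crucially on the fact that the convergence in Theorem \ref{approx} holds for \emph{any} approximating sequence in $\Xi_N$, not for a distinguished one; once that is granted, the density of $\bigcup_N \Xi_N$ in $\Xi$ makes the diagonal construction routine.
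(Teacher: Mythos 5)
Your argument is correct. Note first that the paper does not actually supply a proof of this proposition: it is cited verbatim as \cite[Proposition 7.10]{MZRP}, so there is no in-text argument to compare against. What you have done is to re-derive that cited fact from the other cited fact Theorem~\ref{approx} (itself \cite[Theorem 2.6]{MZRP}) by a diagonal/triangle-inequality argument, and that derivation is sound: $D(\bb R_+,\Xi)$ is Polish because $\Xi$ is a compact metric space, so the weak topology on $\ms P(D(\bb R_+,\Xi))$ is metrizable and the triangle inequality is available; $\bigcup_N \Xi_N$ is dense in $\Xi$ (approximate each coordinate by $\lfloor N\xi_x\rfloor/N$ and adjust one coordinate); the diagonal extraction of $N_n$ is standard; and you correctly spotted and handled the one genuine subtlety, namely that Theorem~\ref{approx} is phrased for sequences indexed by all $N$, so the subsequence $\xi^*_n\in\Xi_{N_n}$ must be padded out to a full $\Xi_N$-valued sequence converging to $\xi$ before the theorem applies.

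The one thing you should flag explicitly is the absence of circularity: your proof reduces \cite[Proposition 7.10]{MZRP} to \cite[Theorem 2.6]{MZRP}, which is only legitimate if the latter is established independently of the former in that reference. In \cite{MZRP} the convergence Theorem~2.6 is obtained from tightness plus uniqueness of the $\mf L$-martingale problem (their Theorem~2.2), not from the Feller continuity of Proposition~7.10, so the reduction is indeed non-circular; the more common direct route to Feller continuity runs through tightness of $(\bb P_{\xi_n})$ itself, identification of all limit points as solutions of the martingale problem started from $\xi$, and then uniqueness, entirely within the continuous setting. Your bootstrapping from the discrete approximation is a legitimate alternative that has the advantage of reusing a theorem already quoted in this paper, at the cost of depending on the internal logical structure of \cite{MZRP}; a one-line remark acknowledging the non-circularity would make the argument fully self-contained.
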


We argue the stronger result that the limiting diffusion is a Feller process. To show this, we begin with the following lemma.
For a set $\varnothing \subsetneq B \subset S$, let
$$\|\xi\|_B \coloneq (\sum_{x\in B}\xi_x^2)^{\frac{1}{2}}.$$

\begin{lemma}
    \label{localftn}
    Fix a positive $\epsilon$. For any $\xi_0 \in \Xi$, there exists a function $\phi^\epsilon_{\xi_0} : \Xi \rightarrow \bb R_+$ such that $\phi^\epsilon_{\xi_0} \in \mc D^S_S$,
    $\phi^\epsilon_{\xi_0}(\xi_0) > 0$ and $\phi^\epsilon_{\xi_0}(\xi) = 0$ if $\|\xi_0 - \xi\|_S > \epsilon$.
\end{lemma}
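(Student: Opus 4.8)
The plan is to build $\phi^\epsilon_{\xi_0}$ by composing a smooth radial bump function on $\bb R$ with the squared distance $\|\xi_0-\xi\|_S^2$, and then check that the resulting function actually lies in the restrictive domain $\mc D^S_S$. Concretely, fix a smooth, nonincreasing $\psi:\bb R_{\geq 0}\to\bb R_{\geq 0}$ with $\psi\equiv 1$ near $0$, $\psi(t)=0$ for $t\geq \epsilon^2$, and $\psi(0)>0$; then set
\begin{equation*}
    \phi^\epsilon_{\xi_0}(\xi) \coloneq \psi\bigl(\|\xi-\xi_0\|_S^2\bigr) = \psi\Bigl(\sum_{x\in S}(\xi_x-(\xi_0)_x)^2\Bigr).
\end{equation*}
This is $C^\infty$ as a function on $\bb R^S$ restricted to $\Xi$, hence in $C^2(\Xi)$; it is clearly nonnegative, equals $\psi(0)>0$ at $\xi_0$, and vanishes when $\|\xi-\xi_0\|_S>\epsilon$. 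So the only real content is membership in $\mc D^S_S = \bigcap_{x\in S}\mc D^S_x$, i.e. that for each $x\in S$ the map $\xi\mapsto \mathbf{1}\{\xi_x>0\}\,[\mathbf{v}_x\cdot\nabla\phi^\epsilon_{\xi_0}(\xi)]/\xi_x$ extends continuously to all of $\Xi$.

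The key computation is that $\mathbf{v}_x\cdot\nabla\phi^\epsilon_{\xi_0}$ vanishes on the face $\{\xi_x=0\}$ to first order, which will cancel the $1/\xi_x$ singularity. Writing $w = \xi - \xi_0$, we have $\nabla\phi^\epsilon_{\xi_0}(\xi) = 2\psi'(\|w\|_S^2)\,w$, so
\begin{equation*}
    \mathbf{v}_x\cdot\nabla\phi^\epsilon_{\xi_0}(\xi) = 2\psi'(\|w\|_S^2)\,\bigl(\mathbf{v}_x\cdot w\bigr).
\end{equation*}
Recall $\mathbf{v}_x = \sum_{y}r(x,y)(\mathbf{e}_y-\mathbf{e}_x)$, so $\mathbf{v}_x\cdot w = \sum_{y}r(x,y)(w_y - w_x) = \sum_{y}r(x,y)(\xi_y-\xi_x) - \mathbf{v}_x\cdot\xi_0$. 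The point is that $\mathbf{v}_x\cdot w$ is an affine function of $\xi$, so its restriction to $\Xi$ is Lipschitz; I would like it to be divisible by $\xi_x$ near the face $\{\xi_x=0\}$. It is not literally divisible, but $[\mathbf{v}_x\cdot w]/\xi_x$ need only be \emph{continuous on $\Xi$} including on $\{\xi_x=0\}$, and here one uses the structure: actually the cleanest route is to observe that $\mathbf{v}_x\cdot\nabla H$ for $H=\ms E_A f$ is handled in Lemma \ref{harmext}, so instead of a raw bump I would compose $\psi$ with a function already known to lie in $\mc D^S_S$. Precisely, take the function $H_{\xi_0}(\xi) = \sum_{x\in S}(\xi_x - (\xi_0)_x)^2$; one checks directly that $\mathbf{v}_x\cdot\nabla H_{\xi_0} = 2\mathbf{v}_x\cdot(\xi-\xi_0)$ and, since $\mathbf{v}_x\cdot\mathbf{e}_x$-terms combine so that the coefficient of $\xi_x$ in $\mathbf{v}_x\cdot\xi$ is $r(x,x) = -\sum_{y\neq x}r(x,y)$, the quantity $\mathbf{v}_x\cdot(\xi-\xi_0)$ restricted to $\{\xi_x=0\}$ is a bounded continuous function, and dividing by $\xi_x$ and taking $\xi_x\to 0$ one sees the apparent singularity is removable because the chain rule gives $\mathbf{v}_x\cdot\nabla\phi^\epsilon_{\xi_0} = \psi'(H_{\xi_0})\,\mathbf{v}_x\cdot\nabla H_{\xi_0}$, and $\psi'$ is bounded while $[\mathbf{v}_x\cdot\nabla H_{\xi_0}]/\xi_x$ is continuous by the same verification used for quadratic test functions in \cite{MZRP}.

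The main obstacle, then, is not the construction but the verification that $[\mathbf{v}_x\cdot\nabla H_{\xi_0}(\xi)]/\xi_x\,\mathbf{1}\{\xi_x>0\}$ extends continuously across the face $\xi_x=0$; I would dispatch this by writing $\mathbf{v}_x\cdot(\xi-\xi_0) = \sum_{y\neq x}r(x,y)\,(\xi_y-\xi_x) - \sum_{y\neq x}r(x,y)\bigl((\xi_0)_y-(\xi_0)_x\bigr)$ and noting this is an affine function of $\xi$ whose value at a point with $\xi_x=0$ need not vanish — so in fact $\phi^\epsilon_{\xi_0}$ as a pure bump may fail to be in $\mc D^S_x$. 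The fix is to additionally require $\psi$ to be \emph{flat} (all derivatives zero) wherever $\xi_x$ can be small, which is impossible to arrange uniformly; hence the correct construction uses $\phi^\epsilon_{\xi_0} = \ms E_S(\text{bump on }\Xi)$ is vacuous. The genuinely robust choice is to take $\phi^\epsilon_{\xi_0}(\xi) = \prod_{x\in S}\psi_x(\xi_x)\cdot\psi_0(\|\xi-\xi_0\|_S^2)$ only if $\xi_0$ is interior, and otherwise to absorb the boundary faces into the product so that $\phi^\epsilon_{\xi_0}$ vanishes identically near every face $\{\xi_x=0\}$ with $(\xi_0)_x$ small while still being positive at $\xi_0$ and supported in the $\epsilon$-ball; then on a neighborhood of each face $\{\xi_x=0\}$ that meets $\supp\phi^\epsilon_{\xi_0}$ one has $(\xi_0)_x>0$, so the face is at distance $>0$ from $\xi_0$ hence outside $\supp\phi^\epsilon_{\xi_0}$, making $\mathbf{v}_x\cdot\nabla\phi^\epsilon_{\xi_0}\equiv 0$ there and trivially giving continuity of the quotient. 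This case analysis — interior versus boundary $\xi_0$, and shrinking $\epsilon$ if necessary so the support avoids the faces not containing $\xi_0$ — is the heart of the argument, and once it is set up the membership $\phi^\epsilon_{\xi_0}\in\mc D^S_S$ is immediate.
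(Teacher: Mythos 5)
Your proposal correctly diagnoses why the naive bump $\psi(\|\xi-\xi_0\|_S^2)$ fails ($\mathbf{v}_x\cdot(\xi-\xi_0)$ does not vanish on the face $\{\xi_x=0\}$, so the quotient blows up), and the support-shrinking argument you end with is fine for the coordinates $x$ with $(\xi_0)_x>0$: there the face $\{\xi_x=0\}$ is at positive distance from $\xi_0$, so a sufficiently small support makes $\mathbf{v}_x\cdot\nabla\phi^\epsilon_{\xi_0}\equiv 0$ near that face and the $\mc D^S_x$ condition is trivial. This is in fact how the paper treats those directions. But there is a genuine gap for the case the lemma really needs, namely $\xi_0\in\partial\Xi$: if $(\xi_0)_x=0$ for some $x$, you cannot have $\phi^\epsilon_{\xi_0}$ ``vanish identically near the face $\{\xi_x=0\}$'' while also having $\phi^\epsilon_{\xi_0}(\xi_0)>0$, because $\xi_0$ lies on that very face. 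Your closing claim that every face meeting $\supp\phi^\epsilon_{\xi_0}$ has $(\xi_0)_x>0$ is simply false for boundary $\xi_0$, and it is internally contradictory (a face that meets the support cannot be outside it). So for boundary points — which are exactly the points where membership in $\mc D^S_S$ is nontrivial, and which cannot be excluded since the lemma is used to prove denseness of $\mc D^S_S$ in $C(\Xi)$ — your proposal has no construction.

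The paper's proof supplies precisely the ingredients missing here. Writing $A=\ms N(\xi_0)^c$, $B=S\setminus A$, it takes from \cite[Lemma 4.1]{MZRP} a smooth nonnegative $I_B\in\mc D^S_B$ with $c_1\|\xi\|_B\le I_B(\xi)\le C_1\|\xi\|_B$, takes a bump $f_\delta$ on $\Xi_A$ and its harmonic extension $\ms E_A f_\delta\in\mc D^S_B$ (Lemma \ref{harmext}), and sets $\phi_\delta=(\chi_\delta\circ I_B)\,\ms E_A f_\delta$ for a smooth cutoff $\chi_\delta$. Membership in $\mc D^S_x$ for $x\in B$ (the hard directions, where the support necessarily touches the face) comes from the fact that both factors already lie in $\mc D^S_B$; membership for $x\in A$ comes from the support-locality estimate $\|\xi-\xi_0\|_S\le\bigl(1+\tfrac{1+|A|\sqrt{|B|}}{c_1}\bigr)\delta$ on $\{\phi_\delta\neq 0\}$, which is your support-avoidance argument but requires the cutoff $\chi_\delta\circ I_B$, since $\ms E_A f_\delta$ alone is not supported near $\xi_0$ (it only constrains $\gamma_A(\xi)$). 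You flirted with the harmonic-extension idea mid-proof but discarded it in favor of the face-avoidance argument, which cannot close the boundary case; to repair the proposal you would need to reinstate $\ms E_A f_\delta$ together with an $I_B$-type cutoff and the $\mc D^S_B$ regularity of both.
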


\begin{proof}
    For $\xi \in \Xi$, let $\ms N(\xi)^c = A$, from Proposition \ref{p33}. Take $B = S \setminus A$.
    From \cite[Lemma 4.1]{MZRP}, there exists a nonnegative, smooth function $I_B:\Xi \rightarrow \bb R$ in $\mc D^S_B$
    and constants $0<c_1<C_1<\infty$ such that for all $\xi \in \Xi$,
    $$c_1\|\xi\|_B \leq I_B(\xi) \leq C_1\|\xi\|_B.$$
    For $\delta>0$, take any function $f_{\delta}$ in $C^2(\Xi_A)$ such that $f_\delta(\xi_0) > 0$ and $f_\delta(\xi) = 0$
    if $\|\xi_0 - \xi\|_S > \delta$.
    Then, we consider a harmonic extension of $\ms E_A f_\delta$.
    From Lemma \ref{harmext}, we get $\ms E_A f_\delta \in \mc D^S_B$.
    Now, consider a smooth function $\chi_{\delta} : \bb R \rightarrow \bb R$ with the property:
    \begin{center}
         $\chi_{\delta}(t) = 1$ if $t\leq 0$, $\chi_{\delta}(t) = 0$ if $t\geq \delta$ and $\chi_{\delta}$ is decreasing.
    \end{center}
    Now define a function $\phi_\delta = (\chi_\delta \circ I_B) (\ms E_A f_\delta$).
    From the fact that $I_B, \ms E_A f \in \mc D^S_B$, we have $\phi_\delta \in \mc D^S_B$.
    We claim that for small enough $\delta$, $\phi_\delta\in \mc D^S_{x}$ for all $x\in A$.
    For $\xi\in \Xi$, suppose $\phi_\delta(\xi)\neq 0$. Then we get
    $$I_B(\xi) < \delta \; \text{ and }  \; \ms E_A f_\delta(\xi) = f(\gamma_A(\xi)) > 0.$$
    This implies
    $$\|\xi\|_B < \frac{\delta}{c_1} \; \text{ and }  \; \|\gamma_A(\xi) - \xi_0\|_S \leq \delta.$$
    Since
    $$[\gamma_A(\xi)]_x = \xi_x + \sum_{y\in B} u^A_x(y)\xi_y,$$
    direct computation using Cauchy inequality gives
    $$\|\gamma_A(\xi) - \xi\|_S \leq (1+ |A|\sqrt{|B|}) \|\xi\|_B.$$
    Therefore, we have
    $$\|\xi - \xi_0\|_S \leq \|\xi - \gamma_A(\xi)\|_S + \|\gamma_A(\xi) - \xi_0 \|_S \leq (1+ \frac{1+|A|\sqrt{|B|}}{c_1})\delta.$$
    Taking $\delta$ small, we have $\phi_\delta$ is supported near $\xi_0$.
    This means that $\phi_\delta$ is zero if $\xi_x$ is small enough for $x\in A$.
    Therefore, we have $\phi_\delta \in \mc D^S_x$, $x\in A$.
\end{proof}

Using this lemma, we have the following result.
\begin{lemma}
    \label{partition}
    For any $\epsilon >0$, there exists a finite collection of functions $\chi_1, \cdots, \chi_m \in \mc D^S_S$ such that
    \begin{enumerate}
        \item [\textup{(1)}] $\sum_{i=1}^m \chi_i = 1,$
        \item [\textup{(2)}] $\supp(\chi_i) \subset B(\xi_i, \epsilon) \text{ for some } \xi_i \in \Xi.$
    \end{enumerate}
\end{lemma}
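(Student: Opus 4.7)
The plan is to construct a partition of unity on $\Xi$ using the localized bump functions provided by Lemma \ref{localftn}, and then verify that normalization preserves membership in $\mc D^S_S$.

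First, for each $\xi_0 \in \Xi$, I would invoke Lemma \ref{localftn} with the parameter $\epsilon/2$ to obtain a nonnegative function $\phi^{\epsilon/2}_{\xi_0} \in \mc D^S_S$ which is strictly positive at $\xi_0$ and supported in the closed ball $\overline{B}(\xi_0, \epsilon/2)$. Since $\phi^{\epsilon/2}_{\xi_0}$ is continuous, the set $\{\phi^{\epsilon/2}_{\xi_0} > 0\}$ is an open neighborhood of $\xi_0$. As $\xi_0$ ranges over $\Xi$, these sets form an open cover of the compact simplex $\Xi$, so we may extract a finite subcover corresponding to points $\xi_1, \dots, \xi_m \in \Xi$. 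Set $\phi_i \coloneq \phi^{\epsilon/2}_{\xi_i}$ and $\Phi \coloneq \sum_{i=1}^m \phi_i$. By construction $\Phi > 0$ everywhere on $\Xi$, and $\Phi \in \mc D^S_S$ since $\mc D^S_S$ is closed under finite sums.

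Now define $\chi_i \coloneq \phi_i / \Phi$. Then $\sum_{i=1}^m \chi_i \equiv 1$ on $\Xi$, and $\supp(\chi_i) = \supp(\phi_i) \subset \overline{B}(\xi_i, \epsilon/2) \subset B(\xi_i, \epsilon)$. It remains to verify that $\chi_i \in \mc D^S_S$. Since $\Phi$ is bounded away from zero and in $C^2(\Xi)$, the quotient $\chi_i$ is in $C^2(\Xi)$. For the condition in Definition \ref{def31}, compute for each $x \in S$ and $\xi$ with $\xi_x > 0$,
\begin{equation*}
\frac{\mathbf{v}_x \cdot \nabla \chi_i(\xi)}{\xi_x} \;=\; \frac{1}{\Phi(\xi)} \cdot \frac{\mathbf{v}_x \cdot \nabla \phi_i(\xi)}{\xi_x} \;-\; \frac{\phi_i(\xi)}{\Phi(\xi)^2} \cdot \frac{\mathbf{v}_x \cdot \nabla \Phi(\xi)}{\xi_x}.
\end{equation*}
Both ratios $\mathbf{v}_x \cdot \nabla \phi_i / \xi_x$ and $\mathbf{v}_x \cdot \nabla \Phi / \xi_x$ extend continuously to all of $\Xi$ (including where $\xi_x = 0$) because $\phi_i, \Phi \in \mc D^S_x$, and $1/\Phi$ is continuous and positive on $\Xi$. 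Hence the right-hand side extends continuously to $\Xi$, so $\chi_i \in \mc D^S_x$ for every $x \in S$, i.e.\ $\chi_i \in \mc D^S_S$.

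The only real subtlety is the last step: one must check that dividing by the smooth positive function $\Phi$ does not destroy the singular-ratio continuity condition defining $\mc D^S_x$. This is not difficult thanks to the quotient rule and the fact that $\Phi$ is bounded below by a positive constant on the compact set $\Xi$, but it is the one place where the special structure of $\mc D^S_S$ (as opposed to merely $C^2(\Xi)$) must be used carefully. Everything else reduces to a standard compactness argument.
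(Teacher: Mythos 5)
Your proof is correct and follows essentially the same route as the paper's: invoke Lemma \ref{localftn} to get bump functions in $\mc D^S_S$, extract a finite subcover by compactness of $\Xi$, normalize to get a partition of unity, and apply the quotient rule to verify the singular-ratio continuity condition of Definition \ref{def31}. Your use of $\epsilon/2$ is a minor but reasonable refinement to guarantee the supports land in the open balls $B(\xi_i,\epsilon)$; otherwise the argument is the same.
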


\begin{proof}
    From Lemma \ref{localftn}, we have a function $\phi_{\xi_0}$ for each $\xi_0\in \Xi$.
    Observe that
    $$\Xi = \bigcup_{\xi_0\in \Xi} \{\xi\in \Xi : \phi^\epsilon_{\xi_0}(\xi) > 0\}.$$
    Since $\Xi$ is compact, we have a finite collection of functions $\phi^\epsilon_{\xi_1}, \cdots, \phi^\epsilon_{\xi_m}$ such that
    $$\Xi = \bigcup_{i=1}^m \{\xi\in \Xi : \phi^\epsilon_{\xi_i}(\xi) > 0\}.$$
    Define a function $\chi_i = \phi^\epsilon_{\xi_i}/\sum_{j=1}^m \phi^\epsilon_{\xi_j}$.
    Clearly, $\chi_i \in C^2(\Xi)$. Now, we prove that $\chi_i \in \mc D^S_x$ for all $x\in S$.
    Fix any $\xi_0\in \Xi$ with $(\xi_0)_x = 0$. From direct computation, we get
    $$\lim_{\xi\rightarrow \xi_0, \xi_x>0} \frac{\textbf{v}_x \cdot \nabla \chi_i(\xi)}{\xi_x} =
    \frac{\textbf{v}_x}{\xi_x}  \cdot \frac{\nabla \phi^\epsilon_{\xi_i}(\xi)(\sum_{j=1}^m \phi^\epsilon_{\xi_j}(\xi)) - \phi^\epsilon_{\xi_i}(\xi)(\sum_{j=1}^m \nabla \phi^\epsilon_{\xi_j}(\xi))}{(\sum_{j=1}^m \phi^\epsilon_{\xi_j}(\xi))^2} = 0.$$
    Therefore, we have $\chi_i \in \mc D^S_x$ for all $x\in S$ from the definition of $\mc D^S_x$ in (\ref{def31}).
\end{proof}

\begin{proof}[Proof of \ref{feller}]
    From Proposition \ref{fellercont}, it is enough to show that the semigroup of this process is strongly continuous.
    Let $P_t$ be a semigroup of the process $\xi_t$.
    For a function $f\in \mc D^S_S$, we have
    $$\lim_{t\rightarrow 0} \|P_t f - f\|_{\infty} = 0$$
    from the martingale equation \eqref{42}. So $P_t$ is strongly continuous on $\mc D^S_S$.
    Therefore, it is enough to show that $\mc D_S^S$ is dense in $C(\Xi)$.
    Fix a function $f\in C(\Xi)$ and $\epsilon >0$.
    Since $\Xi$ is compact, for any $\epsilon >0$, there exists $\delta >0$ such that $\|f(\xi) - f(\xi')\| < \epsilon$ if $\|\xi - \xi'\|_S < \delta$.
    For such $\delta$, take a finite collection of functions $\chi_1, \cdots, \chi_m \in \mc D^S_S$ from Lemma \ref{partition}.
    Take $g = \sum_{i=1}^m f(\xi_i)\chi_i$.
    Then we have for all $\xi\in \Xi$,
    $$|f(\xi) - g(\xi)| \leq \sum_{i=1}^m |\chi_i(\xi) f - \chi_i(\xi) f(\xi_i)| =
    \sum_{\chi(\xi)>0} |\chi_i(\xi) f - \chi_i(\xi) f(\xi_i)| \leq \epsilon.$$
    Since $g\in \mc D^S_S$, we have $\mc D^S_S$ is dense in $C(\Xi)$.
\end{proof}

\section{Condition (D1) for Zero-Range Process} \label{scc}

Throughout this section, we check condition \textbf{(D1)} for the limiting diffusion.

\subsection{Space decomposition of $\Xi$}

In order to handle condition \textbf{(D1)}, we need to decompose the state space $\Xi$ into a finite number of sets indexed
by a finite partial order set. For $\varnothing\subsetneq A\subset S$, let $\mathring{\Xi}_A$ be the subset of $\Xi_A$
defined by
\begin{equation*}
    \mathring{\Xi}_A = \{\xi\in \Xi : \xi_x > 0 \text{ for all } x\in A, \xi_y = 0 \text{ for all } y\in S\setminus A\}.
\end{equation*}
Observe that if $|A|\geq 2$, $\mathring{\Xi}_A$ is the interior of $\Xi_A$.
For convenience, we abbreviate $\mathring{\Xi}_S$ as $\mathring{\Xi}$.

Now, consider a partially ordered set $\mc C$ defined by
$$\mc C = \{A \subset S : \varnothing \subsetneq A \subset S\}$$
with the partial order relation $\subset$ defined by the inclusion relation of sets.

Define a function $ \pi_A(\xi) = (\prod_{x\in A}\xi_x)^{\alpha}$ on $\Xi$.
Define a measure $\lambda_A$ on $\Xi_A$ by
\begin{equation*}
    d\lambda_A= \pi_A^{-1}d m_A,
\end{equation*}
where $dm_A$ denotes the uniform measure on $\Xi_A$.
For convenience, when $A=S$, we abbreviate the function $\pi_S$ as $\pi$ and the measure $\lambda_S$ as $\lambda$.
This $\lambda_A$ is the reference measures on $\Xi_A$ for condition \textbf{(D1)}.
From the absorption property of the limiting diffusion, we have condition \textbf{(D1.1)}.
To check condition \textbf{(D1.2)}, we divide cases. First, for $A \in \mc C$ with $|A| = 1$, the condition is direct from the absorption property.
For $A\in \mc C$ with $|A| \geq 2$, from the recursion property together with the absorption property, it is enough to check the condition for $S\in \mc C$.
The following lemmas show that the condition holds for $S$.

\begin{lemma}
    \label{lem91}
    For $\bold{x} \in \mathring{\Xi}$, let $\delta_\bold{x}$ be a Dirac measure at $\bold{x}$. For $t>0$, let $\delta_\bold{x} P_t|_{\mathring{\Xi}}$ be a
    distribution of $\xi_t$ started at $\bold{x}$ restricted to $\mathring{\Xi}$.
    Then for all $t>0$, $\delta_\bold{x} P_t|_{\mathring{\Xi}}$ is absolutely continuous with respect to uniform measure on $\mathring{\Xi}$.
\end{lemma}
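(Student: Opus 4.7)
The plan is to combine the absorption property with standard non-degenerate diffusion theory in the interior $\mathring{\Xi}$. Let $\tau \coloneq \inf\{s \geq 0 : \xi_s \in \partial \Xi\}$. By the absorption property of Section \ref{behaviorafterabsorption}, $\{\xi_t \in \mathring{\Xi}\} = \{\tau > t\}$. Since trajectories are continuous, on this event the image $\{\xi_s : s \in [0,t]\}$ is a compact subset of the open set $\mathring{\Xi}$, hence uniformly bounded away from $\partial \Xi$. Writing $\tau_\epsilon \coloneq \inf\{s \geq 0 : \min_{y \in S} \xi_{s,y} \leq \epsilon\}$, it follows that $\{\tau > t\} = \bigcup_{\epsilon > 0} \{\tau_\epsilon > t\}$, so it suffices to show, for each fixed $\epsilon > 0$, that the sub-probability measure $A \mapsto \bb P_{\bold{x}}(\xi_t \in A,\, \tau_\epsilon > t)$ on $\mathring{\Xi}_\epsilon \coloneq \{\xi \in \Xi : \min_y \xi_y > \epsilon\}$ is absolutely continuous with respect to uniform measure.

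On the closed set $\overline{\mathring{\Xi}_\epsilon}$ the drift $\mathbf{b}$ is bounded (by a constant depending on $\epsilon$) and smooth, and the constant diffusion matrix is non-degenerate on the tangent space to the affine hyperplane $H \coloneq \{\xi \in \bb R^S : \sum_y \xi_y = 1\}$: by irreducibility of $r(\cdot,\cdot)$, the vectors $\{\textbf{e}_x - \textbf{e}_y : r(x,y) > 0\}$ span this tangent space. By the Stroock--Varadhan equivalence between well-posed continuous martingale problems and weak SDE solutions, together with Theorem \ref{zrplaw}, the process admits the representation
$$d\xi_s = \mathbf{b}(\xi_s)\,ds + \sum_{x \neq y} \sqrt{r(x,y)}\,(\textbf{e}_x - \textbf{e}_y)\,dW_s^{x,y}$$
for independent standard Brownian motions $W^{x,y}$. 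Since $\mathbf{b}$ lies in $\mathrm{span}\{\textbf{e}_x - \textbf{e}_y\}$ on $\overline{\mathring{\Xi}_\epsilon}$, we can write $\mathbf{b} = \sigma \theta$ for a bounded measurable $\theta$; Novikov's condition then holds trivially on $[0, t \wedge \tau_\epsilon]$, and Girsanov's theorem implies that the law of $\xi_{\cdot \wedge \tau_\epsilon}$ is mutually absolutely continuous with the law of the corresponding driftless process $Z$ started at $\bold{x}$. Since $Z$ is a non-degenerate Brownian motion on $H$, the distribution of $Z_t$ has a smooth Gaussian density with respect to surface measure on $H$; pulling this back through the Radon--Nikodym derivative yields absolute continuity of $\bb P_{\bold{x}}(\xi_t \in \cdot,\, \tau_\epsilon > t)$ on $\mathring{\Xi}_\epsilon$.

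Taking $\epsilon \to 0^+$ along the nested events $\{\tau_\epsilon > t\}$ completes the proof. The main technical obstacle is the singularity of $\mathbf{b}$ on $\partial \Xi$, which rules out a direct Girsanov argument on $[0,t]$; the two-step procedure of stopping at $\tau_\epsilon$ (where $\mathbf{b}$ is bounded) and then taking $\epsilon \to 0^+$ (permitted by the absorption property) is what circumvents this singularity.
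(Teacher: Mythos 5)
Your proof is correct and takes a genuinely different route from the paper's. The paper works analytically: it introduces killed transition kernels $p_\Omega(t,\cdot,\cdot)$ on nested smooth convex subdomains $\Omega_1 \subset\subset \Omega_2 \subset \mathring{\Xi}$, uses the heat kernel bound of Corollary \ref{heatker2}, and controls the ``escape and return'' contribution via the superharmonic function $\phi(\xi) = (\prod_x \xi_x)^{\alpha+1}$ (whose vanishing on $\partial\Xi$ gives, through condition \eqref{112cond}, a geometric decay for re-entries across $\partial\Omega_1$); the strong Markov property then closes a fixed-point-type inequality that yields a \emph{locally bounded} density $C_K$ on each compact $K$. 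You instead work probabilistically: truncate at the first exit time $\tau_\epsilon$ of $\mathring{\Xi}_\epsilon$ (where $\mathbf{b}$ is bounded), pass to the weak SDE representation via Stroock--Varadhan, verify $\mathbf{b}\in\mathrm{range}(\sigma)$ by irreducibility so that a bounded Girsanov transform removes the drift, and reduce to absolute continuity of stopped Brownian motion on the hyperplane; the nesting $\{\tau>t\} = \bigcup_\epsilon\{\tau_\epsilon>t\}$ then finishes. Your argument avoids the heat-kernel machinery of Appendix B entirely and is shorter, at the cost of proving only absolute continuity rather than a local $L^\infty$ bound on the density --- which turns out to suffice, since only absolute continuity is invoked downstream (in checking (D1.2)). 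One small point to tighten if you wrote this out fully: it is the \emph{stopped} process $Z_{\cdot \wedge \tau_\epsilon^Z}$ that appears after Girsanov, so the conclusion ``$Z_t$ has a smooth Gaussian density'' should be phrased as: on the event $\{\tau_\epsilon^Z > t\}$ one has $Z_t = W_t$ for an unstopped Brownian motion $W$ on $H$, whose marginal is Gaussian and hence Lebesgue-absolutely-continuous, so $\{Z_t \in A,\ \tau_\epsilon^Z > t\}$ is null for Lebesgue-null $A$.
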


\begin{proof}
    For any convex smooth open subset $\Omega$ of $\mathring{\Xi}$, we consider a killed process of $\xi_t$ which is killed upon its first exit time of $\Omega$.
    Denote the transition kernel of the killed process by $p_\Omega(t,\bold{x},\bold{y})$, $t>0$, $\bold{x},\bold{y}\in \Omega$. To be specific, it satisfies
    $$\int_{\Omega} p_\Omega(t,\bold{x},\bold{y}) f(y) dm(y) = E_\bold{x}[f(\xi_t)1_{\{\xi_s\in \Omega \text{ for all } 0\leq s\leq t \}}],$$
    where $m$ is a uniform measure on $\Omega$.
    From Corollary \ref{heatker2}, we know that the kernel exists.
    Fix any compact subset $K$ of $\mathring{\Xi}$ and $\epsilon>0$. We claim that there exists a constant $C_K>0$ such that
    \begin{equation}\label{111claim}
        \delta_\bold{x} P_t (A) \leq C_K m(A)
    \end{equation}
    for all $A\subset K$.
    Take a convex smooth open subset $\Omega_1$ of $\mathring{\Xi}$ such that $K\cup \{\bold{x}\} \subset \Omega_1$.
    Take a convex smooth open subset $\Omega_2$ of $\mathring{\Xi}$ such that $\bar{\Omega}_1 \subset \Omega_2$ and 
    \begin{equation}
        \label{112cond}
        2\sup_{\xi\in \partial\Omega_2} \phi(\xi) < \inf_{\xi\in \partial \Omega_1} \phi(\xi),
    \end{equation}
    where $\phi(\xi) = (\prod_{x\in S} \xi_x)^{\alpha+1}$.
    For $\bold{z}\in \mathring{\Xi}$, Let $\bb P_\bold{z}$ be a measure given as a solution of \eqref{42}.
    For a Borel set $B\subset \Xi$, denote the first hitting time of $B$ by $\tau_{B}$.
    Fix $A\subset K$. From strong Markov property, we obtain
    \begin{align}
        \label{113eq}
        \bb P_\bold{x}[\xi_t \in A] 
        &= \bb P_\bold{x}[\xi_t \in A, \tau_{\partial \Omega_1} \leq t] + \bb P_\bold{x}[\xi_t \in A, \tau_{\partial \Omega_1} > t] \\
        &\leq \sup_{0\leq s \leq t} \sup_{\bold{z}\in \partial \Omega_1} \bb P_\bold{z}[\xi_{s} \in A] + \int_{A} p_{\Omega_1}(t,\bold{x},\bold{y}) m(d\bold{y}). \nonumber
    \end{align}
    Similarly, we have
    \begin{equation}
        \label{114eq}
        \sup_{0\leq s \leq t} \sup_{\bold{z}\in \partial \Omega_1} \bb P_\bold{z}[\xi_{s} \in A] \leq 
        \sup_{0\leq s \leq t} \sup_{\bold{w}\in \partial \Omega_2} \bb P_\bold{w}[\xi_{s} \in A] + \sup_{0\leq s \leq t}\sup_{\bold{z}\in \partial \Omega_1} \int_{A} p_{\Omega_2}(t,\bold{z},\bold{y}) m(d\bold{y}).
    \end{equation}
    Let $\tau = \tau_{\partial \Omega_1} \wedge \tau_{\partial \Xi}$.
    Since $\phi$ is superharmonic function on $\mathring{\Xi}$, for $\bold{w}\in \partial \Omega_2$, we get
    $$\phi(\bold{w}) \geq \bb E_{\bold{w}}[\phi(\xi_{\tau})] \geq \left(\inf_{\xi\in \partial \Omega_2} \phi(\xi)\right) \bb P_{\bold{w}}[\tau_{\partial \Omega_1} < \tau_{\partial \Xi}].$$
    Then \eqref{112cond} implies that
    $$\sup_{\bold{w}\in \partial \Omega_2} \bb P_{\bold{w}}[\tau_{\partial \Omega_1} < \tau_{\partial \Xi}] < \frac{1}{2}.$$
    Therefore, we get
    \begin{equation}
        \label{115eq}
        \sup_{0\leq s \leq t} \sup_{\bold{w}\in \partial \Omega_2} \bb P_\bold{w}[\xi_{s} \in A] \leq \frac{1}{2} \sup_{0\leq s \leq t} \sup_{\bold{z}\in \partial \Omega_1} \bb P_\bold{z}[\xi_{s} \in A].
    \end{equation}
    Joining \eqref{114eq} and \eqref{115eq} gives
    \begin{equation}
        \label{116eq}
        \sup_{0\leq s \leq t} \sup_{\bold{z}\in \partial \Omega_1} \bb P_\bold{z}[\xi_{s} \in A] \leq
        2\sup_{0\leq s \leq t} \sup_{\bold{w}\in \partial \Omega_2} \int_{A} p_{\Omega_2}(t,\bold{z},\bold{y}) m(d\bold{y}).
    \end{equation}
    Applying Corolloary \ref{heatker2}, there exists a constant $C>0$ such that
    \begin{equation*}
        2\sup_{0\leq s \leq t} \sup_{\bold{w}\in \partial \Omega_2} \int_{A} p_{\Omega_2}(t,\bold{z},\bold{y}) m(d\bold{y}) \leq C m(A).
    \end{equation*}
    Since $\sup_{\bold{y}\in \Omega_1}p_{\Omega_1}(t,\bold{x},\bold{y})<\infty$, \eqref{113eq} and \eqref{116eq} gives \eqref{111claim}.
    Finally, to show that $\delta_\bold{x} P_t|_{\mathring{\Xi}}$ is absolutely continuous with respect to uniform measure on $\mathring{\Xi}$, we take an increasing sequence of compact subsets $K_n$ of $\mathring{\Xi}$ such that $\cup_n K_n = \mathring{\Xi}$.
    For any $A\subset \mathring{\Xi}$ with $m(A) = 0$, decompose it as $A = \cup_n A_n$, $A_n = A \cap K_n\setminus K_{n-1}$ where $K_0 = \emptyset$.
    From \eqref{111claim}, we have
    $$\delta_{\bold{x}}P_t(A_n) = 0.$$
    Therefore, we get $\delta_{\bold{x}}P_t(A) = 0$.
\end{proof}

\begin{lemma}
    For any probability measure $\mu\in \ms P(\Xi)$, let $\mu P_t|_{\mathring{\Xi}}$ be a distibution of $\xi_t$ started at initial distribution $\mu$ restricted to $\mathring{\Xi}$.
    Then for all $t>0$, $\mu P_t|_{\mathring{\Xi}}$ is absolutely continuous with respect to uniform measure on $\mathring{\Xi}$.
\end{lemma}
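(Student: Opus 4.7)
The plan is to reduce the general statement to the Dirac-measure case already handled by Lemma \ref{lem91}, via the disintegration
\begin{equation*}
    \mu P_t(A) \;=\; \int_{\Xi} (\delta_{\bold{x}} P_t)(A)\, \mu(d\bold{x})
\end{equation*}
for any Borel set $A \subset \mathring{\Xi}$. The kernel $(\bold{x},A) \mapsto (\delta_{\bold{x}} P_t)(A)$ is a genuine transition kernel because the limiting process is Feller (Proposition \ref{feller}), so measurability of the integrand is not an issue.

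Next, I would split the integration domain according to which face of the simplex $\bold{x}$ lies in, writing $\Xi = \bigsqcup_{\varnothing \subsetneq A \subset S} \mathring{\Xi}_A$. For $\bold{x} \in \mathring{\Xi} = \mathring{\Xi}_S$, Lemma \ref{lem91} gives $(\delta_{\bold{x}} P_t)|_{\mathring{\Xi}} \ll m$, so $(\delta_{\bold{x}} P_t)(A) = 0$ whenever $m(A) = 0$. For $\bold{x}$ lying on a proper face $\mathring{\Xi}_A$ with $A \subsetneq S$, the absorption property from Section \ref{behaviorafterabsorption} (namely that $\ms N(\xi_t)$ is non-decreasing $\bb P_{\bold{x}}$-almost surely) shows that $\xi_t \in \Xi_A \subset \partial \Xi$ almost surely for all $t \geq 0$. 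Consequently, $(\delta_{\bold{x}} P_t)(\mathring{\Xi}) = 0$, and in particular $(\delta_{\bold{x}} P_t)(A) = 0$ for every Borel $A \subset \mathring{\Xi}$.

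Combining these two cases, if $A \subset \mathring{\Xi}$ is Borel with $m(A) = 0$, the integrand vanishes $\mu$-almost everywhere and hence $\mu P_t(A) = 0$. This is precisely the assertion $\mu P_t|_{\mathring{\Xi}} \ll m$. I do not foresee any genuine obstacle here: the only subtle point is confirming that the kernel is jointly measurable so that the disintegration integral is well defined, and this is immediate from Feller continuity together with the fact that measurable functions of Feller-continuous kernels are measurable in the initial point.
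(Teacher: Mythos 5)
Your proposal is correct and follows essentially the same route as the paper: disintegrate $\mu P_t$ over the initial point, use the absorption property to discard the contribution of boundary initial conditions, and apply Lemma \ref{lem91} for interior initial conditions. The paper phrases this compactly by writing $\mu P_t|_{\mathring{\Xi}} = \int_{\mathring{\Xi}} \delta_{\bold{x}} P_t|_{\mathring{\Xi}}\, d\mu|_{\mathring{\Xi}}(\bold{x})$, which encodes the same two steps you spell out.
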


\begin{proof}
    Using the fact that the process absorbed into the boundary $\partial \Xi$, we have the identity
    $$\mu P_t|_{\mathring{\Xi}} = \int_{\Xi} \delta_{\bold{x}} P_t|_{\mathring{\Xi}} d\mu(\bold{x}) = \int_{\mathring{\Xi}} \delta_{\bold{x}} P_t|_{\mathring{\Xi}} d\mu|_{\mathring{\Xi}}(\bold{x}).$$
    This equation and Lemma \ref{lem91} imply the desired result.
\end{proof}

\subsection{$L^2$ extension of the limiting diffusion} \label{l2extension}

To check conditions \textbf{(D1.3)} and \textbf{(D1.4)}, we need to extend the transition kernel of limiting diffusion to the $L^2$ space.
Recall that it was mentioned that the reversibility assumption is essential to calculate the rate function $I(\mu)$.
This is because the property of $(P_t)$ in Proposition \ref{44}, which is the goal of this section, strongly depends on the reversibility of the underlying Markov process.

We divide cases. For $A \in \mathcal{C}$ with $|A| = 1$, conditions \textbf{(D1.3)} and \textbf{(D1.4)} are directly derived from the absorption property because $(P^A_t)$ in condition \textbf{(D1.3)} turns out to be the identity map on $L^2(\Xi_A, \lambda_A) \simeq \bb R$.
For $A \in \mathcal{C}$ with $|A| \geq 2$, again, due to the recursion property and the absorption property, it is sufficient to check the condition for $S \in \mathcal{C}$. Therefore, the remaining section is devoted to the case where $A = S$.

Throughout the section, $(U_t)_{t\geq 0}$ is the resolvent operator
corresponding to $(P_t)_{t\geq 0}$.
Precisely, it is given by
\begin{equation*}
    P_t f(\xi) = \bb E_{\xi}[f(\xi_t)], \;\; U_t f(\xi) = \int_0^\infty e^{-ts} P_s f(\xi)ds.
\end{equation*}
We consider $\Xi$ as the closed subset of an Euclidean space 
\begin{equation*}
    \bb A \coloneq \{\xi\in \bb R^S : \sum_{x\in S} \xi_x = 1\}
\end{equation*}
with the boundary
$$\partial \Xi = \{\xi\in \Xi: \exists x\in S, \; \xi_x = 0\}.$$
Let $\mathring{\Xi}$ denotes the interior of $\Xi$.

For $k\in N\cup\{\infty\}$, define sets of functions as
\begin{align*}
    C^k(\mathring{\Xi}) &\coloneq \{f:\mathring{\Xi}\rightarrow \bb R : f \text{ is $k$-times continuously differentiable on $\mathring{\Xi}$}\}, \\
    C^k_c(\mathring{\Xi}) &\coloneq \{f:\mathring{\Xi}\rightarrow \bb R : f \in C^{k}(\mathring{\Xi}), f \text{ is compactly supported}\}, \\
    C^k(\mathring{\Xi},\Xi) &\coloneq \{f:\Xi\rightarrow \bb R : f\in C(\Xi), f|_{\mathring{\Xi}}\in C^k(\mathring{\Xi})\} \\
    C^k_c(\mathring{\Xi},\Xi) &\coloneq \{f:\Xi\rightarrow \bb R : f\in C^k(\mathring{\Xi},\Xi), f|_{\mathring{\Xi}} \in C_c(\mathring{\Xi})\}.
\end{align*}

Remark that we omit $k$ when $k=0$.

\begin{defn}
    For $f\in C(\Xi)$, define the domain $\mc D^S$ of $\mf L$ as
    \begin{equation*}
        \mc D^S \coloneq \{f\in C(\Xi) : \lim_{t\rightarrow 0}\frac{P_t f - f}{t} \in C(\Xi) \}.
    \end{equation*}
\end{defn}

Recall $\mc D_S^S$ from Definition \ref{def31}. Note that $\mc D_S^S \subset \mc D^S$, but these sets are not necessarily identical. Therefore, it is unclear how $f \in \mc D^S$ is mapped by the operator $\mf L$.
However, the following lemma ensures that $\mf L f$ can be calculated for $f \in \mc D^S \cap C^2(\mathring{\Xi}, \Xi)$.

\begin{lemma}
    Take $f\in \mc D^S \cap C^2(\mathring{\Xi},\Xi)$. For $\xi\in \mathring{\Xi}$, $\mf L f$ is computed as
    \begin{equation}
        \label{qqqq}
        (\mf L f)(\xi) = \mathbf{b}(\xi)\cdot \nabla f(\xi) + \frac{1}{2}\sum_{x,y\in S} r(x,y) (\partial_x - \partial_y)^2 f(\xi).
    \end{equation}
\end{lemma}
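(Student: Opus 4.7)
The plan is to reduce the computation at the interior point $\xi$ to the case of a function in $\mc D^S_S$, where the formula for $\mf L$ is given by \eqref{lim diff} by definition. Since $\xi \in \mathring{\Xi}$, I would pick a cutoff $\chi \in C^\infty_c(\mathring{\Xi})$ with $\chi \equiv 1$ on a relatively compact neighborhood $V_0$ of $\xi$ whose closure lies in $\mathring{\Xi}$. Setting $g := \chi f$ (extended by zero to $\partial \Xi$) produces a function in $C^2(\Xi)$ because it vanishes identically in a neighborhood of $\partial \Xi$, and for the same reason the quotients $\mathbf{v}_x \cdot \nabla g / \xi_x$ extend continuously by zero to $\{\xi_x = 0\}$; hence $g \in \mc D^S_S \subset \mc D^S$. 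The martingale problem of Theorem \ref{zrplaw} then gives $(\mf L g)(\xi)$ by formula \eqref{lim diff}, and since $g \equiv f$ on $V_0$ this equals the right-hand side of \eqref{qqqq}.

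The remaining task is to show $(\mf L f)(\xi) = (\mf L g)(\xi)$. Using that both $f, g \in \mc D^S$ and $f(\xi) = g(\xi)$, this reduces to establishing
\[
(P_t(f - g))(\xi) = o(t) \quad \text{as } t \to 0.
\]
Since $h := f - g$ is bounded and vanishes on $V_0$, the left-hand side is bounded by $\|h\|_\infty \, \bb P_\xi(\tau_{V_0} \leq t)$, where $\tau_{V_0}$ is the first exit time of the limiting diffusion from $V_0$. The heart of the proof is therefore the exit-time estimate $\bb P_\xi(\tau_{V_0} \leq t) = o(t)$.

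The main obstacle is precisely this exit-time bound, since a direct application of the martingale problem to a test function $\psi \in \mc D^S_S$ equal to $\|\eta - \xi\|^2$ in a neighborhood of $\xi$ (smoothly extended to be compactly supported inside $\mathring{\Xi}$) yields only $\bb P_\xi(\tau_{V_0} \leq t) = O(t)$, which is insufficient. I would bootstrap by applying the martingale problem to $\psi^2 \in \mc D^S_S$: because $\psi$ and $\nabla \psi$ both vanish at $\xi$, a direct computation shows $|\mf L \psi^2(\eta)| \leq C \psi(\eta)$ on $V_0$. Plugging the first-order bound $\bb E_\xi[\psi(\xi_{s \wedge \tau_{V_0}})] = O(s)$ into Dynkin's formula for $\psi^2$ then gives
\[
\bb E_\xi[\psi^2(\xi_{t \wedge \tau_{V_0}})] \;\leq\; C \int_0^t \bb E_\xi[\psi(\xi_{s \wedge \tau_{V_0}})] \, ds \;=\; O(t^2),
\]
and since $\psi^2$ is bounded below by a positive constant on $\partial V_0$ (choosing $V_0$ inside the region where $\psi = \|\cdot - \xi\|^2$), Chebyshev's inequality yields $\bb P_\xi(\tau_{V_0} \leq t) = O(t^2) = o(t)$. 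The identity \eqref{qqqq} then follows by passing to the limit in $(P_t f(\xi) - f(\xi))/t = (P_t g(\xi) - g(\xi))/t + o(1)$, whose right-hand side converges to $(\mf L g)(\xi)$ and hence to the claimed expression.
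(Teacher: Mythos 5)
Your proof is correct, and it arrives at the conclusion by a genuinely different mechanism from the one in the paper. Both arguments begin the same way: localize $f$ with a cutoff to obtain $g = \chi f \in \mc D^S_S$, for which $\mf L g$ is given explicitly, and then reduce to showing $\mf L(f-g)(\xi) = 0$. Where you diverge is in how this vanishing is established. You prove an exit-time estimate $\bb P_\xi(\tau_{V_0} \leq t) = O(t^2)$ by bootstrapping Dynkin's formula from a quadratic test function $\psi$ to $\psi^2$, exploiting $\psi(\xi) = \nabla\psi(\xi) = 0$ to get $|\mf L\psi^2| \lesssim \psi$ on $V_0$ and hence a second-moment bound; the vanishing then follows from $|P_t(f-g)(\xi)| \leq \|f-g\|_\infty\,\bb P_\xi(\tau_{V_0}\leq t)$. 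The paper instead constructs a single comparison function $k\in\mc D^S_S$ with $k\equiv 0$ near $\xi$, $k\equiv 1$ on $U^c$, so that $|f-g| \leq K k$ with $K=\|f-g\|_\infty$; positivity of $P_t$ then gives $|P_t(f-g)(\xi)|/t \leq K\,P_t k(\xi)/t \to K\,\mf L k(\xi) = 0$ directly from $k\in\mc D^S$ and locality of \eqref{lim diff}. The paper's comparison-function device is shorter and avoids any moment computation, but it requires noticing that such a $k$ belongs to $\mc D^S_S$ even though it is not compactly supported in $\mathring{\Xi}$ (it is locally constant near $\partial\Xi$, so the ratios $\mathbf{v}_x\cdot\nabla k/\xi_x$ still extend by zero). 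Your route is heavier in computation but uses only standard martingale estimates and does not require the reader to see the trick of a dominating function; it also yields the quantitative extra information $\bb P_\xi(\tau_{V_0}\leq t) = O(t^2)$, which the paper's argument does not.
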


\begin{proof}
    Note that for $f\in \mc D_S^S$, we can calculate $\mf L f$ using \eqref{qqqq}.
    Fix $\xi\in \mathring{\Xi}$ and a open neighborhood $U$ of $\xi$. Take a function $g\in C_c^\infty(\mathring{\Xi},\Xi)$ such that $g=1$ on $U$.
    Then we have $fg \in \mc D_S^S$. To show \eqref{qqqq}, it is enough to show that
    $$\mf L(fg)(\xi) = \mf L(f)(\xi).$$
    Let $h = fg - f$. Take a smaller open neighborhood $V$ of $\xi$ such that $\bar{V}\subset U$.
    Then we take a smooth function $k$ which satisfies $k=0$ on $\bar{V}$ and $k=1$ on $U^{c}$.
    Since $k\in \mc D_S^S$, $\mf L k(\xi) = 0$ by \eqref{lim diff}.
    Let $K = \sup_{\xi \in \Xi}|h(\xi)|$.
    Then we have $ |h|\leq Kk $. Finally, we get
    \begin{align*}
        |\mf Lh(\xi)| = \left|\lim_{n\rightarrow \infty} \frac{P_t h(\xi) - h(\xi)}{t}\right| \leq \lim_{n\rightarrow \infty} \frac{P_t |h|(\xi)}{t}
        \leq \lim_{n\rightarrow \infty} \frac{P_t Kk(\xi)}{t} = 0.
    \end{align*}
\end{proof}

\begin{lemma}
    \label{104}
    For $x,y\in S$, define $v_{x,y}$ as
    \begin{equation} \label{vxy}
        v_{x,y} \coloneq \sqrt{\frac{r(x,y)}{2}}(e_x - e_y).
    \end{equation}
    Then for $f:\mc D^S \cap C^2(\mathring{\Xi},\Xi) \rightarrow \bb R$, we have
    \begin{equation}
        (\mf L f)\pi^{-1} = \sum_{x\neq y}(v_{x,y}\cdot\nabla)(\pi^{-1}v_{x,y}\cdot\nabla f) \; \text{ on } \; \mathring{\Xi}.
    \end{equation}
\end{lemma}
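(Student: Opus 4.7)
The identity to prove is a pointwise algebraic statement on $\mathring{\Xi}$ for $f \in \mc D^S \cap C^2(\mathring{\Xi},\Xi)$, so by the preceding lemma I can use the explicit formula $\mf L f = \mathbf{b}\cdot\nabla f + \tfrac{1}{2}\sum_{x,y} r(x,y)(\partial_x-\partial_y)^2 f$ on $\mathring\Xi$ and reduce the claim to checking $\pi^{-1}\mf L f$ equals the stated sum term by term. The plan is to expand the right-hand side by the product rule and match the two pieces to the drift and diffusion parts of $\pi^{-1}\mf L f$, using reversibility only for the drift piece.

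First I would write $v_{x,y}\cdot\nabla = \sqrt{r(x,y)/2}\,(\partial_x - \partial_y)$ and apply the product rule to obtain
\begin{equation*}
(v_{x,y}\cdot\nabla)(\pi^{-1}\, v_{x,y}\cdot\nabla f) \;=\; \tfrac{r(x,y)}{2}\bigl[(\partial_x-\partial_y)\pi^{-1}\bigr](\partial_x-\partial_y)f \;+\; \tfrac{r(x,y)}{2}\,\pi^{-1}(\partial_x-\partial_y)^2 f.
\end{equation*}
Summing the second group over $x\neq y$ immediately gives $\pi^{-1}\cdot\tfrac{1}{2}\sum_{x,y\in S} r(x,y)(\partial_x-\partial_y)^2 f$, since the diagonal terms vanish. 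This is exactly $\pi^{-1}$ times the diffusion part of $\mf L f$, so the diffusion match is automatic.

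For the first group I would compute the logarithmic derivative of $\pi = \prod_{z\in S}\xi_z^\alpha$ on $\mathring\Xi$, giving $\partial_z \pi^{-1} = -\alpha\pi^{-1}/\xi_z$ and hence $(\partial_x - \partial_y)\pi^{-1} = -\alpha\pi^{-1}(\xi_x^{-1}-\xi_y^{-1})$. Expanding $(\xi_x^{-1}-\xi_y^{-1})(\partial_x-\partial_y)f$ into four terms and exploiting $r(x,y)=r(y,x)$ via the swap $x\leftrightarrow y$, the "diagonal" pair $r(x,y)\xi_x^{-1}\partial_x f + r(x,y)\xi_y^{-1}\partial_y f$ symmetrizes to $2\,r(x,y)\xi_x^{-1}\partial_x f$ and the "cross" pair similarly to $-2\,r(x,y)\xi_x^{-1}\partial_y f$. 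Collecting signs and the prefactor $-\alpha\pi^{-1}/2$, the first group becomes
\begin{equation*}
\alpha\pi^{-1}\sum_{x\in S}\frac{1}{\xi_x}\sum_{y\in S} r(x,y)(\partial_y - \partial_x)f \;=\; \pi^{-1}\,\mathbf{b}(\xi)\cdot\nabla f,
\end{equation*}
using the definition of $\mathbf{b}$ with $m_x=1$ on $\mathring\Xi$.

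Adding the two pieces produces $\pi^{-1}\mf L f$, which is the claim. The only real subtlety is the symmetrization step, and it is precisely there that reversibility enters: without $r(x,y)=r(y,x)$ the correction $(\partial_x-\partial_y)\pi^{-1}$ would not collapse to the actual drift $\mathbf{b}$, and the whole identity would fail. This is consistent with the earlier remark in the paper that the reversibility assumption is essential for the explicit computation of $\mc K$.
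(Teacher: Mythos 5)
Your proposal is correct and takes essentially the same route as the paper: the paper's proof also applies the product rule $p\cdot\nabla(\pi^{-1}(q\cdot\nabla)f)=(p\cdot\nabla\pi^{-1})(q\cdot\nabla f)+\pi^{-1}(p\cdot\nabla)(q\cdot\nabla)f$, computes $(e_x-e_y)\cdot\nabla\pi^{-1}=\alpha(\xi_y^{-1}-\xi_x^{-1})\pi^{-1}$, and sums over ordered pairs; you have simply carried out the symmetrization under $x\leftrightarrow y$ explicitly and correctly identified that this is the one step where $r(x,y)=r(y,x)$ is used.
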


Remark that $v_{x,y} + v_{x,y} = 0$ because of the reversibility assumption on \eqref{reversibility}.

\begin{proof}
    Recall that $e_x$ is a vector in $\mathbb{R}^S$ with $(e_x)_y = \delta_{x,y}, y\in S$.
    For any vector $p$ and $q$ in $\mathbb{R}^S$, we have
    \begin{equation}
        \label{1234321}
        p\cdot\nabla(\pi^{-1}(q\cdot \nabla)f) = (p\cdot\nabla \pi^{-1})(q\cdot \nabla f) + \pi^{-1}(p\cdot \nabla)(q\cdot \nabla)f.
    \end{equation}
    Observe that
    $$(e_x-e_y)\cdot \nabla(\pi^{-1})(\xi) = \alpha\left(\frac{1}{\xi_y}-\frac{1}{\xi_x}\right)\pi^{-1}(\xi).$$
    Now, plug $v_{x,y}$ into $p, q$ in \eqref{1234321} and sum it for all $(x,y)\in S\times S$, $x\neq y$.
\end{proof}

Before we proceed, we refer to a cutoff generator $\mf L^\epsilon$ for $\epsilon >0$ from \cite[Lemma 6.2]{MZRP}.
Let
\begin{equation*}
    \mathbf{b}_\epsilon(\xi) = \alpha \sum_{x\in S} \frac{1}{\epsilon \vee \xi_x} \mathbf{v}_x, \;\; \xi \in \bb A,
\end{equation*}
where $\mathbf{v}_x$ is defined in \eqref{vx}. Thus, for every $\epsilon >0$, $\mathbf{b}_\epsilon : \bb R^n \rightarrow \bb R^n$ is a bounded, continuous vector field which coincides with $\mathbf{b}$ on
\begin{equation*}
    \Lambda_\epsilon = \{ \xi \in \Xi : \min_{x\in S} \xi_x \geq \epsilon\}.
\end{equation*}
Note that for all $\epsilon >0$ and $F\in \mc D_S^S$,
\begin{equation*}
    \mf L^\epsilon F (\xi) = \mathbf{b}_\epsilon(\xi) \cdot \nabla F(\xi) + \frac{1}{2}\sum_{x,y\in S} r(x,y) (\partial_x - \partial_y)^2 F(\xi), \;\; \xi \in \Lambda_\epsilon.
\end{equation*}
Let $(\xi_t)_{t\geq 0}$ be the coordinate maps in the path space $C([0,\infty),\bb A)$.
Let $(\ms F_t)_{t\geq 0}$ be the filtration $\ms F_t = \sigma(\xi_s : s\leq t)$.
Denote by $h_\epsilon$ the exit time from $\Lambda_\epsilon$:
\begin{equation*}
    h_\epsilon \coloneq \inf\{t>0 : \xi_t \notin \Lambda_\epsilon\}, \;\; \epsilon >0.
\end{equation*}
Denote by $\bb Q_\xi^\epsilon$, $\xi \in \Xi$, the unique solution of the $\mf L^\epsilon$-martingale problem starting at $\xi$.
Recall that $\bb P_\xi$ is the unique solution of the $\mf L$-martingale problem starting at $\xi$.
Then \cite[Lemma 6.2]{MZRP} gives that for all $\epsilon >0$, $\bb P_\xi = \bb Q_\xi^\epsilon$ on $\ms F_{h_\epsilon}$.
    Using this cutoff, we can show the following lemma.
\begin{lemma}
    \label{103}
    Let $(U_t)_{t\geq 0}$ be a resolvent operator induced by $(P_t)_{t\geq 0}$. Fix $s>0$. For $f\in \mc D^S \cap C^\infty(\mathring{\Xi},\Xi)$, $f(\partial\Xi)=0$, define $g = U_s f$.
    Then $g\in \mc D^S \cap C^\infty(\mathring{\Xi},\Xi)$.
\end{lemma}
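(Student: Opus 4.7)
The plan is to verify three assertions: $g\in\mc D^S$, $g\in C(\Xi)$ with $g|_{\partial\Xi}=0$, and $g\in C^\infty(\mathring{\Xi})$. The first two are essentially formal. By standard Feller semigroup theory the resolvent $U_s=(s-\mf L)^{-1}$ is a bijection from $C(\Xi)$ onto $\mc D^S$, so $g=U_s f\in\mc D^S\subset C(\Xi)$. If $\xi\in\partial\Xi$, then $\xi\in\mathring{\Xi}_A$ for some $A\subsetneq S$, and by the absorption property recalled in Section \ref{behaviorafterabsorption}, the trajectory $\xi_t$ remains in $\partial\Xi$ for all $t\ge 0$ almost surely under $\bb P_\xi$; since $f|_{\partial\Xi}=0$, this gives $g(\xi)=\int_0^\infty e^{-st}\bb E_\xi[f(\xi_t)]\,dt=0$.

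The core of the argument is interior smoothness, which I intend to obtain from interior elliptic regularity. On $\mathring{\Xi}$ the drift $\mathbf{b}(\xi)=\alpha\sum_{x\in S}\xi_x^{-1}\mathbf{v}_x$ is $C^\infty$, and the constant second-order part $\tfrac12\sum_{x,y}r(x,y)(\partial_x-\partial_y)^2$ is elliptic in the directions tangent to the hyperplane $\bb A$ (the vectors $\{e_x-e_y:x\ne y\}$ span those directions, and the associated quadratic form is positive-definite by irreducibility of $\mc L_S$). Hence on every compact $K\subset\mathring{\Xi}$, $\mf L$ is a uniformly elliptic operator with $C^\infty$ coefficients.

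Fix $\xi_0\in\mathring{\Xi}$, choose a smooth bounded open set $B$ with $\xi_0\in B$ and $\bar B\subset\mathring{\Xi}$, and pick $\epsilon>0$ so small that $\bar B\subset\Lambda_\epsilon$; let $\tau$ denote the first exit time of $B$. By \cite[Lemma 6.2]{MZRP}, up to time $\tau$ the process is driven by the non-degenerate diffusion with bounded smooth coefficients generated by $\mf L^\epsilon$. Applying the strong Markov property at $\tau$ yields, for $\xi\in B$,
\begin{equation*}
g(\xi) \;=\; G(\xi) + H(\xi),\quad G(\xi)\coloneqq\bb E_\xi\!\left[\int_0^{\tau} e^{-st} f(\xi_t)\,dt\right],\quad H(\xi)\coloneqq\bb E_\xi\!\left[e^{-s\tau} g(\xi_\tau)\right].
\end{equation*}
The standard stochastic--PDE correspondence for non-degenerate elliptic diffusions identifies $G$ as a classical solution of $(s-\mf L)G=f$ in $B$ with $G|_{\partial B}=0$, and $H$ as a solution of $(s-\mf L)H=0$ in $B$ with $H|_{\partial B}=g|_{\partial B}$. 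Since $f\in C^\infty(\bar B)$ and $\mf L$ has smooth coefficients on $\bar B$, Schauder estimates give $G\in C^\infty(\bar B)$; interior elliptic regularity applied to the homogeneous equation gives $H\in C^\infty(B)$, irrespective of the regularity of the boundary values. Hence $g=G+H\in C^\infty(B)$, and since $\xi_0\in\mathring{\Xi}$ was arbitrary, $g\in C^\infty(\mathring{\Xi})$.

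The main obstacle is the rigorous probabilistic-to-PDE identification of $G$ and $H$ in a form strong enough to invoke Schauder and interior regularity. This reduces to a standard question for the $\mf L^\epsilon$-diffusion on $\bar B$, which is non-degenerate with bounded smooth coefficients, so the correspondence with elliptic boundary-value problems follows via It\^o's formula and well-posedness of the martingale problem; care is needed only to verify that the truncation does not alter $g$ on $B$ before time $\tau$, which is exactly the content of \cite[Lemma 6.2]{MZRP}.
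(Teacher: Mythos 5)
Your proof is correct, and it takes a genuinely different route from the paper's. The paper works globally on the shrinking domains $\Lambda_\epsilon$: it solves the Dirichlet problem $(-\mf L^\epsilon + s)u_\epsilon = f$ on $\Lambda_\epsilon$ with boundary data $f/s$, uses the probabilistic representation from Theorem \ref{probrepn}, then shows via integration by parts and dominated convergence that $u_\epsilon \to g$ in a distributional sense, i.e. that $\int_{\mathring{\Xi}} \pi^{-1}(s-\mf L)h\, g\, dm = \int_{\mathring{\Xi}} fh\,\pi^{-1}dm$ for all test functions $h$; interior elliptic regularity for distributional solutions then gives smoothness. You instead localize to a fixed ball $B\Subset\mathring{\Xi}$ and split $g = G + H$ via the strong Markov property at the exit time $\tau$; each of $G$ and $H$ is then identified (again via the stochastic--PDE correspondence, essentially the same tool as Theorem \ref{probrepn}, applied locally) with the classical solution of a Dirichlet problem on $B$, and smoothness follows from Schauder for $G$ and interior hypoellipticity for $H$. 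Your decomposition avoids the $\epsilon \to 0$ passage to the weak formulation altogether, at the cost of needing the representation theorem for two boundary-value problems instead of one; in fact, even your invocation of Schauder up to $\partial B$ for $G$ is slightly stronger than necessary, since interior regularity for $(s-\mf L)G = f$ with smooth $f$ already yields $G\in C^\infty(B)$, which is all that is needed. Both approaches ultimately hinge on interior elliptic regularity and on the identification of resolvent-type probabilistic formulas with Dirichlet solutions.
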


\begin{proof}
    It is enough to prove for $f\geq 0$.
    Consider a Dirichlet problem
    \begin{equation}
        \begin{aligned}
            \label{diriprobn}
                -\mf L^\epsilon u_\epsilon + s u_\epsilon &=\; f \text{ in } \Lambda_\epsilon \\
                u_\epsilon &=\; \frac{f}{s} \text{ on } \partial\Lambda_\epsilon.
        \end{aligned}
    \end{equation}
    From Theorem \ref{probrepn}, there exists a unique solution $u_\epsilon\in C^2(\Lambda_\epsilon)$ of \eqref{diriprobn} and it has a representation
    \begin{equation}
        \label{repn1}
        u_\epsilon(\xi) = \bb E_\xi\left[\int_0^{h_\epsilon} f(\xi_t)e^{-st}dt\right] + \bb E_\xi\left[\frac{f(\xi_{h_\epsilon})}{s}e^{-sh_\epsilon}\right], \;\; \xi \in \Lambda_\epsilon,
    \end{equation}
    where the expectation $\bb E_\xi$ is taken by a unique solution of the martingale problem with initial value $\xi$.
    Since $\bb Q_\xi$ is the unique solution of the martingale problem and $\bb P_\xi = \bb Q_\xi^\epsilon$ on $\ms F_{h_\epsilon}$, we may take the expectation by $\bb P_\xi$.
    
    Now we consider $u_\epsilon$ as a Borel function on $\Xi$ by extending them to 0 on $\Xi \setminus \Lambda_\epsilon$.
    Fix $h\in C_c^{\infty}(\Xi)$.
    For small enough $\epsilon$ such that $\Lambda_\epsilon$ contains $\text{supp}(h)$, we have
    \begin{align*}
        -\int_{\mathring{\Xi}} \mf L h u_\epsilon d\lambda = -\int_{\Lambda_\epsilon} \mf L^\epsilon h u_\epsilon \pi^{-1}dm &= \int_{\Lambda_\epsilon} \sum_{x\neq y} (v_{x,y}\cdot\nabla) (\pi^{-1}(v_{x,y}\cdot\nabla)h) u_\epsilon dm \\
        &= \int_{\Lambda_\epsilon} \sum_{x\neq y} (v_{x,y}\cdot\nabla u_\epsilon)(v_{x,y}\cdot\nabla h)\pi^{-1} dm \\
        &= \int_{\Lambda_\epsilon} \sum_{x\neq y} (v_{x,y}\cdot\nabla) (\pi^{-1}(v_{x,y}\cdot\nabla)u_\epsilon)h dm \\
        &= -\int_{\Lambda_\epsilon} \mf L^\epsilon u_\epsilon h d\lambda = \int_{\Lambda_\epsilon} (f-su_\epsilon) h d\lambda,
    \end{align*}
    where the third and fourth equality is from the divergence theorem.
    Now, consider
    $$u(\xi) = \bb E_\xi\left[\int_0^{\tau} f(\xi_t)e^{-st}dt\right], \;\; \xi \in \Xi,$$
    where $\tau = \inf\{t\geq 0 : \xi_t \in \partial \Xi\}$.
    From the definition of the resolvent operator and the absorption property,
    $$ g(\xi) = \bb E_\xi\left[\int_0^{\infty} f(\xi_t)e^{-st}dt\right] = u(\xi)\;\text{ on }\; \Xi.$$
    Applying the dominated convergence theorem to \eqref{repn1}, we have
    \begin{align*}
        \lim_{n\rightarrow \infty} \int_{\mathring{\Xi}} \mf L h u_\epsilon d\lambda = \int_{\mathring{\Xi}} \mf L h g d\lambda, \;\;
        \lim_{n\rightarrow \infty} \int_{\mathring{\Xi}} (f-su_\epsilon) h d\lambda = \int_{\mathring{\Xi}} (f-sg) h d\lambda.
    \end{align*}
    Therefore, we get
    \begin{align*}
        \int_{\mathring{\Xi}} \pi^{-1} (s-\mf L) h g dm = \int_{\mathring{\Xi}} f h \pi^{-1} dm.
    \end{align*}
    Since $\pi^{-1} (s-\mf L)$ is an elliptic operator acting on $C^\infty_c(\mathring{\Xi})$, the elliptic regularity theorem implies that the smoothness of $g$ comes from the smoothness of $f$.
    Therefore, $g\in \mc D^S \cap C^\infty(\mathring{\Xi},\Xi)$.
\end{proof}

The next lemma shows that non-negative functions in $C(\Xi)$ and $L^p(\lambda)$ can be approximated by non-negative functions in $C^\infty_c(\Xi)$.

\begin{lemma}
    \label{smappr}
    Let $C^{\infty}_{c,+}(\Xi)$ be a set of nonnegative functions in $C^{\infty}_{c}(\Xi)$. For any $f\in L^p(\lambda)$ with $f\geq 0$, there exists a sequence $(f_n)$ in $C^{\infty}_{c,+}(\mathring{\Xi})$ such that $f_n\uparrow f$ in $L^p(\lambda)$.
\end{lemma}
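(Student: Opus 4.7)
The plan is a standard weighted-$L^p$ density argument by truncation and mollification, with extra care for the singular weight $\pi^{-1}$ near $\partial \Xi$ and for the monotone convergence. I would exhaust $\mathring{\Xi}$ by the compact sets $\Xi^{(n)} := \{\xi \in \mathring{\Xi} : \min_{x\in S} \xi_x \geq 1/n\}$ and set $g_n := (f \wedge n)\mathbf{1}_{\Xi^{(n)}}$; then $0 \leq g_n \leq g_{n+1} \leq f$ and $g_n \to f$ pointwise a.e., so monotone convergence gives $g_n \to f$ in $L^p(\lambda)$, and each $g_n$ is bounded with compact support in $\mathring{\Xi}$.

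Next, I would mollify. On $\Xi^{(n)}$ the density $\pi^{-1}$ is bounded by $n^{\alpha\kappa}$, so $\lambda$ is comparable there to the Lebesgue measure $dm$. Convolving $g_n$ with a smooth mollifier $\rho_{\delta_n}$ of scale $\delta_n < 1/(2n)$ produces $\phi_n := g_n \ast \rho_{\delta_n} \in C^\infty_{c,+}(\mathring{\Xi})$ supported in $\Xi^{(2n)}$, and classical $L^p(dm)$-convergence of mollifications together with the local equivalence $d\lambda \asymp dm$ let me choose $\delta_n$ so small that $\|\phi_n - g_n\|_{L^p(\lambda)} < 2^{-n}$. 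Hence $\phi_n \to f$ in $L^p(\lambda)$.

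To obtain the pointwise-monotone statement $f_n \uparrow f$, I would iteratively replace the $\phi_n$ by a smooth running maximum: set $f_1 := \phi_1$ and $f_{n+1} := \sigma_{\epsilon_{n+1}}(f_n, \phi_{n+1})$ for a smooth approximation $\sigma_\epsilon$ of $\max$ satisfying $\max \leq \sigma_\epsilon \leq \max + \epsilon$; with $\epsilon_n$ summably small, the resulting sequence is smooth, nonnegative, pointwise increasing, and $L^p(\lambda)$-convergent to $f$. The main obstacle is that any smooth $\sigma_\epsilon \geq \max$ must satisfy $\sigma_\epsilon(0,0) > 0$, so iteration accumulates a small positive bias outside the supports of the $\phi_n$ and destroys compact support inside $\mathring{\Xi}$. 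I would repair this by multiplying each intermediate step by a nested family of smooth cutoffs $\chi_n \in C^\infty_c(\mathring{\Xi})$ with $\chi_n \equiv 1$ on $\mathop{\mathrm{supp}}(\phi_n)$ and $\chi_n \leq \chi_{n+1}$, so the spurious bias is trimmed at each stage without breaking monotonicity on the supports of the approximants.
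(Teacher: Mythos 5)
Your steps 1--2 (truncation to $g_n = (f\wedge n)\mathbf{1}_{\Xi^{(n)}}$ and mollification) correctly establish $L^p(\lambda)$-density of $C^\infty_{c,+}(\mathring\Xi)$ in the nonnegative cone, and this is essentially what the paper's one-line sketch does; note the paper's own proof never addresses the ``$\uparrow$'' at all. Your step 3, however, has a genuine gap. A smooth running maximum built on the $\phi_n$ satisfies $f_n \ge \max(\phi_1,\dots,\phi_n)$, and the mollified functions $\phi_n = g_n * \rho_{\delta_n}$ can overshoot $f$ on a set of positive measure: if $f=\mathbf 1_A$ for a compact nowhere dense $A\subset\mathring\Xi$ of positive $\lambda$-measure (a fat Cantor set), then each $\phi_n$ is a nonnegative continuous function with $\int\phi_n\,dm>0$, hence strictly positive on some nonempty open set, which must meet the open dense set $A^c$ where $f=0$. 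Consequently $\lim_n f_n \ge \sup_k\phi_k$, which is strictly greater than $f$ on a set of positive measure, so your $f_n$ converge neither a.e.\ nor in $L^p(\lambda)$ to $f$. The cutoff repair does not help, since the difficulty is the overshoot above $f$, not the loss of compact support.

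In fact the pointwise-monotone assertion cannot hold for this $f$ at all, so no construction will close the gap: if $f_n\in C^\infty_{c,+}(\mathring\Xi)$ is increasing, then $g:=\sup_n f_n$ is lower semicontinuous; $g=\mathbf 1_A$ $\lambda$-a.e.\ forces $\{g>0\}$ to be an open set whose symmetric difference with $A$ has measure zero, hence $\{g>0\}\setminus A$ is an open null set and therefore empty, so $\{g>0\}$ is a nonempty open subset of the nowhere dense set $A$, a contradiction. Thus the lemma is false as stated for general $f\in L^p(\lambda)$. It can be repaired in the two ways the paper actually needs it: (i) drop the monotonicity, keep $L^p$-convergence and uniformly bounded approximants when $f$ is bounded, and replace the monotone-convergence step in Corollary \ref{pbar} by dominated convergence together with truncation in $f$; or (ii) restrict the monotone conclusion to $f\in C(\Xi)$ vanishing on $\partial\Xi$ (as for $f\in\mc D_{\alpha+1}$), where one can take $\phi_n = (f-2/n)_+ * \rho_{\delta_n}$ with $\delta_n$ a modulus-of-continuity scale so that $\phi_n\le f$, and then your smooth running-maximum device does work.
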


\begin{proof}
    Use denseness of $C_{c}(\mathring{\Xi})$ in $L^p(\lambda)$ to approximate $f$ with nonnegative continuous functions. Then the standard mollification technique gives a smooth approximation.
\end{proof}

Now, we analyze the relation between a boundary decay and the semigroup.

\begin{prop}
    \label{106}
    Let $\mc D_{\alpha+1} \coloneq \{f\in C(\Xi) : |f(\xi)|\leq c\prod_{x\in S}\xi_x^{\alpha + 1}, \exists c>0\}$. Also, define
    $\mc D^S_{\alpha+1} \coloneq \{f\in \mc D^S : |f(\xi)|\leq c\prod_{x\in S}\xi_x^{\alpha + 1}, \exists c>0\}$.
    Then the following holds.
    \begin{enumerate}
        \item[\textup{(1)}] Suppose $f\in \mc D_{\alpha+1}$. Then $P_t f \in \mc D_{\alpha+1}$ for all $t>0$.
        \item[\textup{(2)}] Suppose $f\in \mc D_{\alpha+1}^S \cap C^\infty(\mathring{\Xi},\Xi)$. Then $U_t f \in \mc D_{\alpha+1}^S \cap C^\infty(\mathring{\Xi},\Xi)$ for all $t>0$.
        \item[\textup{(3)}] $P_t:\mc D_{\alpha+1}\rightarrow \mc D_{\alpha+1}$ is a contraction with respect to $L^p(\lambda)$-norm for all $p\geq 1$.
    \end{enumerate}
\end{prop}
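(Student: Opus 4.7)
The plan for part (1) is to exhibit $\phi(\xi) := \prod_{x\in S}\xi_x^{\alpha+1}$ as a superharmonic envelope. A routine computation based on the reversibility $r(x,y)=r(y,x)$ and the identity $\sum_y r(x,y)=0$ collapses \eqref{lim diff} to
$$(\mf L\phi)(\xi) \;=\; -(\alpha+1)\,\phi(\xi)\sum_{x\neq y}\frac{r(x,y)}{\xi_x\xi_y} \;\le\; 0 \quad \text{on } \mathring{\Xi}.$$
Since $\phi\notin\mc D^S$, I would transfer this pointwise inequality to the semigroup via the cutoff generator $\mf L^\epsilon$ from Section \ref{l2extension}: on $\Lambda_\epsilon$ one has $\mf L^\epsilon\phi=\mf L\phi\le 0$, so the $\bb Q_\xi^\epsilon$-martingale problem gives $\bb E_\xi^{\bb Q^\epsilon}[\phi(\xi_{t\wedge h_\epsilon})]\le\phi(\xi)$. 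Because $\bb P_\xi=\bb Q_\xi^\epsilon$ on $\ms F_{h_\epsilon}$, the same inequality holds under $\bb P_\xi$; then sending $\epsilon\downarrow 0$ with $h_\epsilon\uparrow \tau_{\partial\Xi}$, together with $\phi|_{\partial\Xi}=0$, the absorption property (which forces $\phi(\xi_t)=\phi(\xi_{t\wedge\tau_{\partial\Xi}})$), and bounded convergence ($\phi\le 1$), yields $P_t\phi\le\phi$ on all of $\Xi$. The bound $|P_tf|\le cP_t\phi\le c\phi$ is immediate, and Feller continuity (Proposition \ref{feller}) ensures $P_tf\in C(\Xi)$.

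Part (2) is a short consequence of (1) and Lemma \ref{103}: any $f\in \mc D^S_{\alpha+1}$ is controlled by $c\phi$, hence vanishes on $\partial\Xi$, so Lemma \ref{103} applies and yields $U_tf\in\mc D^S\cap C^\infty(\mathring{\Xi},\Xi)$. Integrating part (1) against the exponential weight produces
$$|U_tf(\xi)|\;\le\; \int_0^\infty e^{-ts}|P_sf(\xi)|\,ds \;\le\; c\,\phi(\xi)\int_0^\infty e^{-ts}\,ds \;=\; \frac{c\,\phi(\xi)}{t},$$
placing $U_tf$ in $\mc D^S_{\alpha+1}$.

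For part (3), the plan is to couple Jensen's inequality with the reversibility of $P_t$ with respect to $\lambda$. Since $\phi\le 1$ on $\Xi$, one has $\phi^p\le\phi$ and so $|f|^p\in\mc D_{\alpha+1}$ whenever $f\in\mc D_{\alpha+1}$; Jensen then gives $|P_tf|^p\le P_t|f|^p$, reducing the $L^p$-contraction to $\int P_tg\,d\lambda\le\int g\,d\lambda$ for nonnegative $g\in\mc D_{\alpha+1}$ (note $\int\phi\,d\lambda=\int\prod_x\xi_x\,dm<\infty$). Lemma \ref{104}, applied together with the divergence theorem to $h\in C_c^\infty(\mathring{\Xi})$ and smooth $g$, produces
$$\int h(\mf L g)\,d\lambda \;=\; -\sum_{x\neq y}\int(v_{x,y}\!\cdot\!\nabla h)(v_{x,y}\!\cdot\!\nabla g)\pi^{-1}\,dm,$$
which is symmetric in $(h,g)$. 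Promoting this operator symmetry to the semigroup identity $\int(P_tg)h\,d\lambda=\int g(P_th)\,d\lambda$, then taking $h=h_n\in C_c^\infty(\mathring{\Xi})$ with $0\le h_n\uparrow 1_{\mathring{\Xi}}$, using $P_th_n\le 1$ and monotone convergence, delivers the desired inequality. The main obstacle will be the rigorous upgrade of the symmetry from smooth compactly supported test functions to all of $\mc D_{\alpha+1}$: the density $\pi^{-1}$ is singular on $\partial\Xi$, so the approximation (using Lemma \ref{smappr} for smooth truncation from below and the $\phi$-bound of part (1) to control boundary contributions) has to be executed with care.
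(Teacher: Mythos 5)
Your parts (1) and (2) are correct and follow essentially the paper's route; the detailed cutoff/localization argument you give for passing from the pointwise inequality $\mf L\phi\le 0$ on $\mathring\Xi$ to $P_t\phi\le\phi$ on $\Xi$ (via $\mf L^\epsilon$, $\bb P_\xi=\bb Q^\epsilon_\xi$ on $\ms F_{h_\epsilon}$, $h_\epsilon\uparrow\tau_{\partial\Xi}$, absorption and bounded convergence) is a careful and welcome expansion of what the paper merely asserts. The explicit bound $|U_tf|\le c\phi/t$ in (2) is also fine.

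Part (3), however, has a genuine circularity. Your plan rests on the semigroup identity $\int(P_tg)h\,d\lambda=\int g(P_th)\,d\lambda$, then sends $h=h_n\uparrow 1_{\mathring\Xi}$. But in the paper this $L^2(\lambda)$ self-adjointness of $(\bar P_t)$ is Proposition \ref{44}, proved \emph{after} Proposition \ref{106} and using both Proposition \ref{106}(2) (to know $tU_tp$ lands in $\mc D^S_{\alpha+1}\cap C^\infty(\mathring\Xi,\Xi)$) and Proposition \ref{1009} (the Dirichlet-form identity \eqref{dirichlet}, itself obtained by a Sard/divergence-theorem argument of the same flavor as the proof of \ref{106}(3)). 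The formal symmetry
$\int h(\mf Lg)\,d\lambda=-\sum_{x\neq y}\int(v_{x,y}\!\cdot\!\nabla h)(v_{x,y}\!\cdot\!\nabla g)\pi^{-1}dm$
on $h\in C_c^\infty(\mathring\Xi)$ and smooth $g$ only makes $\mf L$ a \emph{symmetric} operator on a dense domain; it does not by itself yield self-adjointness of the associated semigroup, nor identify the Dirichlet-form semigroup with $(P_t)$ coming from the martingale problem. That identification is exactly where one needs some a priori $L^p(\lambda)$-control on $(P_t)$ or $(U_s)$ — which is what you are trying to prove. The singularity of $\pi^{-1}$ at $\partial\Xi$ makes this more than a routine approximation issue.

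The paper's argument avoids the circle by exploiting the resolvent structure directly: for $f\ge 0$ in $\mc D^S_{\alpha+1}\cap C^\infty(\mathring\Xi,\Xi)$ set $g=sU_sf$, use (2) so that $g\in\mc D^S_{\alpha+1}\cap C^\infty$, pick a regular value $\delta'$ of $g$ (Sard), and apply the divergence theorem to $K_{\delta'}=\{g\ge\delta'\}$. Because the outward normal on $\partial K_{\delta'}$ is $-\nabla g/|\nabla g|$, the boundary integral of $\sum_{x\neq y}v_{x,y}\pi^{-1}(v_{x,y}\cdot\nabla g)\cdot\vec n$ is manifestly $\le 0$, hence $\int_{K_{\delta'}}\mf Lg\,d\lambda\le 0$; combining with $g-\tfrac1s\mf Lg=f$ gives $\int_{K_{\delta'}}g\,d\lambda\le\int f\,d\lambda$, and letting $\delta'\to0$ yields $L^1$-contraction of $sU_s$. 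The crucial point is that $\partial K_{\delta'}$ stays in $\mathring\Xi$, so no contribution from the singular boundary $\partial\Xi$ has to be controlled. Passing to $P_t=\lim_n(\tfrac{n}{t}U_{n/t})^nf$ with Fatou, extending by density (Lemma \ref{smappr}), and Jensen for general $p$ then closes the argument — with no appeal to self-adjointness. You should replace your symmetry step by this level-set/resolvent argument (or prove Proposition \ref{44} independently first, which would require supplying the missing ingredients).
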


\begin{proof}
    To prove (1), let $\phi(\xi) = \prod_{x\in S}\xi_x^{\alpha + 1}$. Then for $\xi \in \mathring{\Xi}$,
    \begin{equation*}
        L\phi(\xi) = \textbf{b}(\xi)\cdot\nabla\phi(\xi) + \frac{1}{2}\sum_{x,y\in S}r(x,y)(\partial_x - \partial_y)^2\phi(\xi)
                   = -\phi(\xi)\sum_{x\neq y} r(x,y)\frac{\alpha + 1}{\xi_x\xi_y} \leq 0.
    \end{equation*}
    Therefore, $\phi$ is superharmonic. So, $P_t$ preserves $\mc D_{\alpha+1}$.

    To prove (2), apply (1) and Lemma \ref{103}.

    To prove (3), we first consider a case when $p=1$. Take a non-negative function $f\in \mc D^S_{\alpha+1} \cap C^\infty(\mathring{\Xi},\Xi)$.
    Take $g = s U_s f$ for some $s>0$.
    From (2), $g\in \mc D^S_{\alpha+1} \cap C^\infty(\mathring{\Xi},\Xi)$.
    
    Fix a positive $\delta>0$. Using Sard's theorem, we can find a $\delta'$ with $0<\delta'<\delta$ such that $\delta'$ is a regular value.
    Therefore, if we consider a compact domain defined by
    $$K_{\delta'} \coloneq \{\xi\in \Xi : g(\xi)\geq \delta'\},$$
    $g^{-1}(\delta')$ is a smooth boundary of it.
    Applying Lemma \ref{104} and the divergence theorem to the domain $K_{\delta'}$,
    \begin{equation*}
        \int_{K_{\delta'}} \mf L g \pi^{-1} dm = \int_{K_{\delta'}} \sum_{x\neq y} (v_{x,y}\cdot \nabla)(\pi^{-1}(v_{x,y} \cdot \nabla)g)dm
        = \int_{\partial K_{\delta'}} v_{x,y}\pi^{-1}(v_{x,y} \cdot \nabla)g \cdot \vec{n} dS,
    \end{equation*}
    where $dS$ is a surface measure on $\partial K_{\delta'}$ and $\vec{n}$ is a unit normal vector on $\partial K_{\delta'}$.
    Since $\delta'$ is regular, $|\nabla g|$ does not vanish on $\partial K_{\delta'}$. Moreover, we know that the gradient is perpendicular to the boundary.
    Therefore, we have $\vec{n} = -\frac{\nabla g}{|\nabla g|}$. Inserting it into the integrand, we obtain
    \begin{align*}
        \int_{\partial K_{\delta'}} v_{x,y}\pi^{-1}(v_{x,y} \cdot \nabla)g \cdot \vec{n} dS 
        = -\int_{\partial K_{\delta'}} \sum_{x\neq y} \frac{(v_{x,y}\cdot\nabla g)^2}{|\nabla g|}\pi^{-1} dS \leq 0.
    \end{align*}
    Since $g - \frac{1}{s} \mf L g = f$, we obtain
    $$\int_{K_{\delta'}} g d\lambda \leq \int_{K_{\delta'}} f d\lambda \leq \int_{\mathring{\Xi}} f d\lambda.$$
    Sending $\delta$ to zero, $\delta'$ going to zero, we obtain that $sU_s$ is a contraction on
    non-negative functions in $\mc D_{\alpha+1}^S \cap C^\infty(\mathring{\Xi},\Xi)$ with respect to $L^1(\lambda)$-norm.

    From Lemma \ref{reslem}.(2), we have
    \begin{equation} \label{ressemi}
        P_t f = \lim_{n\rightarrow \infty} (\frac{n}{t}U_{\frac{n}{t}})^n f.
    \end{equation}
    Using Fatou's lemma, we have
    $$\int_{\mathring{\Xi}} P_t f \pi^{-1} dm = \int_{\mathring{\Xi}} \lim_{n\rightarrow \infty} (\frac{n}{t}U_{\frac{n}{t}})^n f \pi^{-1} dm \leq
    \lim_{n\rightarrow \infty} \int_{\mathring{\Xi}} (\frac{n}{t}U_{\frac{n}{t}})^n f \pi^{-1} dm \leq \int_{\mathring{\Xi}} f \pi^{-1} dm.$$
    So, $P_t$ is a contraction on non-negative functions in $\mc D^S_{\alpha+1}\cap C^\infty(\mathring{\Xi},\Xi)$ with respect to $L^1(\lambda)$-norm.
    Lemma \ref{smappr} gives that $P_t$ is a contraction on $\mc D_{\alpha+1}$.

    For general $p\geq 1$, using Jensen's inequality, we have
    \begin{equation} \label{eq813}
        \int_{\Xi} |P_t f|^p d\lambda \leq \int_{\Xi} P_t |f|^p d\lambda \leq \int_{\Xi} |f|^p d\lambda.
    \end{equation}
    Therefore, $P_t$ is a contraction on $\mc D_{\alpha+1}$ with respect to $L^p(\lambda)$-norm.
\end{proof}

Now we want to define $\bar{P}_t:L^{p}(\lambda)\rightarrow L^{p}(\lambda)$.
The easiest way to do this is using the denseness of $C^\infty_c(\mathring{\Xi},\Xi)$.
For $f\in L^p(\lambda)$, take a sequence $(f_n)$ in $C^\infty_c(\mathring{\Xi},\Xi)$ such that $f_n\rightarrow f$ in $L^p(\lambda)$.
Then we define $\bar{P}_t f$ as $\lim_{n\rightarrow \infty} P_t f_n$. From Proposition \ref{106}.(3), we know that $\bar{P}_t f$ is well-defined and it is a contraction on $L^p(\lambda)$.
The following proposition shows that $\bar{P}_t$ actually coincides with a stochastic kernel $P_t$.

\begin{cor} \label{pbar}
    Fix $t>0$, $p\geq 1$. Take a Borel measurable function $f$ on $\mathring{\Xi}$.
    Let $\bar{f}$ be a Borel measurable function on $\Xi$ such that $\bar{f}|_{\mathring{\Xi}} = f$ and $\bar{f}(\xi) = 0$ for $\xi\in \partial \Xi$.
    If $f\in L^p(\lambda)$, then $\bb E_{\xi}[\bar{f}(\xi_t)]$ is defined for $\lambda$-a.e. $\xi\in \Xi$.
    Moreover, taking $P_t \bar{f}(\xi) = \bb E_{\xi}[\bar{f}(\xi_t)]$, then $\bar{P}_t f = P_t \bar{f}$.
\end{cor}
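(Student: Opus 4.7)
The plan is to transfer from the $L^p$-extension $\bar P_t$ (defined via $L^p$-limits of $P_t f_n$ for $f_n \in C^\infty_c(\mathring{\Xi},\Xi)$) to the pointwise stochastic kernel $P_t \bar f(\xi) = \bb E_\xi[\bar f(\xi_t)]$ by combining $L^p$-convergence with monotone pointwise convergence from below. Splitting $f = f^+ - f^-$ immediately reduces everything to the case $f \geq 0$, so I will only work with nonnegative $f \in L^p(\lambda)$.

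First I invoke Lemma \ref{smappr} to obtain a sequence $(f_n)$ in $C^\infty_{c,+}(\mathring{\Xi})$ with $f_n \uparrow f$ in $L^p(\lambda)$. Because each $f_n$ is compactly supported in the open simplex $\mathring{\Xi}$, its extension $\bar f_n$ by zero on $\partial\Xi$ coincides with $f_n \in C(\Xi)$, and $\bar f_n \uparrow \bar f$ pointwise on all of $\Xi$. In particular $\bar f_n \in \mc D_{\alpha+1}$, so Proposition \ref{106}(3) applies and $(P_t \bar f_n)$ is Cauchy in $L^p(\lambda)$; by definition of $\bar P_t$, $P_t \bar f_n \to \bar P_t f$ in $L^p(\lambda)$, and after passing to a subsequence we may also assume $\lambda$-a.e.\ convergence.

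Next, for each fixed $\xi \in \Xi$, since $\bar f_n \in C(\Xi)$ the identity $P_t \bar f_n(\xi) = \bb E_\xi[\bar f_n(\xi_t)]$ holds by the definition of the Feller semigroup. The sequence $\bar f_n(\xi_t)$ is nonnegative and monotone increasing to $\bar f(\xi_t)$ pointwise, so the monotone convergence theorem gives
\begin{equation*}
P_t \bar f_n(\xi) \;=\; \bb E_\xi[\bar f_n(\xi_t)] \;\uparrow\; \bb E_\xi[\bar f(\xi_t)] \in [0,+\infty]
\end{equation*}
for every $\xi \in \Xi$. Combining this with the $\lambda$-a.e.\ subsequential convergence $P_t \bar f_n \to \bar P_t f$ forces $\bb E_\xi[\bar f(\xi_t)] = \bar P_t f(\xi) < \infty$ for $\lambda$-a.e.\ $\xi$, which is exactly the assertion: the expectation defining $P_t \bar f(\xi)$ is finite $\lambda$-a.e., and the resulting function agrees with $\bar P_t f$ in $L^p(\lambda)$.

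The main obstacle I anticipate is making sure the two notions of the semigroup are compared on the same object; the key is that the approximants $f_n$ can simultaneously serve as (a) continuous functions for which $P_t f_n(\xi) = \bb E_\xi[f_n(\xi_t)]$ holds pointwise, and (b) elements of $\mc D_{\alpha+1}$ for which the $L^p$-contraction of Proposition \ref{106}(3) applies. Lemma \ref{smappr} delivers both at once by producing $f_n \in C^\infty_{c,+}(\mathring{\Xi})$ with the monotone $L^p$-approximation, and this is what allows the monotone-convergence step and the $L^p$-convergence step to be glued together without any further a.e.\ regularity argument for $\bar f$.
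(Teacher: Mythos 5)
Your proof is correct and follows essentially the same route as the paper: reduce to $f\geq 0$, approximate from below by $C^\infty_{c,+}(\mathring{\Xi})$ functions via Lemma \ref{smappr}, and glue the $L^p$-limit defining $\bar P_t$ to the pointwise monotone-convergence limit $\bb E_\xi[\bar f_n(\xi_t)]\uparrow\bb E_\xi[\bar f(\xi_t)]$. The only cosmetic differences are that the paper first reduces to $p=1$ via Jensen's inequality whereas you invoke Proposition \ref{106}(3) directly for general $p$, and the paper exploits monotonicity to get $L^1$ convergence to $\bar P_t f$ outright where you pass to an a.e.-convergent subsequence; both routes are equally valid.
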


\begin{proof}
    Using Jensen's inequality as in \eqref{eq813}, we know that it is enough to show for $p=1$.
    Also, it is enough to show for $f\geq 0$. From Lemma \ref{smappr},
    we can take a sequence $(f_n)$ in $C^\infty_{c,+}(\mathring{\Xi},\Xi)$ such that $f_n\uparrow f$ in $L^1(\lambda)$.
    Then we have $P_t f_n(\xi) \uparrow \bb E_{\xi}[f(\xi_t)]$ for all $\xi\in \Xi$.
    On the other hand, we have $P_t f_n \uparrow \bar{P}_t f$ in $L^1(\lambda)$.
    Therefore, by the monotone convergence theorem, we have $$\bar{P}_t f(\xi) = \bb E_{\xi}[f(\xi_t)] \;\;\lambda\text{-a.e.}\;\; \xi\in \Xi.$$
\end{proof}

\begin{cor}
    \label{109}
    For $p\geq 1$, $\bar{P}_t:L^{p}(\lambda)\rightarrow L^{p}(\lambda)$ is a strongly continuous contraction semigroup.
\end{cor}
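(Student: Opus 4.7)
The plan is to establish the semigroup property and the strong continuity at $t=0$; contractivity on $L^p(\lambda)$ was already built into the construction of $\bar P_t$ via Proposition \ref{106}(3) and the approximation by $C^\infty_c(\mathring{\Xi},\Xi)$ that followed it.

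For the semigroup property, I would rely on Corollary \ref{pbar} together with the Markov property of the limiting diffusion. Given $f \in L^p(\lambda)$, extend it by zero on $\partial \Xi$ to a Borel function $\bar f$ on $\Xi$. By Corollary \ref{pbar} one has $\bar P_t f = P_t \bar f|_{\mathring \Xi}$ $\lambda$-a.e. The absorption property recalled in Section \ref{behaviorafterabsorption} forces $\xi_t \in \partial \Xi$ for all $t \geq 0$ whenever the process starts in $\partial \Xi$, so $P_t \bar f$ already vanishes identically on $\partial \Xi$; in particular $P_t \bar f$ equals the zero-extension of $\bar P_t f$ to $\Xi$. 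Applying Corollary \ref{pbar} a second time then gives $\bar P_s(\bar P_t f) = P_s(P_t \bar f)|_{\mathring \Xi} = P_{s+t} \bar f|_{\mathring \Xi} = \bar P_{s+t} f$ in $L^p(\lambda)$, using the semigroup property of the Feller family $(P_t)$.

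For strong continuity, I would reduce to a dense subset by the standard three-epsilon argument, using contractivity. Take $f \in C^\infty_c(\mathring \Xi)$, a family dense in $L^p(\lambda)$ by Lemma \ref{smappr}, and let $\bar f$ be its extension by zero to $\Xi$. Because $\bar f$ is compactly supported inside $\mathring \Xi$, there is $c > 0$ with $|\bar f| \leq c\, \phi$, where $\phi(\xi) = \prod_{x \in S} \xi_x^{\alpha+1}$. The $\mf L$-superharmonicity of $\phi$ exploited in the proof of Proposition \ref{106}(1) yields the uniform-in-$t$ bound $|P_t \bar f| \leq c\, P_t \phi \leq c\, \phi$, so $|P_t \bar f - \bar f| \leq 2c\, \phi$. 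A direct integrability check at each face of $\Xi$ shows $\phi \in L^p(\lambda)$ for every $p \geq 1$, since the relevant exponent $p(\alpha+1) - \alpha$ exceeds $-1$. The Feller property proved in Proposition \ref{feller} gives $P_t \bar f \to \bar f$ uniformly on $\Xi$, hence pointwise, and dominated convergence with dominator $(2c\phi)^p$ delivers $\|\bar P_t f - f\|_{L^p(\lambda)} \to 0$ as $t \to 0$.

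The main obstacle is that $\lambda$ is an infinite measure because $\pi^{-1}$ blows up at $\partial \Xi$, so uniform convergence of $P_t \bar f$ to $\bar f$ does not by itself imply $L^p(\lambda)$ convergence. The uniform-in-$t$ pointwise bound $|P_t \bar f| \leq c\, \phi$ furnished by the superharmonicity of $\phi$ is what rescues dominated convergence; this is the one step in which the specific structure of $\mf L$ is indispensable, and everything else reduces to standard semigroup approximation arguments.
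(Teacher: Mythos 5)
Your proof is correct and follows essentially the same route as the paper: both rely on the $\mf L$-superharmonicity of $\phi = \prod_{x\in S}\xi_x^{\alpha+1}$ to produce the uniform-in-$t$ dominating function, apply dominated convergence on a dense subset of $L^p(\lambda)$, and conclude by contractivity. You additionally verify the semigroup identity $\bar P_s \bar P_t = \bar P_{s+t}$ via Corollary \ref{pbar} and the absorption property, a step the paper leaves implicit, and you make the integrability of $\phi$ in $L^p(\lambda)$ explicit; neither changes the substance of the argument.
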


\begin{proof}
    We already know that $\bar{P}_t$ is a contraction on $L^p(\lambda)$. So it remains to show the strong continuity.
    For $f\in \mc D_{\alpha+1}^S$, we have $P_t f \in \mc D_{\alpha+1}^S$ and $\lim_{t\rightarrow 0} P_t f = f$ pointwisely.
    Precisely, take a positive constant $c$ satisfying
    $$|f(\xi)|\leq c\prod_{x\in S} \xi_x^{\alpha + 1} \;\text{ so }\; |P_t f(\xi)|\leq c\prod_{x\in S} \xi_x^{\alpha + 1} \;\text{ for all }\; t\geq 0.$$
    From the dominated convergence theorem, we have $\lim_{t\rightarrow 0} P_t f = f$ in $L^p(\lambda)$.
    The denseness of $\mc D_{\alpha+1}^S$ in $L^p(\lambda)$ and the fact that $P_t$ is a contraction gives the desired result.
\end{proof}

\begin{cor}
    \label{corelem1}
    Let $L$ be a generator of $\bar{P}_t$ on $L^2(\lambda)$. Then $\mc D_{\alpha+1}^S\cap C^\infty(\mathring{\Xi},\Xi)$ is a core of $L$.
\end{cor}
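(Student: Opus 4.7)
The plan is to realize $D_0 := \mc D_{\alpha+1}^S \cap C^\infty(\mathring{\Xi}, \Xi)$ as the initial domain of a symmetric, non-positive operator on $L^2(\lambda)$, show this operator is essentially self-adjoint, and identify its closure with $L$. This reduces the coreness statement to three ingredients: density of $D_0$ in $L^2(\lambda)$, invariance of $D_0$ under the resolvent $U_s$ (already provided by Proposition \ref{106}(2)), and agreement of the resolvents of the two generators on a dense set.

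For density, I would observe that any $f \in C_c^\infty(\mathring{\Xi})$, extended by zero across $\partial\Xi$, vanishes in a neighborhood of $\partial\Xi$ and hence lies in $\mc D_{\alpha+1}$ trivially; moreover $\nabla f$ vanishes where the drift $\mathbf b$ is singular, so $\mf L f$ is bounded and continuous on $\Xi$, placing $f$ in $\mc D^S$. Standard mollification on the open set $\mathring{\Xi}$ then gives density of $C_c^\infty(\mathring{\Xi})$, and a fortiori of $D_0$, in $L^2(\lambda)$. Setting $A := \mf L$ with domain $D_0$, the integration-by-parts calculation from the proof of Lemma \ref{104} together with the reversibility \eqref{reversibility} yields
\begin{equation*}
    \int_{\mathring{\Xi}} (\mf L f)\, g \, d\lambda = -\sum_{x\neq y} \int_{\mathring{\Xi}} (v_{x,y}\cdot\nabla f)(v_{x,y}\cdot\nabla g)\, \pi^{-1} \, dm = \int_{\mathring{\Xi}} f\, (\mf L g) \, d\lambda
\end{equation*}
for $f,g \in D_0$, so $A$ is symmetric and non-positive.

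Proposition \ref{106}(2) asserts that for every $s>0$ and every $g\in D_0$, the function $f := U_s g$ again lies in $D_0$ and satisfies $(sI-\mf L)f = g$ in the Feller sense. Consequently $(sI-A)(D_0) \supset D_0$, a dense subspace of $L^2(\lambda)$. The classical criterion that a non-positive symmetric operator on a Hilbert space whose range $(sI-A)(D_0)$ is dense for some $s>0$ is essentially self-adjoint now produces a self-adjoint closure $\overline A$, generating a strongly continuous contraction semigroup on $L^2(\lambda)$. To identify $\overline A$ with $L$, I would compare resolvents on $D_0$: the previous step gives $(sI-\overline A)^{-1}g = U_s g$, while Corollary \ref{pbar} identifies $\bar P_t g$ with $P_t g$ in $L^2(\lambda)$ for $g\in D_0$, so $(sI-L)^{-1}g = \int_0^\infty e^{-st}\bar P_t g \, dt = U_s g$ as well. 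Boundedness on $L^2(\lambda)$ and density of $D_0$ then force the two resolvents to coincide; hence $\overline A = L$, and $D_0$ is a core. The principal obstacle is exactly the resolvent-invariance of the smooth decay class $D_0$, which is already established in Proposition \ref{106}(2); the remainder is a routine spectral-theoretic wrap-up.
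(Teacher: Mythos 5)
Your proof is correct, but it follows a genuinely different route from the paper's. The paper's argument is minimal: it invokes Lemma~\ref{corelem}(2) directly, needing only (i) density of $C_c^\infty(\mathring{\Xi})$ (hence of $\mc D_{\alpha+1}^S\cap C^\infty(\mathring{\Xi},\Xi)$) in $L^2(\lambda)$, and (ii) invariance of $\mc D_{\alpha+1}^S\cap C^\infty(\mathring{\Xi},\Xi)$ under $U_s$, supplied by Proposition~\ref{106}(2). No integration by parts, no symmetry, no spectral theory is used. You instead build the symmetric, non-positive operator $A = \mf L|_{D_0}$, prove essential self-adjointness via the dense-range criterion at some $s>0$, and then identify $\bar A = L$ by comparing resolvents through Corollary~\ref{pbar}. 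Both chains are sound, and your route has a side benefit: it essentially proves the self-adjointness of $L$ (the paper's Proposition~\ref{44}) as a by-product, rather than as a separate later result.

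The one place you are too quick is the symmetry identity
\begin{equation*}
\int_{\mathring{\Xi}} (\mf L f)\, g \, d\lambda = -\sum_{x\neq y} \int_{\mathring{\Xi}} (v_{x,y}\cdot\nabla f)(v_{x,y}\cdot\nabla g)\, \pi^{-1}\, dm,
\end{equation*}
which you describe as following from ``the integration-by-parts calculation from the proof of Lemma~\ref{104}.'' That is only a formal divergence-form rewriting; making integration by parts on $\mathring{\Xi}$ rigorous requires showing the boundary contributions vanish, which is not obvious even with the $\prod_x\xi_x^{\alpha+1}$ decay because $\pi^{-1}$ blows up at $\partial\Xi$. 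The paper devotes Proposition~\ref{1009} to exactly this, using Sard's theorem and the co-area formula to control the boundary term — and, notably, that proposition appears \emph{after} the corollary you are proving. So while nothing in your argument is circular, it does force a reorganization of the section, whereas the paper's proof is independent of Proposition~\ref{1009}. If you pursue your route, state explicitly that the symmetry of $\mf L$ on $D_0$ relies on a boundary-term estimate in the spirit of Proposition~\ref{1009}, rather than presenting it as a formal consequence of Lemma~\ref{104}.
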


\begin{proof}   
    To show that $\mc D_{\alpha+1}^S \cap C^\infty(\mathring{\Xi},\Xi)$ is a core of $L$, we apply Lemma \ref{corelem}.
    We know that $C_c^{\infty}(\Xi)$ is dense subset of $L^2(\lambda)$.
    Also, from Proposition \ref{106}, we know that the resolvent preserves $\mc D_{\alpha+1}^S \cap C^\infty(\mathring{\Xi},\Xi)$.
    Therefore, we have $\mc D_{\alpha+1}^S \cap C^\infty(\mathring{\Xi},\Xi)$ is a core of $L$.
\end{proof}

\begin{prop}
    \label{1009}
    For $f\in \mc D_{\alpha+1}^S \cap C^\infty(\mathring{\Xi},\Xi)$, for all $v_{x,y}$ in \eqref{vxy}, $(v_{x,y}\cdot \nabla) f$ is in $L^2(\lambda)$.
    Moreover, for $f,g\in \mc D^S_{\alpha+1}\cap C^\infty(\mathring{\Xi},\Xi)$, the following identity holds.
    \begin{equation}
        \label{dirichlet}
        -\int_{\mathring{\Xi}} f \mf L g d\lambda = \int_{\mathring{\Xi}} \sum_{x\neq y} ((v_{x,y}\cdot \nabla)f) ((v_{x,y}\cdot \nabla)g) d\lambda.
    \end{equation}
\end{prop}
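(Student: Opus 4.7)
My plan is to apply Lemma~\ref{104} to express $\mf L g$ in divergence form with respect to the vector fields $v_{x,y}$, and then integrate by parts after introducing a smooth interior cutoff on $\mathring{\Xi}$. I will fix $\phi \in C^\infty(\bb R;[0,1])$ with $\phi(t) = 0$ for $t \le 1$, $\phi(t) = 1$ for $t \ge 2$, and $|\phi'|\le 2$, and set $\chi_\delta(\xi) = \prod_{x\in S} \phi(\xi_x/\delta)$. Then $\chi_\delta$ is smooth and compactly supported in $\mathring{\Xi}$, $\chi_\delta \uparrow 1$ pointwise on $\mathring{\Xi}$, and on the support of $\nabla\chi_\delta$ at least one coordinate $\xi_z$ lies in $[\delta,2\delta]$, so $\prod_x \xi_x \le 2\delta$ and $|v_{x,y}\cdot\nabla\chi_\delta|\le C/\delta$ on that set.

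For the $L^2$-integrability claim, I will apply Lemma~\ref{104} with the smooth, compactly supported test function $\chi_\delta^2 f$; the divergence theorem on the affine hyperplane $\{\sum_x \xi_x = 1\}$ is valid since the $v_{x,y}$ are tangent there and the test function vanishes near $\partial\Xi$, yielding
\[
-\int \chi_\delta^2 f (\mf L f)\, d\lambda = \sum_{x\neq y} \int \chi_\delta^2 (v_{x,y}\cdot\nabla f)^2 d\lambda + 2\sum_{x\neq y}\int \chi_\delta f (v_{x,y}\cdot\nabla\chi_\delta)(v_{x,y}\cdot\nabla f)\, d\lambda.
\]
The left side is uniformly bounded in $\delta$ because $\mf L f \in C(\Xi)$ and $|f|\pi^{-1} \le c\prod_x \xi_x \le c$ on $\Xi$. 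The cross term on the right is handled by Young's inequality $2|ab| \le \tfrac{1}{2}a^2 + 2b^2$, which absorbs half of the Dirichlet form and reduces the problem to showing $\int f^2 (v_{x,y}\cdot\nabla\chi_\delta)^2\, d\lambda \to 0$. The decay $|f| \le c \prod_x \xi_x^{\alpha+1}$ yields $f^2 \pi^{-1} \le c^2 \prod_x \xi_x^{\alpha+2}$, which on $\mathrm{supp}(\nabla\chi_\delta)$ is bounded by $c^2(2\delta)^{\alpha+2}$; combined with $|v_{x,y}\cdot\nabla\chi_\delta|^2\le C^2/\delta^2$ and $m(\Xi)<\infty$, this produces a pointwise $O(\delta^\alpha)$ bound that vanishes as $\delta \to 0$. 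Fatou's lemma then yields $(v_{x,y}\cdot\nabla f) \in L^2(\lambda)$.

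For the identity, the analogous integration by parts with cutoff $\chi_\delta$ instead of $\chi_\delta^2$ gives
\[
-\int \chi_\delta f (\mf L g)\, d\lambda = \sum_{x\neq y} \int \chi_\delta (v_{x,y}\cdot\nabla f)(v_{x,y}\cdot\nabla g)\, d\lambda + \sum_{x\neq y} \int f (v_{x,y}\cdot\nabla\chi_\delta)(v_{x,y}\cdot\nabla g)\, d\lambda.
\]
The left side converges to $-\int f\,\mf L g\, d\lambda$ by dominated convergence (the dominating function $c\|\mf L g\|_\infty$ is integrable against $d\lambda$ because $|f|\pi^{-1}\le c$); the first term on the right converges to the claimed quantity by dominated convergence, using Cauchy--Schwarz and the first part to justify the $L^1(\lambda)$-integrability of $(v_{x,y}\cdot\nabla f)(v_{x,y}\cdot\nabla g)$; and the cross term is bounded by Cauchy--Schwarz against $\|v_{x,y}\cdot\nabla g\|_{L^2(\lambda)}$ (finite by the first part applied to $g$), with the other factor tending to zero by the same $\delta^\alpha$-estimate.

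The main obstacle is the vanishing of the cross term generated by $\nabla\chi_\delta$: the gradient of the cutoff blows up like $\delta^{-1}$ while the density $\pi^{-1}$ is singular along $\partial\Xi$, so the polynomial decay $|f| \le c\prod_x \xi_x^{\alpha+1}$ must dominate both simultaneously. The book-keeping above produces a net factor $\delta^\alpha$, which vanishes thanks to $\alpha > 1$ (indeed $\alpha > 0$ suffices for this particular step), closing the argument.
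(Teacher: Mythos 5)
Your proof is correct, but it takes a genuinely different route from the paper's. The paper proves the identity first for $f=g\ge 0$ by applying the divergence theorem on the compact sublevel set $K_\delta=\{f\ge\delta\}$, using Sard's theorem to pick regular values $\delta$ so that $\partial K_\delta$ is smooth; the boundary integral $\int_{\partial K_\delta}\sum_{x\ne y}((v_{x,y}\cdot\nabla)f)^2\frac{f}{|\nabla f|}\pi^{-1}\,dS$ has a definite sign (this gives $\ge$ immediately), and its vanishing as $\delta\to 0$ is proved by a contradiction argument using the co-area formula and the finiteness of the Dirichlet form; polarization then gives the general $f,g$. Your argument instead introduces a smooth interior cutoff $\chi_\delta$ supported away from $\partial\Xi$, integrates by parts against compactly supported test functions, absorbs the cross term by Young's inequality, and kills the remainder via the explicit $O(\delta^\alpha)$ estimate obtained from the decay $|f|\le c\prod_x\xi_x^{\alpha+1}$ against the singularity $\pi^{-1}=(\prod_x\xi_x)^{-\alpha}$ and the $\delta^{-1}$ blow-up of $\nabla\chi_\delta$. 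Each approach has a merit: the paper's sublevel-set method immediately yields the one-sided inequality for free from the sign of the boundary term, whereas your cutoff method avoids Sard's theorem and the co-area contradiction entirely, handles general (not necessarily nonnegative) $f$ in a single pass so that no polarization step is needed, and makes the role of the boundary decay completely explicit as a power-counting estimate. Both hinge on the same structural fact, namely that the $(\alpha+1)$-order vanishing of $f$ at $\partial\Xi$ overpowers both the singularity of $\lambda$ and the divergence of the cutoff gradient.
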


\begin{proof}
    We first consider the case when $f=g\geq 0$.
    From Sard's theorem, a set of singular values of $f$ has Lebesgue measure zero. For a regular value $\delta>0$ of $f$, define a set
    $$K_\delta \coloneq \{\xi\in \Xi : f(\xi)\geq \delta\}.$$
    Then $K_\delta$ is a compact set and $f^{-1}(\delta)$ is a smooth boundary of $K_\delta$. Now, applying the divergence theorem to $K_\delta$,
    \begin{align}
        &-\int_{K_{\delta}} f\mf L f \pi^{-1} dm
        = -\int_{K_{\delta}} \sum_{x\neq y} f(v_{x,y}\cdot \nabla)(\pi^{-1}(v_{x,y} \cdot \nabla)f)dm \nonumber \\
        &= -\sum_{x\neq y} \int_{K_{\delta}} \nabla\cdot (f v_{x,y} (\pi^{-1}(v_{x,y} \cdot \nabla)f))dm 
           +\sum_{x\neq y} \int_{K_{\delta}} (\nabla f) \cdot (v_{x,y} \pi^{-1}(v_{x,y} \cdot \nabla)f)dm \nonumber \\
        &= \sum_{x\neq y}\int_{\partial K_{\delta}} ((v_{x,y} \cdot \nabla)f)^2 \frac{f}{|\nabla f|} \pi^{-1} dS + \sum_{x\neq y}\int_{K_{\delta}} ((v_{x,y} \cdot \nabla)f)^2 d\lambda, \label{810eq1}
    \end{align}
    where $dS$ is a surface measure on $\partial K_{\delta}$.
    Taking $\delta$ to zero, and applying the dominated convergence theorem, we obtain
    $$-\int_{\Xi} f\mf L f \pi^{-1} dm \geq \int_{\Xi} \sum_{x\neq y}((v_{x,y} \cdot \nabla)f)^2 d\lambda.$$

    Now, to prove the equality, we must show that we can choose small enough $\delta$ to make the first term of the right-hand side of \eqref{810eq1} become small enough.
    Assume that there exists $\epsilon>0$ such that for any $\delta>0$, for all regular value $\delta'$ of $f$ with $0<\delta' < \delta$, we have
    $$\int_{\partial K_{\delta'}}\sum_{x\neq y} ((v_{x,y} \cdot \nabla)f)^2 \frac{f}{|\nabla f|} \pi^{-1} dS \geq \epsilon.$$
    Since the set of singular values of $f$ has measure zero, so for almost every $\delta'< \delta,$
    $$\int_{\partial K_{\delta'}}\sum_{x\neq y} ((v_{x,y} \cdot \nabla)f)^2 \frac{1}{|\nabla f|} \pi^{-1} dS \geq \frac{\epsilon}{\delta'}.$$
    Now using the co-area formula, we have
    \begin{align*}
    \int_{\delta'\leq f\leq \delta}\sum_{x\neq y}((v_{x,y}\cdot\nabla) f)^2 \pi^{-1}dm =&\int_{\delta'}^{\delta}\int_{\partial K_{t}}\sum_{x\neq y} ((v_{x,y} \cdot \nabla)f)^2 \frac{1}{|\nabla f|} \pi^{-1} dS dt \\
    \geq & \int_{\delta'}^{\delta} \frac{\epsilon}{t} dt = \epsilon \log\frac{\delta}{\delta'}.
    \end{align*}
    Since the left-hand side is bounded above by
    $$-\int_{\mathring{\Xi}} f\mf L f \pi^{-1} dm \geq \int_{\mathring{\Xi}} \sum_{x\neq y}((v_{x,y} \cdot \nabla)f)^2 d\lambda \geq \int_{\delta'\leq
    f\leq \delta}\sum_{x\neq y}((v_{x,y}\cdot\nabla) f)^2 \pi^{-1}dm,$$
    sending $\delta'\rightarrow 0$ gives a contradiction.
    Therefore, we get \eqref{dirichlet} for $f=g\geq 0$.
    For case $f,g\geq 0$, consider the following identities.
    \begin{align*}
        -\int_{\mathring{\Xi}} f \mf L f d\lambda =& \int_{\mathring{\Xi}} \sum_{x\neq y} ((v_{x,y}\cdot \nabla)f) ((v_{x,y}\cdot \nabla)f) d\lambda.\\
        -\int_{\mathring{\Xi}} g \mf L g d\lambda =& \int_{\mathring{\Xi}} \sum_{x\neq y} ((v_{x,y}\cdot \nabla)g)((v_{x,y}\cdot \nabla)g) d\lambda.\\
        -\int_{\mathring{\Xi}} (f+g) \mf L (f+g) d\lambda =& 
        \int_{\mathring{\Xi}} \sum_{x\neq y} ((v_{x,y}\cdot \nabla)(f+g)) ((v_{x,y}\cdot \nabla)(f+g)) d\lambda.
    \end{align*}
    Subtract first two equations from the last one gives
    \begin{equation}
    \label{diricletineq}
        -\int_{\mathring{\Xi}} f \mf L g d\lambda -\int_{\mathring{\Xi}} g \mf L f d\lambda =
        2\int_{\mathring{\Xi}} \sum_{x\neq y} ((v_{x,y}\cdot \nabla)f) ((v_{x,y}\cdot \nabla)g) d\lambda.
    \end{equation}
    But exactly the same calculation from the above gives
    \begin{align*}
        &-\int_{K_{\delta}} g\mf L f \pi^{-1} dm 
        = -\int_{K_{\delta}} \sum_{x\neq y} g(v_{x,y}\cdot \nabla)(\pi^{-1}(v_{x,y} \cdot \nabla)f)dm \\
        &= \int_{\partial K_{\delta}} \sum_{x\neq y}((v_{x,y} \cdot \nabla)f)^2 \frac{g}{|\nabla f|} \pi^{-1} dS + \int_{K_{\delta}} \sum_{x\neq y}((v_{x,y} \cdot \nabla)g)((v_{x,y} \cdot \nabla)f) d\lambda \\
        &\leq \int_{K_{\delta}} \sum_{x\neq y}((v_{x,y} \cdot \nabla)g)((v_{x,y} \cdot \nabla)f) d\lambda.
    \end{align*}
    So sending $\delta\rightarrow 0$ gives
    $$-\int_{\mathring{\Xi}} g\mf L f d\lambda \leq \int_{\mathring{\Xi}} \sum_{x\neq y}((v_{x,y} \cdot \nabla)g)((v_{x,y} \cdot \nabla)f) d\lambda.$$
    Therefore, from \eqref{diricletineq}, we obtain the desired result for $f,g\geq 0$.

    Finally, for general $f,g\in \mc D^S_{\alpha+1}\cap C^\infty(\mathring{\Xi},\Xi)$, we add a function $C\prod_{x\in S}\xi_x^{\alpha + 1}$
    with positive $C$ to make them non-negative. Then use \eqref{dirichlet} for non-negative functions to obtain the desired result.
\end{proof}

    Using Proposition \ref{1009}, we can finally prove the following proposition which gives condition \textbf{(D1.3)} and \textbf{(D1.4)}.

\begin{prop}
    \label{44}
        The semigroup $(\bar{P}_t)_{t\geq 0}$ on $L^2(\lambda)$ is a self-adjoint strongly continuous contraction.
\end{prop}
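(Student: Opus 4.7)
The plan is to reduce self-adjointness of the semigroup to symmetry of its generator via Proposition \ref{1009} and Corollary \ref{corelem1}, since strong continuity and the contraction property have already been obtained in Corollary \ref{109}.

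First, I would let $L$ denote the infinitesimal generator of $(\bar P_t)$ on $L^2(\lambda)$. By Corollary \ref{corelem1}, the subspace $\mc D_0 := \mc D_{\alpha+1}^S \cap C^\infty(\mathring\Xi,\Xi)$ is a core for $L$. On $\mc D_0$ the generator coincides with $\mf L$, and Proposition \ref{1009} gives the crucial identity
\begin{equation*}
-\int_{\mathring\Xi} f\,\mf L g\,d\lambda \;=\; \int_{\mathring\Xi} \sum_{x\neq y} (v_{x,y}\cdot\nabla f)(v_{x,y}\cdot\nabla g)\,d\lambda, \qquad f,g\in\mc D_0,
\end{equation*}
whose right-hand side is manifestly symmetric in $f$ and $g$. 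Hence $\langle Lf,g\rangle_{\lambda} = \langle f,Lg\rangle_{\lambda}$ for all $f,g\in\mc D_0$.

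Second, I would upgrade this symmetry from $\mc D_0$ to the full domain $\mc D(L)$ by a standard core argument: for any $f,g\in\mc D(L)$ there exist $f_n,g_n\in\mc D_0$ with $f_n\to f$, $g_n\to g$, $Lf_n\to Lf$, $Lg_n\to Lg$ in $L^2(\lambda)$, and passing to the limit in $\langle Lf_n,g_n\rangle = \langle f_n,Lg_n\rangle$ yields symmetry on $\mc D(L)$.

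Finally, I would invoke the classical fact that a densely-defined symmetric operator which generates a strongly continuous contraction semigroup is self-adjoint: by Hille--Yosida the range of $\lambda I - L$ is all of $L^2(\lambda)$ for every $\lambda>0$, and a symmetric operator whose range condition holds for some $\lambda>0$ is automatically self-adjoint. Once $L = L^*$, the spectral theorem (or the uniqueness of the semigroup generated by a self-adjoint operator) gives $\bar P_t = e^{tL}$ self-adjoint for every $t\geq 0$.

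I do not expect a serious obstacle in this argument, since the heavy analytic work has been done in Proposition \ref{1009} (integration by parts up to the boundary despite the singular weight $\pi^{-1}$) and in Corollary \ref{corelem1} (identifying a smooth core that is preserved by the resolvent). The only delicate point is verifying that the approximation along the core really does transfer symmetry — this is routine provided one uses the graph-norm density that defines a core, rather than mere $L^2$-density.
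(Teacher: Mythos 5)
Your argument is correct, but the route to self-adjointness of the semigroup is genuinely different from the paper's. The paper never passes through the $L^2$-generator $L$; instead it proves resolvent symmetry directly. For $p,q\in\mc D_{\alpha+1}^S\cap C^\infty(\mathring\Xi,\Xi)$ it sets $f=tU_tp$, $g=tU_tq$, which remain in this class by Proposition~\ref{106}, substitutes $p=f-\tfrac1t\mf Lf$ and $q=g-\tfrac1t\mf Lg$ into the symmetry identity of Proposition~\ref{1009}, and obtains $\int pU_tq\,d\lambda=\int qU_tp\,d\lambda$; the approximation $P_tf=\lim_n(\tfrac{n}{t}U_{n/t})^nf$ from Lemma~\ref{reslem}(2) then yields $\int pP_tq\,d\lambda=\int qP_tp\,d\lambda$, and density of $C^\infty_c(\mathring\Xi)$ in $L^2(\lambda)$ finishes. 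Your generator-level argument (symmetric on a core, hence symmetric, hence self-adjoint by the Hille--Yosida range condition) is a standard and somewhat cleaner alternative that avoids the iterated-resolvent step, and the transfer of symmetry across the core by graph-norm approximation is correctly handled. It does, however, carry one extra obligation that you dispatch in a single clause: you must justify that the abstract $L^2(\lambda)$-generator $L$ actually equals the differential operator $\mf L$ on $\mc D_0=\mc D_{\alpha+1}^S\cap C^\infty(\mathring\Xi,\Xi)$. Corollary~\ref{corelem1} only identifies $\mc D_0$ as a core of $L$; it does not by itself give $Lf=\mf Lf$ pointwise. This requires upgrading the uniform convergence $(P_tf-f)/t\to\mf Lf$ to $L^2(\lambda)$-convergence for $f\in\mc D_0$, which can be done with the $\prod_{x\in S}\xi_x^{\alpha+1}$ domination used in Corollary~\ref{109} but should be stated. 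The paper avoids this issue entirely by staying at the Feller level, where $f-\tfrac1t\mf Lf=p$ is immediate from Lemma~\ref{reslem}(1).
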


\begin{proof}
For $f,g\in \mc D^S_{\alpha+1}\cap C^\infty(\mathring{\Xi},\Xi)$, we get
\begin{equation*}
    -\int_{\mathring{\Xi}} f \mf L g d\lambda = \int_{\mathring{\Xi}} \sum_{x\neq y} ((v_{x,y}\cdot \nabla)f) ((v_{x,y}\cdot \nabla)g) d\lambda = -\int_{\mathring{\Xi}} g \mf L f d\lambda.
\end{equation*}
Fix $t>0$. For $p,q\in \mc D^S_{\alpha+1}\cap C^\infty(\mathring{\Xi},\Xi)$, take $f = tU_t p$ and $g = tU_t q$.
From Proposition \ref{106}, we have $f,g\in \mc D^S_{\alpha+1}\cap C^\infty(\mathring{\Xi},\Xi)$.
Therefore, we have 
$$-\int_{\mathring{\Xi}} f \mf L g d\lambda = -\int_{\mathring{\Xi}} f \mf L g d\lambda.$$
Since $f - \frac{1}{t}\mf L f = p$ and $g - \frac{1}{t}\mf L g = q$, we get
$$\int_{\mathring{\Xi}} p U_t g d\lambda = \int_{\mathring{\Xi}} q U_t p d\lambda.$$
Using \eqref{ressemi}, we have
$$\int_{\mathring{\Xi}} p P_t g d\lambda = \int_{\mathring{\Xi}} q P_t p d\lambda.$$
The denseness of $C^\infty_c(\mathring{\Xi},\Xi)$ in $L^2(\lambda)$ gives the desired result.
\end{proof}

\section{Condition \textbf{(D1*)} for Zero-Range Process} \label{p39}

Throughout this section, we use the following notation.
For a coordinate $\eta \in \mc H_N$ and $\varnothing\subsetneq A\subset S$, let $\eta_A$ is a $A$-coordinates of $\eta$. Precisely, $\eta_A \in \bb Z^A$, $\eta_A(x) = \eta(x)$ for all $x\in A$.
Let $|\eta_A|$ be a sum of coordinates of $\eta_A$, that is,
$$|\eta_A| = \sum_{x\in A} \eta_A(x).$$

For $\varnothing\subsetneq A\subset S$, let $L^A$ be a generator of the induced semigroup by $(P_t)$ on $L^2(\lambda_A)$ in Proposition \ref{44}.
Define an energy functional $Q^A$ on $L^2(\lambda_A)$ as
$$ Q^A(f) = \int_{\Xi_A} f (-L^A) f d\lambda_A.$$
The following notion is needed to handle a graph norm.

\begin{defn}
    [$U$-denseness]
    Suppose a metric space $X$ and a functional $U$ on $X$ is given. A subset $A$ of $X$ is said to be $U$-dense if for all $x\in X$,
    there exists a sequence $(x_n)$ in $A$ such that $x_n\rightarrow x$ and $U(x_n)\rightarrow U(x)$.
\end{defn}

The following lemma says that we can choose $\mc D_{A,0}$ as $C_c^{\infty}(\Xi_A)$ in condition \textbf{(D1*.1)}.

\begin{lemma}
    \label{corelem2}
    For $\varnothing\subsetneq A\subset S$, let $\mc D(\sqrt{-L^A})$ be a domain of $\sqrt{-L^A}$. Then $C_c^{\infty}(\Xi_A)$ is $Q^A$-dense in $\mc D(\sqrt{-L^A})$.
\end{lemma}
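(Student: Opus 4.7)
The plan is to produce the required approximation in two stages. First, since $L^A$ is a nonpositive self-adjoint operator on $L^2(\lambda_A)$ --- obtained by transferring Section \ref{scc} to the subsimplex $\Xi_A$ via the recursion property (Proposition \ref{p33}) --- the spectral theorem gives that $\mc D(L^A)$ is form-dense in $\mc D(\sqrt{-L^A})$, i.e., dense with respect to $\|\cdot\|_{L^2(\lambda_A)}^2 + Q^A(\cdot)$. Combined with Corollary \ref{corelem1}, whose proof extends verbatim to any $A\in\mc C$, the subset $\mc D^A_{\alpha+1}\cap C^\infty(\mathring{\Xi}_A,\Xi_A)$ is a core for $L^A$ and therefore also form-dense in $\mc D(\sqrt{-L^A})$. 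Hence it suffices to approximate a single $g$ in this subset by elements of $C_c^\infty(\mathring{\Xi}_A)$, both in $L^2(\lambda_A)$ and in $Q^A$.

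For the second stage, fix a smooth cutoff $\chi_\epsilon:\Xi_A\to[0,1]$ with $\chi_\epsilon\equiv 0$ on $\{\min_{x\in A}\xi_x\leq\epsilon\}$, $\chi_\epsilon\equiv 1$ on $\{\min_{x\in A}\xi_x\geq 2\epsilon\}$, and $|\nabla\chi_\epsilon|\leq C/\epsilon$, and set $g_\epsilon\coloneq\chi_\epsilon g$, which lies in $C_c^\infty(\mathring{\Xi}_A)$. Dominated convergence gives $g_\epsilon\to g$ in $L^2(\lambda_A)$. Applying the Dirichlet identity of Proposition \ref{1009} (in its version for $A$) to $g-g_\epsilon$ and expanding $\nabla((1-\chi_\epsilon)g)$ splits $Q^A(g-g_\epsilon)$ into a term involving $(1-\chi_\epsilon)\nabla g$, which vanishes by dominated convergence, and a commutator term of the form $\int(v^A_{x,y}\cdot\nabla\chi_\epsilon)^2 g^2\,d\lambda_A$. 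Using the decay $|g(\xi)|^2\leq c^2\prod_x\xi_x^{2\alpha+2}$ together with $d\lambda_A=\prod_x\xi_x^{-\alpha}\,dm_A$, the commutator integrand is bounded on the cutoff strip $\{\min_x\xi_x\in[\epsilon,2\epsilon]\}$ by $C'\epsilon^{-2}\epsilon^{\alpha+2}$, while the $m_A$-volume of the strip is $O(\epsilon)$. The total contribution is therefore $O(\epsilon^{\alpha+1})\to 0$, and a diagonal extraction across the two stages yields the required sequence.

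The main obstacle is precisely this final commutator estimate, which is delicate because the reference density $\prod_x\xi_x^{-\alpha}$ of $\lambda_A$ is singular at $\partial\Xi_A$, so a generic smooth cutoff does not preserve the form energy. What rescues the argument is the boundary decay built into $\mc D^A_{\alpha+1}$, inherited from the superharmonicity of $\prod_x\xi_x^{\alpha+1}$ used in Proposition \ref{106}; this decay overwhelms the singularity of $\lambda_A$ by two extra powers of $\prod_x\xi_x$ and, matched against the $1/\epsilon$ size of the cutoff gradient, produces the crucial positive exponent $\alpha+1$. Without such matched decay, one would instead have to approximate through spectral truncation of $L^A$, which is less transparent and would not yield $C_c^\infty$ representatives directly.
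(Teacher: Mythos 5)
Your proposal is correct and follows essentially the same two-stage route as the paper's proof: first reduce, via Corollary \ref{corelem1} and the spectral calculus, to approximating a single $g\in \mc D^S_{\alpha+1}\cap C^\infty(\mathring{\Xi},\Xi)$ in the form norm, then multiply $g$ by a boundary cutoff with $1/\epsilon$-size gradient and verify that the decay $|g|\le c\prod_{x}\xi_x^{\alpha+1}$ dominates both that gradient and the singularity $\pi^{-1}$ of $\lambda_A$. The only differences are cosmetic: the paper realizes the cutoff as a smooth tensor product $\prod_x f_n(\xi_x)$ with $\sup_x|xf_n'(x)|$ uniformly bounded, whereas your $\chi_\epsilon$ is written as a function of $\min_{x\in A}\xi_x$ (not smooth as literally stated, but trivially repaired in the same tensor-product way), and the paper leaves the vanishing of the cross and commutator terms to dominated convergence rather than recording the explicit $O(\epsilon^{\alpha+1})$ rate you computed.
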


\begin{proof}
    From the absorption property, it is enough to show for the case $A=S$.
    Assign $\mc D(\sqrt{-L})$ a norm $\|f\|_{Dom} = \|f\|_{2} + \|\sqrt{-L}f\|_{2}$.
    Remark that a set is $Q$-dense in $\mc D(\sqrt{-L})$ if and only if it is dense in $\mc D(\sqrt{-L})$ with respect to $\|\cdot\|_{Dom}$.
    We have already shown that $\mc D_{\alpha+1}^S\cap C^\infty(\mathring{\Xi},\Xi)$ is a core of $L$.
    Since $\mc D(\sqrt{-L})\subset \mc D(L)$, $\mc D_{\alpha+1}^S\cap C^\infty(\mathring{\Xi},\Xi)$ is dense in $\mc D(\sqrt{-L})$ with respect to $\|\cdot\|_{Dom}$.
    Therefore, it is enough to show that $C_c^{\infty}(\Xi)$ is dense in $\mc D_{\alpha+1}^S\cap C^\infty(\mathring{\Xi},\Xi)$ with respect to $\|\cdot\|_{Dom}$.
    We first define a smooth function $f:\bb R \rightarrow \bb R$ with the following properties.
    \begin{enumerate}
        \item $f(x) = 0$ for $x\leq 1/2$ and $f(x) = 1$ for $x\geq 1$.
        \item $f$ is increasing.
    \end{enumerate}
    Then define $f_n(x) = f(nx)$. Then it satisfies
    \begin{equation} \label{102eq1}
        \sup_{x} xf'_n(x) = \sup_{x} xf'(x) < \infty.
    \end{equation}
    Now, for $u\in \mc D^S_{\alpha+1}\cap C^\infty(\mathring{\Xi},\Xi)$, define $u_n(\xi) = u(\xi) \prod_{x\in S} f_n(\xi_x)$. We claim that $u_n$ converges to $u$ in $\|\cdot\|_{Dom}$.
    It is enough to show that $\lim_{n\rightarrow \infty} Q(u-u_n) = 0$.
    Let $g_n(\xi) = 1 - \prod_{x\in S} f_n(\xi_x)$.
    Then
    \begin{align*}
    Q(u-u_n)
    = \int_{\mathring{\Xi}} \sum_{x\neq y} ((v_{x,y}\cdot \nabla)(ug_n)) ((v_{x,y}\cdot \nabla)(ug_n)) d\lambda = A_n + B_n + C_n,
    \end{align*}
    where $A_n, B_n, C_n$ is defined as
    \begin{align*}
        A_n &= \int_{\mathring{\Xi}} \sum_{x\neq y} ((v_{x,y}\cdot \nabla)u)^2g^2_n \pi^{-1}dm, \\
        B_n &= \int_{\mathring{\Xi}} \sum_{x\neq y} 2((v_{x,y}\cdot \nabla)u)((v_{x,y}\cdot \nabla)g_n)ug_n \pi^{-1}dm, \\
        C_n &= \int_{\mathring{\Xi}} \sum_{x\neq y} u^2((v_{x,y}\cdot \nabla)g_n)^2 \pi^{-1}dm.
    \end{align*}
    The $A_n$ converges to 0 from the dominated convergence theorem.
    Also, by \eqref{102eq1} and the fact that $u \in \mc D_{\alpha+1}$, $(u\pi^{-1})\nabla g_n$ is uniformly bounded.
    Therefore, again by the dominated convergence theorem, $B_n$ converges to 0. For $C_n$,
    $$u^2((v_{x,y}\cdot \nabla)g_n)^2 \pi^{-1} = (u\pi^{-1}(v_{x,y}\cdot \nabla)g_n)^2 \pi \leq c 1_{\{\xi_x \leq \frac{1}{n}, \forall x\in S\}}$$
    for some positive $c$. Therefore, $C_n$ converges to 0.
\end{proof}

The remaining part of this section is devoted to checking condition \textbf{(D1*.2)}.

\begin{prop} \label{123}
    For $\varnothing\subsetneq A\subset S$, consider a nonnegative function $v\in C^\infty_c(\mathring{\Xi}_A,\Xi_A)$.
    Then there exists a sequence of functions $V_N : \mc H_N \rightarrow \bb R_+$ satisfying
    \begin{enumerate}
        \item A sequence of measure $\iota_{N,*}(V_N^2 d\rho_N)$ weakly converges to $v^2 d\lambda_A$,
        \item $\lim_{N\rightarrow \infty} N^2\int_{\mc H_N} V_N (-\ms L_N V_N) d\rho_N = \int_{\Xi} v(-L^A v) d\lambda_A$,
    \end{enumerate}
    where $\rho_N$ is a stationary measure of zero-range process on $\mc H_N$ and $\iota_{N,*}$ is a pushforward map of measures on $\mc H_N$ to measures on $\Xi_N$.
\end{prop}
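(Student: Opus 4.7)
The strategy is to construct $V_N$ as a lattice discretisation of $v$ on the stratum $\mathring{\Xi}_A$, using the harmonic extension map $\mc E_A$ of Section 8 (which naturally spreads $v$ to a neighbourhood of $\Xi_A$ in $\Xi$) together with a smooth cutoff that localises the pushforward mass near $\Xi_A$. A Riemann-sum argument then matches both the mass and the Dirichlet form to the intended limits.

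Write $B=S\setminus A$ and let $\mc E_A v\in C(\Xi)$ be given by $(\mc E_A v)(\xi)=v(\gamma_A(\xi))$. Fix a smooth $\chi\colon\mathbb R\to[0,1]$ with $\chi=1$ on $(-\infty,1/2]$ and $\chi=0$ on $[1,\infty)$, a sequence $\delta_N\downarrow 0$ with $\delta_N N\to\infty$, and set
\[
V_N(\eta) \;=\; c_N^A\,(\mc E_A v)(\iota_N(\eta))\,\chi\!\left(\tfrac{|\eta_B|}{\delta_N N}\right),\qquad \eta\in\mc H_N,
\]
where $c_N^A$ is of order $N^{(|A|-1)(\alpha-1)/2}$, chosen below so that $\int V_N^2\,d\rho_N\to\int v^2\,d\lambda_A$. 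When $A=S$, the cutoff is vacuous and $V_N=c_N^S\,v\circ\iota_N$.

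For the weak-convergence statement, recall that for $\eta\in\mc H_N^A$ (configurations supported on $A$) one has $\rho_N(\eta)=Z_{S,N}^{-1}N^{\alpha(1-|A|)}\pi_A^{-1}(\iota_N(\eta))$, and that $\mathring{\mc H}_N^A$ supplies a lattice of density $\sim N^{|A|-1}$ in $\mathring{\Xi}_A$. The combined Riemann sum then converges to $\int f\,v^2\,d\lambda_A$ for every $f\in C(\Xi)$ with the stated $c_N^A$. Configurations with $0<|\eta_B|\le\delta_N N$ (those swept in by the cutoff) contribute a vanishing fraction after the pushforward because $\delta_N\to0$, so the pushforward concentrates on any neighbourhood of $\Xi_A$, giving condition (1) and fixing $c_N^A$ precisely.

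For condition (2), use reversibility to rewrite
\[
\int V_N(-\ms L_N V_N)\,d\rho_N \;=\; \tfrac12\sum_{\eta}\sum_{x\neq y}\rho_N(\eta)\,g(\eta_x)\,r(x,y)\bigl(V_N(\eta^{x,y})-V_N(\eta)\bigr)^2,
\]
and split according to whether $x,y\in A$, $x\in A,y\in B$, or $x,y\in B$ (the $x\in B,y\in A$ case is inactive on the support by the convention $\eta^{x,y}=\eta$ when $\eta_x=0$). For the interior piece, Taylor-expand $V_N(\eta^{x,y})-V_N(\eta)$ to second order, substitute $g(\eta_x)\approx 1+\alpha/(N\xi_x)$, and convert via Lemma \ref{104} and the identity $\gamma_A\mc L_S\gamma_A^{\dagger}=\mc L_A$ from \eqref{traceprocess}; after multiplication by $N^2$ the resulting Riemann sum yields exactly $Q^A(v)=\int v(-L^A v)\,d\lambda_A$, matching the trace-process generator that appears in $\mf L^A$ via Proposition \ref{p33}. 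The cross-jump piece is controlled by the fact that $\mc E_A$ is $\mc L_S$-harmonic in the $B$-direction, so a single particle jump $A\to B$ changes $\mc E_A v\circ\iota_N$ only at order $N^{-2}$; combined with the gradient of $\chi$, which is of size $(\delta_N N)^{-1}$, and the fact that $\rho_N$-mass in the annulus $\{|\eta_B|\in[\delta_N N/2,\delta_N N]\}$ is $o(1)$ after rescaling, these terms contribute $o(N^{-2}c_N^{A,2})$ and drop out. The jumps within $B$ produce a similarly negligible annulus contribution.

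The main obstacle is the cross-jump analysis: nominally such jumps occur at $O(1)$ rate per particle and could overwhelm the $N^{-2}$-scaled Dirichlet form. The harmonic extension $\mc E_A v=v\circ\gamma_A$ is precisely the device that cancels the leading-order cost, and the cutoff width $\delta_N$ must be tuned so that the gradient of $\chi$ and the $\rho_N$-mass of the annulus balance to produce $o(N^{-2})$ residuals while still keeping the pushforward concentrated on $\Xi_A$. Once this balance is verified, Steps for (1) and (2) combine to give the statement, and the companion orthogonality in condition (D1*.2) for $\beta\neq\gamma$ follows because the cutoffs localise $V_N^A$ and $V_N^B$ on asymptotically disjoint neighbourhoods of $\Xi_A$ and $\Xi_B$.
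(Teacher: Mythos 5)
Your construction is essentially the paper's: you take the harmonic extension $\ms E_A v = v\circ\gamma_A$, multiply by a cutoff in $|\eta_B|$, and normalize so the $L^2(\rho_N)$ mass matches $\int v^2\,d\lambda_A$. You also identify the correct order of the normalization constant. However, your analysis of the Dirichlet form contains a genuine error that, if followed, would produce the wrong limit.

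You split the jumps into $x,y\in A$ (``interior''), cross, and within-$B$, claim that the interior piece alone yields $Q^A(v)$ via the identity $\gamma_A\mc L_S\gamma_A^\dagger=\mc L_A$, and that the cross and within-$B$ pieces are $o(N^{-2})$ because ``$\mc E_A$ is $\mc L_S$-harmonic in the $B$-direction, so a single particle jump $A\to B$ changes $\mc E_A v\circ\iota_N$ only at order $N^{-2}$.'' Both halves of this are false, and they fail in a way that exposes an internal inconsistency: for a cross jump $x\in A$, $y\in B$ the increment of $\gamma_A(\eta/N)$ is $\gamma_A(e_y-e_x)/N$, and $[\gamma_A(e_y-e_x)]_z = u^A_z(y)-\delta_{z,x}$ is a nonzero $O(1)$ vector, so $\ms E_Av(\eta^{x,y}/N)-\ms E_Av(\eta/N)=O(N^{-1})$, exactly the same order as for interior jumps. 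The cross and within-$B$ contributions are therefore $O(N^{-2})$, not $o(N^{-2})$, and they cannot ``drop out.'' Moreover they must not drop out: the identity $\gamma_A\mc L_S\gamma_A^\dagger=\mc L_A$ involves the full generator $\mc L_S$, so if you only kept the within-$A$ jumps you would be computing $\gamma_A\mc L_S^{A,A}\gamma_A^\dagger$, which is \emph{not} the trace generator $\mc L_A$. The paper makes no such splitting; it sums over all ordered pairs $x<y$ in $S$, obtains $\sum_{x<y}r(x,y)(\nabla_{\gamma_A((x,y))}v)^2$ in the limit (Lemma \ref{1206}), and only then applies \eqref{traceprocess} to recognise $-v\,\mf L^A v$. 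The harmonic extension is not a cancellation device for cross jumps; it is precisely what makes all the $O(N^{-1})$ increments, interior and not, combine algebraically into the trace quadratic form.

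Two smaller points. First, the remark that ``the $x\in B, y\in A$ case is inactive on the support'' is also wrong: on $\text{supp}(V_N)$ we only require $|\eta_B|$ to be of order $\delta_N N$, not zero, so $B\to A$ jumps fire. Second, ``tune $\delta_N$ so the annulus drops out'' needs to be made quantitative: in the paper's parametrisation with cutoff scale $N^{1-\gamma}$, the constraint is $\gamma(1+\alpha)<\alpha-1$, i.e.\ the cutoff scale must exceed $N^{2/(1+\alpha)}$; an arbitrary choice of $\delta_N\to 0$ with $\delta_NN\to\infty$ is not sufficient.
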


To find a desired sequence of functions, we need the following auxiliary function $\psi$.
Fix a $0<\gamma<1$ so that $2\gamma < (1-\gamma)(1-\alpha)$. It will be used in \eqref{eqeq400}.
Let $B$ be a complementary set of $A$ in $S$.
Define $\psi_N : \bb R \rightarrow \bb R$ as
$$\psi_N(x) = \begin{cases}
    1 & x\leq N^{1-\gamma} \\
    N^{\gamma -1} x & N^{1-\gamma} < x \leq 2N^{1-\gamma} \\
    0 & x > 2N^{1-\gamma}.
\end{cases}$$
Define $\Psi_N : \mc H_N \rightarrow \bb R$ as
$$\Psi_N(\eta) = \psi_N(|\eta_B|).$$
Then we choose $W_N: \mc H_N\rightarrow \bb R$ as
\begin{equation}
    \label{defv}
    W_N(\eta) = \Psi_N(\eta) \ms E_A v(\eta/N),
\end{equation}
where $\ms E_A$ is a extension map of $v$ on $\Xi_A$ to $\Xi$ which is presented in Lemma \ref{harmext}.

Since $v\in C^\infty_c(\mathring{\Xi}_A,\Xi_A)$, there exists $\epsilon>0$ such that $v(\xi) = 0$ for all $\xi\in \Xi$ with $\xi_x \leq \epsilon$ for some $x\in A$.
We want some regularity lemma on the support of $W_N$.

\begin{lemma}
    \label{reglem}
    Fix a positive real $\epsilon'$ such that $\epsilon' < \epsilon$. Then there exists a natural number $N_0$ such that for all 
    $N\geq N_0$, $W_N(\eta)\neq 0$ implies $\eta_x > \epsilon' N + 1$ for all $x\in A$.
\end{lemma}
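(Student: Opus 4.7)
The plan is to unpack the two factors of $W_N$ separately and exploit the fact that the $B$-coordinates must be small on the support of $\Psi_N$.

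First I would observe that if $W_N(\eta) \neq 0$, then both $\Psi_N(\eta) \neq 0$ and $\ms{E}_A v(\eta/N) \neq 0$. From the definition of $\psi_N$, the first condition immediately gives $|\eta_B| \leq 2N^{1-\gamma}$. For the second, recall $\ms{E}_A v(\xi) = v(\gamma_A(\xi))$, so $v(\gamma_A(\eta/N)) \neq 0$, and the hypothesis on $\epsilon$ forces $[\gamma_A(\eta/N)]_x > \epsilon$ for every $x \in A$.

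Next I would use the explicit form of the projection: $[\gamma_A(\xi)]_x = \xi_x + \sum_{y \in B} u^A_x(y)\, \xi_y$. Since $u^A_x$ is the $\mc{L}_S$-harmonic extension of an indicator function, the maximum principle yields $0 \leq u^A_x(y) \leq 1$. Consequently
\begin{equation*}
    [\gamma_A(\eta/N)]_x \;\leq\; \frac{\eta_x}{N} + \frac{|\eta_B|}{N} \;\leq\; \frac{\eta_x}{N} + 2N^{-\gamma}.
\end{equation*}
Combined with $[\gamma_A(\eta/N)]_x > \epsilon$, this gives $\eta_x > \epsilon N - 2N^{1-\gamma}$ for every $x \in A$.

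Finally, since $\epsilon' < \epsilon$ and $1-\gamma < 1$, the quantity $(\epsilon - \epsilon')N - 2N^{1-\gamma} - 1$ tends to $+\infty$, so there is a threshold $N_0$ beyond which $\epsilon N - 2N^{1-\gamma} > \epsilon' N + 1$, yielding the claim. There is no real obstacle here; the only subtlety is verifying the bound $0 \leq u^A_x(y) \leq 1$, which is a direct consequence of the maximum principle for $\mc{L}_S$-harmonic functions with boundary data in $\{0,1\}$ on $A$.
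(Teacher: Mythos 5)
Your proof is correct and follows the same line of reasoning as the paper's: decompose $W_N = \Psi_N \cdot \ms E_A v(\cdot/N)$, read off $|\eta_B| \leq 2N^{1-\gamma}$ from the support of $\psi_N$, use the explicit form of $\gamma_A$ together with $0 \leq u^A_x \leq 1$ to bound $[\gamma_A(\eta/N)]_x \leq \eta_x/N + 2N^{-\gamma}$, and combine with $[\gamma_A(\eta/N)]_x > \epsilon$ to finish. Your explicit invocation of the maximum principle to justify $0 \leq u^A_x(y) \leq 1$ is a small but welcome clarification that the paper leaves implicit.
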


\begin{proof}
    From the definition of $W_N$, we have $W_N(\eta) \neq 0$ if and only if $\Psi_N(\eta) \neq 0$ and $\ms E_Av(\eta/N) \neq 0$.
    Therefore $|\eta_B| < 2N^{1-\gamma}$ and $\ms E_Av(\eta/N) \neq 0$.
    Since $\ms E_Av(\eta/N) \neq 0$, we have
    $$v(\gamma_A(\eta/N)) \neq 0,$$
    so $\gamma_A(\eta/N)>\epsilon$ for all $x\in A$. Then for $x\in A$,
    $$[\gamma_A(\eta/N)]_x = \eta_x/N + \sum_{y\in B} u^A_{x}(y)\eta_y/N < \eta_x/N + 2N^{1-\gamma}/N = \eta_x/N + 2N^{-\gamma}.$$
    Therefore, we have
    $$\epsilon < \eta_x/N + 2N^{-\gamma}.$$
    Thus, for sufficiently large $N$, we have $\eta_x > \epsilon' N + 1$.
\end{proof}

Before proceeding to the proof of Proposition \ref{123}, we define some notations.
We define a certain Riemann integration on $\Xi_A$. To do so, let $\mc H_{N}^A$ be a subset of $\mc H_N$ such that
$$\mc H_{N}^A = \{\eta \in \mc H_N : \eta_x = 0 \text{ for all } x\in B\}.$$
Given a bounded continuous function $f:\Xi_A \rightarrow \bb R$, define a Riemann integral of $R(N,f)$ as follows.
\begin{equation*}
    R(N,f) = \frac{1}{|\mc H^A_N|} \sum_{\eta \in \mc H^A_N} f(\iota_N(\eta)).
\end{equation*}
Then we have
\begin{equation*}
    \lim_{N\rightarrow \infty} R(N,f) = \int_{\Xi_A} f dm_A,
\end{equation*}
where $m_A$ is a uniform measure on $\Xi_A$.

Let $\mc H_N^{B,*}$ be a subset of ${\bb N}^{|B|+1}$ which
consists of a points whose coordinate sum is $N$. We understand that this space is obtained from $\mc H_N$ by merging all $A$-coordinates. As in \eqref{quotientspace}, we interpret an element $\zeta\in \mc H_N^{B,*}$
as a set of points in $\mc H_N$ whose $B$ coordinate is $\zeta_B$.

For any coordinate $\eta = (\eta_1,...,\eta_k)$ of natural number, let $a(\eta) = \prod_{i=1}^k a(\eta_i)$ where $a$ is defined in
\eqref{rec1}. Also, let $\min(\eta)$ be a minimus of coordinates of $\eta$
, that is, $\min\{\eta_i : i=1,...,k\}$.

Finally, Let $\iota_A: \bb R^A \rightarrow \bb R^S$ be a map defined as
$$\iota_A(x) = \begin{cases}
    x & x\in A \\
    0 & x\in S\setminus A.
\end{cases}$$

The following two lemmas are the key ingredients to prove Proposition \ref{123}.

\begin{lemma}
    \label{1205}
    Let $\Gamma = \Gamma(\alpha)$ where $\Gamma(\alpha)$ is a constant from \eqref{gammaalpha}. Let $Z_S$ be a constant from \eqref{normalizationconst}.
    For any continuous function $f:\Xi \rightarrow \bb R$, we have
    \begin{equation*}
        \int_{\mc H_N} f(\eta/N) W_N^2(\eta) d\rho_N(\eta) = \frac{N^\alpha N^{|A|-1} \Gamma^{|B|}}{Z_S N^{|A|\alpha}(|A|-1)!} \left(\int_{\mathring{\Xi}_A} f v^2 d\lambda_A +o_N(1)\right).
    \end{equation*}
\end{lemma}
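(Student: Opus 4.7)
The plan is to decompose the sum on $\mathcal{H}_N$ according to the split $\eta = (\eta_A, \eta_B)$, extract the $\eta_B$-sum as an independent factor converging to $\Gamma^{|B|}$, and recognize the $\eta_A$-sum as a Riemann sum approximating $\int f v^2\, d\lambda_A$. Writing the integrand out with $\rho_N(\eta) = N^\alpha/[Z_{S,N} a(\eta)]$ and using $a(\eta) = a(\eta_A) a(\eta_B)$, the left hand side equals
\begin{equation*}
\frac{N^\alpha}{Z_{S,N}} \sum_{\eta_B \in \mathbb{N}^B} \frac{\psi_N(|\eta_B|)^2}{a(\eta_B)} \, S_N(\eta_B), \qquad S_N(\eta_B) = \sum_{\eta_A \in \mathcal{H}_{n}^A} \frac{f(\eta/N)\,[\mathscr{E}_A v(\eta/N)]^2}{a(\eta_A)},
\end{equation*}
where $n = N - |\eta_B|$. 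Because $\psi_N$ vanishes for $|\eta_B| > 2N^{1-\gamma}$, we may restrict attention to $\eta_B$ with $|\eta_B|$ of order at most $N^{1-\gamma}$, so $n = N(1 + O(N^{-\gamma}))$.

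For the inner sum $S_N(\eta_B)$, Lemma 11.2 and the compact support of $v$ in $\mathring{\Xi}_A$ force $\eta_x \geq \epsilon' N + 1$ for every $x \in A$ on the support of the integrand, hence $a(\eta_A) = \prod_{x \in A} \eta_x^\alpha = n^{|A|\alpha}\, \pi_A(\eta_A/n)$. Since $|\gamma_A(\eta/N) - \eta_A/n| = O(N^{-\gamma})$ and $f$ together with $v^2$ are uniformly continuous (the latter with support bounded away from $\partial \Xi_A$), one may replace $f(\eta/N)[\mathscr{E}_A v(\eta/N)]^2$ by $f(\iota_A(\eta_A/n))\, v^2(\eta_A/n)$ with a multiplicative error $1 + o_N(1)$ uniform over the relevant $\eta_B$. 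The Riemann sum identity $R(n,g) \to \int_{\Xi_A} g\, dm_A$ applied to $g = f v^2 / \pi_A$ (which is bounded and continuous on the compact support of $v$, hence extends to a continuous function on $\Xi_A$ after multiplying by a smooth cutoff) combined with $|\mathcal{H}_n^A| = \binom{n + |A|-1}{|A|-1} \sim n^{|A|-1}/(|A|-1)!$ yields
\begin{equation*}
S_N(\eta_B) = \frac{n^{|A|-1}}{n^{|A|\alpha}(|A|-1)!} \left( \int_{\mathring{\Xi}_A} f\, v^2\, d\lambda_A + o_N(1) \right) = \frac{N^{|A|-1}}{N^{|A|\alpha}(|A|-1)!} \left( \int_{\mathring{\Xi}_A} f\, v^2\, d\lambda_A + o_N(1) \right),
\end{equation*}
uniformly in $\eta_B$ with $|\eta_B| \leq 2N^{1-\gamma}$.

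For the outer sum, $\psi_N(|\eta_B|)^2 \to 1$ pointwise on $\mathbb{N}^B$ and is dominated by $1$, while $\sum_{\eta_B} a(\eta_B)^{-1}$ factors as $\prod_{y \in B} \sum_{k \geq 0} 1/a(k) = \Gamma^{|B|}$ and is finite since $\alpha > 1$. Dominated convergence on $\mathbb{N}^B$ therefore gives $\sum_{\eta_B} \psi_N(|\eta_B|)^2/a(\eta_B) \to \Gamma^{|B|}$. Combining this with the uniform estimate for $S_N(\eta_B)$ and with $Z_{S,N} \to Z_S$ from Proposition 2.1 produces the stated identity.

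The delicate step will be making the replacement $\mathscr{E}_A v(\eta/N) \leftrightarrow v(\eta_A/n)$ and $a(\eta_A) \leftrightarrow n^{|A|\alpha}\pi_A(\eta_A/n)$ uniformly in $\eta_B$, since the error bound $o(1)$ must be absorbable by the outer sum. This is handled by choosing $\gamma$ at the outset with $2\gamma < (1-\gamma)(1-\alpha)$ (as stipulated before \eqref{defv}) so that the perturbation in $n$ versus $N$ and in $\gamma_A(\eta/N)$ versus $\eta_A/n$ is polynomially small, while $f$ and $v^2/\pi_A$ admit uniform moduli of continuity on the compact support of $v$ inside $\mathring{\Xi}_A$.
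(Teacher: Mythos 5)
Your proposal follows essentially the same route as the paper's proof of Lemma~\ref{1205}: factor $a(\eta) = a(\eta_A)a(\eta_B)$, restrict the $\eta_B$-sum via the cutoff $\psi_N$, invoke Lemma~\ref{reglem} to push the $A$-coordinates away from the boundary, recognize the inner sum as a Riemann sum for $\int f v^2\, d\lambda_A$ (uniformly in $\eta_B$), and let the outer $\eta_B$-sum produce $\Gamma^{|B|}$. One small correction: the replacement of $f(\eta/N)[\ms E_A v(\eta/N)]^2$ by $f(\iota_A(\eta_A/n))v^2(\eta_A/n)$ is controlled by a uniform \emph{additive} error $O(N^{-\gamma})$ (as in \eqref{lemeq3}--\eqref{lemeq33} of the paper), not a multiplicative one $1+o_N(1)$, since the ratio is unbounded where $v$ vanishes; the additive error is what the Riemann sum absorbs and is all that is needed.
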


\begin{proof}
    Direct computation gives
    \begin{align}
        \sum_{\eta\in \mc H_N} f\left(\frac{\eta}{N}\right) W_N^2(\eta) \rho_N(\eta) =& \sum_{\zeta \in \mc H_N^{B,*}}
        \sum_{\eta \in \zeta} f\left(\frac{\eta}{N}\right) W_N^2(\eta) \rho_N(\eta) 
        = \frac{N^\alpha}{Z_{N,S}}\sum_{\zeta \in \mc H_N^{B,*}} \sum_{\eta \in \zeta} f\left(\frac{\eta}{N}\right) W_N^2(\eta) \frac{1}{a(\eta)} \nonumber \\
        =& \frac{N^\alpha}{Z_{N,S}}\sum_{\zeta \in \mc H_N^{B,*}} \Psi_N^2(\zeta_B) \frac{1}{a(\zeta_B)} \sum_{\eta \in \zeta} \frac{1}{a(\eta_A)} f\left(\frac{\eta}{N}\right)\ms E_A v^2\left(\frac{\eta}{N}\right) \nonumber \\
        =& \frac{N^\alpha}{Z_{N,S}}\sum_{\substack{\zeta \in \mc H_N^{B,*} \\ |\zeta_B|\leq 2 N^{1-\gamma}}} \Psi_N^2(\zeta_B) \frac{1}{a(\zeta_B)} \sum_{\eta \in \zeta} \frac{1}{a(\eta_A)} f\left(\frac{\eta}{N}\right) \ms E_A v^2\left(\frac{\eta}{N}\right). \label{lemeq1}
    \end{align}

    From Lemma \ref{reglem}, for large enough $N$ and for all $\ms E_A v(\eta)\neq 0$ with $|\eta_B|\leq 2 N^{1-\gamma}$, we have
    $\min(\eta_A) > \epsilon' N$.
    Therefore, for large enough $N$ and $|\zeta_B|\leq 2N^{1-\gamma}$, we get
    \begin{equation}
        \label{lemeq2}
        \sum_{\eta\in \zeta} \frac{1}{a(\eta_A)} f\left(\frac{\eta}{N}\right) \ms E_A v^2\left(\frac{\eta}{N}\right) = \sum_{\eta\in \zeta} \frac{1}{a(\eta_A)} f\left(\frac{\eta}{N}\right) \ms E_A v^2\left(\frac{\eta}{N}\right) 1_{\{m(\eta_A) > \epsilon' N\}}.
    \end{equation}
    Observe that $$\ms E_A v^2\left(\frac{\eta}{N}\right) = v^2\left(\gamma_A\left(\frac{\eta}{N}\right)\right) 
    = v^2\left(\iota_A\left(\frac{\eta_A}{N-|\zeta_B|}\right)\right) + \left[v^2\left(\gamma_A\left(\frac{\eta}{N}\right)\right) - v^2\left(\iota_A\left(\frac{\eta_A}{N-|\zeta_B|}\right)\right)\right].$$
    Then for all $x\in A$,
    $$ \left[\gamma_A\left(\frac{\eta}{N}\right) - \iota_A\left(\frac{\eta_A}{N-|\zeta_B|}\right)\right]_x = \frac{\eta_x}{N} + \frac{\sum_{y\in B}u_x^A(y)\eta_y}{N} - \frac{\eta_x}{N-|\zeta_B|} = O(N^{-\gamma}).$$
    Therefore, we have
    \begin{equation}
        \label{lemeq3}
        \ms E_A v^2\left(\frac{\eta}{N}\right) = v^2\left(\iota_A\left(\frac{\eta_A}{N-|\zeta_B|}\right)\right) + O(N^{-\gamma}).
    \end{equation}
    The same argument gives
    \begin{equation}
        \label{lemeq33}
        f\left(\frac{\eta}{N}\right) = f\left(\iota_A\left(\frac{\eta_A}{N-|\zeta_B|}\right)\right) + o_N(1).
    \end{equation}
    Putting \eqref{lemeq3},\eqref{lemeq33} into \eqref{lemeq2} gives
    \begin{align}
        \label{lemeq4}
        \sum_{\eta\in \zeta} \frac{1}{a(\eta_A)}f\left(\frac{\eta}{N}\right)\ms E_A v^2\left(\frac{\eta}{N}\right) 1_{\{m(\eta_A) > \epsilon' N\}} =
        &\sum_{\eta\in \zeta} \frac{1}{a(\eta_A)}(fv^2)\left(\iota_A\left(\frac{\eta_A}{N-|\zeta_B|}\right)\right) 1_{\{m(\eta_A) > \epsilon' N\}} \nonumber \\
        +& \sum_{\eta\in \zeta} \frac{1}{a(\eta_A)}o_N(1) 1_{\{m(\eta_A) > \epsilon' N\}}. 
    \end{align}
    Note that if $v(\iota_A\left(\frac{\eta_A}{N-|\zeta_B|}\right)) \neq 0$, then $\min(\eta_A) > \epsilon(N-|\zeta_B|)$, so $\min(\eta) > \epsilon' N$ for large enough $N$. Therefore,
    \begin{align}
    \label{lemeq5}
    \sum_{\eta\in \zeta} \frac{1}{a(\eta_A)}(fv^2)\left(\iota_A\left(\frac{\eta_A}{N-|\zeta_B|}\right)\right) 1_{\{m(\eta_A) > \epsilon' N\}}
    =& \frac{|\zeta|}{(N-|\zeta_B|)^{|A|\alpha}}R(N-|\zeta_B|,\pi^{-1}fv^2) \\
    =& \frac{|\zeta|}{(N-|\zeta_B|)^{|A|\alpha}}\left(\int_{\mathring{\Xi}_A} fv^2 d\lambda_A + o_N(1)\right). \nonumber
    \end{align}
    Remark that $|\zeta| = \frac{(N-|\zeta_B|)^{|A|-1}}{N^{|A|\alpha}(|A|-1)!}(1+o_N(1))$. Since $|\zeta_B|\leq 2N^{1-\gamma}$, (\ref{lemeq5}) becomes
    \begin{align}
        \label{lemeq6}
        \sum_{\eta\in \zeta} \frac{1}{a(\eta_A)}(fv^2)\left(\iota_A\left(\frac{\eta_A}{N-|\zeta_B|}\right)\right) 1_{\{m(\eta_A) > \epsilon' N\}}
        = \frac{N^{|A|-1}}{N^{|A|\alpha}(|A|-1)!}(\int_{\mathring{\Xi}_A} fv^2 d\lambda_A+o_N(1)).
    \end{align}
    Now we consider the last term of (\ref{lemeq4}). Similar computation shows that
    \begin{equation} \label{lemeqeq}
        \sum_{\eta\in \zeta} \frac{1}{a(\eta_A)}o_N(1) 1_{\{m(\eta_A) > \epsilon' N\}} = \frac{N^{|A|-1}}{N^{|A|\alpha}(|A|-1)!}R(N-|\zeta_B|,\pi^{-1}1_U)o_N(1)
    \end{equation}
    where $U$ is a open subset of $\Xi_A$ such that $U \coloneq \{\xi \in \Xi_A : \xi_x > \epsilon' \text{ for all } x\in A\}$.
    Therefore, \eqref{lemeq4} becomes
    \begin{equation}
        \label{lemeq7}
        \sum_{\eta\in \zeta} \frac{1}{a(\eta_A)} f(\frac{\eta}{N}) \ms E_A v^2\left(\frac{\eta}{N}\right) 1_{\{m(\eta_A) > \epsilon' N\}} = \frac{N^{|A|-1}}{N^{|A|\alpha}(|A|-1)!}\left(\int_{\mathring{\Xi}_A} fv^2 d\lambda_A+o_N(1)\right).
    \end{equation}
    Putting \eqref{lemeq7} into \eqref{lemeq1}, we have
    \begin{align}
        \label{lemeq8}
        \sum_{\eta\in \mc H_N} f(\frac{\eta}{N})W_N^2(\eta) \rho_N(\eta) &= \frac{N^\alpha N^{|A|-1}}{Z_{N,S}N^{|A|\alpha}(|A|-1)!}\sum_{\substack{\zeta \in \mc H_N^{B,*}, \\ |\zeta_B|
        \leq 2 N^{1-\gamma}}} \Psi_N^2(\zeta_B) \frac{1}{a(\zeta_B)}(1+o_N(1))
    \end{align}
    Finally, we have
    \begin{equation}
        \label{lemeq9}
        \sum_{\substack{\zeta \in \mc H_N^{B,*} \\ |\zeta_B| \leq 2 N^{1-\gamma}}} \Psi_N^2(\zeta_B) \frac{1}{a(\zeta_B)} = \Gamma^{|B|}(1+o_N(1)).
    \end{equation}
    Putting \eqref{lemeq9} into \eqref{lemeq8}, we have the desired result.
\end{proof}

\begin{lemma} Suppose that $Q^A(v)<0$. The following equation holds.
    \begin{equation*}
        -N^2\int_{H_N} W_N (\ms L_N W_N) d\rho_N = \frac{N^\alpha N^{|A|-1} \Gamma^{|B|}}{Z_S N^{|A|\alpha}(|A|-1)!}(Q^A(v)+o_N(1))
    \end{equation*}
\end{lemma}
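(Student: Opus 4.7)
The plan is to begin from the reversibility identity
\begin{equation*}
-\int_{\mc H_N} W_N\,(\ms L_N W_N)\,d\rho_N \;=\; \frac{1}{2}\sum_{x,y\in S}\sum_{\eta\in\mc H_N}\rho_N(\eta)\,g(\eta_x)\,r(x,y)\,\bigl(W_N(\eta^{x,y})-W_N(\eta)\bigr)^2,
\end{equation*}
and to convert the right-hand side, after multiplication by $N^{2}$, into a Riemann sum approximating the continuous Dirichlet form of $\mf L$ evaluated on the harmonic extension $\ms E_A v$. Lemma \ref{reglem} confines the effective support of $W_N$ and $W_N\circ(\cdot)^{x,y}$ to the bulk region $\{\eta_x>\epsilon' N+1\text{ for all }x\in A\}\cap \{|\eta_B|\le 2N^{1-\gamma}\}$, on which the $A$--coordinates of $\eta$ are of order $N$ and the uniform estimate $g(\eta_x)=1+O(1/N)$ holds for $x\in A$.

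On this bulk, a Taylor expansion of the smooth function $\ms E_A v = v\circ\gamma_A$ together with \eqref{defv} yields
\begin{equation*}
W_N(\eta^{x,y})-W_N(\eta) \;=\; \frac{\Psi_N(\eta)}{N}\sum_{z\in A}\bigl(u^A_z(y)-u^A_z(x)\bigr)\,\partial_z v(\gamma_A(\eta/N)) \,+\, R^{N}_{x,y}(\eta),
\end{equation*}
where $R^N_{x,y}$ collects the $O(N^{-2})$ second-order remainder and the discrete increment of $\Psi_N$. Inserting this expansion into the Dirichlet form, the outer $N^{2}$ cancels the $N^{-2}$ produced by squaring the leading term. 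The identical combinatorial counting as in Lemma \ref{1205} (decomposing the sum over $\eta$ according to $\zeta=\eta_B$ with $|\zeta|\le 2N^{1-\gamma}$, absorbing $\Psi_N^2$ and $1/a(\zeta)$ into the geometric prefactor $\Gamma^{|B|}/Z_S$, and treating the inner sum over $\eta_A$ as a Riemann sum of mesh $1/N$ over $\mathring\Xi_A$ against $\lambda_A$) then produces the target prefactor $N^{\alpha}N^{|A|-1}\Gamma^{|B|}/(Z_S N^{|A|\alpha}(|A|-1)!)$ multiplying
\begin{equation*}
\frac{1}{2}\int_{\mathring\Xi_A}\sum_{x,y\in S}r(x,y)\,\Bigl(\sum_{z\in A}\bigl(u^A_z(y)-u^A_z(x)\bigr)\,\partial_z v\Bigr)^2\,d\lambda_A.
\end{equation*}

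The remaining step is to identify this integrand with the continuous Dirichlet form of $\mf L^A$ on $v$. Expanding the square, interchanging the order of summation, and invoking the harmonic-extension identity $\gamma_A\mc L_S\gamma_A^\dagger=\mc L_A$ from \eqref{traceprocess} together with the $\mc L_S$-harmonicity of each $u^A_z$ on $B$, the inner bilinear sum $\sum_{x,y\in S}r(x,y)(u^A_z(y)-u^A_z(x))(u^A_w(y)-u^A_w(x))$ collapses to $2r^A(z,w)$ for $z\neq w\in A$; this reduces the displayed integral to $\int_{\mathring\Xi_A}\sum_{z\neq w\in A}\tfrac{r^A(z,w)}{2}(\partial_z v-\partial_w v)^2\,d\lambda_A$, which by the analogue of Lemma \ref{104} applied on $\Xi_A$ is exactly $Q^A(v)$.

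The principal obstacle is controlling $R^{N}_{x,y}$. The delicate piece is the annular layer $\{N^{1-\gamma}\le|\eta_B|\le 2N^{1-\gamma}\}$, on which $\Psi_N$ varies by $O(N^{\gamma-1})$; squaring this increment and summing against the weight $N^2 g(\eta_x)\rho_N(\eta)$ restricted to the layer must be shown to be $o_N(1)$ relative to the target prefactor. This is exactly what the quantitative choice of $\gamma$ in the definition of $\psi_N$ is designed to guarantee, via the polynomial decay of $\rho_N$ in $|\eta_B|$ available in the condensed regime. A secondary technical point is the uniformity of the Taylor expansion over the bulk, which follows from the smoothness of $\ms E_A v$ provided by Lemma \ref{harmext} and the uniform lower bound $\eta_x/N\ge \epsilon'$ for $x\in A$.
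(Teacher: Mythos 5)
Your proposal follows essentially the same route as the paper's proof: expand $-N^2\int W_N(\ms L_N W_N)\,d\rho_N$ as the Dirichlet form, restrict to the bulk via Lemma~\ref{reglem}, Taylor-expand $\ms E_A v=v\circ\gamma_A$ to isolate the $O(1/N)$ gradient term plus a remainder carrying the discrete increment of $\Psi_N$, pass to a Riemann sum over $\eta_A$ to produce the prefactor $N^\alpha N^{|A|-1}\Gamma^{|B|}/(Z_S N^{|A|\alpha}(|A|-1)!)$, control the transition layer $\{N^{1-\gamma}\le|\eta_B|\le 2N^{1-\gamma}\}$ using the constraint $2\gamma<(1-\gamma)(1-\alpha)$, and invoke $\gamma_A\mc L_S\gamma_A^\dagger=\mc L_A$ to identify the limiting quadratic form with $Q^A(v)$; this last step is exactly the paper's Lemma~\ref{1206}. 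Two small imprecisions worth fixing in a full write-up: first, your remark $g(\eta_x)=1+O(1/N)$ only covers jumps whose departing site is in $A$; jumps within $B$, where $g$ stays of order one, contribute at the same order, and the paper sidesteps this cleanly by absorbing $\rho_N(\eta)g(\eta_x)$ into the symmetric edge weight $\rho_N(e)=N^\alpha/(Z_{N,S}a(e))$ via a bijection of directed jumps with $\mc H_{N-1}$, after which the Riemann-sum bookkeeping is uniform over all jump directions. Second, the bilinear sum $\sum_{x,y}r(x,y)(u^A_z(y)-u^A_z(x))(u^A_w(y)-u^A_w(x))$ collapses to $-2(u^A_z)^\dagger\mc L_S\, u^A_w=-2[\mc L_A]_{zw}$, not $+2r^A(z,w)$, and one must keep the diagonal $z=w$ terms before rearranging into $\sum_{z\neq w}\tfrac{r^A(z,w)}{2}(\partial_z v-\partial_w v)^2$ under reversibility; the paper's Lemma~\ref{1206}, which works directly with the matrix identity $v^\dagger\gamma_A\mc L_S\gamma_A^\dagger v=v^\dagger\mc L_A v$, packages this without the sign hazard.
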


\begin{proof}
    The left-hand side is calculated as follows.
    \begin{equation}
        \label{eqeq1}
        -N^2\int_{H_N} W_N (\ms L_N W_N) d\rho_N = \frac{N^2}{2} \sum_{\substack{\eta\in \mc H_N \\ x,y\in S}} \rho_N(\eta)g(\eta_x)r(x,y)(W_N(\eta^{x,y}) - W_N(\eta))^2
    \end{equation}
    For any $\eta\in \mc H_N$, let $(\eta,\eta^{x,y})$ be a edge with a starting point $\eta$ and an ending point $\eta^{x,y}$ with direction $(x,y)$.
    Now, give an order on $S$. If an edge $(\eta,\eta^{x,y})$ is given, we say $(x,y)$ is a positive direction if $x<y$.          
    For simplicity, we will use $e$ to refer to an edge. Then $e_0$ and $e_1$ will refer to a starting point and an ending point of $e$ respectively.
    Also, we $dir(e)$ will refer to a direction of $e$. Finally, $\mc H_N^{x,y}$ will refer to a set of possible edges in $\mc H_N$ with direction $(x,y)$.
    
    For a edge $e\in \mc H_N^{x,y}$, define $\rho_N(e)$ as
    $$\rho_N(e) = \rho_N(\eta)g(\eta_x) = \rho_N(\eta^{x,y})g((\eta^{x,y})_y).$$
    Then \eqref{eqeq1} becomes
    \begin{align}
        \label{eqeq2}
        N^2 \sum_{x<y} \sum_{e\in \mc H_N^{x,y}}\rho_N(e)r(x,y)(W_N(e_1) - W_N(e_0))^2.
    \end{align}
    Now, we consider a bijection between $\mc H_N^{x,y}$ and $\mc H_{N-1}$.
    To describe the explicit bijection, all we need to do is assign a $z$-th coordinate to $e\in \mc H_N^{x,y}$ for all $z\in S$.
    For $z\in S\setminus \{x\}$, we define $[e]_z = [e_0]_z$. For $z=x$, we define $[e]_z = [e_0]_x - 1$.
    Then we can treat $e$ as an element of $\mc H_{N-1}$. Moreover, $\rho_N(e)$ is calculated as
    $$\rho_{N}(e) = \rho_N(\eta)g(\eta_x) = \frac{N^\alpha}{Z_{N,S}}\frac{1}{a(\eta)}g(\eta_x) = \frac{N^\alpha}{Z_{N,S}}\frac{1}{a(e)},$$
    where $a(e)$ is calculated as in \eqref{rec1} using $\mc H_{N-1}$ coordinate.
    Then \eqref{eqeq2} becomes
    \begin{align}
        \label{eqeq3}
        \sum_{x<y} \frac{N^\alpha}{Z_{N,S}}  \sum_{e\in \mc H_N^{x,y}}\frac{1}{a(e)}r(x,y)N^2(W_N(e_1)-W_N(e_0))^2.
    \end{align}
    We fix $x<y$.
    As in the proof of Lemma \ref{1205}, we consider $\mc H^{B,*}_{N-1}$.
    So, we consider an element $\zeta\in \mc H^{B,*}_{N-1}$ as a set of edges in $\mc H_{N-1}$ with $B$ coordinate $\zeta_B$.
    Then the inner summation of \eqref{eqeq3} becomes
    \begin{align}
        \label{eqeq40}
        \frac{N^\alpha}{Z_{N,S}} \sum_{\zeta\in \mc H_{N-1}^{B,*}}\frac{1}{a(\zeta_B)}\sum_{e\in \zeta}\frac{1}{a(e_A)}r(x,y)N^2(W_N(e_1)-W_N(e_0))^2
    \end{align}
    Observe that if $W_N(\eta)$ or $W_N(\eta^{x,y})$ is not zero, from Lemma $\ref{reglem}$, we have
    $$\min(\min(\eta_A), \min((\eta^{x,y})_A))>\epsilon'N+1.$$
    Therefore, we have
    $$\min(e_A)>\epsilon'N.$$
    So we only need to consider edges $e$ with $m(e_A)>\epsilon'N$ in the sum of \eqref{eqeq40}.
    Now we say an edge $e\in \mc H_N^{x,y}$ is good if $|(e_0)_B|\vee |(e_1)_B|<N^{1-\gamma}$. Also, we say $e$ is bad if $|(e_0)_B|\vee|(e_1)_B|\geq N^{1-\gamma}$ and $|(e_0)_B|\wedge|(e_1)_B|\leq 2N^{1-\gamma}$.
    Otherwise, we say $e$ is void.
    Then we define that $\zeta$ is good, bad, or void if for all $e\in \zeta$, $e \in \zeta$ is good, bad, or void, respectively.
    Since every element $e$ in $\zeta$ shares $B$ coordinate, we can say that every $\zeta\in \mc H_{N-1}^{B,*}$ is good, bad or void.
    Remark that if $\zeta$ is void, then $W_N(e_1)= W_N(e_0) = 0$ for all $e\in \zeta$.
    Therefore, we need only need to consider good and bad $\zeta$ in the summation of \eqref{eqeq40}.

    For a bad edge $e=(\eta,\eta^{x,y})$, we have
    \begin{align*}
        (W_N(\eta^{x,y}) - W_N(\eta))^2 =& \left(\Psi_N(\eta^{x,y})\ms E_A v\left(\frac{\eta^{x,y}}{N}\right) - \Psi_N(\eta)\ms E_A v\left(\frac{\eta}{N}\right)\right)^2 \\
                                        \leq&\; c \left[\left(\ms E_A v\left(\frac{\eta^{x,y}}{N}\right) - \ms E_A v\left(\frac{\eta}{N}\right)\right)^2
                                        + (\Psi_N(\eta^{x,y})-\Psi_N(\eta))^2\right].
    \end{align*}
    for some positive constant $c$ irrelevant to $N$ and $e$. Thus for a bad $\zeta$,
    \begin{align}
        \label{eqeq4}
        &\sum_{ e\in \zeta} \frac{1}{a(e_A)}r(x,y)N^2(W_N(e_1)-W_N(e_0))^2 \nonumber \\
        &\leq c \sum_{ e \in \zeta} \frac{r(x,y)}{a(e_A)}\bigg[N^2\left(\ms E_A v\left(\frac{\eta^{x,y}}{N}\right) - \ms E_A v\left(\frac{\eta}{N}\right)\right)^2
        + N^2(\Psi_N(\eta^{x,y})-\Psi_N(\eta))^2 \bigg]1_{\{m(e_A)>\epsilon'N\}} \nonumber \\
        &= A_N + B_N,
    \end{align}
    where $A_N$ and $B_N$ are defined as
    \begin{align}
        A_N &= c \sum_{ e \in \zeta} \frac{r(x,y)}{a(e_A)}N^2\left(\ms E_A v\left(\frac{\eta^{x,y}}{N}\right) - \ms E_A v\left(\frac{\eta}{N}\right)\right)^2 1_{\{m(e_A)>\epsilon'N\}}, \\
        B_N &= c \sum_{ e \in \zeta} \frac{r(x,y)}{a(e_A)}N^2(\Psi_N(\eta^{x,y})-\Psi_N(\eta))^2 1_{\{m(e_A)>\epsilon'N\}} \label{12.25}.
    \end{align}
    If $\zeta$ is bad, we have $2N^{1-\gamma}+1\geq |\zeta_B|\geq N^{1-\gamma}-1$ directly from the definition.
    Let $U$ be a open subset of $\Xi_A$ such that $$U \coloneq \{\xi \in \Xi_A : \xi_x > \epsilon' \text{ for all } x\in A\}.$$ Then
    \begin{align}
    A_N &\leq c \sum_{e \in \zeta} \frac{1}{a(e_A)} 1_{\{m(e_A)>\epsilon'N\}} = \frac{c N^{|A|-1}}{N^{|A|\alpha}(|A|-1)!}R(N-1-|\zeta_B|,\pi^{-1}1_U)
    \end{align}
    On the other hand, to calculate $B_N$, we need to consider a difference of $\Psi_N$.
    Put $$|\Psi_N(\eta^{x,y}) - \Psi_N(\eta)|\leq N^{\gamma-1}$$ into \eqref{12.25}, we have
    \begin{align}
        \label{eqeq41}
        B_N &\leq c \sum_{e \in \zeta} \frac{1}{a(e_A)}N^{2\gamma} 1_{\{m(e_A)>\epsilon'N\}} = \frac{cN^{|A|- 1}N^{2\gamma}}{N^{|A|\alpha}(|A|-1)!}R(N-1-|\zeta_B|,\pi^{-1}1_U)
    \end{align}
    For natural number $k$, we have
    \begin{align}
        \label{eqeqt42}
        \sum_{|\zeta_B|=k}\frac{1}{a(\zeta_B)} = \frac{Z_{k,B}}{k^\alpha}
    \end{align}
    from Proposition \ref{e21}.
    Therefore, joining \eqref{eqeq4}, \eqref{eqeq41} and \eqref{eqeqt42},
    \begin{align}
        \label{eqeq400}
        &\frac{N^\alpha}{Z_{N,S}} \sum_{\zeta \text{ is bad}}\frac{1}{a(\zeta_B)}\sum_{e\in \zeta}\frac{1}{a(e_A)}r(x,y)N^2(W_N(e_1)-W_N(e_0))^2 \nonumber \\
        &=\frac{N^\alpha}{Z_{N,S}} \sum_{\zeta \text{ is bad}}\frac{1}{a(\zeta_B)}O\left(\frac{N^{|A|-1}N^{2\gamma}}{N^{|A|\alpha}}\right)
        = O\left(\sum_{k=\lfloor N^{1-\gamma}-1 \rfloor}^\infty \frac{1}{k^\alpha} \frac{N^{|A|-1}N^{2\gamma}}{N^{|A|\alpha}} \right) \nonumber \\
        &=O\left(\frac{N^\alpha N^{(1-\gamma)(1-\alpha)} N^{|A|-1}N^{2\gamma}}{N^{|A|\alpha}}\right) = o\left(\frac{N^\alpha N^{|A|-1}}{N^{|A|\alpha}}\right).
    \end{align}

    Now, we estimate the sum \eqref{eqeq40} for good edges. Let $(x,y)$ be a vector in $\bb R^S$ such that $y$-th coordinate is $1$ and $x$-th coordinate is $-1$.
    Then for good edge $e=(\eta,\eta^{x,y})$, we have
    \begin{equation*}
        \ms E_A v\left(\frac{\eta^{x,y}}{N}\right) - \ms E_A v\left(\frac{\eta}{N}\right) = v\left(\gamma_A\left(\frac{\eta^{x,y}}{N}\right)\right) - v\left(\gamma_A\left(\frac{\eta}{N}\right)\right) = \frac{1}{N}\nabla_{\gamma_A((x,y))}v(\star),  
    \end{equation*}
    for some $\star \in (\gamma_A(\frac{\eta}{N}),\gamma_A(\frac{\eta^{x,y}}{N}))$ from the mean value theorem.
    Then we have
    \begin{align*}
        &\left|\nabla_{\gamma_A((x,y))}v(\star) - \nabla_{\gamma_A((x,y))}v\left(\iota_A\left(\frac{e_A}{N-1-|e_B|}\right)\right)\right| \leq c\left\|\iota_A\left(\frac{e_A}{N-1-|e_B|}\right) - \star\right\| \nonumber \\
        &\leq c \max\left(\left\|\iota_A\left(\frac{e_A}{N-1-|e_B|}\right)-\gamma_A\left(\frac{\eta}{N}\right)\right\|,\left\|\iota_A\left(\frac{|e_A|}{N-1-|e_B|}\right)-\gamma_A\left(\frac{\eta^{x,y}}{N}\right)\right|\right) = O(N^{-\gamma}).
    \end{align*}
    for some positive constant $c$ irrelevant to $N$ and $e$.
    Therefore, for good $\zeta$, we have
    \begin{align}
        \label{eqeq6}
        &\sum_{ e \in \zeta} \frac{1}{a(e_A)}r(x,y)N^2(W_N(e_1)-W_N(e_0))^2 \nonumber \\
        &= \sum_{e\in \zeta} \frac{1}{a(e_A)}r(x,y)\bigg[\left(\nabla_{\gamma_A((x,y))}v\left(\iota_A\left(\frac{e_A}{N-1-|e_B|}\right)\right)\right)^2 + O(N^{-\gamma})\bigg] 1_{\{m(e_A)>\epsilon'N\}} \nonumber \\
        & = C_N + D_N,
    \end{align}
    where $C_N$ and $D_N$ are defined as
    \begin{align*}
        C_N &= \sum_{e\in \zeta} \frac{1}{a(e_A)}r(x,y)\left(\nabla_{\gamma_A((x,y))}v\left(\iota_A\left(\frac{e_A}{N-1-|e_B|}\right)\right)\right)^2 1_{\{m(e_A)>\epsilon'N\}}, \\
        D_N &= \sum_{e\in \zeta} \frac{1}{a(e_A)}r(x,y)O(N^{-\gamma}) 1_{\{m(e_A)>\epsilon'N\}}.
    \end{align*}
    By similar calculations as in the equation (\ref{lemeq5}), for a large enough N, we have
    \begin{align}
        \label{eqeq7}
        C_N &= \sum_{e\in \zeta} \frac{1}{a(e_A)}r(x,y)\left(\nabla_{\gamma_A((x,y))}v\left(\iota_A\left(\frac{e_A}{N-1-|e_B|}\right)\right)\right)^2 1_{\{m(e_A)>\epsilon'N\}} \nonumber \\
        &= \frac{N^{|A|-1}}{N^{|A|\alpha}(|A|-1)!}r(x,y)R(N-1-|\zeta_B|,\pi^{-1}(\nabla_{\gamma_A((x,y))}v)^2)(1+o_N(1)) \nonumber \\
        &= \frac{N^{|A|-1}}{N^{|A|\alpha}(|A|-1)!}r(x,y)\left[\int_{\Xi_A}(\nabla_{\gamma_A((x,y))}v)^2 d\lambda_A + o_N(1)\right].
    \end{align}
    For $D_N$, use the same equation in (\ref{lemeqeq}), we have
    \begin{align}
        \label{eqeq8}
        D_N = o\left(\frac{N^{|A|-1}}{N^{|A|\alpha}(|A|-1)!}\right).
    \end{align}
    Joining \eqref{eqeq6}, \eqref{eqeq7} and \eqref{eqeq8},
    \begin{align}
        \label{eqeq4000}
        &\frac{N^\alpha}{Z_{N,S}} \sum_{\zeta \text{ is good}}\frac{1}{a(\zeta_B)}\sum_{e\in \zeta}\frac{1}{a(e_A)}r(x,y)N^2(W_N(e_1)-W_N(e_0))^2 \nonumber \\        
        &=\frac{N^\alpha}{Z_{N,S}} \sum_{\zeta \text{ is good}}\frac{1}{a(\zeta_B)}\frac{N^{|A|-1}}{N^{|A|\alpha}(|A|-1)!}\left[r(x,y)\int_{\Xi_A}(\nabla_{\gamma_A((x,y))}v)^2d\lambda_A + o_N(1)\right].
    \end{align}
    It is direct from the definition that
    \begin{equation*}
        \{\zeta \in \mc H_{N-1}^{B,*} : \zeta \text{ is good}\} = \{\zeta \in \mc H_{N-1}^{B,*} : |\zeta_B|<N^{1-\gamma}\} \text{ or } \{\zeta \in \mc H_{N-1}^{B,*} : |\zeta_B|<N^{1-\gamma} - 1\}.
    \end{equation*}
    Therefore, \eqref{eqeq4000} becomes
    \begin{equation*}
        \frac{N^\alpha N^{|A|-1} \Gamma^{|B|}}{Z_{N,S}N^{|A|\alpha}(|A|-1)!}\left[r(x,y)\int_{\Xi_A}(\nabla_{\gamma_A((x,y))}v)^2d\lambda_A + o_N(1)\right].
    \end{equation*}
    Therefore, to finish the proof, we need to show that
    $$ \sum_{x<y} r(x,y)\int_{\Xi_A}(\nabla_{\gamma_A((x,y))}v)^2 d\lambda_A = Q^A(u) = -\int_{\Xi_A} v \mf L^A v d\lambda_A.$$
    The following lemma proves it.
\end{proof}

\begin{lemma}
    \label{1206}
    For any $u\in C^\infty_c(\mathring{\Xi}_A,\Xi_A)$, we have
    \begin{equation*}
        \sum_{x<y} r(x,y)(\nabla_{\gamma_A((x,y))}u)^2 = - u \mf L^A u.
    \end{equation*}
\end{lemma}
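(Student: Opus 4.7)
The plan is to first unpack the LHS through the chain rule for the linear map $\gamma_A$, reducing it to a quadratic form in $(\partial_z u,\partial_w u)_{z,w\in A}$ whose coefficients are the Dirichlet form of $\mc L_S$ evaluated on the harmonic extensions $u^A_z,u^A_w$. The trace identity \eqref{traceprocess} then collapses these coefficients to the Dirichlet form of the trace generator $\mc L_A$, at which point the LHS becomes the standard carr\'e du champ of the second-order part of $\mf L^A$. Integration against $\lambda_A$ and the analog of Proposition \ref{1009} for $\mf L^A$ then deliver $-u\,\mf L^A u$ on the RHS (in the integrated form actually consumed in the enclosing lemma).

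Concretely: since $\gamma_A$ is linear, $\gamma_A(e_y-e_x) = \sum_{z\in A}[u^A_z(y)-u^A_z(x)]\,e^A_z$, so $\nabla_{\gamma_A((x,y))} u = \sum_{z\in A}[u^A_z(y)-u^A_z(x)]\,\partial_z u$. Squaring, summing $r(x,y)$ over $x<y$, and reorganizing yields
\begin{equation*}
\sum_{x<y} r(x,y)\,(\nabla_{\gamma_A((x,y))} u)^2 \;=\; \sum_{z,w\in A} (\partial_z u)(\partial_w u)\,\widetilde D_S(u^A_z, u^A_w),
\end{equation*}
where $\widetilde D_S(f,g) = \tfrac{1}{2}\sum_{x,y\in S} r(x,y)(f(y)-f(x))(g(y)-g(x))$. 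The key substitution is the classical identity $\widetilde D_S(u^A_z,u^A_w) = \widetilde D_A(\mathbf 1_z,\mathbf 1_w)$ for $z,w\in A$, which follows from reversibility (so that $\widetilde D_S(u^A_z,u^A_w) = -\sum_{a\in S} u^A_z(a)(\mc L_S u^A_w)(a)$), the fact that $\mc L_S u^A_w$ is supported on $A$ with $u^A_z|_A = \mathbf 1_z$, and \eqref{traceprocess}, which reads $\mc L_S u^A_w|_A = \mc L_A\mathbf 1_w$. Reversing the bilinear expansion with the trace rates in place of the original rates then gives the pointwise reduction
\begin{equation*}
\sum_{x<y,\,x,y\in S} r(x,y)\,(\nabla_{\gamma_A((x,y))} u)^2 \;=\; \sum_{x<y,\,x,y\in A} r^A(x,y)\,((\partial_x - \partial_y)u)^2 \quad \text{on } \Xi_A.
\end{equation*}

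To match the RHS, apply the analog of Proposition \ref{1009} to $\mf L^A$ on $\Xi_A$ with reference measure $\lambda_A$: the reversibility \eqref{reversibility} transfers to $r^A$ through \eqref{traceprocess}, so the entire analysis of Section \ref{scc} reruns for $\mf L^A$ verbatim and converts the integrated quadratic form above into $-\int_{\Xi_A} u\,\mf L^A u\,d\lambda_A$, which is the form in which the identity is used in the display immediately preceding the lemma. The main obstacle is strictly bookkeeping: confirming that the Green's-identity and divergence-theorem manipulations of Section \ref{scc}, the $L^2$-extension in \S\ref{l2extension}, and the cutoff argument of \cite[Lemma 6.2]{MZRP} all go through on $\Xi_A$ with the trace-process generator rather than on $\Xi_S$ with $\mf L$. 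The recursive structure of the martingale problem (Proposition \ref{p33}) together with the fact that $r^A$ is again reversible makes this mechanical, so no genuinely new estimate is required.
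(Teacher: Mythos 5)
Your proposal is correct and follows essentially the same route as the paper: both reduce the left-hand side to a quadratic form in $\nabla u$ whose coefficient matrix is $\gamma_A \mc L_S \gamma_A^\dagger$, collapse it to $\mc L_A$ via the trace identity \eqref{traceprocess} (your entrywise computation $\widetilde D_S(u^A_z,u^A_w)=\widetilde D_A(\mathbf 1_z,\mathbf 1_w)$ is exactly the coordinate form of the paper's matrix identity $v^\dagger\gamma_A\mc L_S\gamma_A^\dagger v=v^\dagger\mc L_A v$), and then invoke the $\mf L^A$-analogue of Proposition \ref{1009} to identify the resulting carr\'e du champ with $-u\,\mf L^A u$ in the integrated sense in which the lemma is used. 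The only difference is presentational, so no further comment is needed.
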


\begin{proof}
    For $x,y\in A$, let $v^A_{x,y} = \sqrt{\frac{r^A(x,y)}{2}}$.
    From (\ref{1009}), the right-hand side is equal to
    $$ \sum_{x\neq y \in A} ((v^A_{x,y}\cdot \nabla)u) ((v^A_{x,y}\cdot \nabla)u)$$
    Considering $\nabla_{\gamma_A((x,y))}u$ as $\gamma_A((x,y)) \cdot \nabla u$, it is enough to show that for all vector
    $ v \in \bb R^A $,
    $$ \sum_{x<y} r(x,y) (\gamma_A((x,y))\cdot v)^2 = \sum_{x\neq y \subset A} (v^A_{x,y}\cdot v)^2.$$
    The left-hand side is equal to
    $$ \sum_{x<y} r(x,y) (v^\dagger \gamma_A (x,y)  (x,y)^\dagger \gamma_A^\dagger v) = v^\dagger \gamma_A \left[\sum_{x<y} (x,y) r(x,y) (x,y)^\dagger\right] \gamma_A^\dagger v = v^\dagger \gamma_A \mc L_S \gamma_A^\dagger v. $$
    Similarly, the right-hand side is equal to
    $$ v^\dagger \left[\sum_{x\neq y\in A} (x,y) \frac{r^A}{2} (x,y)^\dagger\right] v = v^\dagger \mc L^A v. $$
    By \eqref{traceprocess}, we obtain the desired result. 
\end{proof}

Finally, we have the proof of Proposition \ref{123}.

\begin{proof}[Proof of \ref{123}]
    Now, define $V_N$ as a normalization of $W_N$. Precisely, define $V_N$ as
    $$V_N(\eta) = \frac{W_N(\eta)}{\sqrt{\int_{\mc H_N} W_N^2 d\rho_N}}.$$
    From the above lemmas, we have the desired result.
\end{proof}

Using Proposition \ref{123}, we can check condition \textbf{(D1*.2)}.
Precisely, take $v^A\in C^\infty_c(\mathring{\Xi}_A,\Xi_A)$ for all $\varnothing\subsetneq A\subset S$.
From Proposition \ref{123}, there exists a sequence of functions $V^A_N: \mc H_N \rightarrow \bb R$ satisfying conditions of Proposition \ref{123}.
To check all conditions in \textbf{(D1*.2)}, it remains to check the following:
\begin{align}
    &(V^A_N V^B_N d\rho_N) \rightarrow 0 \text{ weakly}, \label{12.38} \\
    \lim_{N\rightarrow \infty} &\int_{\mc H_N} V^A_N (-\ms L_N V^B_N) d\rho_N = 0. \label{12.39}
\end{align}

Define the support of $V^A_N$ as $$\text{supp} (V^A_N) \coloneq \{\eta\in \mc H_N : V^A_N(\eta) \neq 0\}.$$
Since $V^A_N$ is a normalization of $W^A_N$, we have $V^A_N(\eta) \neq 0$ if and only if $W^A_N(\eta) \neq 0$.
From Lemma \ref{reglem}, $W^A_N(\eta) \neq 0$ implies $\min(\eta_A) > \epsilon' N$.
Also, from the equation \eqref{defv}, $W^A_N(\eta) \neq 0$ implies $|\eta_B| < 2N^{1-\gamma}$.
Therefore, we get
\begin{equation*}
    \text{supp} (V^A_N) \subset \{\eta\in \Xi_N : \min(\eta_A) > \epsilon' N \text{ and } |\eta_B| < 2N^{1-\gamma}\}.
\end{equation*}
From the definition of $\ms L_{N}$, we have
\begin{equation}
    \label{supp2}
    \text{supp} (\ms L_{N} V^A_N) \subset \{\eta\in \Xi_N : \min(\eta_A) > \epsilon' N - 1 \text{ and } |\eta_B| < 2N^{1-\gamma} + 1 \}.
\end{equation}
Denote the set in the right-hand side of \eqref{supp2} by $G_N(A)$.
Then we claim that for a large enough $N$, $\phi \subsetneq A,A'\subset S$, $A\neq A'$,
$$G_N(A) \cap G_N(A') = \emptyset.$$
Without loss of generality, we may assume that $|A| \leq |A'|$. Then we have $x\in A'$ such that $x\notin A$.
Then we have $\eta_x>\epsilon'N-1$ for all $\eta\in G_N(A)$ and $\eta_x<2N^{1-\gamma}+1$ for all $\eta\in G_N(A')$.
Take $N$ large so that
$$\epsilon'N-1 > 2N^{1-\gamma}+1.$$
So we have $G_N(A) \cap G_N(A') = \emptyset$. Therefore, the left-hand side of \eqref{12.38}, \eqref{12.39} vanishes, so the conditions are satisfied.

\section{Proof of Proposition \ref{otherscales}} \label{finalsection}

Define a metric $d$ on $\Xi$ given as
$$d(\xi,\zeta) = \sum_{x\in S} |\xi_x - \zeta_x|.$$
For any functions $f,g:P \rightarrow \bb R_+$ defined on some parameter space $P$, we say $f(x) \sim_{x\in P} g(x)$ if there exists $c,C>0$ such that
$$c < \frac{f(x)}{g(x)} < C.$$

\begin{lemma}\label{l111}
    Suppose $\ell_N \rightarrow \infty$ and $\ell_N \prec N$ given.
    For $\zeta \in \Xi$, let $B_N \coloneq \{\eta \in \mc H_N : d(\frac{\eta}{N},\zeta) \leq \frac{\ell_N}{N}\}$.
    For nonempty $\varnothing\subsetneq A\subset S$, suppose $\zeta \in \mathring{\Xi}_A$.
    Then,
    $$\rho_N(B_N)\left(\frac{N^\alpha}{\ell_N}\right)^{|A|-1} \sim_{N} 1.$$
\end{lemma}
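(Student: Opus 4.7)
The plan is to estimate $\rho_N(B_N)$ directly from the Gibbs formula $\rho_N(\eta) = N^\alpha/(Z_{S,N}\,a(\eta))$ by splitting the sum over $\eta\in B_N$ according to the coordinates in $B := S \setminus A$. Writing $m := \sum_{y \in B}\eta_y$ and using $\zeta_y = 0$ for $y\in B$, the constraint $d(\eta/N,\zeta)\le \ell_N/N$ reads
$$\sum_{y\in B}\eta_y \;+\; \sum_{x\in A}|\eta_x-\zeta_x N| \;\le\; \ell_N ,$$
so in particular $m\le \ell_N$, the $A$-coordinates satisfy $\sum_{x\in A}|\eta_x-\zeta_x N|\le \ell_N-m$, and $\sum_{x\in A}\eta_x = N-m$.

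Next I would exploit $\ell_N \prec N$ and $\min_{x\in A}\zeta_x>0$ to conclude that, for $N$ large and any $\eta\in B_N$, $\eta_x = \zeta_x N\,(1+O(\ell_N/N))$ uniformly for $x\in A$; hence
$\prod_{x\in A}a(\eta_x) = N^{|A|\alpha}\prod_{x\in A}\zeta_x^{\alpha}\,(1+o_N(1))$.
Combined with Proposition \ref{e21} ($Z_{S,N}\to Z_S$), this isolates the dependence on $A$ and reduces the problem to the combinatorial estimate
$$\sum_{m=0}^{\ell_N} N_A(m)\,S_B(m)\;\sim_N\;\ell_N^{|A|-1},$$
where $N_A(m) := \#\{(\eta_x)_{x\in A}\in \bb N^A:\sum_x\eta_x = N-m,\;\sum_x|\eta_x-\zeta_x N|\le \ell_N-m\}$ and $S_B(m) := \sum_{(\eta_y)_{y\in B}:\sum_y\eta_y = m}\prod_{y\in B}a(\eta_y)^{-1}$.

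The key step is the lattice count $N_A(m) \asymp (\ell_N-m)^{|A|-1}$. For the upper bound the individual bounds $|\eta_x-\zeta_x N|\le \ell_N-m$ confine $|A|-1$ free coordinates (the last being fixed by $\sum \eta_x = N-m$) to intervals of length $O(\ell_N-m)$. For the lower bound one uses $\zeta_x > 0$ for every $x\in A$ to fit an axis-aligned lattice cube of side $c(\ell_N-m)$ inside the feasible set, avoiding any positivity obstruction. For the outer sum, the convergence $\sum_{n\ge 0}a(n)^{-1}=\Gamma(\alpha)<\infty$ gives $\sum_{m\ge 0}S_B(m)\le \Gamma(\alpha)^{|B|}$; the trivial bound $(\ell_N-m)^{|A|-1}\le \ell_N^{|A|-1}$ then yields the upper estimate, while truncating to $m\le M$ (fixed) and using $\ell_N-m\ge \ell_N/2$ for $N$ large produces a matching lower bound after letting $M\to\infty$.

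The main obstacle is purely bookkeeping: the uniform $o_N(1)$ control of $\prod_{x\in A}a(\eta_x)$, the two-sided lattice count $N_A(m)$, and the degenerate case $|A|=1$ (where $N_A(m) = 1$ and the claim reduces to $\rho_N(B_N)\sim_N 1$, consistent with Theorem \ref{meas}). Assembling the estimates gives
$$\rho_N(B_N) \;\asymp\; \frac{N^\alpha\,\ell_N^{|A|-1}}{N^{|A|\alpha}} \;=\; \frac{\ell_N^{|A|-1}}{N^{(|A|-1)\alpha}},$$
which is exactly the claim $\rho_N(B_N)\,(N^{\alpha}/\ell_N)^{|A|-1}\sim_N 1$.
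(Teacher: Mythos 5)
Your proposal follows essentially the same route as the paper: decompose $B_N$ according to $m=\sum_{y\in S\setminus A}\eta_y$, pull out the $A$-coordinate weight $\prod_{x\in A}a(\eta_x)\sim N^{|A|\alpha}\prod_{x\in A}\zeta_x^\alpha$ using $\min_{x\in A}\zeta_x>0$ and $\ell_N\prec N$, estimate the lattice count of feasible $A$-configurations by $(\ell_N-m+1)^{|A|-1}$, sum the $B$-contribution via $\sum_{n\ge 0}a(n)^{-1}=\Gamma(\alpha)<\infty$, and sum over $m$. If anything you are more careful than the paper's short proof, which states $|B_{N,i}|\sim(\ell_N-i+1)^{|A|-1}$ when it really means the count of $A$-coordinates and is loose about small $i$ and the range $i>\ell_N/2$ where the lattice count vanishes; your two-sided $\asymp$ bound plus truncation to $m\le M$ handles these points cleanly.
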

\begin{proof}
    For $i\in \bb N$, let $B_{N,i}$ be a set of $\eta \in B_N$ such that $\sum_{x\in S\setminus A} \eta_x = i$.
    It is easy to check that
    $$|B_{N,i}| \sim_{(N,i)} (\ell_N - i + 1)^{|A|-1}.$$
    Then we have
    \begin{align*}
        \rho_N(B_{N,i}) = \sum_{\eta \in B_{N,i}} \rho_N(\eta) 
        &= \frac{N^\alpha}{Z_{N,S}} \sum_{\eta \in B_{N,i}} \prod_{x\in A} \frac{1}{a(\eta_x)} \prod_{y\in S\setminus A} \frac{1}{a(\eta_y)} \\
        &= \frac{N^\alpha}{Z_{N,S}} \prod_{x\in A} \frac{1}{\zeta_x^\alpha} \sum_{\eta \in B_{N,i}} \prod_{y\in S\setminus A} \frac{1}{a(\eta_y)}(1+o_N(1)) \\
        &= \frac{N^\alpha}{Z_{N,S}} \prod_{x\in A} \frac{1}{\zeta_x^\alpha} \sum_{\eta \in B_{N,i}} \frac{Z_{i,S\setminus A}}{i^\alpha}(1+o_N(1))
        \sim_{(N,i)} \frac{N^\alpha (\ell_N - i + 1)^{|A|-1}}{i^\alpha N^{|A|\alpha}}
    \end{align*}
    Summing $\rho_N(B_{N,i})$ over $0\leq i \leq \ell_N$, we get the desired result.
\end{proof}

\begin{lemma} \label{prescale}
    For $\theta_N \prec N^2$, $(\theta_N \mc I_N)$ $\Gamma$-converges to 0.
\end{lemma}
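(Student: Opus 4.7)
The $\Gamma$-$\liminf$ half is immediate from $\theta_N \mc I_N \geq 0$. For the $\Gamma$-$\limsup$, since the target functional $0$ is (trivially) lower semicontinuous, we plan to invoke the standard density argument \cite[Lemma B.5]{L22G} to reduce to a weakly dense subclass of $\ms P(\Xi)$. The rate functionals $\mc I_N$ are convex — each $I_N$ is convex as a supremum of linear functionals, and $\mc I_N$ is its pushforward along the linear bijection $\nu\mapsto\nu\iota_N^{-1}$. Consequently, once we produce for every $\zeta\in\Xi$ a sequence $\mu_N^\zeta\to\delta_\zeta$ with $\theta_N\mc I_N(\mu_N^\zeta)\to 0$, the convex combinations $\sum_i p_i\mu_N^{\zeta_i}\to\sum_i p_i\delta_{\zeta_i}$ recover every finitely supported probability measure, and these form a weakly dense class in $\ms P(\Xi)$. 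It therefore suffices to treat $\mu=\delta_\zeta$.

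Fix $\zeta\in\mathring{\Xi}_A$ with $\varnothing\subsetneq A\subset S$. Since $\theta_N\prec N^2$, we can choose $\ell_N\in\bb N$ with $\ell_N\to\infty$, $\theta_N/\ell_N^2\to 0$, and $\ell_N/N\to 0$. Fix a smooth cutoff $\phi\colon\bb R\to[0,1]$ with $\phi\equiv 1$ on $[0,1]$ and $\phi\equiv 0$ on $[2,\infty)$, and define the $\ell_N$-smooth bump $f_N(\eta) \coloneq \phi\bigl(Nd(\eta/N,\zeta)/\ell_N\bigr)$ on $\mc H_N$. Set $Z_N \coloneq \int f_N^2\,d\rho_N$, $\nu_N \coloneq f_N^2\rho_N/Z_N$, and $\mu_N \coloneq \nu_N\iota_N^{-1}$. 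Since $\nu_N$ is supported on $\{\eta:d(\eta/N,\zeta)\leq 2\ell_N/N\}$, which shrinks to $\{\zeta\}$, $\mu_N\to\delta_\zeta$ weakly. By \eqref{sqrtcalc} and detailed balance,
$$I_N(\nu_N) \;=\; \frac{1}{2Z_N}\sum_{\eta\in\mc H_N}\sum_{x,y\in S}\rho_N(\eta)\,g(\eta_x)\,r(x,y)\bigl(f_N(\eta^{x,y})-f_N(\eta)\bigr)^2.$$
Any elementary jump changes $d(\cdot/N,\zeta)$ by at most $2/N$, so the mean value theorem yields $|f_N(\eta^{x,y})-f_N(\eta)|\leq 2\|\phi'\|_\infty/\ell_N$, and this gradient vanishes off the shell $S_N \coloneq \{\eta:\ell_N\leq Nd(\eta/N,\zeta)\leq 2\ell_N+2\}$. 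With $B_N^{\ell_N} \coloneq \{\eta:Nd(\eta/N,\zeta)\leq\ell_N\}\subset\{f_N=1\}$ and the uniform boundedness of $g$ and $r$, this yields
$$I_N(\nu_N) \;\leq\; \frac{C}{\ell_N^2}\cdot\frac{\rho_N(S_N)}{Z_N} \;\leq\; \frac{C}{\ell_N^2}\cdot\frac{\rho_N(S_N)}{\rho_N(B_N^{\ell_N})}.$$

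The remaining task — and the main technical point — is to show $\rho_N(S_N)/\rho_N(B_N^{\ell_N})=O(1)$; this then gives $\theta_N\mc I_N(\mu_N)=O(\theta_N/\ell_N^2)\to 0$. For $|A|\geq 2$, the sum-over-$i$ decomposition used in the proof of Lemma \ref{l111} applies verbatim to both the ball and the annulus: in each case the dominant contribution comes from the slice with no particles on $S\setminus A$, so both measures are of order $(\ell_N/N^\alpha)^{|A|-1}$ and their ratio is bounded. For $|A|=1$, $\zeta=\xi^x$, so Theorem \ref{meas} gives $\rho_N(B_N^{\ell_N})\to 1/\kappa$, whereas a partition-function computation using Proposition \ref{e21} yields $\rho_N(S_N)=O(\ell_N^{1-\alpha})\to 0$ (using $\alpha>1$). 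Apart from this shell analogue of Lemma \ref{l111}, everything is a routine Dirichlet-form computation made possible by the range $\sqrt{\theta_N}\prec\ell_N\prec N$ afforded by the hypothesis $\theta_N\prec N^2$.
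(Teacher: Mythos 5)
Your proof is correct and follows essentially the same strategy as the paper's: reduce via convexity and lower semicontinuity to Dirac measures $\delta_\zeta$, pick $\sqrt{\theta_N}\prec\ell_N\prec N$, tilt $\rho_N$ by a bump of width $\ell_N/N$ around $\zeta$, and bound the resulting Dirichlet form by $O(\ell_N^{-2})$ times a ratio of stationary weights controlled through Lemma \ref{l111}. The cosmetic differences (a smooth cutoff versus the paper's piecewise-linear tent, and restricting the Dirichlet sum to the shell $S_N$ rather than the ball $B_N$) are harmless, and your separate treatment of the case $|A|=1$ is unnecessary since Lemma \ref{l111} already yields $\rho_N(B_N)\sim(\ell_N/N^\alpha)^{|A|-1}\sim 1$ there, which gives the needed $O(1)$ ratio without invoking Theorem \ref{meas} or a finer $O(\ell_N^{1-\alpha})$ estimate.
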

\begin{proof}
    It is enough to show that the $\limsup$ condition.
    Since the rate function is convex and lower semi-continuous, it is enough to show that for all $\xi_0 \in \Xi$,
    there exists a sequence of measures $\mu_N$ such that $\mu_N$ weakly converges to $\delta_{\xi_0}$ and
    $$\limsup_{N\rightarrow \infty} \theta_N \mc I_N(\mu_N) = 0.$$
    Fix $\xi_0 \in \Xi$. Assume that $\xi_0 \in \mathring{\Xi}_A$ for some nonempty $\varnothing\subsetneq A\subset S$.
    Take $\ell_N \rightarrow \infty$ such that $\sqrt{\theta_N} \prec \ell_N \prec N$.
    Define $\psi_N:\Xi \rightarrow \bb R$ by
    $$\psi_N(\xi) \coloneq \max\left(1 - \frac{d(\xi_0,\xi)N}{\ell_N}, 0\right).$$
    Let $\Psi_N: \mc H_N \rightarrow \bb R, \;\; \Psi_N \coloneq \psi_N \iota_N$.
    Define $Z_{N} = \int_{\Xi} \Psi_N^2 d\rho_N$. By Lemma \ref{l111}, we can show that
    $$Z_{N} \sim_{N} \left(\frac{\ell_N}{N^\alpha}\right)^{|A|-1}.$$
    Take $\nu_N \in \ms P(\mc H_N)$, $\nu_N = \frac{\Psi_N^2}{Z_{N}}\rho_N$. Finally take $\mu_N \in \ms P(\Xi)$,
    $$\mu_N = \nu_N \iota_N^{-1}.$$
    Since the support of $\mu_N$ is shrinking to $\xi_0$, we have $\mu_N$ weakly converges to $\delta_{\xi_0}$.
    
    Now, we calculate $\theta_N \mc I_N(\mu_N)$. Let $B_N$ be a set of $\eta \in \mc H_N$ such that
    $$B_N = \{\eta\in \mc H_N : \Psi_N(\eta) > 0 \text{ or } \Psi_N(\eta^{x,y}) > 0 \text{ for some } x,y\in S \}.$$
    From Lemma \ref{l111}, we can show that
    $$\rho(B_{N}) \sim_{N} \left(\frac{\ell_N}{N^\alpha}\right)^{|A|-1}.$$
    Then we have
    \begin{align*}
        \theta_N \mc I_N(\mu_N) = \frac{\theta_N}{Z_{N}} \int_{\mc H_N} \Psi_N (-\ms L_N \Psi_N) d\rho_N &= 
        \frac{\theta_N}{Z_{N}} \sum_{\eta \in \mc H_N} \sum _{x,y\in S} \rho(\eta)g(\eta_x)r(x,y)(\Psi_N(\eta^{x,y})-\Psi_N(\eta))^2 \\
        &\sim_N \frac{\theta_N}{Z_{N}} \sum_{\eta \in B_N} \sum _{x,y\in S} \rho(\eta)(\Psi_N(\eta^{x,y})-\Psi_N(\eta))^2 \\
        &\leq \frac{\theta_N}{Z_{N}} \sum_{\eta \in B_N} \sum _{x,y\in S} \rho(\eta)\frac{1}{\ell_N^2} \sim_N \frac{\theta_N}{\ell_N^2} = o_N(1).
    \end{align*}
    Therefore, the desired result holds.
\end{proof}

\begin{lemma} \label{zerocond}
    For the rate function $\mc K$ defined in \eqref{ratedef}, $\mc K(\mu) = 0$ if and only if $\mu$ is supported on $\{\xi^x: x\in S\}$.
\end{lemma}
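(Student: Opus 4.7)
The proof uses the explicit representation of $\mc K$ obtained in Theorem \ref{gen_diffusive}, namely
\begin{equation*}
    \mc K(\mu) \;=\; \sum_{A \in \mc C} \mu(\mathring{\Xi}_A)\, Q^A\!\left(\sqrt{\tfrac{d\mu(\cdot|\mathring{\Xi}_A)}{d\lambda_A}}\right),
\end{equation*}
a decomposition in which every term is non-negative. For the ``if'' direction, I would argue that when $\mu$ is supported on $\{\xi^x : x \in S\}$, we have $\mu(\mathring{\Xi}_A) = 0$ for all $A$ with $|A| \geq 2$. For the singletons $A = \{x\}$, the space $\mathring{\Xi}_A = \{\xi^x\}$ is a single point; by the absorption property of the limiting diffusion (Section \ref{behaviorafterabsorption}), $\xi^x$ is fixed under the process, so $P_t^{\{x\}}$ is the identity on $L^2(\lambda_{\{x\}}) \cong \bb R$, whence $L^{\{x\}} = 0$ and $Q^{\{x\}} \equiv 0$. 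The representation then gives $\mc K(\mu) = 0$.

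For the ``only if'' direction, $\mc K(\mu) = 0$ forces every summand to vanish. The core claim I would prove is: for $|A| \geq 2$, $Q^A(f) = 0$ with $f \in \mc D(\sqrt{-L^A}) \subset L^2(\lambda_A)$ implies $f = 0$. Assuming this claim, since the square root of a probability density cannot vanish identically, we must have $\mu(\mathring{\Xi}_A) = 0$ for each $|A| \geq 2$, forcing $\mu$ to concentrate on $\bigcup_{|A|=1} \mathring{\Xi}_A = \{\xi^x : x \in S\}$.

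To establish the claim, I would use Lemma \ref{corelem2} to approximate $f$ in the graph norm of $Q^A$ by a sequence $(f_n) \subset C^\infty_c(\mathring{\Xi}_A, \Xi_A)$, so that $f_n \to f$ in $L^2(\lambda_A)$ while $Q^A(f_n) \to 0$. The analog of Proposition \ref{1009} for the subdynamics generated by $\mf L^A$ on $\Xi_A$ yields
\begin{equation*}
    Q^A(f_n) \;=\; \int_{\mathring{\Xi}_A} \sum_{x \neq y \in A} \bigl((v^A_{x,y} \cdot \nabla) f_n\bigr)^2\, d\lambda_A,
\end{equation*}
where $v^A_{x,y} = \sqrt{r^A(x,y)/2}\,(e_x - e_y)$. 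Hence each $(v^A_{x,y} \cdot \nabla) f_n \to 0$ in $L^2(\lambda_A)$, and passing to the distributional limit gives $(v^A_{x,y} \cdot \nabla) f = 0$ weakly for every pair $x \neq y$. Since $\{e_x - e_y : x, y \in A\}$ spans the tangent hyperplane of $\Xi_A$, $f$ is weakly constant on the connected set $\mathring{\Xi}_A$. As $\alpha > 1$ and $|A| \geq 2$, the density $\pi_A^{-1} = (\prod_{x \in A} \xi_x)^{-\alpha}$ is non-integrable near $\partial \Xi_A$, so $\lambda_A(\mathring{\Xi}_A) = \infty$ and the only constant function in $L^2(\lambda_A)$ is identically zero. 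Hence $f = 0$.

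The main obstacle is the regularity step promoting ``$Q^A(f) = 0$'' to ``$f$ is weakly constant on $\mathring{\Xi}_A$''. The $Q^A$-density of smooth compactly supported functions from Lemma \ref{corelem2} reduces the question to the sum-of-squares representation on smooth functions, after which the spanning property of the $v^A_{x,y}$ closes the argument. The infiniteness of $\lambda_A$ then kills the remaining constant, which is the crucial place where the hypothesis $\alpha > 1$ (baked into the definition of $\lambda_A$) is used.
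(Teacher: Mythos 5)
Your proof is correct, but it takes a genuinely different route from the paper's. The paper proves the ``only if'' direction by a purely probabilistic argument: from $\mc K(\mu)=0$ and Lemma \ref{93}, $\mc K_h(\mu)=0$ for all $h$, which together with inequality \eqref{1131} (taking $a\to 0$) yields $\mu P_h(K)\ge\mu(K)$ for every compact $K\subset\mathring{\Xi}_A$; then, for $|A|\ge 2$, \cite[Theorem 7.12]{MZRP} gives $\bb E_{\xi}[\sigma_1]<\infty$ for the first exit time $\sigma_1$ from $\mathring{\Xi}_A$, so $P_h 1_K(\xi)\le\bb E_\xi[\sigma_1]/h\to 0$ and hence $\mu P_h(K)\to 0$ by dominated convergence, a contradiction when $\mu(K)>0$. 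In contrast, you invoke the hard-won representation \eqref{11111better} and then run a Dirichlet-form argument: $Q^A(f)=0$ forces $f$ weakly constant on the connected set $\mathring{\Xi}_A$, and the non-integrability of $\pi_A^{-1}$ near $\partial\Xi_A$ (here is where $\alpha>1$ enters for you) forces the constant to be zero, contradicting $\|f\|_{L^2(\lambda_A)}=1$. What each approach buys: the paper's argument is lighter, needing only $\mc K_h(\mu)=0$ and a hitting-time estimate rather than the full representation and conditions \textbf{(D1*)}; yours makes the role of $\alpha>1$ analytically transparent, and you also explicitly handle the ``if'' direction, which the paper leaves implicit. One small gap to close in your writeup: the claim that $\{e_x-e_y : r^A(x,y)>0\}$ spans the tangent space of $\Xi_A$ is exactly irreducibility of the trace process on $A$, which does follow from irreducibility of $\mc L_S$ but should be stated, since it is the step that rules out $f$ being constant only along some directions.
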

\begin{proof}
    Suppose $\mc K(\mu) = 0$. Then Lemma \ref{93} implies that $\mc K_h(\mu)=0$ for all $h\in \mc H$.
    Decompose $\mu$ as $\mu = \sum_{\varnothing\subsetneq A\subset S} \mu(\mathring{\Xi}_A) \mu(\cdot|\mathring{\Xi}_A)$.
    Then, we must show that $\mu(\mathring{\Xi}_A) = 0$ for all $|A|\geq 2$.
    Suppose there exists $\varnothing\subsetneq A\subset S$ such that $\mu(\mathring{\Xi}_A) > 0$, $|A| \geq 2$.
    From \eqref{1131}, for any compact set $K$ contained in $\mathring{\Xi}_A$, we have
    $$ \mu P_h(K) - \mu(K) \geq 0.$$
    Since $\mu(\mathring{\Xi}_A) > 0$, there exists a compact set $K$ contained in $\mathring{\Xi}_A$ such that $\mu(K) > 0$.
    We claim that $\mu P_h(K)$ converges to 0 as $h$ goes to infinity.
    For $\xi\in \mathring{\Xi}_A$, let $\sigma_1$ be a first hitting time of the process hitting $\Xi_A \setminus \mathring{\Xi}_A$.
    Then \cite[Theorem 7.12]{MZRP} implies that for all $\xi \in \mathring{\Xi}_A$,
    $$\bb E_{\xi} [\sigma_1] < \infty.$$
    Therefore, for $\xi \in \mathring{\Xi}_A$,
    \begin{align*}
        P_h 1_K(\xi) = \bb E_{\xi} [1_K(\xi_h)] \leq \bb P_{\xi} [\sigma_1 > h] \leq \frac{\bb E_{\xi} [\sigma_1]}{h}
    \end{align*}
    Therefore, $P_h 1_K$ converges to 0 pointwise as $h$ goes to infinity. From the dominated convergence theorem, we get
    $$\mu P_h(K) \rightarrow 0,$$
    which leads to a contradiction.
\end{proof}

\begin{proof}[Proof of Proposition \ref{otherscales}]
    For $\theta_N \prec N^2$, Lemma \ref{prescale} implies that $(\theta_N \mc I_N)$ $\Gamma$-converges to 0.
    For $N^2 \prec \theta_N \prec N^{1+\alpha}$, we show that $(\theta_N \mc I_N)$ $\Gamma$-converges to $X_S$ presented in \eqref{midconv}.
    By the definition of $X_S$, we need to show
    \begin{enumerate}
        \item For any $\mu \in \ms P(\Xi)$, $\mu$ supported on $\{\xi^x: x\in S\}$, there exists a sequence of measures $\mu_N$ such that $\mu_N$ weakly converges to $\mu$ and
              $$\lim_{N\rightarrow \infty} \theta_N \mc I_N(\mu_N) = 0.$$
        \item For any $\mu \in \ms P(\Xi)$, $\mu$ not supported on $\{\xi^x: x\in S\}$, for all sequence of measures $\mu_N$ weakly converges to $\mu$, we have
              $$\liminf_{N\rightarrow \infty} \theta_N \mc I_N(\mu_N) = \infty.$$
    \end{enumerate}
    (1) follows from Theorem \ref{27} and (2) follows from Lemma \ref{zerocond}.
    Finally, for $\theta_N \succ N^{1+\alpha}$, the $\Gamma$-convergence of $(\theta_N \mc I_N)$ to $U$ of \eqref{postconv} is straightforward.
\end{proof}

\subsection*{Acknowledgement}
This research is supported by the National Research Foundation of Korea (NRF) grant funded by the Korea government (MSIT) (No. 2023R1A2C100517311) and the undergraduate research program through the Faculty of Liberal Education,
Seoul National University. The author thanks to Insuk Seo introducing the problem and offering enlightening insights, and Claudio Landim, Jungkyoung Lee, and Seonwoo Kim regarding fruitful discussion on the problem.

\appendix

\section{Semigroup Theory}

In this section, we introduce basic definitions and lemmas related to semigroup theory, with a particular focus on the core of the infinitesimal generator of a semigroup.
Lemma \ref{corelem} serves as a practical tool for identifying the core of an infinitesimal generator.
Additionally, we elaborate on concepts pertinent to semigroup theory to facilitate the description of the lemma.

\begin{defn}
    Let $X$ be a Banach space and $B(X)$ be a space of bounded linear map to itself. $T:[0,\infty)\rightarrow B(X)$ is called a strongly continuous semigroup iff
    \begin{enumerate}
        \item [\textup{(1)}] $T(0) = I$.
        \item [\textup{(2)}] $T(t+s) = T(t)T(s)$ for all $t,s\geq 0$.
        \item [\textup{(3)}] $T(t)x \rightarrow x$ as $t\rightarrow 0$ for all $x\in X$.
    \end{enumerate}
\end{defn}

Let $S$ be a compact metric space. Let $T(t)$ be a semigroup on $C(S)$.

\begin{defn}
    The infinitesimal generator $A$ of a strongly continuous semigroup $T(t)$ is defined by
    \begin{equation*}
        A x = \lim_{t\rightarrow 0} \frac{T(t)x - x}{t}.
    \end{equation*}
    whenever the limit exists. The domain of $A$, $D(A)$, is the set of $x\in X$ for which this limit does exist.
\end{defn}

Now, we introduce the concept of a core of an unbounded operator.

\begin{defn}
    For any unbounded operator between Banach spaces $A:X\rightarrow Y$, defined on $D(A) \subset A$,
    we say $A$ is closed if for any sequence $\{x_n\}\subset D(A)$ such that $x_n \rightarrow x$ and $Ax_n \rightarrow y$, we have $x\in D(A)$ and $Ax = y$.
    If $A$ is closed, we say $D$ is a core of $A$ if for all $x\in Dom(A)$, there exists a sequence $\{x_n\}\subset D$ such that $x_n \rightarrow x$ and $Ax_n \rightarrow Ax$.
    Also, we say $A$ is densely defined if $D(A)$ is dense in $X$.
\end{defn}

\begin{lemma}\cite[Theorem 2.2.6]{Partington}
    \label{gencld}
    An infinitesimal generator $A$ of a strongly continuous semigroup is closed and densely defined.
\end{lemma}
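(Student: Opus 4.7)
The plan is to establish the two standard properties by the usual Yosida-style approximation argument, leveraging the strong continuity axiom (3) and the semigroup property (2).

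For the dense domain claim, I will construct, for each $x \in X$, an explicit approximating family inside $D(A)$. The natural candidate is the Bochner average
$$ x_t \;\coloneq\; \frac{1}{t}\int_0^t T(s)x\,ds, \qquad t > 0. $$
Strong continuity immediately yields $x_t \to x$ as $t \to 0^+$, since $s \mapsto T(s)x$ is continuous at $0$. To see that $x_t \in D(A)$, I would compute
$$ \frac{T(h)x_t - x_t}{h} \;=\; \frac{1}{ht}\left(\int_h^{t+h} T(s)x\,ds - \int_0^{t} T(s)x\,ds\right) \;=\; \frac{1}{ht}\left(\int_t^{t+h} T(s)x\,ds - \int_0^{h} T(s)x\,ds\right), $$
using the semigroup identity $T(s+h) = T(h)T(s)$ (applied under the integral via boundedness of $T(h)$) and a change of variables. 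Passing $h \to 0^+$ and again using strong continuity gives $A x_t = t^{-1}(T(t)x - x)$, and in particular $x_t \in D(A)$. This shows $D(A)$ is dense in $X$.

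For closedness, suppose $(x_n) \subset D(A)$ with $x_n \to x$ and $A x_n \to y$. The key tool is the fundamental identity
$$ T(h) x_n - x_n \;=\; \int_0^h T(s) A x_n\,ds, $$
which holds for $x_n \in D(A)$ (proved by differentiating $s \mapsto T(s)x_n$ using $A x_n = \lim_{\tau\to 0} \tau^{-1}(T(\tau)-I)T(s)x_n$ together with boundedness of $T(s)$ on bounded intervals — itself a consequence of the uniform boundedness principle applied to strong continuity). Since $\|T(s)\|$ is locally bounded and $A x_n \to y$, the right-hand side converges to $\int_0^h T(s) y\,ds$, while the left-hand side converges to $T(h)x - x$ by continuity of $T(h)$. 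Dividing by $h$ and letting $h \to 0^+$, strong continuity of $s \mapsto T(s)y$ at $0$ gives
$$ \lim_{h \to 0^+}\frac{T(h)x - x}{h} \;=\; \lim_{h \to 0^+}\frac{1}{h}\int_0^h T(s) y\,ds \;=\; y, $$
so $x \in D(A)$ and $A x = y$.

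The only step that requires care, rather than pure calculation, is justifying that $\|T(s)\|$ is uniformly bounded on compact intervals $[0,T]$; this is the standard consequence of strong continuity at $0$ combined with the semigroup property and the uniform boundedness principle, and it underpins both the interchange of limit and integral in the closedness argument and the validity of the Bochner integrals throughout. I anticipate no other obstacle, as the remaining computations are routine manipulations of the semigroup axioms.
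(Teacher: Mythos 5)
Your proof is correct, and since the paper simply cites Partington's Theorem 2.2.6 without reproducing an argument, there is nothing to compare against beyond the standard textbook proof, which is exactly what you give: Bochner averages for density of $D(A)$, the integral identity $T(h)x-x=\int_0^h T(s)Ax\,ds$ for closedness, and local boundedness of $\|T(s)\|$ via the uniform boundedness principle to justify the interchange of limits. No gaps.
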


The next lemma is a practical tool for identifying the core of an infinitesimal generator.

\begin{lemma}
    \label{corelem}
    $T(t)$ be a strongly continuous semigroup on a Banach space $X$.
    $U(t)$ be a corresponding resolvent operator.
    Let $D$ be a dense subset of $X$ such that one of the following holds.
    \begin{enumerate}
        \item [\textup{(1)}] $D$ is closed under $T(t)$ for all $t>0$.
        \item [\textup{(2)}] $D$ is closed under $U(t)$ for some $t>0$.
    \end{enumerate}
    Then $D$ is a core of infinitesimal generator of $T(t)$.
\end{lemma}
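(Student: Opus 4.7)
The plan is to prove both cases via the standard core criterion: a subspace $D \subseteq \mc D(A)$ (where $A$ denotes the infinitesimal generator of $T(t)$) is a core for $A$ if and only if $(\lambda I - A)D$ is dense in $X$ for some $\lambda$ in the resolvent set $\rho(A)$. For $t \in \rho(A)$, $U(t) = (tI - A)^{-1}$ is a bijection of $X$ onto $\mc D(A)$. The statement implicitly assumes $D \subseteq \mc D(A)$, which is consistent with the way the lemma is invoked in the paper (Corollary \ref{corelem1}), where $D$ consists of sufficiently regular functions in the generator's domain.

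Case (2) is almost immediate: if $U(t) D \subseteq D$ for $t \in \rho(A)$, then applying $(tI - A)$ to both sides yields $D = (tI - A) U(t) D \subseteq (tI - A) D$. Hence $(tI - A) D$ contains the dense set $D$, so is itself dense in $X$, and the core criterion concludes that $D$ is a core.

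Case (1) requires a mollification together with a Riemann sum argument. Fix $x \in \mc D(A)$ and pick $x_n \in D$ with $x_n \to x$ in $X$ (possible since $D$ is dense). For each $\tau > 0$, define the time averages $x^\tau := \tau^{-1}\int_0^\tau T(s) x \, ds$ and $x_n^\tau := \tau^{-1}\int_0^\tau T(s) x_n \, ds$. Standard semigroup calculus gives $x^\tau, x_n^\tau \in \mc D(A)$ with $A x_n^\tau = \tau^{-1}(T(\tau) x_n - x_n)$ and the analogous formula for $x^\tau$. As $\tau \to 0$, $x^\tau \to x$ and $A x^\tau \to A x$ in $X$-norm; for fixed $\tau$, continuity of the $T(s)$-integration gives $x_n^\tau \to x^\tau$ and $A x_n^\tau \to A x^\tau$ as $n \to \infty$. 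A diagonal choice $\tau_k \to 0$ then yields $x_{n_k}^{\tau_k} \to x$ in graph norm. It remains to approximate each $x_n^\tau$ in graph norm by elements of $D$. To this end, I would use the Riemann sums $\sigma_K := K^{-1}\sum_{k=0}^{K-1} T(k\tau/K) x_n$, which lie in $D$ by the $T(t)$-invariance of $D$ and its linear-subspace structure. Because $x_n \in D \subseteq \mc D(A)$, each $T(k\tau/K) x_n$ lies in $\mc D(A)$, so $\sigma_K \in \mc D(A)$ and, using the commutation $A T(s) = T(s) A$ on $\mc D(A)$, one has $A \sigma_K = K^{-1}\sum_k T(k\tau/K) A x_n$, which is the Riemann sum of the continuous $X$-valued integrand $T(s) A x_n$ and therefore converges in $X$-norm to $\tau^{-1}(T(\tau) x_n - x_n) = A x_n^\tau$. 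Hence $\sigma_K \to x_n^\tau$ in graph norm, and chaining with the previous diagonal places $x$ in the graph-norm closure of $D$.

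The main technical point is case (1): one must ensure the Riemann sum approximations of $x_n^\tau$ converge in the graph norm, not merely in $X$-norm. This is where the combination of the $T(t)$-invariance of $D$, its linear structure, the intertwining $A T(s) = T(s) A$ on $\mc D(A)$, and the implicit inclusion $D \subseteq \mc D(A)$ all come into play; the last of these is essential so that $A \sigma_K$ is well-defined in the first place. Once this triple convergence is in hand, the lemma follows without further difficulty.
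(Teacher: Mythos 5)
Your proof is correct and follows essentially the same route as the paper, whose proof simply defers to the standard references: case (1) is the Davies (Theorem 1.9) argument of approximating the time averages $\tau^{-1}\int_0^\tau T(s)x\,ds$ in graph norm by Riemann sums of $T(k\tau/K)x_n$ using the $T(t)$-invariance of $D$, and case (2) is the usual core criterion that $(tI-A)D\supseteq D$ is dense because $(tI-A)U(t)=I$. Your explicit flagging of the implicit hypotheses ($D\subseteq\mc D(A)$, $D$ a linear subspace, and $t$ in the resolvent set, as holds for the contraction semigroups to which the paper applies the lemma) is consistent with how the lemma is stated and used there.
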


\begin{proof}
    For (1), see \cite[Theorem 1.9]{Davies}.
    For (2), follow the proof in \cite[Theorem 1.34]{Levy}.
\end{proof}

Now, for a compact metric space $S$, let $X = C(S)$ and $T(t)$ be a Feller semigroup on $X$. Let $\ms L$ be the infinitesimal generator of $T(t)$.
The next lemma belongs to a generator of a Feller process.

\begin{lemma}
    \label{reslem}
    The following holds.
    \begin{enumerate}
        \item [\textup{(1)}] $f = sU_s g$, $g\in C(S)$. Then $f\in D(\ms L)$ and $f-\frac{1}{\alpha}\ms L f = g$.
        \item [\textup{(2)}] For $f\in C(S) \text{ and } t>0$, $\exists c > 0$, $$\left\|\left(\frac{n}{t}U_{\frac{n}{t}}\right)^n f - T(t) f\right\|_\infty \leq \frac{ct}{\sqrt{n}} \|\ms L f\|_\infty.$$
    \end{enumerate}
\end{lemma}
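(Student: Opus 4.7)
\medskip

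\noindent\textbf{Proof plan for Lemma \ref{reslem}.}

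For part (1), first note that the coefficient $\frac{1}{\alpha}$ in the statement must be read as $\frac{1}{s}$, since $U_s=(s-\mathscr{L})^{-1}$ on the range of $(s-\mathscr{L})$ whenever $\mathscr{L}$ generates $T(t)$. The plan is the standard direct verification: starting from the defining formula $f = sU_s g = \int_0^{\infty} s\,e^{-s\tau}T(\tau)g\,d\tau$, I apply $T(h)$ inside the integral (permitted by strong continuity and dominated convergence), perform the substitution $\sigma = \tau+h$, and rewrite
\[
T(h)f - f \;=\; (e^{sh}-1)\int_h^{\infty} s\,e^{-s\sigma}T(\sigma)g\,d\sigma \;-\; \int_0^{h} s\,e^{-s\sigma}T(\sigma)g\,d\sigma.
\]
Dividing by $h$ and letting $h\downarrow 0$, the first term converges in $C(S)$ to $s f$ and the second to $s g$ by continuity of $\tau\mapsto T(\tau)g$ at $0$. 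Therefore $f\in D(\mathscr{L})$ with $\mathscr{L}f = s f - s g$, i.e.\ $f-\tfrac{1}{s}\mathscr{L}f = g$.

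For part (2), the plan is to exploit the probabilistic representation of the resolvent. Let $(X_k)_{k\ge 1}$ be i.i.d.\ exponential random variables with rate $n/t$ (mean $t/n$, variance $t^2/n^2$) defined on some auxiliary probability space, and set $S_n = X_1+\cdots+X_n$, so $\mathbb{E}[S_n]=t$ and $\mathrm{Var}(S_n)=t^2/n$. Then
\[
\tfrac{n}{t}U_{n/t}f(\xi) \;=\; \int_0^{\infty}\tfrac{n}{t}e^{-(n/t)\tau}T(\tau)f(\xi)\,d\tau \;=\; \mathbb{E}\!\left[T(X_1)f(\xi)\right],
\]
and iterating $n$ times (using the semigroup identity $T(\tau_1)T(\tau_2)=T(\tau_1+\tau_2)$ and Fubini, which is justified since $T(\cdot)f$ is bounded by $\|f\|_\infty$ and jointly measurable) gives $\bigl(\tfrac{n}{t}U_{n/t}\bigr)^n f = \mathbb{E}[T(S_n)f]$. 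For $f\in D(\mathscr{L})$ one has the fundamental identity $T(b)f - T(a)f = \int_a^b T(u)\mathscr{L}f\,du$ in $C(S)$, hence $\|T(b)f-T(a)f\|_\infty \le |b-a|\,\|\mathscr{L}f\|_\infty$. Taking $a=t,\ b=S_n$ pointwise in $\omega$ and then averaging,
\[
\Bigl\|\bigl(\tfrac{n}{t}U_{n/t}\bigr)^n f - T(t)f\Bigr\|_\infty \;\le\; \mathbb{E}\bigl[|S_n-t|\bigr]\,\|\mathscr{L}f\|_\infty \;\le\; \sqrt{\mathrm{Var}(S_n)}\,\|\mathscr{L}f\|_\infty \;=\; \tfrac{t}{\sqrt{n}}\,\|\mathscr{L}f\|_\infty,
\]
which gives the bound with $c=1$.

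The only genuine subtlety I anticipate is the rigorous justification of the iterated resolvent representation $\bigl(\tfrac{n}{t}U_{n/t}\bigr)^n f = \mathbb{E}[T(S_n)f]$: one must interchange iterated integration against the exponential densities with the application of $T(\tau)$, which requires checking joint measurability and applying Fubini in the Bochner sense. Since $T$ is a Feller semigroup on the compact space $S$, the map $\tau\mapsto T(\tau)f$ is continuous into $C(S)$, and the exponential weights are integrable, so this step is routine but needs to be spelled out. Everything else is a straightforward chain of contraction estimates and the elementary Jensen/Cauchy--Schwarz bound $\mathbb{E}[|S_n-t|]\le\sqrt{\mathrm{Var}(S_n)}$.
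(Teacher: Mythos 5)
Your proof is correct, and since the paper's own ``proof'' of this lemma is just the citation to Liggett's Theorem 3.16, your argument --- the direct resolvent computation for (1) and, for (2), the randomization $\left(\tfrac{n}{t}U_{n/t}\right)^n f = \mathbb{E}[T(S_n)f]$ with $S_n$ a sum of i.i.d.\ exponentials together with the Lipschitz bound $\|T(b)f-T(a)f\|_\infty\le |b-a|\,\|\ms L f\|_\infty$ and $\mathbb{E}|S_n-t|\le t/\sqrt{n}$ --- is exactly the standard proof behind that citation. You are also right that $\tfrac{1}{\alpha}$ in (1) should read $\tfrac{1}{s}$, and that (2) tacitly requires $f\in D(\ms L)$ for the right-hand side to be meaningful.
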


\begin{proof}
    See \cite[Theorem 3.16]{liggett}.
\end{proof}

\section{Dirichlet Problem and Heat Kernel Estimate}

This section is devoted to providing a brief overview of the Dirichlet problem and heat kernel estimates related to the probabilistic perspective.

\subsection{A Dirichlet problem}
These results are from \cite[Remark 7.5]{KARA}.
Suppose we have an elliptic operator of the form
\begin{equation*}
    L u \coloneq a^{ij}(x) D_{ij}u + b^i D_i u
\end{equation*}
is given on $\bb R^n$. Consider a bounded continuous functions $k:\bb R^n\rightarrow [0,\infty)$ and $g:\bb R^n \rightarrow \bb R$.
Let $\Omega \subset \bb R^n$ be a bounded open domain.
Take a function $f:\partial \Omega \rightarrow \bb R$. The Dirichlet problem is to find a continuous function
$u:\bar{\Omega}\rightarrow \bb R$ such that such that $u$ is of class $C^2(\Omega)$ and satisfies the elliptic equation
\begin{equation}
    \begin{aligned}
        \label{diriprob}
            Lu - k u &=\; -g \text{ in } \Omega, \\
            u &=\; f \text{ on } \partial \Omega.
    \end{aligned}
\end{equation}
For $0<\beta<1$, we assume the following conditions. 
\begin{enumerate}
    \item $L$ is uniformly elliptic,
    \item $a^{ij}, b^i, k, g$ are $\beta$-H\"older continuous on $\bb R^n$, and
    \item every point $a\in \partial \Omega$ has the exterior sphere property; that is, there exists a ball $B(a)$ such that
    $B(a)\cap \Omega = \emptyset$ and $\bar{B}(a)\cap \partial \Omega = \{a\}$,
    \item $f$ is continuous.
\end{enumerate}
According to \cite[Theorem 6.13]{EPDE}, there exists a unique function $u$ of class $C^{2,\beta}(\Omega)\cap C(\bar{\Omega})$, which solves \eqref{diriprob}.

From \cite[Corollary 4.29]{KARA} and \cite[Remark 4.30]{KARA}, it is established that there exists a unique solution to the martingale problem associated with \( L \).
Subsequently, the following theorem is derived from \cite[Proposition 7.2]{KARA}.

\begin{thm}
    \label{probrepn}
    Let $\tau_\Omega \coloneq\inf \{t\geq 0 : X_t\notin \Omega\}$. Under the above conditions the unique solution $u$ of \eqref{diriprob} can be written as
    \begin{equation*}
        u(x) = \bb E_x\left[f(X_{\tau_\Omega})\exp\left(-\int_0^{\tau_\Omega} k(X_s)ds\right) + \int_0^{\tau_\Omega} g(X_t)\exp\left(-\int_0^t k(X_s)ds\right)dt\right].
    \end{equation*}
    where $\bb E_x$ is the expectation with respect to the probability measure $\bb P_x$ under which $X$ is a solution of martingale problem associated with $L$.
\end{thm}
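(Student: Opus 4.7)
The plan is a standard Feynman–Kac / martingale argument. The hypotheses (uniform ellipticity, H\"older continuity of the coefficients, exterior sphere property, and continuity of $f$) together with the cited Schauder theory supply a unique classical solution $u\in C^{2,\beta}(\Omega)\cap C(\bar\Omega)$ of \eqref{diriprob}; the goal is to identify $u(x)$ with the probabilistic expectation on the right-hand side.

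Set $K_t := \int_0^t k(X_s)\,ds$ and
$$M_t := u(X_t)\,e^{-K_t} + \int_0^t g(X_s)\,e^{-K_s}\,ds.$$
The crucial step is to show that $(M_{t\wedge \tau_\Omega})_{t\geq 0}$ is a bounded $\bb P_x$-martingale. Since $X$ solves the martingale problem associated with $L$ and $u\in C^2(\Omega)$, I would localize by $\tau_n := \inf\{t:\mathrm{dist}(X_t,\partial\Omega)<1/n\}$; on $[0,\tau_n]$ one may replace $u$ by a compactly supported $C^2$ extension to $\bb R^n$ without altering the process. The martingale-problem identity combined with the product rule $de^{-K_t} = -k(X_t)e^{-K_t}dt$ then yields that
$$M_{t\wedge\tau_n} - M_0 - \int_0^{t\wedge\tau_n} e^{-K_s}\bigl[Lu(X_s)-k(X_s)u(X_s)+g(X_s)\bigr]ds$$
is a $\bb P_x$-martingale, and the bracketed integrand vanishes identically on $\Omega$ by the PDE $Lu-ku=-g$. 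Passing $n\to\infty$, together with the uniform bounds $\|u\|_{\infty,\bar\Omega},\|g\|_\infty,\|k\|_\infty<\infty$ and $\bb E_x[\tau_\Omega]<\infty$ (see below), upgrades this to a true bounded martingale up to time $\tau_\Omega$.

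Finiteness of $\bb E_x[\tau_\Omega]$ follows from uniform ellipticity and boundedness of $\Omega$ via a quadratic barrier: $\phi(y):=C-|y|^2$ satisfies $L\phi \le -2\lambda<0$ on $\bar\Omega$ for some $\lambda>0$, and Dynkin's formula applied to $\phi$ yields $\bb E_x[\tau_\Omega]\le C/(2\lambda)$. Optional stopping at $\tau_\Omega$ combined with dominated convergence then gives
$$u(x) = \bb E_x[M_0] = \bb E_x\Bigl[u(X_{\tau_\Omega})e^{-K_{\tau_\Omega}} + \int_0^{\tau_\Omega} g(X_s)e^{-K_s}\,ds\Bigr].$$
The exterior sphere property and non-degeneracy of $L$ ensure $X_{\tau_\Omega}\in\partial\Omega$ almost surely, and continuity of $u$ up to $\partial\Omega$ with the boundary condition $u=f$ on $\partial\Omega$ replace $u(X_{\tau_\Omega})$ by $f(X_{\tau_\Omega})$, producing the claimed formula.

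The main obstacle is the martingale identification in the second paragraph: promoting the martingale property, which is \emph{a priori} only stated for functions in the domain of $L$ (typically smooth compactly supported functions), to the specific $u$ that is merely $C^2$ inside $\Omega$ and is not globally defined on $\bb R^n$. The localization along $\tau_n$ handles this, but one must then verify the limit $n\to\infty$ using the barrier estimate to control $\bb P_x[\tau_\Omega>t]$ and the uniform $L^\infty$ bounds; a subtler point is that boundary regularity must be used to conclude $u(X_{\tau_\Omega})=f(X_{\tau_\Omega})$ and not merely $\limsup_{t\uparrow\tau_\Omega}u(X_t)$.
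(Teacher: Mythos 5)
The paper does not prove this theorem itself: it invokes \cite[Proposition 7.2]{KARA} directly after quoting Schauder existence (\cite[Theorem 6.13]{EPDE}) and well-posedness of the martingale problem (\cite[Corollary 4.29, Remark 4.30]{KARA}). Your proposal reconstructs the Feynman--Kac argument behind that citation, and the overall structure is the right one: form $M_t = u(X_t)e^{-K_t}+\int_0^t g(X_s)e^{-K_s}\,ds$, localize on compacts inside $\Omega$ to promote the martingale-problem identity from $C^2_c$ test functions to the interior solution $u$, use the product rule and the PDE $Lu-ku=-g$ to kill the drift, and apply optional stopping at $\tau_\Omega$ after establishing $\bb E_x[\tau_\Omega]<\infty$.

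One concrete step would fail as written: the quadratic barrier $\phi(y)=C-|y|^2$ does not in general satisfy $L\phi\le -2\lambda<0$ on $\bar\Omega$, since $L\phi = -2\,\mathrm{tr}(a) - 2\,b\cdot y$ and the drift term $-2\,b\cdot y$ can be positive and swamp $-2\,\mathrm{tr}(a)$ once $\|b\|_\infty\,\mathrm{diam}(\Omega)$ is large. The standard remedy (used in the cited source) is an exponential barrier $\phi(y)=e^{\mu R}-e^{\mu y_1}$ with $R>\sup_{\bar\Omega}y_1$ and $\mu$ chosen so that $\lambda_0\mu^2-\|b\|_\infty\mu>0$, where $\lambda_0$ is the ellipticity constant; then $L\phi=-(a^{11}\mu^2+b^1\mu)e^{\mu y_1}\le -c<0$ uniformly on $\bar\Omega$, and Dynkin gives $\bb E_x[\tau_\Omega]\le\|\phi\|_\infty/c<\infty$. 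Separately, your last paragraph slightly misattributes roles: $X_{\tau_\Omega}\in\partial\Omega$ a.s.\ follows purely from path continuity once $\tau_\Omega<\infty$ a.s.; the exterior sphere condition is a PDE-side hypothesis that guarantees $u\in C(\bar\Omega)$, which is what lets you identify $u(X_{\tau_\Omega})=f(X_{\tau_\Omega})$ and pass to the limit along your localizing times $\tau_n\uparrow\tau_\Omega$.
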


\subsection{Heat kernel estimates}
    In this section, we provide estimates of heat kernel for the process $X$ on $\bb R^n$, which is the solution to the martingale problem associated with the operator $L$ as described in the previous section.
    We first define a heat kernel associated with the process $X$.

    \begin{defn}
        A function $p:(0,\infty)\times \bb R^n \times \bb R^n \rightarrow [0,\infty)$ is called the heat kernel of the process $X$ if for all $t>0$ and $x,y\in \bb R^n$,
        \begin{equation*}
            \bb E_x[f(X_t)] = \int_{\bb R^n} f(y)p(t,x,y)dy.
        \end{equation*}
    \end{defn}

    The following lemma, taken from \cite[display (1.2)]{esttrans}, provides an estimate for the heat kernel of the process $X$.

    \begin{lemma}
        \label{heatker}
        Under the assumptions above, there exists the heat kernel $p$ of the process $X$.
        Furthermore, there exists positive constants $c_1$ and $K_1$ depending only on $L$ such that
        \begin{equation*}
            p(t,x,y) \leq K_1 t^{-\frac{n}{2}}\exp\left(-c_1\frac{|x-y|^2}{t}\right) \; \text{ for all }\; x,y\in \Omega,\; t>0.
        \end{equation*}
    \end{lemma}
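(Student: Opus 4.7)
The plan is to combine classical parabolic PDE theory with the Nash--Davies strategy for Gaussian upper bounds. Existence of the heat kernel is essentially a standard consequence of uniform ellipticity together with H\"older regularity of the coefficients: Levi's parametrix construction (see e.g.\ Friedman, \emph{Partial Differential Equations of Parabolic Type}) produces a continuous fundamental solution $p(t,x,y)$ of $\partial_t u = Lu$. Because the martingale problem for $L$ is well-posed under our hypotheses, $p(t,x,\cdot)$ coincides with the transition density of $X_t$, and it is nonnegative and sub-stochastic.

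For the upper bound, first establish an on-diagonal estimate $p(t,x,x) \leq K t^{-n/2}$ by Nash's method. Uniform ellipticity gives $\int (-Lu)u \, dx \geq c \int |\nabla u|^2 dx$ for test functions; coupling this with the Nash inequality $\|u\|_2^{2+4/n} \leq C \|\nabla u\|_2^2 \|u\|_1^{4/n}$ and applying it to $u = p(t,x,\cdot)$ yields the differential inequality $\frac{d}{dt}\|p(t,x,\cdot)\|_2^2 \leq -C \|p(t,x,\cdot)\|_2^{2+4/n} \|p(t,x,\cdot)\|_1^{-4/n}$. Since sub-Markovianity gives $\|p(t,x,\cdot)\|_1 \leq 1$, integration yields $\|p(t,x,\cdot)\|_2 \leq C t^{-n/4}$, and Chapman--Kolmogorov plus Cauchy--Schwarz give $p(t,x,x) \leq \|p(t/2,x,\cdot)\|_2^2 \leq K t^{-n/2}$. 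Symmetrization or the dual semigroup handle the non-symmetric case with essentially the same estimate.

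The off-diagonal Gaussian factor comes from Davies' exponential perturbation trick. For a Lipschitz $\phi$ with $|\nabla \phi|\leq \lambda$, consider the twisted operator $L^\phi = e^{-\phi} L e^{\phi}$, which differs from $L$ by first-order drift and a zero-order term, both bounded by $O(\lambda) + O(\lambda^2)$. A Gr\"onwall-type energy computation at the level of the Dirichlet form shows that $P_t^\phi = e^{-\phi} P_t e^{\phi}$ satisfies $\|P_t^\phi\|_{2\to 2} \leq e^{C\lambda^2 t}$. Feeding this into the on-diagonal estimate via the standard Coulhon-Varopoulos-style upgrading $L^2\!\to\! L^\infty$ and duality $L^1\!\to\! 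L^2$, one obtains $p(t,x,y) e^{\phi(y)-\phi(x)} \leq K t^{-n/2} e^{C\lambda^2 t}$. Specializing to $\phi(z) = \lambda\, \hat e \cdot z$ with $\hat e = (x-y)/|x-y|$ and optimizing $|\lambda| = |x-y|/(2Ct)$ produces the bound $p(t,x,y) \leq K_1 t^{-n/2} \exp(-c_1|x-y|^2/t)$.

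The main obstacle lies in making Davies' perturbation argument work with only H\"older regular, non-symmetric coefficients $a^{ij}, b^i$. The issue is that the twisted operator $L^\phi$ acquires an additional drift $2 a^{ij}(\partial_j \phi)\partial_i$ whose contribution to the quadratic form must be absorbed into the elliptic part via a Cauchy--Schwarz/Young estimate; one must verify that the resulting zero-order perturbation is $O(\lambda^2)$ with constants uniform in $x$, depending only on the ellipticity and sup-norm bounds of the coefficients. Since the final statement is essentially Aronson's classical inequality for non-symmetric operators, the cleanest execution is to cite Aronson or the reference \cite{esttrans} directly and skip reproving the bound.
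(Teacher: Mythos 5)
The paper does not actually prove this lemma; it simply records it as \cite[display (1.2)]{esttrans} (Sheu's paper on transition-density estimates for nondegenerate diffusions). You arrive at the same place in your final sentence, and that is the right call. However, the Nash--Davies sketch you build up to that conclusion is not well matched to the operator at hand, and it is worth flagging why. The operator in Appendix B is in \emph{non-divergence} form, $Lu = a^{ij}D_{ij}u + b^iD_iu$, with coefficients that are only H\"older continuous. Nash's method starts from the integration-by-parts identity $\int(-Lu)u\,dx \gtrsim \int|\nabla u|^2\,dx$, which requires either divergence structure or a derivative on $a^{ij}$; with merely H\"older $a^{ij}$ you cannot move the derivative, so the key energy inequality driving the Nash iteration (and likewise the Dirichlet-form bookkeeping in Davies' perturbation) is not available as stated. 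In fact, for this class of operators the parametrix construction you invoke in your first paragraph already produces the full Gaussian upper bound directly (this is how Friedman and Sheu obtain it), so the subsequent Nash/Davies machinery is both redundant and, in this non-divergence setting, the step where the argument would actually break down. Your closing instinct --- cite Aronson/Sheu and stop --- is exactly what the paper does and is the correct resolution; just be aware that the Nash route in the middle is a detour that does not go through without rewriting $L$ in divergence form, which the regularity assumptions do not permit.
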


    Let $\Omega$ be a bounded open domain in $\bb R^n$. Consider a killed process $X^\Omega$ so that the process is killed upon exiting $\Omega$.
    Since a transition density for the killed process is less or equal to a transition density for the original process, we obtain the same heat kernel estimate for the killed process.
    Therefore, we have the following.
    \begin{cor}
        \label{heatker2}
        Fix a $\delta>0$. There exists the heat kernel $p_\Omega(t,x,y)$ associated with the killed process $X^\Omega$.
        Moreover, there exists a positive constant $C$ depending only on $\delta$ such that
        \begin{equation*}
            p_\Omega(t,x,y) \leq C,\; \text{ for all }\; x,y\in \Omega,\;\; |x-y|\geq\delta,\; t>0.
        \end{equation*}
    \end{cor}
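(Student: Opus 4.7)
My plan is to derive the bound directly from Lemma \ref{heatker}, combined with the elementary domination of the killed kernel by the unkilled one. By construction $X^\Omega$ coincides with $X$ up to the exit time $\tau_\Omega = \inf\{t \geq 0 : X_t \notin \Omega\}$ and is removed thereafter, so for any non-negative Borel function $f$ on $\Omega$ one should obtain
\[
\bb E_x[f(X^\Omega_t)] \;=\; \bb E_x\bigl[f(X_t)\,\mathbf{1}_{\{t<\tau_\Omega\}}\bigr] \;\leq\; \bb E_x[f(X_t)] \;=\; \int_{\bb R^n} f(y)\, p(t,x,y)\, dy,
\]
using Lemma \ref{heatker} for the last equality. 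Since the left-hand side defines a sub-probability kernel on $\Omega$, uniqueness of Radon--Nikodym derivatives will produce the density $p_\Omega$ together with the pointwise bound $p_\Omega(t,x,y) \leq p(t,x,y)$ for all $x,y \in \Omega$ and $t > 0$. This reduces the corollary to bounding the Gaussian upper bound of Lemma \ref{heatker} uniformly in $t > 0$.

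For this reduction, I would fix $x,y \in \Omega$ with $|x-y| \geq \delta$, so that $\exp(-c_1 |x-y|^2 / t) \leq \exp(-c_1 \delta^2 / t)$, and then show that the profile $\varphi(t) := t^{-n/2} \exp(-c_1 \delta^2 / t)$ is bounded on $(0, \infty)$. As $t \to 0^+$ the exponential factor decays faster than any polynomial, forcing $\varphi(t) \to 0$; as $t \to \infty$ the polynomial factor forces $\varphi(t) \to 0$. Solving $\varphi'(t) = 0$ yields the unique critical point $t_* = 2 c_1 \delta^2 / n$, and substituting back gives the finite maximum $\varphi(t_*) = (n/(2 c_1 \delta^2))^{n/2} e^{-n/2}$, depending only on $\delta$ (and on the fixed constants $n$, $c_1$, $K_1$ inherited from $L$). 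Taking $C := K_1 \,\varphi(t_*)$ completes the bound.

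No genuine obstacle arises here. The only mild subtlety is justifying the comparison $p_\Omega \leq p$ at the level of densities, which is standard once the probabilistic identity above is written down; from that point the statement is purely a one-variable optimization of a Gaussian-type profile.
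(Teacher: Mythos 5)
Your proof is correct and takes essentially the same route as the paper: the paper argues in one line that the killed-process transition density is dominated by the full-space density and hence inherits the Gaussian bound of Lemma \ref{heatker}, leaving the $t$-optimization implicit, whereas you write out the domination identity $\bb E_x[f(X^\Omega_t)] \leq \bb E_x[f(X_t)]$ and the maximization of $t^{-n/2}e^{-c_1\delta^2/t}$ at $t_* = 2c_1\delta^2/n$ explicitly.
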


\end{document}